\theoremstyle{plain}
\newtheorem{thm}{Theorem}[section]  
\newtheorem{lem}[thm]{Lemma}
\newtheorem{claim}[thm]{Claim} 
\newtheorem{proposition}[thm]{Proposition}
\newtheorem{cor}[thm]{Corollary} 
\newtheorem{conjecture}[thm]{Conjecture}
\theoremstyle{definition}
\newtheorem{dfn}[thm]{Definition}
\theoremstyle{remark}
\newtheorem{rem}[thm]{Remark} 
\newtheorem{example}[thm]{Example}
\theoremstyle{plain}
\newlist{thmlist}{enumerate}{1}
\setlist[thmlist]{  font=\normalfont, label=(\roman*), ref=\thethm.(\roman{thmlisti})}
\newlist{corlist}{enumerate}{1} 
\setlist[corlist]{  font=\normalfont, label=(\roman*), ref=\thecorx.(\roman{corlisti})}
\crefname{lem}{Lemma}{Lemmas} 
\crefname{conjecture}{Conjecture}{Conjectures}
\crefname{thm}{Theorem}{Theorems}
\crefname{proposition}{Proposition}{Propositions}
\crefname{dfn}{Definition}{Definitions}
\crefname{rem}{Remark}{Remarks}
\crefname{cor}{Corollary}{Corollaries}
\crefname{corx}{Corollary}{Corollaries}
\crefname{problem}{Problem}{Problems}
\crefname{thmx}{Theorem}{Theorems}
\crefname{claim}{Claim}{Claims}
\crefname{assumption}{Assumption}{Assumptions}
\crefname{main}{Main Theorem}{Main Theorems}
\crefname{setting}{Setting}{Settings}
\numberwithin{equation}{thm} 
\def\ep{\varepsilon}
\def\vhs{\text{\tiny{VHS}}}
\newcommand*{\rom}[1]{\expandafter\@slowromancap\romannumeral #1@}
\newcommand{\crefnames}[3]{%
	\@for\next:=#1\do{%
		\expandafter\crefname\expandafter{\next}{#2}{#3}%
	}%
}
\newcommand{\sD}{\mathscr{D}}
\newcommand{\sX}{\mathscr{X}}
\newcommand{\cD}{\mathcal D}
\newcommand{\cL}{\mathcal L}
\newcommand{\cO}{\mathcal O}
\newcommand{\cP}{\mathcal P}
\newcommand{\bA}{\mathbb{A}}
\newcommand{\bC}{\mathbb{C}}
\newcommand{\bD}{\mathbb{D}}
\newcommand{\bF}{\mathbb{F}}
\newcommand{\bG}{\mathbb{G}}
\newcommand{\bN}{\mathbb{N}}
\newcommand{\bP}{\mathbb{P}}
\newcommand{\bQ}{\mathbb{Q}}
\newcommand{\bZ}{\mathbb{Z}}
\newcommand{\xsp}{X^{\! \rm sp}}
\def\db{\bar{\partial}}
  \def\spec{\textrm{Spec}\,}
 \def\d{\partial}
\def\sn{\sqrt{-1}}
\def\End{{\rm \small  End}}
\def\Sym{{\text{Sym}}}
\def\vol{\text{\small  Vol}}
\def\btau{{\bm{\tau}}}
\newcommand{\Hom}{{\rm Hom}}
\newcommand{\ord}{{\rm ord}\,}
\newcommand{\ram}{{\rm ram}\,}
\newcommand{\GL}{{\rm GL}}
\begin{document} \title[Topology \& Hyperbolicity of Algebraic Varieties]{Topology, Hyperbolicity, and the Shafarevich Conjecture for Complex Algebraic Varieties}

	\author[Ya Deng]{Ya Deng}

	\email{ya.deng@math.cnrs.fr, deng@imj-prg.fr}
	\address{CNRS,  
		Institut de Math\'ematiques de Jussieu-Paris Rive Gauche,
		Sorbonne Universit\'e, Campus Pierre et Marie Curie,
		4 place Jussieu, 75252 Paris Cedex 05, France}
	\urladdr{https://ydeng.perso.math.cnrs.fr} 
	\thanks{The  author was supported in part by  the ANR grant Karmapolis (ANR-21-CE40-0010).}
	
\subjclass[2020]{Primary  32Q45, 32Q30, Secondary 32H25, 14D07,  14C30,     53C23,  32A22}
\keywords{pseudo Picard hyperbolicity,   generalized Green-Griffiths-Lang conjecture,  Chern-Hopf-Thurston conjecture, $L^2$-cohomology,   non-abelian Hodge theories, big/large fundamental group,  Harmonic mapping to Bruhat-Tits buildings,   variation of Hodge structures,  Nevanlinna theory,  special varieties}
	\begin{abstract}
This survey presents recent developments concerning the Shafarevich conjecture, non-abelian Hodge theories, hyperbolicity, and the topology of complex algebraic varieties, as well as the interplay among these areas. 
More precisely, we present the main ideas and techniques involved in the linear versions of the following conjectures: the Shafarevich conjecture, the Chern–Hopf–Thurston conjecture, Kollár’s conjecture on the holomorphic Euler characteristic, the de Oliveira–Katzarkov–Ramachandran conjecture, and Campana’s nilpotency conjecture. In addition, we  discuss characterizations of the hyperbolicity of complex quasi-projective varieties via representations of their fundamental groups, together with  the generalized Green–Griffiths–Lang conjecture   in the presence of a big local system.
	\end{abstract}
	\maketitle
\epigraph{\textit{Parmi les combinaisons que l'on choisira, les plus fécondes seront souvent celles qui sont formées d'éléments empruntés à des domaines très éloignés.}}
{--- \textsc{Henri Poincaré}, \textit{Science et méthode} (1908)}
	\tableofcontents
	\section{Introduction}  
	\subsection{Overview of the Paper}
	The study of the fundamental groups of algebraic varieties lies at the intersection of 
	algebraic topology and algebraic geometry. 
	It traces back to the foundational work of Picard, Lefschetz, Hodge, Hirzebruch,  and Deligne. 
	Subsequent breakthroughs, notably the theory of variations of Hodge structures developed by 
	the school of Griffiths, and the \emph{non-abelian Hodge theory} due to Simpson, 
	Corlette, Gromov--Schoen, and T. Mochizuki, have provided powerful tools for understanding the 
	topology of complex algebraic varieties.
	
	In this survey, I focus on the following question: how do representations of the fundamental group  of the algebraic variety
	into \(\GL_N(K)\), where \(K\) is a field, influence the \emph{geometry} of such  variety? 
	More specifically, we seek to understand how such representations interact with the topology 
	of the variety, its  {hyperbolicity}, and various notions of  {positivity} in algebraic geometry. 
	This perspective is particularly appealing, as it naturally connects diverse areas of mathematics, 
	including harmonic map theory and Nevanlinna theory.
	
	In \cref{sec:topology}, we begin with a classical problem in geometric topology, namely the \emph{Chern--Hopf--Thurston conjecture}, which concerns the sign of the Euler characteristic of closed \emph{aspherical} even-dimensional real manifolds.   If we further assume that such an aspherical manifold carries the structure of a complex projective variety, we are naturally led to the study of algebraic varieties with \emph{large} (or \emph{big}) fundamental groups. We then present recent progress related to this conjecture, as well as a conjecture of Kollár regarding the positivity of Euler characteristic of the canonical bundle of projective varieties with big fundamental groups. 	Finally, we discuss our results on a conjecture of 
	De~Oliveira--Katzarkov--Ramachandran concerning the deformation openness of the property of having 
	a big fundamental group.

	In \cref{sec:hyper}, after discussing several basic examples that illustrate necessary conditions of fundamental groups for hyperbolicity, we present a positive result on the above question by showing that a smooth complex quasi-projective variety admitting a local system over an arbitrary field with semisimple algebraic monodromy group is of log general type and is pseudo Picard or Brody hyperbolic.
	We then turn to the \emph{generalized Green--Griffiths--Lang conjecture}, which relates pseudo 
	Picard or Brody hyperbolicity to  some positivity in algebraic geometry. 
	After reviewing the relatively few known results on this conjecture, we present our solution in 
	the case of varieties admitting a \emph{big local system}.
	
	A central theme of this survey is developed in \cref{sec:sha}, which is devoted to the \emph{Shafarevich conjecture}. This conjecture predicts that the universal cover $\widetilde{X}$ of a smooth complex projective variety $X$ is \emph{holomorphically convex}.  After providing an overview of the historical background, we outline the principal tools used in its study. These include non-abelian Hodge theory in the archimedean setting, as developed by Corlette, Simpson, and Mochizuki, as well as the non-archimedean setting initiated by Gromov--Schoen, 	and further developed by Katzarkov, Zuo, and Eyssidieux.   We then describe our recent progress on extending the Gromov--Schoen theory to quasi-projective varieties. Building on these developments, we explain further recent progress on the Shafarevich conjecture and sketch a proof for the case of complex projective surfaces with reductive fundamental group, intended to highlight the key ideas underlying the general theory. The lecture notes by Eyssidieux \cite{Eys11} and his two seminal papers with Katzarkov, Pantev, and Ramachandran \cite{Eys04,EKPR12} represent some of the most profound work on this problem. In this survey, we aim to provide new perspectives on the conjecture and to emphasize that the 
	techniques arising from its study play a crucial role in the proofs of the main results 
	discussed in \cref{sec:topology,sec:hyper}.
	
	Finally, in \cref{sec:topology2,sec:hyperbolicity}, we outline the main ideas underlying the 
	proofs of the results presented in \cref{sec:topology,sec:hyper}, using methods developed in the 
	study of the linear Shafarevich conjecture. 
	In \cref{sec:application}, we present several applications of these techniques, including  Campana’s conjecture on fundamental groups of special varieties;
	a conjecture of Claudon--Höring--Kollár on the structure of universal covers of projective varieties;
	a structure theorem motivated by a conjecture of Kollár.

	\subsection{Algebraic varieties with big fundamental groups}\label{sec:topology}
	\subsubsection{Large fundamental groups: motivation, conjectures and results}
	A well-known conjecture of Hopf from the 1920s states that the Euler characteristic 
	\(\chi(M)\) of a closed \(2n\)-dimensional Riemannian manifold of non-positive sectional curvature
	must satisfy the inequality
	\[
	(-1)^n \chi(M) \geq 0.
	\]
	This conjecture is known in dimensions \(n=1,2\) by the Gauss--Bonnet formula, but it remains
	widely open for \(n\geq 3\).  Note that for any closed odd-dimensional real manifold, 
	its Euler characteristic is zero by the Poincar\'e duality theorem.
	
	By the Cartan--Hadamard theorem, the universal cover \(\widetilde{M}\) of such a manifold 
	is diffeomorphic to \(\mathbb{R}^{2n}\); in particular, it is contractible. 
	Therefore, \(M\) is aspherical, i.e.,  if its universal covering is contractible.
	
	In the 1970s, Thurston proposed the following more general conjecture, 
	now referred to as the \emph{Chern--Hopf--Thurston} conjecture.
	\begin{conjecture}[Chern--Hopf--Thurston]\label{conj:CHT}
		Let \(M\) be a closed real \(2n\)-manifold.  
		If \(M\) is aspherical, then it satisfies the inequality
		\[
		(-1)^n \chi(M)\geq 0.
		\]
	\end{conjecture}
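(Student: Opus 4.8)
The plan is to deduce the inequality from Atiyah's $L^2$-index theorem together with the \emph{Singer conjecture} on the $L^2$-Betti numbers of aspherical manifolds. Write $\Gamma=\pi_1(M)$, so that $\widetilde M$ is a free cocompact $\Gamma$-space. The reduced $L^2$-cohomology groups $H^i_{(2)}(\widetilde M)$ are Hilbert $\mathcal{N}(\Gamma)$-modules of finite von Neumann dimension $b^{(2)}_i(M):=\dim_{\mathcal{N}(\Gamma)} H^i_{(2)}(\widetilde M)\geq 0$, and Atiyah's theorem gives
\[
\chi(M)=\sum_{i=0}^{2n}(-1)^i\,b^{(2)}_i(M).
\]
Hence it suffices to prove $b^{(2)}_i(M)=0$ for every $i\neq n$: then $(-1)^n\chi(M)=b^{(2)}_n(M)\geq 0$, which is exactly the asserted inequality. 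The vanishing $b^{(2)}_i(M)=0$ for $i\neq n$ on a closed aspherical $2n$-manifold is precisely the Singer conjecture, so the whole problem is reduced to establishing it.

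The proof would then proceed in the following steps. First, exploit Poincaré duality for $L^2$-cohomology, $b^{(2)}_i(M)=b^{(2)}_{2n-i}(M)$, which already halves the work: it is enough to show $b^{(2)}_i(M)=0$ for $i<n$. Second, put geometry on $\widetilde M$: since $M$ is aspherical, one tries to equip $\widetilde M$ with a metric of nonpositive sectional curvature (Cartan--Hadamard), or more generally a CAT$(0)$ or convex-cocompact structure, and then run a Bochner-- or Gromov-type vanishing argument — when the metric is "$L^2$-hyperbolic'' in Gromov's sense (the relevant form admits a bounded primitive on $\widetilde M$), every $L^2$-harmonic form of degree $\neq n$ vanishes, giving $b^{(2)}_i(M)=0$ for $i<n$ and, via the first step, for $i>n$ as well. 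Third, treat the aspherical cases where no such invariant metric exists — manifolds assembled from nonpositively curved pieces, Coxeter- and Artin-type reflection manifolds, Bestvina--Brady constructions — by a separate combinatorial $\ell^2$-argument on a suitable CW-structure, in the spirit of Davis--Okun and Davis--Januszkiewicz.

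The main obstacle is that the Singer conjecture is itself wide open for $n\geq 3$: there is no general mechanism forcing the $L^2$-cohomology of an arbitrary contractible manifold with cocompact group action to concentrate in the middle degree. It is known when $\widetilde M$ is a Riemannian symmetric space (Borel, Casselman), when $M$ carries a Kähler hyperbolic metric or, more generally, a Kähler metric of nonpositive curvature with suitable pinching (Gromov, Jost--Zuo, Cao--Xavier), for locally symmetric spaces of higher rank in a range of degrees (Ballmann--Brüning), and for various right-angled Coxeter and Artin groups (Davis--Okun); but the passage from asphericity \emph{alone} — with no curvature, analytic, or nice combinatorial structure available — to the vanishing of $b^{(2)}_i(M)$ for $i\neq n$ is exactly where every current technique breaks down. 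In the \emph{linear} setting — when $\Gamma$ admits a faithful, or merely a suitably nontrivial, representation into some $\GL_N(K)$ — non-abelian Hodge theory and the $L^2$-cohomology of big local systems can be substituted for the missing geometry, and that is the route developed in the body of this survey; but for \Cref{conj:CHT} as stated, the reduction to the Singer conjecture, and the Singer conjecture's own resolution, remains the crux.
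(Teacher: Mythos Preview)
The statement you are addressing is \Cref{conj:CHT}, which the paper presents as an \emph{open conjecture}: the surrounding text says explicitly that it ``remains widely open for $n\geq 2$.'' There is therefore no proof in the paper to compare against, and your proposal is, appropriately, not a proof either but the standard reduction to the Singer conjecture together with an honest acknowledgment that this leaves the problem exactly as open as before. Your reduction is correct: Atiyah's $L^2$-index theorem gives $\chi(M)=\sum_i(-1)^ib^{(2)}_i(M)$, and Singer's conjectural vanishing of $b^{(2)}_i$ for $i\neq n$ would yield $(-1)^n\chi(M)=b^{(2)}_n(M)\geq 0$. You also correctly locate the obstruction.

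What the paper \emph{does} prove is the partial result \Cref{thm:CHT} under a linearity hypothesis on $\pi_1$, and its method is entirely different from the $L^2$-route you sketch. Rather than $L^2$-Betti numbers, it uses the identity $\chi(X,\cP)=CC(\cP)\cdot T_X^*X$ for perverse sheaves, reducing to the nonnegativity of intersections $T_Z^*X\cdot T_X^*X$ of conormal cycles in $T^*X$. These are then controlled by an iterative ``deformation to properness'' via multivalued $1$-forms extracted from equivariant harmonic maps to Bruhat--Tits buildings (for the non-archimedean part) and from period maps of $\mathbb{C}$-VHS (for the archimedean part). This mechanism is specific to complex projective varieties with a large linear representation and does not pass through any Singer-type vanishing; your closing paragraph correctly flags this distinction.
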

	This conjecture is particularly appealing, as both the hypothesis and the conclusion 
	are purely topological: no metric or curvature assumptions are involved.  
	It remains widely open for \(n \geq 2\).
	
	In this survey, we will focus on \cref{conj:CHT} in the setting where \(M\) 
	admits the structure of a complex projective variety. An easy observation is the following.
	\begin{lem}\label{lem:asp}
		Let $X$ be a smooth complex projective variety. If $X$ is aspherical or if its universal covering $\widetilde{X}$ is a Stein manifold, then $X$ has a large fundamental group.
	\end{lem}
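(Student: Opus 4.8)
Recall that $X$ is said to have \emph{large fundamental group} if for every closed irreducible subvariety $Z\subseteq X$ of positive dimension, with normalization $\nu\colon Z^{\mathrm n}\to Z$, the image of $\pi_1(Z^{\mathrm n})\to\pi_1(X)$ is infinite. The plan is to prove that under either hypothesis $\widetilde X$ contains no compact connected analytic subvariety of positive dimension, and then to deduce largeness from this by a finite-covering argument which is insensitive to whether $\pi_1(X)$ has torsion.

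For the deduction, suppose some positive-dimensional irreducible $Z\subseteq X$ has $G:=\operatorname{Im}\big(\pi_1(Z^{\mathrm n})\to\pi_1(X)\big)$ finite. Let $Z_1\to Z^{\mathrm n}$ be the connected finite étale cover corresponding to the finite-index subgroup $\ker\big(\pi_1(Z^{\mathrm n})\twoheadrightarrow G\big)$; then $Z_1$ is again a connected normal projective variety, in particular compact, and the composite $f\colon Z_1\to Z^{\mathrm n}\xrightarrow{\nu}Z\hookrightarrow X$ is trivial on $\pi_1$. By the lifting criterion for the covering $\widetilde p\colon\widetilde X\to X$, the map $f$ lifts to a holomorphic map $h\colon Z_1\to\widetilde X$ with $\widetilde p\circ h=f$. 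Since $f$ is a finite morphism onto $Z$ and $\widetilde p$ is a local biholomorphism, $h(Z_1)$ is a compact connected analytic subvariety of $\widetilde X$ of dimension $\dim Z\ge 1$, contradicting the assertion (to be established below) that $\widetilde X$ admits no such subvariety. Hence $G$ is infinite for every such $Z$, so $X$ has large fundamental group.

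It therefore remains to exclude compact connected positive-dimensional analytic subvarieties $W\subseteq\widetilde X$. If $\widetilde X$ is Stein this is classical: every holomorphic function on $\widetilde X$ restricts to a constant on the compact set $W$ by the maximum principle (applied on the dense connected smooth locus of $W$), while holomorphic functions on a Stein manifold separate points, forcing $W$ to be a single point. If instead $X$ is aspherical, then $\widetilde X$ is contractible, so $H^2_{\mathrm{dR}}(\widetilde X;\mathbb R)=0$ and the pullback $\widetilde\omega:=\widetilde p^{\,*}\omega_X$ of a Kähler form $\omega_X$ on $X$ is exact, say $\widetilde\omega=d\eta$. Then for $W\subseteq\widetilde X$ as above, of dimension $d$, choosing a resolution $\mu\colon W'\to W$ and writing $g\colon W'\xrightarrow{\mu}W\hookrightarrow\widetilde X$ for the induced map, the form $g^{*}\widetilde\omega$ is closed, semipositive, and strictly positive on the dense open subset where $\mu$ is an isomorphism onto $W_{\mathrm{reg}}$; hence $\int_{W'}(g^{*}\widetilde\omega)^{d}>0$. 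On the other hand $g^{*}\widetilde\omega=d(g^{*}\eta)$ is exact, so this integral vanishes by Stokes' theorem on the closed manifold $W'$, a contradiction.

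The only non-formal ingredient is the observation in the aspherical case that the Kähler class of $X$ pulls back to an exact form on $\widetilde X$ as soon as $H^2(\widetilde X;\mathbb R)=0$, which is precisely what obstructs the existence of compact positive-dimensional complex subvarieties there. Everything else — connectedness and projectivity of the cover $Z_1$, the lifting criterion for (locally well-behaved) singular varieties, and the reduction of the integral positivity to $W_{\mathrm{reg}}$ via a resolution — is routine, so I expect no serious obstacle.
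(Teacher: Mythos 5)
Your proposal is correct and follows essentially the same strategy as the paper: argue by contradiction, produce a compact positive-dimensional analytic subvariety of $\widetilde{X}$ from a subvariety $Z$ with finite $\operatorname{Im}[\pi_1(Z^{\mathrm{norm}})\to\pi_1(X)]$, dispose of the Stein case immediately, and in the aspherical case use $H^2(\widetilde{X})=0$ to contradict positivity of a top self-intersection. The only cosmetic differences are that you obtain the compact subvariety by lifting a finite \'etale cover of $Z^{\mathrm{norm}}$ rather than taking a connected component of $\pi_X^{-1}(Z)$, and you run the degree computation with the exact pullback of a K\"ahler form and Stokes' theorem instead of with $c_1(g^*L)^k$ for an ample $L$.
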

	
	We recall the notion of large fundamental group from \cite{Kol95}.
	
	\begin{dfn}[Large fundamental group]\label{def:large}
		A complex quasi-projective normal variety $X$ is said to have \emph{large fundamental group} if, for every closed irreducible positive-dimensional subvariety $Z \subset X$, the image
		\[
		\operatorname{Im}\bigl[\pi_1(Z^{\mathrm{norm}}) \longrightarrow \pi_1(X)\bigr]
		\]
		is an infinite group.
	\end{dfn}
	\begin{rem}
		In alternative definitions of a \emph{large fundamental group}, the normalization of $Z$ is sometimes omitted. Although $\pi_1(Z^{\mathrm{norm}})$ and $\pi_1(Z)$ may differ with respect to finiteness---for example, one may be infinite while the other is finite, as in the case of a rational nodal curve---their respective images in $\pi_1(X)$,
		\[
		\operatorname{Im}\bigl[\pi_1(Z^{\mathrm{norm}})\longrightarrow \pi_1(X)\bigr]
		\quad\text{and}\quad
		\operatorname{Im}\bigl[\pi_1(Z)\longrightarrow \pi_1(X)\bigr],
		\]
		are nevertheless expected to be either both infinite or both finite. This behavior was predicted as a consequence of the \emph{Shafarevich conjecture} (cf.  \cref{conj:Sha}) and was first observed by Gurjar \cite{Gur85}. 
	\end{rem}
	\begin{proof}[Proof of \cref{lem:asp}]
		We proceed by contradiction. Assume that there exists a closed subvariety $Z \subset X$ of dimension $k > 0$ such that the image
		\[
		\operatorname{Im}\bigl[\pi_1(Z^{\mathrm{norm}}) \longrightarrow \pi_1(X)\bigr]
		\]
		is finite. Let $Z'$ be a connected component of $\pi_X^{-1}(Z)$; note that $Z'$ is a compact subvariety of $\widetilde{X}$. Let $f \colon Y \to Z'$ be a desingularization and let $g \colon Y \to X$ be the composite map. 
		
		If $\widetilde{X}$ is Stein, it cannot contain a compact subvariety of positive dimension, which yields an immediate contradiction. 
		
		If $\widetilde{X}$ is contractible (the aspherical case), the composite map
		\[
		H^{2}(X, \mathbb{C}) \longrightarrow H^2(\widetilde{X}, \mathbb{C}) \longrightarrow H^{2}(Y, \mathbb{C})
		\]
		is zero. Fix an ample line bundle $L$ on $X$. Since the map on cohomology is zero, the class $c_1(g^*L) \in H^2(Y, \mathbb{C})$ vanishes. Consequently,
		\[
		\int_{Y} c_1(g^*L)^{k} = 0.
		\]
		However, since $L$ is ample and $g$ is a generically finite map onto its image, this integral must be positive, which gives a contradiction.
	\end{proof}A partial converse to \cref{lem:asp} is suggested by the famous conjecture of Shafarevich.
	\begin{conjecture}[Shafarevich]\label{conj:Sha}
		Let $X$ be a smooth complex projective variety. If $X$ has a large fundamental group, then its universal covering $\widetilde{X}$ is Stein. 
	\end{conjecture}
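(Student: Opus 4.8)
The plan is to pursue the program of Eyssidieux and of Eyssidieux--Katzarkov--Pantev--Ramachandran, splitting \cref{conj:Sha} into a \emph{holomorphic convexity} assertion for $\widetilde{X}$ and a soft step that promotes holomorphic convexity to Steinness under the largeness hypothesis. Write $\pi_X\colon\widetilde{X}\to X$ for the universal covering and $\Gamma=\pi_1(X)$.

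First I would dispose of the soft part. By the Remmert reduction theorem, a holomorphically convex complex space admits a proper surjective map onto a Stein space whose fibers are exactly the maximal connected compact analytic subsets; in particular a holomorphically convex manifold with no compact analytic subset of positive dimension is biholomorphic to its Remmert reduction, hence Stein. But the absence of positive-dimensional compact analytic subsets of $\widetilde{X}$ follows at once from largeness by the argument already used in the proof of \cref{lem:asp}: if $Z'\subset\widetilde{X}$ were compact of positive dimension, then $\pi_X|_{Z'}$ maps $Z'$ finitely onto its image $Z\subset X$, while the composite $\pi_1(Z')\to\pi_1(\widetilde{X})=1$ is trivial, forcing $\operatorname{Im}\bigl[\pi_1(Z^{\mathrm{norm}})\to\Gamma\bigr]$ to be finite, contrary to largeness. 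Thus the whole content of the conjecture is to prove that $\widetilde{X}$ is holomorphically convex, which — following Eyssidieux — amounts to constructing a proper holomorphic map $\widetilde{X}\to\mathcal{S}$ onto a Stein space; note that largeness itself plays no role in this step, only in the upgrade just described.

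The construction of such a map is the heart of the matter, and I would carry it out by assembling the outputs of non-abelian Hodge theory over all places. Fix a number field $k$ over which the finite-dimensional representations of $\Gamma$ at hand are defined. For each reductive representation $\varrho\colon\Gamma\to\GL_N(\overline{k}_v)$ one obtains, at an archimedean place $v$, a $\varrho$-equivariant harmonic map to the associated symmetric space and, by Corlette--Simpson--Mochizuki non-abelian Hodge theory, a polarized complex variation of Hodge structure whose period map has a Stein factorization giving a reduction $\operatorname{red}_\varrho\colon X\to\operatorname{red}_\varrho(X)$; at a non-archimedean place $v$, the Gromov--Schoen theory — together with its quasi-projective extension discussed in \cref{sec:sha} — provides a $\varrho$-equivariant harmonic map to the Bruhat--Tits building of $\GL_N(\overline{k}_v)$, whose factorization detects the locus where the image of $\pi_1$ is $\varrho$-bounded. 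The non-reductive case is handled by combining the reductive analysis of all semisimplifications with Eyssidieux's $L^2$- and Albanese-type control of the solvable radical, à la \cite{EKPR12}. Taking the simultaneous ``intersection'' of all these reductions over a suitable finite family — essentially the Shafarevich map / $\Gamma$-reduction of Kollár and Campana, made holomorphic after a birational modification of $X$ (cf. \cite{Kol95}) — one obtains a morphism $\operatorname{sh}^{\mathrm{lin}}_X\colon X\to\operatorname{Sh}^{\mathrm{lin}}(X)$ whose fibers are precisely the subvarieties on which $\pi_1$ has finite image in the maximal linear quotient of $\Gamma$, together with a proper holomorphic map from $\widetilde{X}$ onto a complex space built out of period domains and Euclidean buildings; since the relevant factorizations of such spaces are Stein, this yields the holomorphic convexity of $\widetilde{X}$ \emph{whenever $\Gamma$ is linear} — the theorem of \cite{Eys04,EKPR12} revisited in \cref{sec:sha} — and hence, by the soft part, \cref{conj:Sha} under the additional assumption that $\Gamma$ is a linear group.

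The genuine obstacle — and the reason \cref{conj:Sha} is still open — is exactly the hypothesis that $\Gamma$ be linear, which the construction above cannot do without. A projective fundamental group need not be linear, nor even residually finite (Toledo; Catanese--Kollár), and the morphism produced above only sees the coverings $\widetilde{X}_\varrho$ attached to finite-dimensional representations $\varrho$: it factors through $\widetilde{X}\to\widetilde{X}_\varrho$ for every such $\varrho$, and when $\bigcap_\varrho\ker\varrho$ is infinite these are all infinite covers, so one learns nothing about $\widetilde{X}$ itself — an unramified cover of a Stein space need not remain holomorphically convex. Closing this gap would require a substitute for non-abelian Hodge theory capable of detecting the non-linear part of $\Gamma$: actions on more exotic targets (loop groups, $\operatorname{Diff}$, $\operatorname{Homeo}$), $L^2$-Betti number or $L^2$-cohomology input à la Gromov, or a direct construction of a strictly plurisubharmonic exhaustion of $\widetilde{X}$ from the large-group hypothesis alone. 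I do not know how to do this, and I expect it to be the main obstacle; the concrete outcome of this plan is therefore the \emph{linear} Shafarevich conjecture — the subject of the remainder of this survey — rather than \cref{conj:Sha} in full generality.
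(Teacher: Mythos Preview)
The statement is a \emph{conjecture}, and the paper does not contain a proof of it; it is explicitly presented as open, with only the linear case (\cref{thm:EKPR}) known. Your proposal correctly recognizes this and is really a survey of the Eyssidieux--EKPR strategy rather than a proof, ending with the honest admission that the outcome is the linear Shafarevich conjecture, not \cref{conj:Sha} in full. In that sense your discussion is broadly aligned with what the paper does in \cref{sec:sha}: the soft step (holomorphic convexity plus largeness implies Stein via Cartan--Remmert) is exactly right, and the outline of assembling archimedean and non-archimedean reductions is the correct picture.

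One technical slip is worth flagging: you write that ``an unramified cover of a Stein space need not remain holomorphically convex'', but in fact every (unramified) covering of a Stein space \emph{is} Stein. The genuine obstruction is rather that the intermediate covers $\widetilde{X}_\varrho$ produced by the linear theory are only known to be \emph{holomorphically convex}, not Stein, when $\varrho$ is not large; and holomorphic convexity, unlike Steinness, is not inherited by infinite \'etale covers. So the failure mode is not ``Stein doesn't pass upstairs'' but ``holomorphically convex doesn't pass upstairs''. Also, your description of the target of the map from $\widetilde{X}$ as ``built out of period domains and Euclidean buildings'' is somewhat loose: in the paper's treatment (following \cite{Eys04,DY23}) the map goes to $\sD\times S_{\rm Fac}(X)$, and the buildings enter only indirectly through the canonical currents and the Katzarkov--Eyssidieux reductions, not as factors of the target space.
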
 
	While this conjecture remains open in full generality, a great achievement was made   by Eyssidieux, Katzarkov, Pantev, Ramachandran  in \cite{EKPR12}. 
	\begin{thm}[\cite{EKPR12}]\label{thm:EKPR}
		Let $X$ be a smooth complex projective variety. If there exists a faithful representation $\varrho:\pi_1(X)\to \GL_{N}(\bC)$, then $\widetilde{X}$ is holomorphically convex. In particular, if additionally $X$ has large fundamental group, then $\widetilde{X}$ is Stein. 
	\end{thm}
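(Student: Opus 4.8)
The plan is to exhibit the \emph{Remmert reduction} of $\widetilde X$ explicitly and to prove that it is Stein; by the Cartan--Remmert theorem this yields holomorphic convexity. Since $\varrho$ is faithful, a subvariety $Z\subset X$ has finite image $\operatorname{Im}[\pi_1(Z^{\mathrm{norm}})\to\pi_1(X)]$ if and only if $\varrho$ restricted to $\pi_1(Z^{\mathrm{norm}})$ has finite image, and after conjugating $\varrho$ into the points of a finitely generated field this is detected by boundedness at every archimedean and non-archimedean place. Consequently the \emph{Shafarevich morphism} $\mathrm{sh}_\varrho\colon X\to\mathrm{Sh}_\varrho(X)$ attached to $\varrho$ — the proper fibration with connected fibres contracting exactly those $Z$ on which $\varrho$ is bounded everywhere — coincides with the Shafarevich morphism of the whole group $\pi_1(X)$. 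Thus the candidate Remmert reduction of $\widetilde X$ is $\widetilde X\to\widetilde{\mathrm{Sh}_\varrho(X)}$, and it suffices to construct this morphism and to equip $\widetilde{\mathrm{Sh}_\varrho(X)}$ with a strictly plurisubharmonic exhaustion. The last assertion of the theorem then follows at once: if $X$ has large fundamental group, no positive-dimensional $Z$ is contracted, so $\mathrm{sh}_\varrho$ is generically finite and $\widetilde X$ itself is Stein.

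To build $\mathrm{sh}_\varrho$ and the exhaustion I would first split $\varrho$ along the Zariski closure $G=\overline{\varrho(\pi_1(X))}$ with unipotent radical $R_u$, treating the \emph{reductive part} $\varrho_{\mathrm{red}}\colon\pi_1(X)\to(G/R_u)(\bC)$ and the \emph{nilpotent part} (the image in $\pi_1(X)$ of $\ker\varrho_{\mathrm{red}}$, a finitely generated torsion-free nilpotent group) separately. For the reductive part I would invoke non-abelian Hodge theory at every place: archimedean non-abelian Hodge theory (Corlette--Simpson) attaches to every semisimple representation a tame pure imaginary harmonic bundle, which after Simpson's deformation yields a polarized complex variation of Hodge structure with period map $p\colon X\to\Gamma\backslash\mathcal{D}$; at each non-archimedean completion, Gromov--Schoen theory — in the form of Katzarkov--Zuo and Eyssidieux — attaches to every unbounded representation a $\pi_1$-equivariant pluriharmonic map from $\widetilde X$ to the associated Bruhat--Tits building. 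Pluriharmonicity endows the fibres of these maps with holomorphic transverse structure, so descending them produces the reductive Shafarevich morphism; Eyssidieux's theorem that a period domain carries an exhaustion which is strictly plurisubharmonic along the horizontal directions, combined with the convex functions pulled back from the buildings, then furnishes a strictly plurisubharmonic exhaustion on the universal cover of the reductive Shafarevich variety, because the combined period/building map is generically finite there by construction.

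For the nilpotent part I would use the mixed Hodge structure on the prounipotent (Malcev) completion of $\pi_1(X)$ in the sense of Hain and Deligne: the resulting tower of Albanese-type maps to semi-abelian varieties, whose universal covers are affine spaces, equips the relevant covers with further plurisubharmonic functions. The whole construction is carried out \emph{relatively} over the reductive Shafarevich variety, so that the nilpotent contribution is glued to the exhaustion already produced; iterating along the induced stratification yields a strictly plurisubharmonic exhaustion of $\widetilde{\mathrm{Sh}_\varrho(X)}$, which completes the argument.

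The main obstacle is precisely this assembly. One must fit the archimedean period map, the infinitely many non-archimedean harmonic maps, and the nilpotent contribution into a \emph{single} algebraic morphism $\mathrm{sh}_\varrho$ and a \emph{single} proper map to a Stein space, which forces one to prove that the locus of representations entering the construction is an \emph{absolutely constructible} subset of the character variety $M_{\mathrm B}(\pi_1(X),\GL_N)$ — so that the harmonic maps and their fibrations vary algebraically in families and are compatible with the Galois action relating the archimedean and $p$-adic pictures — and to carry out a delicate reduction-theoretic analysis of the fibres over which every reductive representation degenerates. This is the technical heart of \cite{EKPR12}, where Eyssidieux's systematic theory of the Shafarevich morphism and the simultaneous uniformisation of families of harmonic maps are indispensable.
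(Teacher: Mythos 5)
Your skeleton for the reductive part is essentially the one the paper outlines: detect finiteness of $\varrho(\operatorname{Im}[\pi_1(Z)\to\pi_1(X)])$ by boundedness at all places after descending to a number field, contract via the Katzarkov--Eyssidieux reductions at the non-archimedean places together with a period map at the archimedean ones (this is exactly the combination $\Phi=(p,s_{\rm fac}\circ\pi_X)$ built from \cref{prop:cons} and \cref{thm:KE}), pass to the fibration by Cartan's generalized Stein factorization, and produce a strictly plurisubharmonic exhaustion from the Hodge-metric potential of \cref{lem:potential3} plus the convex potentials pulled back from the buildings, concluding with \cref{prop:stein}. The role you assign to absolute constructibility and to the degeneration analysis on fibres is also the right one.

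The genuine gap is in your treatment of the non-reductive part, which is precisely what separates \cite{EKPR12} from the reductive case \cite{Eys04}. Two problems. First, $\ker\varrho_{\mathrm{red}}=\varrho^{-1}(R_u(G)(\bC))$ is a normal subgroup of a finitely generated group and is in general \emph{not} finitely generated, and it is not the fundamental group of any variety fibred over the reductive Shafarevich variety; so there is no ``tower of Albanese-type maps to semi-abelian varieties'' attached to it. Second, the mixed Hodge structure on the \emph{absolute} Malcev completion of $\pi_1(X)$ (Hain--Deligne) only sees $H_1(X)$ and its Massey products; it is blind to a unipotent kernel contained in the commutator subgroup, and more fundamentally it does not control the extension class between $\varrho_{\mathrm{red}}$ and the unipotent kernel, which is where the whole difficulty lies. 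What the proof actually requires is the Eyssidieux--Simpson construction \cite{ES11} of a variation of \emph{mixed} Hodge structure on the formal local ring of $R(\pi_1(X),\GL_N)$ at a point underlying a $\bC$-VHS (equivalently, Hain's \emph{relative} Malcev completion with respect to $\varrho_{\mathrm{red}}$): this produces new ``tautological'' linear representations and associated mixed period maps which separate points in the unipotent directions, and whose plurisubharmonic potentials are what actually get glued, stratum by stratum, to the exhaustion coming from the reductive part. Without this input your relative gluing step cannot be carried out, so as written the argument does not close in the non-reductive case.
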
  We will return to \cref{conj:Sha,thm:EKPR} later.
	
	A stronger converse to \cref{lem:asp} would ask if a large fundamental group implies asphericity. This is related to a question posed by Koll\'ar: is the fundamental group of a projective variety commensurable (up to finite kernels) with a group $G'$ that admits a quasi-projective $K(G', 1)$?
	
	However, Dimca, Papadima, and Suciu provided a negative answer in \cite{DPS09}. They constructed a smooth projective variety $X$ whose universal covering is Stein (satisfying the condition of a large fundamental group), yet $\pi_1(X)$ is not commensurable to any group admitting a quasi-projective $K(\pi,1)$. In particular, although $X$ has a large fundamental group, its universal cover is not contractible, meaning $X$ is not aspherical. This demonstrates that the asphericity of projective varieties is a strictly stronger condition than having a large fundamental group.

	Motivated by this viewpoint, Arapura and Wang 
	\cite[Conjecture~1.2]{AW25} proposed the following conjecture.
	\begin{conjecture}[Arapura-Wang]\label{conj:AW}
		Let $X$ be a   complex projective $n$-fold with large fundamental group. 
		Then for any perverse sheaf $\cP$ on $X$, one has
		\[
		\chi(X,\cP)\geq 0. 
		\] In particular, $(-1)^n\chi(X)\geq 0$. 
	\end{conjecture}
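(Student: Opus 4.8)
\emph{A strategy for the linear case.} The plan is to establish \cref{conj:AW} under the additional hypothesis that $\pi_1(X)$ is linear, i.e.\ admits a faithful representation into $\GL_N(K)$ for some field $K$; by a specialization argument (and, in positive characteristic, the non-archimedean Shafarevich theory of \cref{sec:sha}) one may take $K=\bC$, so fix a faithful $\varrho\colon\pi_1(X)\to\GL_N(\bC)$. By \cref{thm:EKPR} the universal cover $\widetilde X$ is holomorphically convex, and since $\pi_1(X)$ is large it is in fact a Stein manifold. Let $\pi\colon\widetilde X\to X$ denote the covering, $\Gamma=\pi_1(X)$, and equip $\widetilde X$ with the (complete, bounded-geometry) pullback of a K\"ahler metric on $X$; then for any constructible complex $\cF$ on $X$ one has the reduced $L^2$-cohomology $H^\bullet_{(2)}(\widetilde X,\pi^*\cF)$, a complex of finitely generated Hilbert $\Gamma$-modules with von Neumann dimensions $\dim_\Gamma H^i_{(2)}\in[0,\infty)$. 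The argument rests on two pillars: an \emph{$L^2$-index theorem}, that $\chi(X,\cP)=\sum_i(-1)^i\dim_\Gamma H^i_{(2)}(\widetilde X,\pi^*\cP)$ for every perverse sheaf $\cP$ on $X$, and an \emph{$L^2$-concentration theorem}, that this complex is exact outside degree $0$. Granting both, $\chi(X,\cP)=\dim_\Gamma H^0_{(2)}(\widetilde X,\pi^*\cP)\ge 0$, which is the desired inequality; applying it to $\cP=\bC_X[n]$ yields $(-1)^n\chi(X)\ge 0$, i.e.\ the Chern--Hopf--Thurston inequality for smooth projective varieties with large \emph{linear} fundamental group.

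For the index theorem I would proceed by d\'evissage. Both sides are additive over distinguished triangles, and $\dim_\Gamma$ is additive over short exact sequences of Hilbert $\Gamma$-modules, so by the decomposition theorem and induction on the strata of a $\Gamma$-invariant Whitney stratification adapted to $\cP$ one reduces to the case of a local system on a single locally closed smooth stratum---where the claim follows from Atiyah's $L^2$-index theorem together with the compatibility of $\dim_\Gamma$ with $j_{!}$, $j_{*}$ and excision. For the concentration theorem the model to imitate is the Artin--Beilinson vanishing on affine varieties: if $U$ is affine then $H^i(U,\cP)=0$ for $i>0$ and $H^i_c(U,\cP)=0$ for $i<0$. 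The complex-analytic substitute is stratified Morse theory in the sense of Goresky--MacPherson applied to a smooth strictly plurisubharmonic exhaustion $\varphi$ of the Stein manifold $\widetilde X$: at each stratified critical point of $\varphi$ the real Hessian is positive on an $n$-dimensional subspace, so the attaching data of $\pi^*\cP$ has ``Morse index $\le n$'' and $H^i_{(2)}(\widetilde X,\pi^*\cP)=0$ for $i>0$; applying the same reasoning to the (again perverse) Verdier dual $\pi^*\mathbb{D}\cP=\mathbb{D}(\pi^*\cP)$ and invoking $L^2$ Poincar\'e--Verdier duality $H^i_{(2)}(\widetilde X,\pi^*\cP)\cong H^{-i}_{(2)}(\widetilde X,\pi^*\mathbb{D}\cP)^{\vee}$ disposes of the negative degrees.

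The crux---and the step I expect to be hardest---is upgrading this Morse-theoretic heuristic to a genuine statement about \emph{reduced $L^2$-cohomology} on the noncompact $\widetilde X$. Stratified Morse theory controls ordinary and compactly supported cohomology, but in the $L^2$ category one must exhibit a uniform spectral gap for the relevant Laplacians in all degrees $\ne 0$, equivalently an $L^2$ handle decomposition of $\widetilde X$ compatible with $\pi^*\cP$. This is precisely a perverse-sheaf enhancement of Gromov's K\"ahler-hyperbolic $L^2$-Hodge theory---and of the Saper--Stern/Looijenga identification of $L^2$-cohomology with intersection cohomology---carried out $\Gamma$-equivariantly on the cover; the bounded geometry inherited from compactness of $X$ is what makes the required estimates uniform, but assembling them along a singular stratification is the genuine analytic difficulty. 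The remaining points---setting up the $L^2$-index theorem for constructible complexes with due care at non-closed strata, and, in positive characteristic, combining the archimedean and non-archimedean holomorphic-convexity inputs of \cref{sec:sha}---are substantial but, by contrast, essentially formal once the framework is in place.
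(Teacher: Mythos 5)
Your strategy is genuinely different from the one the paper actually uses for the linear case (\cref{thm:CHT}): there, the proof never passes through holomorphic convexity or $L^2$-cohomology at all. Instead it uses the index formula $\chi(X,\cP)=CC(\cP)\cdot T_X^*X$ together with the fact that the multiplicities in $CC(\cP)$ are non-negative for perverse $\cP$, reducing everything to the intersection-theoretic inequality $T_Z^*X\cdot T_X^*X\geq 0$ for every closed $Z\subset X$. That inequality is then proved by deforming conormal cycles against the \emph{multivalued $1$-forms} $\eta_\tau$ produced by harmonic maps to Bruhat--Tits buildings (via the operators $\Phi_{\eta}$ and the iteration \eqref{eq:iteration}), with the residual subvarieties landing in fibers of $s_{\rm fac}$ handled by the $\bC$-VHS of \cref{prop:cons} and the Demailly--Peternell--Schneider semipositivity theorem. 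So the comparison is not between two proofs of the same lemma but between two entirely different architectures.

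The problem is that your architecture has a genuine gap exactly where you locate it, and the gap is not a technicality one can expect to close. The ``$L^2$-concentration theorem'' --- vanishing of $H^i_{(2)}(\widetilde X,\pi^*\cP)$ for $i\neq 0$ --- does not follow from Steinness of $\widetilde X$. Every known vanishing theorem of this type (Gromov's K\"ahler hyperbolicity, Cao--Xavier, Jost--Zuo, and the paper's own \cref{item:vanishing}) requires a \emph{quantitative} positivity input: a K\"ahler-type current on $X$ whose pullback to $\widetilde X$ admits a primitive of bounded or sublinear growth, which is precisely what the canonical currents $T_{\tau_i}$ and the forms $\beta_\tau$, $\beta_{\cL}$ of \cref{lem:potential,lem:potential3} supply. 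A strictly plurisubharmonic exhaustion of $\widetilde X$ carries no such uniform bound, and stratified Morse theory controls ordinary and compactly supported cohomology, not reduced $L^2$-cohomology, where the long exact sequences you need for the d\'evissage also fail. Tellingly, even with the full harmonic-map machinery the paper only obtains the \emph{Dolbeault} vanishings $H^{p,0}_{(2)}(\widetilde X)=0$ ($p<n$) and $H^{n,q}_{(2)}(\widetilde X)=0$ ($q>0$), which suffice for $\chi(X,K_X)\geq 0$ but not for the de Rham concentration needed to compute $\chi(X)$, let alone $\chi(X,\cP)$ for an arbitrary perverse sheaf; this is exactly why the authors switch to the characteristic-cycle method for the Chern--Hopf--Thurston statement. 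As written, your proposal is a program whose central analytic ingredient is unproved and quite possibly stronger than the theorem it is meant to establish.
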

	Observe that the shifted constant sheaf $\underline{\mathbb{C}}[n]$ is a perverse sheaf. Consequently, we have
	$$
	\chi(X, \underline{\mathbb{C}}[n]) = (-1)^n \chi(X).
	$$Therefore, \cref{conj:AW} is broader than \cref{conj:CHT} in another direction. In \cite{AW25}, Arapura and Wang proved their conjecture assuming the existence of a faithful and cohomologically rigid representation $\pi_1(X)\to \GL_N(\mathbb{C})$. In the author's joint work with Wang \cite{DW24}, based on the strategy in \cite{AW25}, non-abelian Hodge theories and techniques in \cite{Eys04,EKPR12,DY23}. 
	\begin{thm}[\cite{DW24}]\label{thm:CHT}
		Let $X$ be a smooth complex projective variety. If there exists a  large representation  $\varrho:\pi_1(X)\to \GL_N(K)$, where $K$ is any field,  then
		$$
		\chi(X,\cP)\geq 0
		$$
		for any perverse sheaf on $X$. 
	\end{thm}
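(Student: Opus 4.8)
The plan is to execute the strategy of Arapura--Wang \cite{AW25}, feeding it, in place of their faithful cohomologically rigid complex representation, the structures furnished by the linear Shafarevich machinery of \cite{Eys04,EKPR12,DY23}, so as to accommodate an arbitrary coefficient field $K$ and an arbitrary large representation. Two reductions are essentially free. Since $\chi(X,-)$ is additive along the Jordan--H\"older filtration of $\cP$, we may take $\cP$ simple; and if $\cP$ is supported on a closed subvariety $\overline Z\subsetneq X$, then, because the restriction of a large representation to any positive-dimensional subvariety is again large, an induction on the dimension of the support reduces us to $\cP=\mathrm{IC}_X(L)$ with full support, where $L$ is an \emph{arbitrary} irreducible local system on a dense Zariski-open subset of $X$; the representation $\varrho$ enters only through the geometry it forces on $X$. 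Next, write $G$ for the Zariski closure of $\varrho$: if the semisimplification $\varrho^{\mathrm{ss}}$ is still large we replace $\varrho$ by it, and otherwise the $\varrho^{\mathrm{ss}}$-Shafarevich morphism has positive-dimensional fibres on which $\varrho$ has infinite but unipotent monodromy, hence a non-trivial Albanese, and one propagates the non-negativity of $\chi$ through this fibration by combining the Franecki--Kapranov theorem on abelian varieties with the recollement long exact sequences, inducting on $\dim X$ --- the ``solvable'' mechanism of \cite{Eys04,EKPR12}. Thus we may assume $\varrho$ semisimple and large.

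Under these assumptions the Shafarevich morphism degenerates to an isomorphism. Replacing $K$ by a finitely generated subfield and embedding it into $\bC$ and into the non-archimedean completions at its places, non-abelian Hodge theory --- archimedean (Corlette, Simpson, Mochizuki) and non-archimedean (Gromov--Schoen, Katzarkov--Zuo--Eyssidieux, and the quasi-projective extension of \cite{DY23}) --- produces, place by place, a pluriharmonic map to a period domain or to a Bruhat--Tits building, and the resulting foliations assemble into the $\varrho$-Shafarevich morphism $\mathrm{sh}_\varrho\colon X\to\mathrm{Sh}_\varrho(X)$ onto a normal projective variety, whose fibres are the maximal connected subvarieties on which $\varrho$ has finite monodromy. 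Largeness makes these fibres points, so $\mathrm{sh}_\varrho$ is finite with connected fibres and hence --- $X$ being smooth, the target normal --- an isomorphism. Therefore $X$ itself carries the combined data of the polarised $\bC$-variations of Hodge structure attached to $\varrho$ and its field conjugates via Simpson's $\bC^\times$-action on the Betti moduli space, together with the equivariant harmonic maps to buildings; in particular $X$ is of general type by \cite{DY23}, and the combined developing map $\widetilde X\to \prod_i \cD_i\times\prod_j \Delta_j$ to a product of period domains and buildings has discrete fibres, hence is generically immersive.

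It remains to show $\chi\bigl(X,\mathrm{IC}_X(L)\bigr)\ge 0$ for arbitrary irreducible $L$, and here one follows and extends \cite{AW25}. By the Kashiwara--Dubson index formula, $\chi\bigl(X,\mathrm{IC}_X(L)\bigr)$ equals, up to a universal sign, the intersection number of the characteristic cycle of $\mathrm{IC}_X(L)$ --- an effective Lagrangian cycle in $T^*X$ --- with the zero section; equivalently, twisting $L$ by a generic rank-one local system (which changes neither $\chi$ nor perversity), one must show the hypercohomology of the generic twist is concentrated in a single degree. The needed positivity is supplied by the combined developing map: the period domains $\cD_i$ carry metrics of negative holomorphic sectional curvature along the horizontal directions, and the buildings $\Delta_j$ are $\mathrm{CAT}(0)$, so the combined target is negatively curved along the image of $T^*X$, which --- through the spectral-cover and semipositivity techniques of \cite{Eys04,EKPR12} and the Higgs-bundle input of non-abelian Hodge theory --- forces the cohomology-support loci of the twisted cohomology to attain their expected codimension, exactly as the triviality of $\Omega^1_A$ does on an abelian variety $A$ in the Franecki--Kapranov theorem. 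This gives $\chi\ge 0$, hence \cref{conj:AW} in the stated generality, since $\underline{\bC}[\dim X]$ is perverse.

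The decisive obstacle is this last step \emph{without} a rigidity hypothesis. Arapura--Wang can invoke a genuine $\bZ$-variation of Hodge structure of geometric origin attached to $\varrho$ itself, whereas here the only Hodge-theoretic objects available are the \emph{Simpson-limit} $\bC$-variations of $\varrho$ and of its conjugates --- none of which need individually be large --- together with the harmonic maps to Bruhat--Tits buildings, so one must show that their \emph{combined} positivity, packaged into the single morphism $\mathrm{sh}_\varrho$, still suffices to run the generic-vanishing argument for an arbitrary perverse sheaf. A second, more structural difficulty is to make the non-archimedean Gromov--Schoen harmonic-map theory, and with it the construction of $\mathrm{sh}_\varrho$, available over an arbitrary field $K$ --- positive characteristic included, and at the infinitely many finite places of a number field --- which is exactly what the quasi-projective Gromov--Schoen theory of \cite{DY23} provides, and which is what upgrades \cite{AW25} from $K=\bC$ with cohomologically rigid $\varrho$ to the full statement of \cref{thm:CHT}.
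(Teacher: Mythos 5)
Your overall frame is right---the Kashiwara--Dubson index formula $\chi(X,\cP)=CC(\cP)\cdot T_X^*X$, effectivity of characteristic cycles reducing everything to $T_Z^*X\cdot T_X^*X\ge 0$, and the input of archimedean and non-archimedean non-abelian Hodge theory through the Shafarevich machinery---but the proposal is missing the actual mechanism that makes the positivity work, and you in effect concede this in your final paragraph. The paper does \emph{not} run a generic-vanishing argument (``twist $L$ by a generic rank-one local system and show the support loci attain expected codimension''); indeed the paper explicitly notes, in the discussion of Koll\'ar's conjecture, that no generic vanishing theorem is available for general varieties with large or big fundamental group. Your assertion that the negative curvature of the period domains and the CAT$(0)$ geometry of the buildings ``forces the cohomology-support loci of the twisted cohomology to attain their expected codimension'' is precisely the content of the theorem restated, with no argument supplied. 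What the paper actually does is purely intersection-theoretic: to each unbounded non-archimedean representation $\tau$ it attaches a \emph{multivalued} holomorphic $1$-form $\eta_\tau$ (the spectral form of the equivariant pluriharmonic map to the building), and to a $d$-valued form $\eta$ a deformation operation $\Phi_\eta$ on conic Lagrangian cycles satisfying $(d-n_0)\,T_Z^*X\cdot T_X^*X=\sum_i n_i\,T_{Z_i}^*X\cdot T_X^*X$ with $Z_i\subsetneq Z$ whenever $\eta|_Z\not\equiv 0$; largeness plus Theorem~\ref{thm:KE} guarantees a nontrivial $\eta_\tau|_Z$ as long as $Z$ is not contracted, so iterating strictly decreases dimension. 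In characteristic $p$ the iteration terminates at points and one is done; over $\bC$ it terminates at subvarieties contained in fibers of $s_{\rm fac}$, where Proposition~\ref{prop:cons} makes $\varrho$ a direct factor of a $\bC$-VHS with discrete monodromy and finite period map, so that the cotangent bundle is ``almost nef'' and the Demailly--Peternell--Schneider theorem yields $T_{Z_i}^*X\cdot T_X^*X\ge 0$. None of this appears in your write-up.

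A second gap is the reduction to the semisimple case. Your proposed ``solvable mechanism''---infinite unipotent monodromy on Shafarevich fibers gives a nontrivial Albanese, then propagate $\chi\ge 0$ through the fibration by Franecki--Kapranov and recollement---is not justified: the Euler characteristic of a perverse sheaf does not decompose along a fibration in a way that lets you quote a theorem on abelian varieties fiberwise, and you give no argument that it does. In positive characteristic the reduction is in fact easy for a different reason (Lemma~\ref{lem:finite group}: a representation whose semisimplification has finite image has finite image, so largeness passes to $\varrho^{\rm ss}$); over $\bC$ the non-reductive case is genuinely the hard one and in the paper it requires the tautological variations of mixed Hodge structure of Eyssidieux--Simpson and the arguments of \cite{EKPR12}, not an Albanese d\'evissage.
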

	In particular, we proved \cref{conj:AW} when $\pi_1(X)$ is {linear}, i.e., there exists a faithful representation $\pi_1(X)\to \GL_{N}(K)$. Here the definition of  {large representation} is analoguous to \cref{def:large}. The representation $\varrho:\pi_1(X)\to \GL_N(\mathbb{K})$, is called \emph{large} if, for every closed irreducible positive-dimensional subvariety $Z \subset X$, the image
	$$
	\varrho(\mathrm{Im} [\pi_1(Z^{\mathrm{norm}}) \longrightarrow \pi_1(X)])
	$$
	is an {infinite group}.
	We shall discuss the proof of \cref{thm:CHT} in the subsequent sections. 
	\subsubsection{Big fundamental groups}
	Note that a variety with  large fundamental group must be \emph{minimal}; specifically, it cannot contain any rational curves or $\mathbb{A}^1$-curves. Consequently, the blow-up of an algebraic variety never has large fundamental group. To address this limitation, Koll\'ar introduced a more natural and birationally invariant notion:
	
	\begin{dfn}[Big fundamental group]\label{def:big}
		A complex quasi-projective normal variety $X$ is said to have a \emph{big fundamental group} if, for every closed irreducible positive-dimensional subvariety $Z \subset X$ passing through a very general point, the image
		\[
		\mathrm{Im}\bigl[\pi_1(Z^{\mathrm{norm}}) \longrightarrow \pi_1(X)\bigr]
		\]
		is infinite.
	\end{dfn}
	
	This condition is birationally invariant: if $f : X \to Y$ is a proper birational morphism between quasi-projective normal varieties, then $X$ has a big fundamental group if and only if $Y$ does. In \cite{Cam94}, varieties with big fundamental groups are also referred to as being of \emph{$\pi_1$-general type}.
	
	We shall now provide some examples of varieties with large or big fundamental groups.
	\begin{example}\label{exa:big}
		Let $X$ be a complex quasi-projective normal variety. Then:
		\begin{itemize}
			\item If $X$ is a quasi-projective curve different from $\mathbb{C}$ and $\mathbb{P}^1$, then it has large fundamental group.
			
			\item If the universal cover of $X$ is contractible, or a bounded symmetric domain, or more generally a Stein manifold, then $X$ has large fundamental group.
			
			\item By the above result, if $X$ is an abelian variety, then it has large fundamental group.
			
			\item If $f : X \to Y$ is an \'etale morphism, then $X$ has a large (resp. big) fundamental group if and only if $Y$ does.

			\item Let $f: X \to Y$ be a morphism to another quasi-projective normal variety $Y$.  
			If $Y$ has large (resp. big) fundamental group and if $f$ is quasi-finite (resp.  generically finite onto its image), then $X$ has large (resp. big) fundamental group.
			
			\item As a consequence, if $X$ has maximal quasi-Albanese dimension, i.e., the quasi-Albanese map 
			\[
			X \longrightarrow {\rm Alb}(X)
			\]
			is generically finite onto its image, then $X$ has big fundamental group.
			
			\item Suppose that $X$ carries a $\mathbb{Z}$-VHS.  
			If the period map 
			\[
			p : X \longrightarrow \sD/\Gamma,
			\]
			where $\sD$ is the associated period domain and $\Gamma$ is the monodromy group, is quasi-finite (resp.\ generically finite onto its image), then $X$ has large (resp.\ big) fundamental group.
		\end{itemize}
	\end{example}  
	Note that \cref{conj:AW} concerns the topological aspect of varieties with large 
	fundamental groups. In general, the statement fails if we replace the condition 
	“large fundamental group’’ by “big fundamental group”, since the topological 
	Euler characteristic is not a birational invariant.  
	However, one basic observation is that if 
	$f:X\to Y$ is a birational morphism between two smooth projective varieties, then
	\[
	\chi(X,K_X)=\chi(Y,K_Y),
	\]
	where $K_X$ and $K_Y$ denote the canonical bundles of $X$ and $Y$.  
	Recall that
	\[
	\chi(X,K_X)=\sum_{i=0}^{\dim X}(-1)^i \dim H^i(X,K_X).
	\]
	Motivated by this observation, Koll\'ar proposed in \cite{Kol95} the following conjecture, 
	which may be viewed as a sheaf-theoretic analogue of \cref{conj:CHT}.
	
	\begin{conjecture}[Koll\'ar]\label{conj:kollar}
		Let $X$ be a smooth projective variety.  
		If $X$ has big fundamental group, then $\chi(X,K_X)\geq 0$.  
	\end{conjecture}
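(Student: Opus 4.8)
The plan is to feed the big fundamental group into the Shafarevich‑type machinery behind \cref{thm:EKPR,thm:CHT} to obtain positivity of $K_X$, and then to convert that positivity into control of $H^{\bullet}(X,\omega_X)$. First I would record the consequences of \cref{def:big}: a rational curve through a very general point has trivial image in $\pi_1(X)$, so $X$ is not uniruled; hence $K_X$ is pseudo‑effective by Boucksom--Demailly--P\u{a}un--Peternell, and bigness of $\pi_1$ is preserved by the birational operations of the minimal model program. Since $\chi(X,K_X)$ is a birational invariant among smooth projective models (recalled before \cref{conj:kollar}) and since $\chi(Y,\omega_Y)=\chi(X,\omega_X)$ whenever $X\to Y$ resolves a normal projective $Y$ with at worst canonical singularities (Grauert--Riemenschneider plus Leray), I may pass freely to a minimal model — or, in dimensions where abundance is known, to one with $\kappa(X)\ge 0$ and $K_X$ semiample. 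For $\dim X=2$ this already finishes the argument: a non‑uniruled surface has $\kappa\ge 0$, so $\chi(\cO_X)\ge 0$ by the Enriques--Kodaira classification, and $\chi(X,K_X)=\chi(\cO_X)$ by Serre duality.

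\emph{The linear case.} Assume next that $\pi_1(X)$ admits a \emph{big} representation $\varrho\colon\pi_1(X)\to\GL_N(K)$ over a field $K$. Running the non‑abelian Hodge theory of Corlette--Simpson--Mochizuki at the archimedean place together with the harmonic theory into Bruhat--Tits buildings of Gromov--Schoen, Katzarkov, Zuo and Eyssidieux at the others, one constructs the $\varrho$‑Shafarevich morphism $\mathrm{sh}_\varrho\colon X\to S_\varrho$ onto a normal projective variety, contracting exactly the positive‑dimensional subvarieties through a very general point with finite $\varrho$‑image; bigness of $\varrho$ forces $\mathrm{sh}_\varrho$ to be generically finite onto its image. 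The associated system of harmonic/Higgs bundles then endows $X$ with a generically finite ``period‑type'' map into a product of symmetric spaces and buildings, from which — exactly as in the general‑type statements for varieties carrying a big semisimple local system discussed in \cref{sec:hyper} — one deduces that $K_X$ is big; by BCHM $X$ then has a minimal (indeed canonical) model on which $K_X$ is nef and big. I would conclude $\chi(X,\omega_X)\ge 0$ as follows: Kawamata--Viehweg--Nadel vanishing on the canonical model kills the higher cohomology of $\omega$ twisted by the multiplier ideal attached to $\mathrm{sh}_\varrho$, while the decomposition theorem together with Koll\'ar's torsion‑freeness and vanishing for the higher direct images $R^{j}\mathrm{sh}_{\varrho*}\omega_X$ expresses the correction terms through perverse sheaves on $S_\varrho$, to which \cref{thm:CHT} applies since $S_\varrho$ inherits a big representation from $\varrho$.

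\emph{The main obstacle.} The step I expect to be genuinely hard — and the reason \cref{conj:kollar} is still open — is removing linearity: non‑abelian Hodge theory, harmonic maps to buildings, and the very construction of $\mathrm{sh}_\varrho$ all require a big \emph{linear} representation, and there is at present no mechanism to manufacture positivity of $K_X$ from a big fundamental group that is not residually linear. A realistic intermediate target is therefore \cref{conj:kollar} under the hypothesis that $\pi_1(X)$ carries a big representation into $\GL_N(K)$ for some field $K$ — strictly parallel to \cref{thm:CHT} — after which one would attack the general case by structure theory: Campana's orbifold decompositions, Delzant--Gromov type splittings, and the expected description of universal covers of projective varieties, aiming to reduce an arbitrary big $\pi_1(X)$ to linear building blocks on fibrations and étale covers along which $\chi(\cdot,\omega)$ is tractable. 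A secondary but real difficulty, present even in the linear case, is upgrading ``$K_X$ is big'' to the precise vanishing of $H^{i}(X,\omega_X)$ for $i>0$ that one ultimately needs without invoking abundance; this is exactly where the $L^2$‑cohomology and Hodge‑module refinements developed around the Shafarevich conjecture would have to be pushed the furthest.
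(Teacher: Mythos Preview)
Your plan for the linear case contains a genuine gap. You claim that a big representation $\varrho:\pi_1(X)\to\GL_N(K)$ forces $K_X$ to be big via the hyperbolicity results of \cref{sec:hyper}, but those results (\cref{main:hyper}, \cref{thm:GGL}) require the Zariski closure of the image to be \emph{semisimple}, not merely that $\varrho$ be big. An abelian variety $A$ has large (hence big) fundamental group and plenty of faithful linear representations, yet $K_A=\cO_A$ is trivial; here $\chi(A,K_A)=0$, and your argument would not even get off the ground. More generally, the ``period-type map'' you invoke degenerates whenever the relevant piece of the representation factors through a torus, so the implication ``big linear representation $\Rightarrow$ $K_X$ big'' is simply false, and with it collapse your Kawamata--Viehweg and minimal-model steps. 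Your proposal to route the remaining cohomology through \cref{thm:CHT} is also off target: that theorem controls Euler characteristics of \emph{perverse} (constructible) sheaves, whereas $\chi(X,\omega_X)$ is a coherent invariant, and Koll\'ar's decomposition of $Rf_*\omega_X$ lives in the coherent world, not the constructible one.

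The paper's approach to the linear case (\cref{main:kollar}) avoids this obstruction entirely: it never tries to show that $K_X$ is big. Instead it proves an $L^2$-vanishing theorem on the universal cover, $H^{n,q}_{(2)}(\widetilde{X})=0$ for $q>0$, and then applies Atiyah's $L^2$-index theorem to obtain $\chi(X,K_X)=\dim_\Gamma H^{n,0}_{(2)}(\widetilde{X})\geq 0$. The vanishing is obtained by assembling the archimedean and non-archimedean harmonic data into a closed positive $(1,1)$-current on $X$ that is smooth and strictly positive on a nonempty open set, together with a $1$-form $\beta$ of sublinear growth on $\widetilde{X}$ whose differential dominates the pullback of this current; a Stokes-type argument then kills every $L^2$ holomorphic $(p,0)$-form with $p<n$. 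This works uniformly whether $K_X$ is big, trivial (as for abelian varieties), or anywhere in between. Bigness of $K_X$ appears in the paper only as a \emph{consequence} when the strict inequality $\chi(X,K_X)>0$ holds, never as an input.
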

	
	Notice that \cref{conj:kollar} holds when $X$ has maximal Albanese dimension, 
	thanks to the generic vanishing theorem of Green--Lazarsfeld \cite{GL87}.  
	Indeed, in this case, \cite{GL87} implies that for a generic topologically trivial 
	line bundle $L\in \mathrm{Pic}^0(X)$, 
	\[
	H^i(X,K_X\otimes L)=0 \qquad \text{for all } i\geq 1.
	\]
	Hence,
	\[
	\chi(X,K_X)
	= \chi(X,K_X\otimes L)
	= \sum_{i=0}^{\dim X} (-1)^i \dim H^i(X,K_X\otimes L)
	= \dim H^0(X,K_X\otimes L)
	\geq 0.
	\] 
	This proof inspired us that some vanishing theorem for cohomology will be useful to prove \cref{conj:kollar}. However, it seems that we do not have the generic vanishing theorem for general varieties with big fundamental group. In \cite{DW24b}, for the case where $\pi_1(X)$ is linear, Wang and the author established a vanishing theorem for the $ {L^2}$-Dolbeault cohomology of $\widetilde{X}$, analogous to the Green--Lazarsfeld theorem.

	\begin{thm}[\cite{DW24b}]\label{main:kollar}
		If there exists a big representation $\varrho:\pi_1(X)\to\GL_N(\bC)$, then 
		\begin{thmlist} 
			\item  \label{item:vanishing} $H^{p,0}_{(2)}(\widetilde{X})=0$   for  $0\leq p\leq n-1$ and $H^{n,q}_{(2)}(\widetilde{X})=0$ for  $1\leq q\leq n$. 
			\item \label{item:Euler}The Euler characteristic $\chi(X, K_X)\geq 0$.
			\item  \label{item:strict} If the strict inequality $\chi(X,K_X)>0$ holds, then 
			\begin{enumerate}[label*=(\alph*)]
				\item \label{item:converse2}there exists  {a nontrivial} $L^2$-holomorphic $n$-form on $\widetilde{X}$;
				\item  \label{item:converse} $X$ is of general type. 
			\end{enumerate}  
		\end{thmlist} 
	\end{thm}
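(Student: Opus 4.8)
The overall plan is to regard \cref{item:vanishing} as the crux and to derive \cref{item:Euler,item:strict} from it by soft arguments. Granting the $L^2$-vanishing, apply the Atiyah $L^2$-index theorem to the elliptic complex computing $H^\bullet(X,K_X)$, namely the $\bar\partial$-complex with coefficients in $K_X$ lifted to $\widetilde X$; it gives
\[
\chi(X,K_X)=\sum_{q=0}^{n}(-1)^q\dim_\Gamma H^{n,q}_{(2)}(\widetilde X),\qquad\Gamma=\pi_1(X),
\]
with $\dim_\Gamma$ the von Neumann dimension. By \cref{item:vanishing} every term with $q\ge 1$ vanishes, so $\chi(X,K_X)=\dim_\Gamma H^{n,0}_{(2)}(\widetilde X)\ge 0$, which is \cref{item:Euler}. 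If in addition $\chi(X,K_X)>0$ then $\dim_\Gamma H^{n,0}_{(2)}(\widetilde X)>0$, so the space $H^{n,0}_{(2)}(\widetilde X)$ of $L^2$-holomorphic $n$-forms on $\widetilde X$ is nonzero; this is \cref{item:converse2}. For \cref{item:converse}, feed a $\Gamma$-invariant space of $L^2$-holomorphic $n$-forms of positive von Neumann dimension into a Poincar\'e-series construction, using in addition the positivity on $\widetilde X$ produced in the proof of \cref{item:vanishing} together with an Ohsawa--Takegoshi-type $L^2$-extension, to obtain enough sections of $mK_X$ that $h^0(X,mK_X)$ grows like $m^n$ for large $m$; hence $K_X$ is big. (This last step uses bigness of $\varrho$ once more, beyond its role in \cref{item:vanishing}.)

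It remains to prove \cref{item:vanishing}, and the strategy is to convert the big representation $\varrho$ into enough positivity on $\widetilde X$ to run a Gromov-type $L^2$-vanishing argument. The plan is: using non-abelian Hodge theory in the archimedean setting (Corlette--Simpson--Mochizuki) and in the non-archimedean setting (Gromov--Schoen and its extensions), together with the reduction-map and linear Shafarevich constructions surveyed in \cref{sec:sha}, construct a $\Gamma$-equivariant holomorphic map from $\widetilde X$ to a product of symmetric spaces of non-compact type, Bruhat--Tits buildings, and complex vector spaces $\bC^m$, whose induced reduction map on $X$ is \emph{generically finite}---and this generic finiteness is precisely the bigness of $\varrho$. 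Pulling back the natural invariant (semi-)K\"ahler forms of the factors yields on $\widetilde X$ a closed semi-positive $(1,1)$-form $\eta$ that is positive-definite at very general points and admits a primitive of at most linear growth. A Gromov-type $L^2$-vanishing theorem---run with $\eta$ playing the role of the K\"ahler form, and treating the proper analytic locus where $\eta$ degenerates by a regularization/cut-off---then yields $H^{p,q}_{(2)}(\widetilde X)=0$ for all $(p,q)$ with $p+q\neq n$. Specializing to $(p,0)$ with $p\le n-1$ and to $(n,q)$ with $1\le q\le n$ gives \cref{item:vanishing}.

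The main obstacle is this construction of positivity on $\widetilde X$ out of a representation that is merely \emph{big}---neither faithful nor even reductive. Three points require care. First, the reduction map of a possibly non-reductive $\varrho$ must be assembled from the harmonic-map data attached to the semisimplifications of $\varrho$ over archimedean and non-archimedean local fields, with the unipotent part controlled separately. Second, one must check that the resulting equivariant map is genuinely non-degenerate at a very general point, so that $\eta$ is positive-definite there; this is the geometric content of bigness applied to the reduction map, and relies on the positive-dimensional fibres of that map avoiding very general points. Third, the linear-growth bound on the primitive of $\eta$ and the $L^2$-estimates must be carried across the degeneracy locus of the reduction map, where the naive pullback collapses. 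These are precisely the places where the full strength of the linear Shafarevich theory of \cref{sec:sha} and its quasi-projective refinements enters; once a suitable form together with a controlled primitive is available, the remaining $L^2$-Hodge theory is routine.
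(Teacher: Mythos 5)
Your derivation of \cref{item:Euler} and \cref{item:converse2} from \cref{item:vanishing} via Atiyah's $L^2$-index theorem is exactly the paper's argument, and for \cref{item:converse} the paper likewise runs Gromov--Koll\'ar's Poincar\'e series $P_k(f)=\sum_{\gamma}\gamma^*(f^{2k})(\gamma x)$ on a nonzero $f\in H^{n,0}_{(2)}(\widetilde X)$ and invokes \cite[Theorem 13.9]{Kol95} to get birationality of the resulting pluricanonical system (no Ohsawa--Takegoshi extension is needed; bigness of $\varrho$ enters there exactly as you anticipate). The source of positivity for \cref{item:vanishing} is also essentially what you describe: a sum $\Phi=\sum_i T_{\tau_i}+\sqrt{-1}\,\mathrm{tr}(\theta\wedge\theta^*)$ of canonical currents of non-archimedean representations and the Hodge form of a $\bC$-VHS, produced by the Shafarevich machinery of \cref{sec:sha} (see \cref{rem:byproduct}), which is strictly positive on a non-empty analytic open set and admits a primitive $\beta$ of linear growth. (Two small corrections: the equivariant maps to symmetric spaces and buildings are pluriharmonic, not holomorphic, and the non-reductive case is handled via the variations of mixed Hodge structure of \cite{ES11,EKPR12}, as you suspected.)

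The genuine gap is in the mechanism you propose for \cref{item:vanishing}. You claim that a Gromov-type argument run with the degenerate form $\eta$, ``treating the degeneracy locus by regularization/cut-off,'' yields $H^{p,q}_{(2)}(\widetilde X)=0$ for \emph{all} $p+q\neq n$. This does not work, and the full vanishing is not what is proved (nor needed). Gromov's $L^2$-vanishing in arbitrary bidegree, and its $d$-sublinear refinement by Cao--Xavier and Jost--Zuo, require the $d$-(sub)bounded form to be a genuine K\"ahler form, uniformly comparable to the background metric: the proof bounds the harmonic form by the Lefschetz operator of that form, which fails wherever the form degenerates, and no cut-off restores it. What actually survives degeneracy is the following weaker statement: for a holomorphic $L^2$-form $\alpha$ of bidegree $(p,0)$, $p\le n-1$, the Stokes identity
\begin{align*}
0=\int_{\widetilde X}(\sqrt{-1})^{p^2}\,d\beta\wedge\alpha\wedge\bar\alpha\wedge\omega^{n-p-1}
\ \ge\ \int_{\widetilde X}(\sqrt{-1})^{p^2}\,\alpha\wedge\bar\alpha\wedge\omega^{n-p-1}\wedge\pi_X^*\Phi\ \ge\ 0
\end{align*}
uses that $(\sqrt{-1})^{p^2}\alpha\wedge\bar\alpha$ is \emph{pointwise semipositive}, forces $\alpha=0$ on $\pi_X^{-1}(X^\circ)$, and then unique continuation of holomorphic forms gives $\alpha\equiv0$. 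Neither the pointwise positivity nor the unique continuation is available for a general harmonic $(p,q)$-form, so your route would stall on every mixed bidegree. The $(n,q)$-vanishing must instead be \emph{deduced} from the $(p,0)$-vanishing via the $L^2$ hard Lefschetz isomorphism $H^{p,0}_{(2)}(\widetilde X)\xrightarrow{\ \wedge\,\pi_X^*\omega^{n-p}\ }H^{n,n-p}_{(2)}(\widetilde X)$ of \cite{Gro92}. Restricting your argument to holomorphic $(p,0)$-forms and adding this Lefschetz step closes the gap.
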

	The representation $\varrho:\pi_1(X)\to \GL_N(\mathbb{K})$, is called \emph{big} if  for every closed irreducible positive-dimensional subvariety $Z \subset X$ passing through a very general point, the image
	\[
	\varrho(\mathrm{Im} [\pi_1(Z^{\mathrm{norm}}) \longrightarrow \pi_1(X)])
	\]
	is infinite.
	\begin{dfn}[$L^2$-cohomology]\label{defn:L2} 
		Let $(Y,\omega)$ be a complete K\"ahler manifold.  Let   $L_{(2)}^{p,q}(Y)$  be the space of $L^2$-integrable   $(p,q)$-forms with respect to the metric $\omega$. A section $u$ is said to be in  ${\rm Dom}\, \db$  if  $\db u$  calculated in the sense of distributions is still in $L^2$.  Then  the $L^2$-Dolbeault cohomology is defined as
		\[ H^{p,q}_{(2)}(Y)= {{\rm ker}\, \db}\big/\,{\overline{{\rm Im}\, \db \cap {\rm Dom}\, \db}}.
		\]  
If $(X,\omega)$ is a compact K\"ahler manifold, we write $H^{p,q}_{(2)}(\widetilde{X})$ for the $L^2$-cohomology computed with respect to the metric $\pi_X^*\omega$. 
Note that this space is independent of the choice of the K\"ahler metric $\omega$, since any two such pullback metrics on $\widetilde{X}$ are mutually bounded. 
	\end{dfn}  
As is well-known to experts, once we have the vanishing of $L^2$-cohomology in \cref{item:vanishing}, one can use  Atiyah's $ {L^2}$-index theorem to prove that $\chi(X, K_X)\geq $. This proves \cref{conj:kollar} in this case.  We explain the proof below, 
	\begin{proof}[Proof of (i)$\implies$(ii)\& (iii)]
		We denote by $\Gamma=\pi_1(X)$ and $\dim_{\Gamma}H^{n,q}_{(2)}(\widetilde{X})$ the Von Neumann dimension of $H^{n,q}_{(2)}(\widetilde{X})$ (cf. \cite{Ati76} for the definition). 
		By Atiyah's $L^2$-index theorem along with \cref{item:vanishing}, we have
		\begin{align} \label{eq:euler}	\chi(X,K_X)=\sum_{q=0}^{n}(-1)^q\dim_{\Gamma}H^{n,q}_{(2)}(\widetilde{X})= \dim_{\Gamma}H^{n,0}_{(2)}(\widetilde{X})\geq 0.
		\end{align} 
		\Cref{item:Euler} is proved. 
		
If the strict inequality~\eqref{eq:euler} holds, then $H^{n,0}_{(2)}(\widetilde{X}) \neq 0$. Thus, one can choose a non-trivial element $f \in H^{n,0}_{(2)}(\widetilde{X})$. In~\cite[Chapter~13]{Kol95}, Koll\'ar, building on ideas of Gromov \cite{Gro92}, introduced the \emph{Poincar\'e series}
\[
P_k(f) := \sum_{\gamma \in \Gamma} \gamma^*\!\big(f^{2k}\big)(\gamma x),
\]
and showed that for each $k \in \mathbb{N}$, the series $P_k(f)$ defines a $\Gamma$-invariant holomorphic section of $2kK_{\widetilde{X}}$. Hence it descends to a pluricanonical section
\[
f_k \in H^0(X, 2kK_X).
\]
 Koll\'ar then considers the linear series generated by products of these sections:
\[
R_{2m}
\;:=\;
\Big\{
\textstyle\prod_{i} f_{k_i}
\;\Big|\;
\sum_{i} k_i = m
\Big\}
\;\subset\; H^0(X, 2mK_X).
\]
In~\cite[Theorem~13.9]{Kol95}, he proves that for all sufficiently large $m$, the rational map associated with $R_{2m}$ is birational onto its image, assuming that $X$ has big fundamental group. This shows that $K_X$ is big, and thus proves~\Cref{item:strict}.

We refer the interested reader to~\cite[Chapter~13]{Kol95} for further details.
 \end{proof}
 \begin{rem}
 Indeed, Koll\'ar's argument above also shows that for any holomorphic line bundle $L$ on $X$ with big fundamental group, the existence of a non-zero $L^2$-section $H_{(2)}^0(\widetilde{X},L) \neq 0$ implies that $L$ is a big line bundle. 
 \end{rem}


\subsubsection{Deformation of big fundamental groups}
From \cref{def:big,def:large}, one can see that the properties of having big or large fundamental groups  depends essentially on the algebraic structures of the varieties, as one has to make the test for all subvarieties or all subvarieties passing to a general point.  It is natural to ask whether two algebraic varieties that are  homeomorphism, one has big fundamental groups if and only if the other has big fundamental group. This question holds trivially for the curves. For surfaces, it was  answered positively by Benoit Claudon in  \cite{Cla10}, based on a theorem by Siu \cite{Siu87}.

We first define a topological invariant $g(X)$ for any compact K\"ahler manifold $X$ as follows.  
A vector subspace $V \subset H^1(X,\mathbb{R})$ is said to be \emph{isotropic} 
if it is annihilated by the exterior product
\[
\Lambda^2 H^1(X,\mathbb{R}) \longrightarrow H^2(X,\mathbb{R}),
\]
that is, if $\alpha \wedge \beta = 0$ for all $(\alpha,\beta) \in V \times V$.
We then set
\[
g(X) = \max \left\{ \dim(V) \mid V \subset H^1(X,\mathbb{R}),\ 
U \text{ is isotropic} \right\}.
\] 
We remark that the invariant $g(X)$ is an invariant for fundamental groups. Namely,     if $Y$ is another  compact K\"ahler manifold and $\pi_1(X)\simeq \pi_1(Y)$, then $g(X)=g(Y)$ (see \cite{ABCKT,Py25}). 
\begin{thm}[Siu] \label{thm:Siu}
	Let $X$ be a compact K\"ahler manifold and let $g \geq 2$ be an integer. 
	Then $\pi_1(X)$ admits the surface group $\pi_1(C_g)$ as a quotient 
	if and only if $X$ admits a fibration onto a  curve of genus 
	$g' \geq g$. 
	Moreover, if 
	\[
	\rho : \pi_1(X) \longrightarrow \pi_1(C_g)
	\]
	is a surjective homomorphism with $g = g(X)$, then there exists a 
	fibration of $X$ onto a curve of genus $g$ that induces $\rho$. 
\end{thm}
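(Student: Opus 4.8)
The plan is to prove the two implications by quite different means: the ``if'' direction by a soft topological argument, and the ``only if'' direction together with the ``moreover'' clause via harmonic maps and a rigidity theorem in the spirit of Siu. For the easy direction, given a fibration $f\colon X\to C$ onto a smooth curve $C$ of genus $g'\ge g$, I would first pass to the Stein factorization (which only increases the genus) to reduce to the case of connected fibers, so that $f_*\colon\pi_1(X)\to\pi_1(C)$ is surjective; composing with the standard surjection $\pi_1(C_{g'})\twoheadrightarrow\pi_1(C_g)$ obtained by collapsing $g'-g$ handles then produces the desired quotient.

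For the converse, I would start with a surjection $\rho\colon\pi_1(X)\twoheadrightarrow\pi_1(C_g)$. Since $C_g$ is aspherical there is a classifying map $\varphi\colon X\to C_g$ with $\varphi_*=\rho$, and after fixing a hyperbolic metric on $C_g$ the Eells--Sampson theorem produces a harmonic map $\bar u$ homotopic to $\varphi$, so still $\bar u_*=\rho$. The structural heart of the argument — the step I expect to be the main obstacle — is to upgrade $\bar u$ to a holomorphic fibration. By Sampson's theorem $\bar u$ is pluriharmonic, so $\partial\bar u$ is holomorphic, the Hopf differential $\Phi\in H^0(X,S^2\Omega^1_X)$ is holomorphic, and $d\bar u$ has real rank $\le 2$ everywhere since $\dim_\bC C_g=1$. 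Combining Siu's $\partial\bar\partial$-Bochner identity with the strict negativity of the curvature of $C_g$ should yield, after possibly replacing the complex structure on the target by its conjugate, that $\bar\partial\bar u$ vanishes off a proper analytic subset; the kernel of $\partial\bar u$ is then a corank-one holomorphic foliation along which $\bar u$ is locally constant, and — as in the Castelnuovo--de Franchis construction — it is cut out by the fibers of a holomorphic fibration $f\colon X\to C$ onto a smooth curve through which $\bar u$ factors, $\bar u=\bar v\circ f$ with $\bar v\colon C\to C_g$ harmonic. Passing to the Stein factorization I may assume $f$ has connected fibers, so $f_*$ is surjective; then $\rho=\bar v_*\circ f_*$ with $\bar v_*$ surjective, and abelianizing $\bar v_*\colon\pi_1(C)\twoheadrightarrow\pi_1(C_g)$ shows $g(C)\ge g$, so $f$ is a fibration onto a curve of genus $\ge g$, completing the equivalence.

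For the ``moreover'' clause I would assume $g=g(X)$ and take $f\colon X\to C$ to be the fibration just produced, so $g(C)\ge g$. To bound $g(C)$ from above I would exhibit a large isotropic subspace: choosing a Lagrangian $L\subset H^1(C,\bR)$ for the intersection form, the pullback $f^*L\subset H^1(X,\bR)$ has dimension $g(C)$ — because $f_*$ surjective makes $f^*$ injective on $H^1$ — and is isotropic, since $f^*\alpha\wedge f^*\beta=f^*(\alpha\wedge_C\beta)=0$ for $\alpha,\beta\in L$. Hence $g(X)\ge g(C)\ge g=g(X)$, forcing $g(C)=g$; then $\bar v_*\colon\pi_1(C)\twoheadrightarrow\pi_1(C_g)$ is a surjective endomorphism of a genus-$g$ surface group, and such groups are Hopfian (being finitely generated and residually finite), so $\bar v_*$ is an isomorphism and $f$ itself induces $\rho$.

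Finally, I should note that the rigidity/factorization step can also be reached without Siu's Bochner formula — for instance via the structure theorem for cohomology jump loci (Green--Lazarsfeld, Beauville, Simpson, Arapura), or via the results of Delzant--Gromov and Napier--Ramachandran on ends of infinite coverings — but every known proof of this step rests on comparable analytic or $L^2$-input, so I expect it to remain the technical core whichever route one follows.
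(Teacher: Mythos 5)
The paper states this result without proof, citing Siu \cite{Siu87} (see also \cite{ABCKT,Py25}), so there is no internal argument to compare against. Your reconstruction follows Siu's original harmonic-map route, and its architecture is sound: the easy direction via connected fibers plus a handle-collapsing surjection $\pi_1(C_{g'})\twoheadrightarrow\pi_1(C_g)$ is fine, the pluriharmonic factorization theorem is the right engine for the converse, and your handling of the ``moreover'' clause — pulling back a Lagrangian of $H^1(C,\mathbb R)$ to get $g(X)\ge g(C)\ge g=g(X)$, hence $g(C)=g$, then invoking Hopficity of surface groups to see that $\bar v_*$ is an isomorphism — is exactly the intended use of the invariant $g(X)$ and is correct.

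Two corrections to the analytic core. First, the claim that the Siu--Sampson Bochner identity forces $\bar\partial\bar u$ to vanish off a proper analytic subset (i.e.\ that $\bar u$ is $\pm$holomorphic there, hence everywhere) is false: already for $X=C_3$ and a surjection $\pi_1(C_3)\twoheadrightarrow\pi_1(C_2)$, Riemann--Hurwitz shows that no holomorphic or antiholomorphic map $C_3\to C_2$ can induce a surjection on $\pi_1$ (the only candidates are unramified double covers, whose induced map has index $2$), yet such a surjection is realized by a harmonic map. What the Bochner identity actually yields for a strictly negatively curved $2$-dimensional target is that $d\bar u(T^{1,0}X)$ is at most one-dimensional in the complexified tangent space of $C_g$ at every point; the corank-one holomorphic foliation is the common kernel of the $(1,0)$- and $(0,1)$-components of $d\bar u|_{T^{1,0}X}$, and your Castelnuovo--de Franchis-style factorization goes through from this weaker, correct statement. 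Second, you should dispose of the degenerate case $\operatorname{rank} d\bar u\le 1$ everywhere, where the image is a point or a closed geodesic and $\rho$ would factor through an abelian group, impossible for a surjection onto $\pi_1(C_g)$ with $g\ge 2$. Finally, note that the proof most often written down today (e.g.\ in \cite{ABCKT}) bypasses harmonic maps: a surjection onto $\pi_1(C_g)$ injects a $g$-dimensional isotropic subspace into $H^1(X,\mathbb R)$, and the Catanese--Beauville real isotropic subspace theorem produces the fibration directly, which makes the role of $g(X)$ in the ``moreover'' clause transparent; either route reaches the same endgame as yours.
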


\begin{thm}[Claudon]\label{thm:Claudon}
	Let $X$ and $Y$ be two compact K\"ahler surfaces that are homeomorphic. Then $X$ has big fundamental groups if and only if $Y$  has big fundamental group. 
\end{thm}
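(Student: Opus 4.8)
The plan is to prove the stronger assertion that, for a compact K\"ahler surface $X$, whether $X$ has big fundamental group (in the sense of \cref{def:big}) depends only on the isomorphism type of the group $\pi_1(X)$; since homeomorphic surfaces have isomorphic fundamental groups, this yields the theorem. I would argue through the $\Gamma$-reduction (Shafarevich morphism) $\mathrm{sh}_X\colon X\dashrightarrow \mathrm{Sh}_\Gamma(X)$ of Koll\'ar and Campana \cite{Kol95,Cam94}, where $\Gamma=\pi_1(X)$: it is an almost holomorphic dominant map to a normal projective variety whose general fibre $Z$ is the largest connected subvariety through its general point with $\mathrm{Im}[\pi_1(Z^{\mathrm{norm}})\to\Gamma]$ finite, so that $X$ has big fundamental group precisely when $\dim\mathrm{Sh}_\Gamma(X)=2$, while $\dim\mathrm{Sh}_\Gamma(X)=0$ precisely when $\pi_1(X)$ is finite (and then $X$ fails to have big fundamental group, taking $Z=X$ in \cref{def:big}). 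Finiteness of $\pi_1$ being a homeomorphism invariant, I may assume $\pi_1(X)$ infinite, so that $\dim\mathrm{Sh}_\Gamma(X)\in\{1,2\}$, and it remains to prove: \emph{$X$ has big fundamental group if and only if $\pi_1(X)$ is not virtually a closed surface group}, where ``closed surface group'' is understood to include the trivial group, $\mathbb{Z}^2$, and the fundamental group of a closed orientable surface of genus $\geq 2$.

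For the ``only if'' in contrapositive form, I would start from the assumption $\dim\mathrm{Sh}_\Gamma(X)=1$. After resolving the indeterminacy of $\mathrm{sh}_X$ by blow-ups --- which does not alter $\pi_1$ --- there is a fibration $f\colon X\to B$ onto a smooth projective curve whose general fibre $F$ has finite image in $\pi_1(X)$. Endowing $B$ with the orbifold divisor $\Delta$ recording the multiple fibres of $f$, I would invoke the exact sequence $\pi_1(F)\to\pi_1(X)\to\pi_1^{\mathrm{orb}}(B,\Delta)\to 1$ \cite{ABCKT,Cam94}: its leftmost term has finite image $K$, so $\pi_1(X)/K\cong\pi_1^{\mathrm{orb}}(B,\Delta)$ is infinite, whence $(B,\Delta)$ is a closed orientable $2$-orbifold with $\chi^{\mathrm{orb}}(B,\Delta)\leq 0$; such an orbifold has a finite orbifold cover by a torus (if Euclidean) or by a closed surface of genus $\geq 2$ (if hyperbolic), so $\pi_1^{\mathrm{orb}}(B,\Delta)$ is virtually a closed surface group. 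Finally, since an extension of a group that is virtually a closed surface group by a finite group is itself virtually a closed surface group --- an elementary fact (pass to a finite-index subgroup over which the finite kernel is central, then trivialise the resulting class in $H^2$ of a surface group by a further finite cover) --- $\pi_1(X)$ is virtually a closed surface group.

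For the ``if'' direction, assume $\pi_1(X)$ is virtually a closed surface group, choose a finite-index subgroup isomorphic to one, and let $X'\to X$ be the associated finite \'etale cover, so that $\pi_1(X')$ is a closed orientable surface group $S$. Since having big fundamental group is an invariant of \'etale covers (\cite{Kol95}; see also \cref{exa:big}), it suffices to show $\dim\mathrm{Sh}_\Gamma(X')\leq 1$. If $S$ is trivial this is clear; otherwise $S=\pi_1(C)$ for a curve $C$ of genus $\geq 1$, and I would produce a fibration $X'\to C$ inducing an isomorphism $\pi_1(X')\to\pi_1(C)$ --- when $g(C)=1$ this is the Stein factorisation of the Albanese map (using $q(X')=1$ and Hurwitz to see the base is elliptic, and the Hopf property of $\mathbb{Z}^2$), and when $g(C)\geq 2$ it is furnished by the ``moreover'' part of Siu's theorem (\cref{thm:Siu}) applied to $\mathrm{id}\colon\pi_1(X')\twoheadrightarrow\pi_1(C)$, which is legitimate because $g(X')=g(C)$ since $g(\cdot)$ is a $\pi_1$-invariant. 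Comparing such a fibration with the orbifold exact sequence forces $C$ to carry no orbifold points and the general fibre to have trivial image in $\pi_1(X')$, so $\dim\mathrm{Sh}_\Gamma(X')\leq 1$ and $X'$ --- hence $X$ --- does not have big fundamental group.

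Combining the two directions gives the displayed equivalence, whose right-hand side depends only on $\pi_1(X)$, and the theorem follows. The step I expect to be the crux is the faithful translation between ``$\dim\mathrm{Sh}_\Gamma(X)=1$'' and ``$\pi_1(X)$ virtually a closed surface group'': in the forward direction one must handle multiple fibres and orbifold bases correctly (the exact sequence above, together with the classification of closed $2$-orbifolds of non-positive Euler characteristic), and in the backward direction one must extract from Siu's theorem not merely a fibration onto a curve of genus $\geq 2$ but the sharper fact that the fibration realising the surface-group quotient has no multiple fibres and general fibre of finite monodromy --- without which products $C_1\times C_2$ of genus-$\geq 2$ curves, which genuinely do have big fundamental group, would be misclassified.
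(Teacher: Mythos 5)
Your proposal is correct and follows essentially the same route as the paper: reduce to the case of infinite $\pi_1$, use the Koll\'ar--Campana fibration theorem to show that a non-big compact K\"ahler surface has fundamental group virtually isomorphic to a closed surface group, and conversely use the Albanese map (genus $1$) or the sharp form of Siu's theorem via the invariant $g(\cdot)$ (genus $\geq 2$) to produce a fibration on the homeomorphic partner whose fibres kill $\pi_1$. The only difference is one of care rather than substance: you spell out the orbifold exact sequence, the classification of non-positively curved $2$-orbifolds, and the fact that a finite-by-(virtually surface group) extension is again virtually a surface group, all of which the paper's proof elides when it passes from the Koll\'ar--Campana fibration to the isomorphism $\pi_1(X)\simeq\pi_1(C_{g'})$ after an \'etale cover.
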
  
\begin{proof}
	Assume that  $X$ does not have big fundamental group. If $\pi_1(X)$ is finite, then $\pi_1(Y)$ is also finite as they are  homeomorphic. Therefore, we just consider the case $\pi_1(X)$ is infinite. In this case, a theorem by Koll\'ar \cite{Kol93} 
	and Campana \cite{Cam94} states that, after replacing $X$ by a finite \'etale cover,  there is a fibration $f:X\to C_g$ onto a smooth projective curve $C_g$ of genus  $g\geq 1$  such that for  a  smooth general  fiber $F$ of $f$,  ${\rm Im}[\pi_1(F)\to \pi_1(X)]$ is a finite group, and we have the following short exact sequence
	$$
	0\to \pi_1(F)\to \pi_1(X)\to \pi_1(C_g) \to 0.
	$$ 
	This means that $\pi_1(X)$ is commensurable to a surface group $\pi_1(C_g)$.   After replacing $X$ by a finite \'etale cover,  $\pi_1(X)\simeq \pi_1(C_{g'})$ for another  projective curve $C_{g'}$ of genus $g'\geq 1$.  Hence we can replace $Y$ by a finite \'etale cover such that $\pi_1(Y)\simeq \pi_1(X)$.  
	
	If $g'=1$, then $\pi_1(Y)$ is abelian,  and  one can see that   the Albanese map ${\rm alb}_Y:Y\to {\rm Alb}(Y)$ of $Y$  is surjective, and  ${\rm Alb}(Y)$ is an elliptic curve.  Moreover, ${\rm alb}_Y$ induces an isomorphism of fundamental groups. Therefore, $Y$ does not have big  fundamental groups, as for each fiber $F$ of ${\rm alb}_Y$, we have ${\rm Im}[\pi_1(F)\to \pi_1(Y)]=\{1\}$. 
	
	If $g'\geq 2$, by the above arguments together with \cref{thm:Siu}, there exists a fibration $h:Y\to C_{g'}'$ to a projective curve $C_{g'}'$ with genus $g'$ such that $h_*:\pi_1(Y)\to \pi_1(C_{g'}')$ is an isomorphism. Therefore, for each fiber $F$ of $h$, we have ${\rm Im}[\pi_1(F)\to \pi_1(Y)]=\{1\}$.  This implies that $Y$ does not have big funmental group.  The theorem is proved.  
\end{proof}
However, it remains unknown whether \cref{thm:Claudon} holds for smooth projective varieties of dimension greater than two. In \cite{DOKR}, De Oliveira, Katzarkov, and Ramachandran proposed the following conjecture; we also refer to the monograph by Koll\'ar \cite{Kol95} for a discussion of related problems.
\begin{conjecture}\label{conj:kat}
	Let $f:\sX\to\bD$ be a smooth projective family over the unit disk. If $X_0:=f^{-1}(0)$ has big $\pi_1$, then any $X_t:=f^{-1}(t)$ also has big $\pi_1$ for small $t$. 
\end{conjecture}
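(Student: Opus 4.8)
The plan is to reduce \cref{conj:kat} to an openness statement and then attack it through relative cycle spaces. First, by Ehresmann's fibration theorem $f$ is a $C^\infty$ locally trivial fibration over $\bD$, so after shrinking $\bD$ one fixes canonical isomorphisms $\pi_1(X_t)\simeq\pi_1(\sX)=:\Gamma\simeq\pi_1(X_0)$ compatible with the inclusions $X_t\hookrightarrow\sX$; thus the content of \cref{conj:kat} is that having big $\pi_1$ --- which by \cref{def:big} is a condition on \emph{all} positive-dimensional subvarieties through a very general point, and hence depends on the algebraic structure --- is an \emph{open} condition on $t$. It is convenient to package bigness via Kollár's $\Gamma$-reduction: for each $t$ there is a dominant rational map $\mathrm{sh}_t\colon X_t\dashrightarrow\mathrm{Sh}(X_t)$ contracting exactly the subvarieties through a very general point with finite image in $\Gamma$, and $X_t$ has big $\pi_1$ iff $\gamma(X_t):=\dim\mathrm{Sh}(X_t)$ equals $n:=\dim X_t$. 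Since $\gamma(X_0)=n$, it suffices to prove that $\gamma(X_t)$ is \emph{lower semicontinuous} at $t=0$, i.e.\ that $\{t:X_t\text{ is not big}\}$ does not accumulate at $0$.

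Next I would argue by contradiction: suppose $t_i\to0$ with $X_{t_i}$ not big. By Kollár's structure theorem, the locus of $\pi_1$-small positive-dimensional subvarieties in $\mathrm{Chow}(X_{t_i})$ is a countable union of closed subsets, and since these subvarieties pass through a very general point of $X_{t_i}$, one of those closed subsets has evaluation image all of $X_{t_i}$. I would then pass to the relative Barlet/Chow space $\mathrm{Chow}(\sX/\bD)=\bigsqcup_\alpha\mathcal C_\alpha$ --- a countable union of complex spaces proper over $\bD$, with universal cycle $\mathcal U_\alpha$ and evaluation $e_\alpha\colon\mathcal U_\alpha\to\sX$ proper over $\bD$. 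For each $i$ the closed subset above lies in some $\mathcal C_{\alpha(i),t_i}$, and after pigeonholing over the countably many $\alpha$ and passing to a subsequence one may take $\alpha(i)=\alpha$ independent of $i$. Let $\mathcal C^{\mathrm{sm}}_\alpha\subset\mathcal C_\alpha$ be the locus of cycles whose support is irreducible, positive-dimensional and $\pi_1$-small inside its fibre of $f$, and $\mathcal U^{\mathrm{sm}}_\alpha$ its preimage in $\mathcal U_\alpha$. Granting for the moment that $\mathcal C^{\mathrm{sm}}_\alpha$ is closed in $\mathcal C_\alpha$, one gets that $\mathcal C^{\mathrm{sm}}_\alpha\to\bD$ is proper and $e_\alpha(\mathcal U^{\mathrm{sm}}_\alpha)\subset\sX$ is closed and proper over $\bD$ with fibre $e_\alpha(\mathcal U^{\mathrm{sm}}_{\alpha,t})$ over $t$; since this fibre equals $X_{t_i}$ for every $i$, and fibre dimension is upper semicontinuous on the base of a proper map, it equals $X_0$ as well. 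Then a very general point of $X_0$ lies on a positive-dimensional $\pi_1$-small subvariety, so $X_0$ is not big, contradicting the hypothesis. Hence $\gamma$ is lower semicontinuous at $0$ and \cref{conj:kat} follows.

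The hard part will be the closedness of $\mathcal C^{\mathrm{sm}}_\alpha$ in $\mathcal C_\alpha$, and this is precisely where the conjecture is genuinely difficult. Already for a \emph{fixed} projective $X$ the closedness (or constructibility) of the $\pi_1$-small locus in $\mathrm{Chow}(X)$ is a delicate theorem of Kollár: $Z\subset X$ is $\pi_1$-small iff the connected components of its preimage in $\widetilde X$ are compact, but the ``size'' of such a component is governed by the index $[\Gamma:\operatorname{Im}(\pi_1(Z^{\mathrm{norm}}))]$, which can be unbounded within a family, so a limit of $\pi_1$-small subvarieties may a priori escape to infinity in $\widetilde X$. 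In the relative setting the universal covers $\widetilde{X_t}$ glue into a smooth holomorphic family $\widetilde{\sX}\to\bD$ (using $\pi_1(\sX)=\pi_1(X_t)$), but $\widetilde{\sX}$ is \emph{not} proper over $\bD$ and admits no fibrewise-equivariant compactification, so the compactness arguments that tame this escape for a single $X$ have no obvious relative analogue; a uniform bound on the indices $[\Gamma:\operatorname{Im}(\pi_1(Z^{\mathrm{norm}}))]$ over the relevant families of subvarieties of the $X_{t_i}$ would close the gap, but I do not see how to obtain one in general. When $\Gamma$ is \emph{linear}, i.e.\ admits a faithful representation into $\GL_N(\bC)$, this obstacle evaporates: the $\Gamma$-reduction of $X_t$ is then realised by a genuine algebraic morphism, assembled from the reductive Shafarevich morphisms of Eyssidieux--Katzarkov--Pantev--Ramachandran (cf.\ \cref{thm:EKPR}) together with the Gromov--Schoen/Simpson/Mochizuki harmonic-map machinery applied to a bounded family of representations, and this morphism deforms algebraically over $\bD$ because the character variety of $\Gamma$ does not move with $t$ --- which is how the linear case of \cref{conj:kat} is proved. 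For general, possibly non-linear, $\Gamma$ one needs a replacement for this rigidity: for instance a direct estimate controlling the escape of $\pi_1$-small subvarieties in $\widetilde{\sX}$, or an analysis, via Campana's orbifold $\Gamma$-reduction, of how the associated core fibration varies in the family. I expect this to be the decisive difficulty.
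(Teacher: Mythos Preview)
The statement you are attempting is \cref{conj:kat}, which the paper states as an \emph{open conjecture}; there is no proof of it in the paper to compare against. What the paper actually proves is the special case where $\pi_1(X_0)$ admits a big linear representation (\cref{conj:deformation}). Your proposal is honest about this: you correctly locate the obstruction in the failure of the $\pi_1$-small locus in the relative Chow space to be closed, and you correctly do not claim to overcome it in general. So there is no ``error'' to flag beyond the fact that you are attacking something the paper does not claim to settle.

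Where your sketch does diverge from the paper is in the description of the \emph{linear} case. You say the linear case works because the Shafarevich morphism ``deforms algebraically over $\bD$ since the character variety of $\Gamma$ does not move with $t$''. The paper's argument is not algebraic in this sense and does not proceed via Chow spaces at all. Instead, it is analytic: one fixes the representations $\tau_i$ (non-archimedean) and the $\bC$-VHS $\sigma$ on $X_0$ produced by the reductive Shafarevich machinery, transports them to $X_t$ via $\pi_1(X_t)\simeq\pi_1(X_0)$, and then proves two continuity results for the associated \emph{canonical currents} and \emph{canonical forms} (\cref{harmonic2} and \cref{lem:continuity}). The point is that $\sum_i T_{\tau_{i,0}}+\omega_{\sigma_0}$ is strictly positive on an open set of $X_0$ (a byproduct of the proof of the reductive Shafarevich conjecture, \cref{rem:byproduct}), and continuity forces $\sum_i T_{\tau_{i,t}}+\omega_{\sigma_t}$ to be strictly positive on an open set of $X_t$ for small $t$. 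Since these currents vanish on any $\pi_1$-small subvariety, the Shafarevich morphism of $X_t$ is birational. The substantive input is therefore the deformation-continuity of equivariant harmonic maps into symmetric spaces and Euclidean buildings, not a deformation of an algebraic Shafarevich morphism. Your Chow-space framework, even restricted to the linear case, would still need a uniform bound of the kind you identify as missing; the paper bypasses that issue entirely by working with currents rather than cycles.
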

In \cite{Cla10}, Claudon proved \cref{conj:kat} for threefolds, assuming suitable conjectures on the fundamental groups of orbifold surfaces.

In a joint work with Mese and Wang  \cite{DMW24}, we prove \cref{conj:kat} in the linear case. 
\begin{thm}[\cite{DMW24}]\label{conj:deformation}
	Let $f:\sX\to\bD$ be a smooth projective family over the unit disk. If there exists a big representation $\varrho:\pi_1(X_0)\to \GL_N(\bC)$, then there exists a representation $\tau:\pi_1(\sX)\to\GL_{N}(\bC)$ such that   for $|t| $ sufficiently small, the representation
	$$
	\tau_t: \pi_1(X_t)\stackrel{\simeq}{\to}\pi_1(\sX)\to \GL_N(\bC)
	$$ 
	is big. In particular, 
	\cref{conj:kat} holds if $\pi_1(X_0)$ is linear. 
\end{thm}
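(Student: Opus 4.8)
The plan is to reduce bigness of a linear representation to the generic finiteness of the associated Shafarevich morphism, and then to exploit that the Shafarevich morphism of the fibres $(X_t,\tau_t)$ varies with $t$ in a lower-semicontinuous way. Since $f$ is smooth and proper over the contractible disk $\bD$, Ehresmann's fibration theorem shows that each inclusion $X_t\hookrightarrow\sX$ is a homotopy equivalence, so that there are canonical isomorphisms $\pi_1(X_t)\cong\pi_1(\sX)\cong\pi_1(X_0)$. I take $\tau\colon\pi_1(\sX)\to\GL_N(\bC)$ to be the transport of $\varrho$ across these isomorphisms; then, viewed through $\pi_1(X_t)\cong\pi_1(\sX)$, the representation $\tau_t$ is identified with $\varrho$ acting on the group $\pi_1(X_t)$. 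The whole point is that bigness is not a property of the abstract pair (group, representation) but of how subvarieties of $X_t$ sit inside $\pi_1(X_t)$, and this genuinely varies with the complex structure as $t$ moves; set $n=\dim X_0=\dim X_t$.

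Next, for each $t$ I invoke the linear Shafarevich morphism of $(X_t,\tau_t)$, a morphism $\mathrm{sh}_t\colon X_t\to\mathrm{Sh}_t$ onto a normal projective variety, built from archimedean non-abelian Hodge theory (Corlette--Simpson--Mochizuki), the Gromov--Schoen theory of harmonic maps to Bruhat--Tits buildings, the associated period maps, and the reduction of the unipotent radical of the Zariski closure of $\tau_t$. Its defining property is that a positive-dimensional irreducible $Z\subseteq X_t$ is contracted by $\mathrm{sh}_t$ if and only if $\tau_t\bigl(\mathrm{Im}[\pi_1(Z^{\mathrm{norm}})\to\pi_1(X_t)]\bigr)$ is finite. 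Since $X_t$ is projective and $\mathrm{sh}_t$ is proper, a standard argument on general fibres then gives that $\tau_t$ is big if and only if $\mathrm{sh}_t$ is generically finite onto its image, equivalently $\dim\mathrm{Sh}_t=n$. At $t=0$ this holds, because $\tau_0=\varrho$ is big with respect to the given structure on $X_0$.

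It remains to see that $\dim\mathrm{Sh}_t$ does not drop for $t$ close to $0$. The mechanism is that the geometric data underlying $\mathrm{sh}_t$ — the equivariant harmonic maps from the universal cover of $X_t$ to symmetric spaces and to Euclidean buildings, the relevant spectral varieties and period maps, and the contribution of the unipotent part — depend continuously on $t$ (continuity of harmonic maps following from uniqueness in nonpositive curvature together with elliptic estimates and the Ehresmann trivialisation), so that the generic rank of the differential of the combined map, which equals $\dim\mathrm{Sh}_t$, is a lower-semicontinuous function of $t$. Since it equals $n$ at $t=0$, it equals $n$ for all $t$ in a neighbourhood of $0$, hence $\mathrm{sh}_t$ is generically finite and $\tau_t$ is big there. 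Applying this to a faithful $\varrho$ recovers \cref{conj:kat} when $\pi_1(X_0)$ is linear.

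The main obstacle is precisely the continuity statement used in the third step: one must develop the theory of equivariant harmonic maps to Bruhat--Tits buildings in families and show that they vary continuously with the parameter — a delicate analytic problem, since the targets are singular — and, more seriously, one must run the \emph{entire} construction of the Shafarevich morphism for an \emph{arbitrary} (not necessarily reductive) linear representation, which requires the quasi-projective extension of the Gromov--Schoen theory together with a treatment of the unipotent radical. This is exactly the technical heart of \cite{DMW24} and of the circle of methods surveyed in \cref{sec:sha}; the reduction to a semicontinuity-of-fibre-dimension statement above is, by comparison, formal.
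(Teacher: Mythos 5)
Your overall strategy---transport $\varrho$ along the Ehresmann trivialisation, characterise bigness of $\tau_t$ by birationality (equivalently generic finiteness) of the Shafarevich morphism of $(X_t,\tau_t)$, and then argue that this is an open condition in $t$ via continuity of the underlying harmonic-map data---is the same as the one in \cite{DMW24}, and the reduction in your first two paragraphs is sound. The problem is that the step you dismiss as ``formal by comparison'' is not: the assertion that $\dim\mathrm{Sh}_t$ equals the generic rank of the differential of a ``combined map'' depending continuously on $t$, and is therefore lower semicontinuous, is not available in the form you use it. The Shafarevich morphism is not the image of a single differentiable map; it is a simultaneous Stein factorisation of period maps and of Katzarkov--Eyssidieux reductions, the latter being themselves Stein factorisations of spectral data attached to equivariant harmonic maps into Euclidean buildings. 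Those maps are only Lipschitz, their differentials are defined almost everywhere and are multivalued, and the dimension of a Stein-factorised image is not computed by a pointwise rank. Worse, the continuity one can actually prove for building-valued harmonic maps in a family is one-sided: in the notation of \cref{lem:continuity} one only gets $T_{\varrho_t}|_{\Omega\cap X_t}\ge T_t$ with $T_t$ continuous and equality at $t=0$. So ``lower semicontinuity of the generic rank'' is a slogan, not an argument.

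The way the paper makes your semicontinuity precise is current-theoretic, and it needs an extra input you do not mention. By \cref{rem:byproduct} (a byproduct of the proof of the reductive Shafarevich conjecture) one produces finitely many non-archimedean representations $\tau_i$ and a $\bC$-VHS $\sigma$ such that, when $\varrho$ is big, the class of $\sum_i T_{\tau_i}+\omega_\sigma$ is \emph{big and nef}; Boucksom's criterion then yields strict positivity of this current on a nonempty analytic open subset of $X_0$. The one-sided continuity statements \cref{harmonic2} and \cref{lem:continuity} propagate this strict positivity to $X_t$ for small $t$, and since any subvariety $Z\subset X_t$ with finite monodromy image satisfies $T_{\tau_{i,t}}|_Z=0$ and $\omega_{\sigma_t}|_Z=0$, the Shafarevich morphism of $X_t$ is forced to be birational. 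The big-and-nef class is precisely what certifies, in a deformable way, the ``generic rank $n$'' at $t=0$; without it your argument has no quantity whose positivity persists. Finally, for the ``in particular'' clause one must handle non-reductive linear $\varrho$, which in \cite{DMW24} requires the Eyssidieux--Simpson variation-of-mixed-Hodge-structure machinery from \cite{EKPR12}, not just the reductive Shafarevich morphism you invoke.
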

To prove \cref{conj:deformation}, one must establish the deformation continuity of equivariant harmonic mappings into symmetric spaces or Euclidean buildings (cf. \cref{harmonic2,lem:continuity} below).

\subsection{Hyperbolicity and fundamental groups: conjectures and main results}\label{sec:hyper}

In this subsection, we illustrate how the notions of  {big} or  {large} fundamental groups, introduced previously, play a pivotal role in characterizing the hyperbolicity of algebraic varieties. We then formulate a conjectural characterization (cf. \cref{conj:funhyp}) along these lines and present our results toward this conjecture. Furthermore, we discuss our main results concerning the generalized Green--Griffiths--Lang conjecture under suitable assumptions on the fundamental group. The strategies for the proofs of the main theorems are detailed in \cref{sec:hyperbolicity}. These proofs rely on the technical machinery developed for the reductive Shafarevich conjecture in \cref{sec:sha}. 
\subsubsection{Notions of hyperbolicity}
The notion of hyperbolicity  originates from Picard's great theorem and Picard's little theorem on  the range of an analytic function.  
\begin{thm}[Little Picard theorem]\label{thm:little}
	Any holomorphic map  $f:\bC\to \bP^1\backslash\{0,1,\infty\}$  must be constant.  
\end{thm}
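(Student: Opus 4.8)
The plan is to exploit the uniformization of the thrice-punctured sphere $Y := \bP^1\setminus\{0,1,\infty\} = \bC\setminus\{0,1\}$, thereby reducing the statement to Liouville's theorem. The first step is to identify the universal cover of $Y$. By the uniformization theorem, the universal covering space of any Riemann surface is biholomorphic to one of $\bP^1$, $\bC$, or the unit disk $\bD$. The cover of $Y$ cannot be $\bP^1$, since $Y$ is non-compact; nor can it be $\bC$, for $Y$ is homotopy equivalent to a wedge of two circles, so $\pi_1(Y)$ is free of rank $2$ and in particular nonabelian, whereas any group acting freely and properly discontinuously by biholomorphisms of $\bC$ consists of translations (an affine automorphism $z\mapsto az+b$ with $a\neq 1$ has a fixed point) and is therefore abelian. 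Hence the universal cover of $Y$ is $\bD$, and we fix a holomorphic covering map $\pi\colon \bD\to Y$. Alternatively, one can make $\pi$ completely explicit and bypass the uniformization theorem altogether: take $\pi$ to be the modular $\lambda$-function on the upper half-plane $\bH$, i.e.\ the quotient map for the action of the principal congruence subgroup $\Gamma(2)$ on $\bH$, and check directly that it is an unramified covering of $\bC\setminus\{0,1\}$.

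The second step is a routine lifting-and-Liouville argument. Since $\bC$ is simply connected and $\pi\colon \bD\to Y$ is a covering map, the holomorphic map $f\colon \bC\to Y$ admits a lift $\tilde f\colon \bC\to \bD$ with $\pi\circ\tilde f = f$; because $\pi$ is a local biholomorphism, $\tilde f$ is holomorphic. Thus $\tilde f$ is a bounded entire function, so Liouville's theorem forces it to be constant, and therefore $f = \pi\circ\tilde f$ is constant, as asserted.

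The only genuinely substantive ingredient is the first step — the existence of the disk as universal cover, equivalently the explicit construction of the modular function — whereas the descent to Liouville's theorem is purely formal, and this is precisely where the work (or the appeal to uniformization) is concentrated. We remark that this proof is the prototype of a recurring principle in this survey: a target whose universal cover carries a complete metric of negative curvature admits no nonconstant entire curves. Indeed, the same conclusion follows by equipping $Y$ with its Poincar\'e metric and applying the Ahlfors--Schwarz lemma to the pullback under $f$ of this metric on $\bC$, or simply by observing that the Kobayashi pseudodistance of $\bC$ vanishes identically while that of $Y$ is a genuine distance, so that the distance-decreasing property of $f$ collapses the image to a point.
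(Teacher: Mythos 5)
Your proof is correct. Note that the paper itself does not prove \cref{thm:little}: it is stated as a classical fact, with only the remark that it follows from the Great Picard theorem (\cref{thm:great}), which is likewise left unproved. Your argument is therefore a genuinely different (and self-contained) route: you identify the universal cover of $\bP^1\setminus\{0,1,\infty\}$ as the disk $\bD$ --- either by uniformization plus the observation that the deck group would have to be abelian if the cover were $\bC$, or explicitly via the modular $\lambda$-function --- and then lift $f$ to a bounded entire function and invoke Liouville. All steps are sound: the lift exists because $\bC$ is simply connected, it is holomorphic because the covering is a local biholomorphism, and the rank-two free (hence nonabelian) fundamental group correctly rules out $\bC$ as universal cover. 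What the paper's implicit route buys is uniformity --- the Great Picard theorem is the stronger local statement at an essential singularity, and Little Picard is its immediate global corollary --- whereas your route is more elementary in that it needs no local analysis near a puncture, only covering-space theory and Liouville. Your closing remark correctly identifies the mechanism (negative curvature of the universal cover, or equivalently the nondegeneracy of the Kobayashi pseudodistance on the thrice-punctured sphere versus its vanishing on $\bC$) that the survey later generalizes, e.g.\ in the discussion of pseudo Picard and Brody hyperbolicity and the use of the Ahlfors--Schwarz lemma in \cref{lem:period}.
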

This  theorem is a significant strengthening of Liouville's theorem which states that the image of an entire non-constant function must be unbounded. 
\begin{thm}[Great Picard theorem]\label{thm:great}
	Any holomorphic map  $f:\bD^*\to \bP^1\backslash\{0,1,\infty\}$   does not have essential singularity at the origin. 
\end{thm}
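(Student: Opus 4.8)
The plan is to deduce the statement from Montel's fundamental normality criterion combined with a rescaling argument and the maximum modulus principle; this is the classical route, and it keeps the proof essentially self-contained apart from one substantial input. First I would reduce: since $f$ avoids the value $\infty$ on all of $\bD^*$, it is in fact a holomorphic function $f\colon\bD^*\to\bC\setminus\{0,1\}$ with no poles on $\bD^*$, so the only issue is the behaviour of $f$ as $z\to 0$, and what must be shown is that $\lim_{z\to 0}f(z)$ exists in $\bP^1$, equivalently that $f$ extends to a meromorphic function on $\bD$.

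Next I would fix the fundamental annulus $A=\{\,\tfrac12<|z|<2\,\}$ and study the rescaled family $f_n(z):=f(2^{-n}z)$, $n\geq 1$, each a holomorphic map $A\to\bP^1\setminus\{0,1,\infty\}$. By Montel's theorem --- a family of holomorphic maps into $\bP^1$ omitting three fixed points is normal --- some subsequence $f_{n_k}$ converges locally uniformly, in the spherical metric, to a holomorphic map $g\colon A\to\bP^1$ which is either a meromorphic function or identically $\infty$. If $g\not\equiv\infty$, I pick $r\in(\tfrac12,2)$ so that the circle $|z|=r$ avoids the finitely many poles of $g$ in $A$; then $|g|\leq M$ there for some constant $M$, hence $|f_{n_k}|\leq M+1$ on $|z|=r$ for large $k$, i.e.\ $|f|\leq M+1$ on the circles $|z|=r\,2^{-n_k}$, which shrink to the origin. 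Since $f$ is holomorphic on each annulus bounded by two consecutive such circles, the maximum modulus principle propagates the bound and gives $|f|\leq M+1$ on a full punctured neighbourhood of $0$, so the singularity is removable. If instead $g\equiv\infty$, I run the same argument for the nowhere-vanishing holomorphic function $1/f$, which again omits $0,1,\infty$: it is bounded near $0$, hence extends holomorphically across $0$, and then either its value at $0$ is nonzero, so $f$ extends holomorphically, or it vanishes there to finite order, so $f$ has a pole. In every case $f$ has no essential singularity at the origin.

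The one genuinely nontrivial ingredient, and the step I expect to be the main obstacle, is Montel's normality criterion for families omitting two finite values: this is exactly where the hypothesis that three points are deleted enters, and it is itself proved either via the modular function $\lambda\colon\bH\to\bP^1\setminus\{0,1,\infty\}$ together with the Schwarz--Pick lemma applied to lifts to the universal cover, or --- following Zalcman --- by a rescaling argument reducing it to the Little Picard theorem (\cref{thm:little}). A more conceptual alternative that bypasses Montel entirely: equip $\bP^1\setminus\{0,1,\infty\}$ with its complete Poincar\'e metric and $\bD^*$ with its complete hyperbolic metric, whose length element near the puncture is comparable to $|dz|/(|z|\log(1/|z|))$; the Ahlfors--Schwarz lemma makes $f$ distance-decreasing, and one then invokes Kwack's extension theorem --- applicable because $\bP^1\setminus\{0,1,\infty\}$ is hyperbolically embedded in $\bP^1$ --- to conclude that $f$ extends holomorphically to $\bD\to\bP^1$. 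Either way the crux is the hyperbolicity of the thrice-punctured line; the maximum-principle bookkeeping in the second paragraph is routine.
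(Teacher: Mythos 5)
The paper does not prove this statement: it is quoted as a classical theorem (alongside the Little Picard theorem) purely as motivation for the definitions of Picard and Brody hyperbolicity, so there is no in-paper argument to compare against. Your proof is correct and is the standard one. The reduction to a bound on a sequence of shrinking circles via Montel's fundamental normality criterion applied to the rescalings $f_n(z)=f(2^{-n}z)$, followed by the maximum modulus principle on the intermediate annuli and Riemann's removable singularity theorem, is exactly the classical route; the dichotomy $g\not\equiv\infty$ versus $g\equiv\infty$ and the passage to $1/f$ (which indeed also omits $0,1,\infty$) is handled correctly, and you rightly identify Montel's criterion --- i.e.\ the hyperbolicity of $\bP^1\setminus\{0,1,\infty\}$ --- as the one nontrivial input. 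Two trivial points of hygiene: the poles of the limit $g$ need only be finite in number on a compact subannulus such as $\{3/4\le|z|\le 3/2\}$ (they may accumulate at $\partial A$), so you should choose the radius $r$ there; and in the second case you should note that the holomorphic extension of $1/f$ is not identically zero (it is nowhere zero on $\bD^*$), so its zero at the origin has finite order and $f$ acquires at worst a pole. Your alternative sketch via the Poincar\'e metric, the distance-decreasing property, and Kwack's extension theorem is the approach that is actually closest in spirit to the paper, since \cref{thm:PicardVHS} and its relatives are proved by exactly this kind of hyperbolic-metric/Nevanlinna machinery in the sources the paper cites.
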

This is a substantial strengthening of the Casorati–Weierstrass theorem, which only guarantees that the range of  a holomorphic function defined over $\bD^*$ with essential singularity at the origin has image  dense in $\bC$.  One can see that \cref{thm:great} implies \cref{thm:little}. 

The complex algebraic varieties that have the similar  properties as described in  \cref{thm:little,thm:great} is called \emph{hyperbolic}. Precisely, we have the following definition.
\begin{dfn}[Hyperbolicity]
	Let $X$ be a complex quasi-projective variety.  
	\begin{thmlist}
		\item The variety $X$ is \emph{pseudo Picard hyperbolic} if  there is a proper Zariski closed subset $\Xi\subsetneqq X$  such that  any holomorphic map $f:\bD^*\to X$ from the punctured disk $\bD^*$ to $X$ with $f(\bD^*)\not\subseteq \Xi$ extends to  a holomorphic map from the disk $\bD$ to a projective compactification $\overline{X}$ of $X$.
		\item  We say that the variety $X$ is \emph{pseudo Brody hyperbolic} if there exists a proper Zariski closed subset $\Xi \subsetneqq X$ such that every non-constant holomorphic map $f : \mathbb{C} \to X$ (an \emph{entire curve}) has image contained in $\Xi$. 
	\end{thmlist}
\end{dfn}
Note that every pseudo Picard hyperbolic variety is pseudo Brody hyperbolic. While we conjecture the converse to hold true, as of now, we lack both a proof and any counter-example of our conjecture.

In the algebraic setting, we introduce the following definition.
\begin{dfn}[Strongly of log general type]
	Let $X$ be a complex quasi-projective variety. We say that $X$ is \emph{strongly of log general type}\footnote{This terminology originates from Demailly \cite{Dem15} in his strategy
		for the proof of Green-Griffiths-Lang conjecture. 
		Although our notion differs from Demailly’s original definition,
		we retain the same terminology for its descriptive convenience.}
	 if there exists a proper Zariski closed subset $\Xi\subsetneqq X$ such that every positive-dimensional closed subvariety of $X$ not contained in $\Xi$ is of log general type.
\end{dfn} 
Lang conjectured that a complex quasi-projective variety is strongly of log general type if it is of log general type. To my knowledge, this conjecture remains open  even for complex surfaces in general.

\subsubsection{Generalized Green-Griffiths-Lang conjecture}
To characterize algebraic varieties falling into the hyperbolic category, we will start by examining cases where $C$ is a smooth quasi-projective curve, with $\overline{C}$ as its compactification, and $D$ representing the complement of $C$ within $\overline{C}$.   
\begin{figure}\label{figure}
	\centering
	\caption{Hyperbolicity from different viewpoints}
	\begin{tabular}{| m{5cm} | m{2.5cm} | m{2cm} | m{3cm} | }
		\hline
		& $\deg (K_{\overline{C}}+D)$ & $\pi_1(C)$ & Hyperbolicity \\
		\hline\hline
		$\mathbb{P}^1, \mathbb{C}$ & $<0$ & $\{1\}$ & no \\
		\hline
		$\mathbb{C}^*$, torus & $=0$ & Infinite, abelian & no \\
		\hline
		$\mathbb{P}^1\backslash \{\text{at least three points}\}$  torus$\backslash \{\text{at least one point}\}$ $\cdots$ & $>0$ & Infinite, non-abelian & yes \\
		\hline
	\end{tabular}
\end{figure}
As illustrated in Figure~1, we can make the following observations. 
From an algebraic–geometric perspective, hyperbolic curves can be characterized as those whose 
logarithmic canonical bundle is positive. 
On the other hand, from a topological viewpoint, hyperbolic curves are precisely those with 
infinite and non-abelian fundamental groups.

It's worth noting that the generalized Green-Griffiths-Lang conjecture aligns with the algebraic geometric viewpoint, focusing on the positivity of the logarithmic canonical bundle.
\begin{conjecture}[Generalized  Green-Griffiths-Lang conjecture]\label{conj:GGL}
	Let $X$ be a smooth quasi-projective variety.  Then the following properties are equivalent:
	\begin{thmlist} 
		\item  $X$ is of log general type;
		\item  $X$ is pseudo-Picard hyperbolic;
		\item  $X$ is pseudo-Brody hyperbolic;
		\item $X$ is  {strongly of log general type}.
	\end{thmlist}
\end{conjecture}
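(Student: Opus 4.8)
This is a longstanding open conjecture, so what follows is the strategy one would pursue (and which, in the linear case discussed in this survey, is carried out unconditionally), together with an indication of which implications are formal and which constitute the genuine difficulty. The plan is to reduce the fourfold equivalence to a short cycle. Two links are essentially free. The implication (iv)$\Rightarrow$(i) is immediate: applying the definition of ``strongly of log general type'' to $Z=X$ itself --- a positive-dimensional closed subvariety of $X$ not contained in the proper closed subset $\Xi$ --- forces $X$ to be of log general type. The implication (ii)$\Rightarrow$(iii) was already recorded in the text (restrict an entire curve $f\colon\bC\to X$ to a punctured disc at $\infty\in\bP^1$ and use the extension property across the puncture). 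It therefore suffices to establish (i)$\Rightarrow$(ii), together with the ``strongly'' upgrade (i)$\Rightarrow$(iv), and the reverse implication (iii)$\Rightarrow$(i), closing the cycle (i)$\Rightarrow$(ii)$\Rightarrow$(iii)$\Rightarrow$(i) and (i)$\Leftrightarrow$(iv).

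For the direction (i)$\Rightarrow$(ii),(iv) --- ``positivity implies hyperbolicity'', which is the Green--Griffiths--Lang content --- I would follow Demailly's strategy. Starting from a log-general-type $X$ with a log-smooth compactification $(\overline X,D)$, one uses logarithmic holomorphic Morse inequalities and a Riemann--Roch computation on a tower of logarithmic Demailly--Semple jet bundles to produce, for large jet order $k$, a nonzero global logarithmic jet differential of some weighted degree $m$ with values in $A^{-1}$ for an ample line bundle $A$ on $\overline X$; its base locus defines a candidate exceptional set $\Xi$. One then invokes the fundamental vanishing theorem (Ahlfors--Schwarz for maps from $\bD$, and the Nevanlinna-theoretic logarithmic derivative lemma for maps from $\bD^\ast$ and $\bC$) to force every $f\colon\bD^\ast\to X$, resp. $\bC\to X$, with image not contained in $\Xi$ to satisfy all such jet differentials identically, hence to be constrained to $\Xi$; a Noetherian bootstrap, rerunning the argument on each positive-dimensional subvariety not contained in $\Xi$, simultaneously yields the Picard extension statement and the ``strongly of log general type'' conclusion, so that $\Xi$ can be taken to be the (a priori countable, hence by Noetherianity finite) union of the maximal non-hyperbolic subvarieties.

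For the reverse direction (iii)$\Rightarrow$(i) --- ``hyperbolicity implies positivity'', in the spirit of Kobayashi's conjecture --- I would argue contrapositively. If $X$ is not of log general type, then the log minimal model program together with the log abundance conjecture would, after a birational modification, exhibit a fibration $X\dashrightarrow Y$ whose general fibre has log Kodaira dimension strictly less than its dimension, hence (inductively) a covering family of non-general-type subvarieties through a very general point; each such subvariety would support a nonconstant entire curve, contradicting pseudo-Brody hyperbolicity.

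The main obstacle is precisely the part that remains open: the construction of sufficiently many global logarithmic jet differentials with negative twist in dimension $\geq 3$ for (i)$\Rightarrow$(ii), and the abundance and structure theory underpinning (iii)$\Rightarrow$(i). This is exactly where the fundamental-group hypotheses of this survey intervene: when $X$ carries a big local system, one replaces the jet-differential input by the harmonic-map and non-abelian Hodge machinery developed for the reductive Shafarevich conjecture --- pulling back the negativity of the distance function to the relevant symmetric space or Bruhat--Tits building and combining it with the Nevanlinna-theoretic estimates above --- which produces both the log-general-type conclusion and pseudo-Picard/Brody hyperbolicity unconditionally, establishing \cref{conj:GGL} in that case.
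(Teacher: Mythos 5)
This statement is a conjecture, not a theorem: the paper itself says that \cref{conj:GGL} ``remains an open and challenging problem, even in situations where $X$ is a surface,'' and offers no proof of it. You correctly recognize this, so there is no proof in the paper to compare your proposal against, and your text is a strategy sketch rather than an argument that could be checked for correctness. The two formal links you isolate are consistent with the paper: (iv)$\Rightarrow$(i) by taking $Z=X$ is fine, and (ii)$\Rightarrow$(iii) is indeed asserted (without proof) in the text --- though your one-line justification is slightly too quick, since extending an entire curve across the puncture only produces a rational curve in $\overline{X}$, which is not by itself a contradiction unless one also controls rational and $\mathbb{A}^1$-curves outside the exceptional set.

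Where your sketch diverges from what the paper actually does in the cases it can handle: for \cref{thm:GGL} (big local systems) the mechanism is not Demailly--Semple jet differentials at all. The positivity-to-hyperbolicity direction is run through the Nevanlinna-theoretic \cref{thm2nd} on varieties of maximal (quasi-)Albanese dimension and \cref{cor:GGL}, applied after passing to a spectral covering $\xsp\to X$ whose quasi-Albanese map is generically finite (this is where the harmonic-map/Bruhat--Tits input enters, via \cref{thm:KE} and \cref{lem:BT2}); and the hyperbolicity-to-positivity direction (iii)$\Rightarrow$(i) is obtained not from log MMP and abundance but again from \cref{cor:GGL} together with the Campana--P\u{a}un criterion \cref{thm:CP} to descend bigness of $K_{\overline{\xsp}}+D^{\rm sp}$ to $K_{\overline{X}}+D$. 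So your closing paragraph correctly identifies that the fundamental-group hypotheses replace the jet-differential input, but the actual replacement is Yamanoi-style Nevanlinna theory on semi-abelian targets, not a direct negativity-of-distance argument. As a conditional outline of the general conjecture your proposal is reasonable folklore; as a proof it establishes nothing, which you acknowledge.
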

So far \cref{conj:GGL} remains an open and challenging problem, even in situations where $X$ is a surface. We   are fascinated by this conjecture   due to its analogy with the Bombieri-Lang conjecture concerning rational points. 
\begin{conjecture}[Bombieri-Lang]
	Let $X$ be a smooth projective variety defined over a number field $k$. Then there exists a dense Zariski closed subset $\Xi\subsetneqq X$ such that  for all   number field extensions  $k'$ of $k$, the set of $k'$-rational points in $X\backslash \Xi$ is finite. 
\end{conjecture}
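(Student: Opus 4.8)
This conjecture is one of the deepest open problems in Diophantine geometry, and in full generality it is far beyond current techniques; the following is only a map of the available routes and of the point at which they all stop. First one should observe that the statement can only be expected to hold once $X$ is of general type --- an abelian variety, or any variety dominating one, acquires a Zariski-dense set of rational points after a finite extension, so then no proper closed $\Xi$ can work --- and that the natural candidate for $\Xi$ is the \emph{Lang exceptional locus}: the Zariski closure of the union of the images of all non-constant morphisms $A\to X$ from positive-dimensional group varieties. The plan would be to establish the statement separately according to the geometric structure of $X$, and the main difficulty is that one of the resulting cases is completely out of reach.

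In the \emph{linear case} --- $X$ carrying a big, or large, linear local system, ideally one underlying a $\bZ$-VHS --- the strategy would be to transpose the hyperbolicity argument of \cref{sec:hyper} to the arithmetic setting. One would first spread $X$ and the local system out over a ring of $S$-integers $\mathcal{O}_{k',S}$ of a finite extension $k'$; then, for each rational point $x\in X(k')$, restrict the (assumed motivic) local system to obtain a semisimple representation of $\mathrm{Gal}(\overline{k'}/k')$ that is unramified outside $S$ and has bounded Hodge--Tate weights, of which there are only finitely many by Faltings' finiteness theorem. Next one would run the $p$-adic analogue of the proof of the hyperbolicity theorem --- the Lawrence--Venkatesh method combined with the non-abelian Hodge-theoretic control of the associated period map --- to force all but finitely many of the $p$-adic periods attached to the points of $X(k')$ to coincide, so that the rational points land in the fibres of a period map whose positive-dimensional part is precisely the exceptional locus $\Xi$ already produced in the proof of the hyperbolicity theorem; an induction on $\dim X$ applied to the degeneration locus of the local system would absorb the remaining points.

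In the \emph{abelian case} --- $X$ with finite, or merely non-degenerate, Albanese map --- one is on solid ground: by the Faltings--Vojta (Mordell--Lang) theorem the rational points of a subvariety of an abelian variety lie in a finite union of translates of abelian subvarieties it contains, and, $X$ being of general type, each positive-dimensional such translate lies in the Ueno--Kawamata locus, hence in $\Xi$. Together with the linear case this would settle, conjecturally, every $X$ whose Shafarevich-type core is non-trivial.

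The hard part --- in fact the only part with no known approach at all --- is the complementary regime: varieties of general type admitting no non-constant map to a positive-dimensional group variety and no non-trivial local system, the prototype being a very general hypersurface of high degree in $\bP^n$, which is simply connected. Here none of the inputs above exists, and the sole remaining tool is Vojta's Diophantine approximation method --- the arithmetic mirror of the Nevanlinna theory used elsewhere in this survey --- which at present yields the conclusion only for subvarieties of semi-abelian varieties (Vojta, Faltings, Bombieri). To reach a general variety of general type one would need an arithmetic Second Main Theorem with an effective error term on an arbitrary such variety, which is exactly the arithmetic shadow of the positivity predicted analytically by the generalized Green--Griffiths--Lang conjecture (\cref{conj:GGL}). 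It is precisely there that a genuinely new idea is required.
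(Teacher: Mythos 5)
This statement is the Bombieri--Lang conjecture, which the paper records only as a \emph{conjecture} --- it is invoked purely as the arithmetic analogue motivating the generalized Green--Griffiths--Lang conjecture (\cref{conj:GGL}) --- and the paper supplies no proof of it. There is therefore nothing in the paper to compare your argument against, and indeed no argument to evaluate: you have, correctly, not produced a proof but a survey of the known partial approaches, and you say so explicitly. That honesty is the right call; any ``proof'' of this statement would have been wrong.

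As a survey your account is accurate in its broad strokes: the necessity of restricting to general type (the statement as literally printed, with no hypothesis on $X$, is false for, e.g., abelian varieties, as you note); the identification of $\Xi$ with the Lang exceptional locus; the Faltings--Vojta theorem handling subvarieties of abelian varieties, which is the arithmetic mirror of the Bloch--Ochiai--Kawamata--Noguchi and Yamanoi results cited in the paper's list of known cases of \cref{conj:GGL}; the Lawrence--Venkatesh method as the current frontier in the presence of a suitable local system, mirroring the paper's \cref{thm:GGL}; and the complete absence of tools for simply connected varieties of general type such as general high-degree hypersurfaces (where, notably, the \emph{analytic} side is known by the results of \cite{DMR} et al.\ cited in the paper, so the analogy genuinely breaks down there). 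One small caution: in the linear case your sketch quietly assumes the local system is motivic (or at least arithmetic) in order to attach Galois representations to rational points; for an arbitrary big complex local system, as in \cref{thm:GGL}, even this first step of the Lawrence--Venkatesh strategy is not available, so the arithmetic analogue of the paper's hyperbolicity theorem is further out of reach than your phrasing suggests.
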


\subsubsection{Hyperbolicity of compactifications after taking finite \'etale coverings} 
It is natural to ask why we are interested in the more general notion Picard hyperbolicity. It indeed enjoys the following algebraicity property.
\begin{proposition}[\cite{Denarxiv}] \label{extension theorem} 
	Let $X$ be a smooth quasi-projective variety that is pseudo Picard hyperbolic. Then any meromorphic map $f:Y\dashrightarrow X$ from another smooth quasi-projective variety $Y$ to $X$ with $f(Y)\not\subset \mathrm{Sp_p}(X)$ is \emph{rational}. 
\end{proposition}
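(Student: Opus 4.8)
The plan is to show that $f$ extends to a \emph{meromorphic} map between projective compactifications of $Y$ and $X$, and then to invoke Chow's theorem to upgrade this to a rational map. Since rationality is insensitive to bimeromorphic modifications of the source, I would begin by blowing up $Y$ to resolve the indeterminacy of $f$, and so assume that $f\colon Y\to X$ is a morphism (the hypothesis $f(Y)\not\subseteq\Spp(X)$ is unaffected). Fix a smooth projective compactification $Y\subset\overline Y$ whose boundary $D_Y=\overline Y\setminus Y$ is a simple normal crossing divisor, and a projective compactification $X\subset\overline X$. It then suffices to extend $f$ to a meromorphic map $\bar f\colon\overline Y\dashrightarrow\overline X$: the closure $\overline\Gamma$ of its graph inside the projective variety $\overline Y\times\overline X$ is a closed analytic subset, hence algebraic by Chow's theorem, and as the first projection $\overline\Gamma\to\overline Y$ is a birational morphism of algebraic varieties, $\bar f$ — and therefore $f$ — is rational.

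To produce the extension I would work first at a general point of each component of $D_Y$. Because $f(Y)\not\subseteq\Spp(X)$, the analytic subset $f^{-1}(\Spp(X))$ is proper in $Y$, so its closure in $\overline Y$ contains no component of $D_Y$ (and the same holds for the indeterminacy locus of the original map, which has codimension $\geq 2$). Fix a component $D_i$ of $D_Y$ and a general point $p\in D_i$, and choose a polydisc $\bD^n\subset\overline Y$ around $p$ in which $D_i=\{z_1=0\}$, no other boundary component meets $\bD^n$, and $Y\cap\bD^n=\bD^*\times\bD^{n-1}$ with $f$ holomorphic there. For every $c$ outside a proper analytic subset of $\bD^{n-1}$ the slice $f|_{\bD^*\times\{c\}}\colon\bD^*\to X$ has image not contained in $\Spp(X)$, hence, by pseudo Picard hyperbolicity, extends to a holomorphic map $\bD\to\overline X$. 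I would then feed these slice-wise extensions into a parametrized version of the big Picard / Kwack extension theorem for maps into compact complex manifolds (in the spirit of Noguchi--Ochiai and Siu) to obtain a meromorphic extension of $f$ across $\{0\}\times\bD^{n-1}$ near $p$, and hence across $D_i$ away from a subset of codimension $\geq 2$ in $\overline Y$.

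Running this over all components of $D_Y$ extends $f$ meromorphically on $\overline Y$ minus an analytic subset of codimension $\geq 2$; since $\overline X$ is a projective (in particular compact Kähler) manifold, the Hartogs--Siu extension theorem for meromorphic maps across analytic subsets of codimension $\geq 2$ produces the global map $\bar f\colon\overline Y\dashrightarrow\overline X$, and the Chow-theorem argument of the first paragraph finishes the proof.

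The hard part will be the passage from the slice-wise extensions to a genuine meromorphic extension across the divisor. Pseudo Picard hyperbolicity only controls punctured discs whose image avoids the exceptional locus $\Spp(X)$, so I would need, uniformly in the transverse parameter $c$, both to show that the slices falling into $\Spp(X)$ are negligible and to rule out any degeneration of the image as $z_1\to 0$. Concretely this comes down to a local volume bound for the graph of $f$ near $D_Y$, allowing an application of Bishop's extension theorem (or a Remmert--Stein-type dimension estimate on the cluster set) and exploiting the compactness of $\overline X$. Establishing such a bound while only having control along generic slices is the technical core of the argument.
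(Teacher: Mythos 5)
The survey does not actually prove this proposition --- it is quoted from \cite{Denarxiv} --- and your proposal follows essentially the same route as the proof given there: slice transversally to the boundary divisor, extend generic punctured-disc slices using pseudo Picard hyperbolicity, promote this to a meromorphic extension across general points of each boundary component, handle the remaining codimension-two locus by Siu's Hartogs-type extension theorem for meromorphic maps into compact K\"ahler manifolds, and conclude rationality by Chow/GAGA. The step you leave as a black box --- passing from extension of generic one-dimensional slices to a meromorphic extension of $f$ across the divisor --- is precisely the extension--convergence theorem of Noguchi/Siu, whose proof is, as you anticipate, a local volume bound on the graph combined with Bishop's theorem; citing it closes the argument. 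One small point to tighten: resolving the indeterminacy of an \emph{analytic} meromorphic map by blow-ups produces a complex manifold that is not a priori quasi-projective (the indeterminacy locus is only known to be analytic at this stage), so the initial reduction to a morphism is mildly circular. It is cleaner to skip it: the indeterminacy locus has codimension at least two, so almost every transverse disc avoids it and the slicing argument, together with Siu's theorems (which are stated for meromorphic maps), applies directly.
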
 
A direct consequence of \cref{extension theorem}  is the following uniqueness of algebraic structure of pseudo Picard hyperbolic varieties. 

\begin{cor}[\cite{Denarxiv}] 
	Let $X$ and $Y$ be smooth quasi-projective varieties such that there exists an analytic   isomorphism $\varphi:Y^{\rm an}\to X^{\rm an}$ of associated complex spaces.
	Assume that $X$ is pseudo Picard hyperbolic. 
	Then $\varphi$ is an algebraic isomorphism. \qed
\end{cor}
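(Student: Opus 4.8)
The plan is to deduce everything from \cref{extension theorem}. First, regard the analytic isomorphism $\varphi\colon Y^{\mathrm{an}}\to X^{\mathrm{an}}$ as a meromorphic map $\varphi\colon Y\dashrightarrow X$ between the algebraic varieties (it is in fact an everywhere-defined holomorphic map). Since $\varphi$ is surjective, its image is all of $X$, which is not contained in the proper Zariski closed subset $\Spp(X)\subsetneqq X$; hence \cref{extension theorem} applies and shows that $\varphi$ is rational. Unwinding this, there is a rational map $\psi\colon Y\dashrightarrow X$ of varieties with $\psi^{\mathrm{an}}=\varphi$ on the Zariski-dense locus $U$ where $\psi$ is defined; let $\Gamma\subseteq Y\times X$ be the Zariski closure of the graph of $\psi$, an irreducible algebraic subvariety of the quasi-projective variety $Y\times X$, and let $p_1\colon\Gamma\to Y$, $p_2\colon\Gamma\to X$ be the two projections, which are morphisms of varieties.

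The next step is to identify $\Gamma^{\mathrm{an}}$ with the graph $\Gamma_\varphi\subset Y^{\mathrm{an}}\times X^{\mathrm{an}}$ of $\varphi$. On one hand, $\Gamma_\varphi$ is a closed analytic subset (as $\varphi$ is a morphism of analytic spaces and $X$ is separated), is analytically isomorphic to $Y^{\mathrm{an}}$ via $p_1$, hence irreducible of dimension $\dim Y$, and it is contained in $\Gamma^{\mathrm{an}}$ because $\varphi$ is continuous on all of $Y^{\mathrm{an}}$ and agrees with $\psi^{\mathrm{an}}$ on the analytically dense open $U^{\mathrm{an}}$. On the other hand, $\Gamma^{\mathrm{an}}$ is irreducible of dimension $\dim\Gamma=\dim Y$, so the inclusion $\Gamma_\varphi\subseteq\Gamma^{\mathrm{an}}$ must be an equality. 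Under this identification $p_1^{\mathrm{an}}\colon\Gamma^{\mathrm{an}}\to Y^{\mathrm{an}}$ is inverse to the graph embedding $y\mapsto(y,\varphi(y))$, hence an analytic isomorphism, and $p_2^{\mathrm{an}}=\varphi\circ p_1^{\mathrm{an}}$ is an analytic isomorphism as well.

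Finally, I would promote these to algebraic isomorphisms using the standard fact that a morphism of $\mathbb{C}$-varieties which induces an isomorphism of the associated analytic spaces is itself an isomorphism of varieties: such a morphism is bijective, and at every point the induced map of local rings becomes an isomorphism after completion, since the completions of the algebraic and analytic local rings coincide, so by faithfully flat descent along completion the map of local rings is already an isomorphism. Applying this to $p_1$ and $p_2$ yields that both are isomorphisms of varieties, whence $\varphi=p_2\circ p_1^{-1}$ and $\varphi^{-1}=p_1\circ p_2^{-1}$ are morphisms, i.e.\ $\varphi$ is an algebraic isomorphism. I do not expect a serious obstacle here: the only point requiring care is the middle step, where one must make sure that ``rational'' in \cref{extension theorem} is being used to produce a genuine algebraic model $\Gamma$ of the graph and that, after analytification, no indeterminacy of $\psi$ survives; once $\Gamma^{\mathrm{an}}=\Gamma_\varphi$ is in hand, the remainder is formal.
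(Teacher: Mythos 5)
Your overall strategy is the intended one: the paper treats this corollary as an immediate consequence of \cref{extension theorem}, and your graph argument (apply the proposition to see $\varphi$ is rational, show the Zariski closure $\Gamma$ of the graph analytifies to the graph $\Gamma_\varphi$ of $\varphi$ by comparing irreducible analytic sets of equal dimension, and conclude that both projections are morphisms whose analytifications are isomorphisms) is the natural way to make ``direct consequence'' precise. Everything up to and including the identification $\Gamma^{\rm an}=\Gamma_\varphi$ is correct.

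The one flaw is in your justification of the final standard lemma. It is false that a local homomorphism of Noetherian local rings inducing an isomorphism on completions is itself an isomorphism: any \'etale morphism, e.g.\ the squaring map $\bG_m\to\bG_m$, induces isomorphisms $\widehat{\cO}_{W,f(v)}\simeq \widehat{\cO}_{V,v}$ at every point while the uncompleted local rings are not isomorphic (the map on fraction fields has degree $2$). The proposed ``faithfully flat descent along completion'' does not apply, because $\cO_{V,v}\otimes_{\cO_{W,f(v)}}\widehat{\cO}_{W,f(v)}$ is not $\widehat{\cO}_{V,v}$ in this situation. What the completion comparison actually gives is that $p_1$ and $p_2$ are flat and unramified, hence \'etale; this must then be combined with the bijectivity you already recorded: an \'etale morphism of complex varieties that is injective on closed points is an open immersion, and a surjective open immersion is an isomorphism. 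With that repair the argument is complete, and the conclusion $\varphi=p_2\circ p_1^{-1}$ with inverse $p_1\circ p_2^{-1}$ follows as you state.
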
 

Let us discuss some examples of pseudo Picard hyperbolic varieties. A classical result due to Borel \cite{Bor72} and Kobayashi-Ochiai \cite{KO71} is that quotients of bounded symmetric domains by torsion-free lattices are Picard hyperbolic. In \cite{Denarxiv} we  proved a similar result for algebraic varieties that admit a complex variation of Hodge structures.
\begin{thm}[{\cite[Theorem A]{Denarxiv}}]\label{thm:PicardVHS}
	Let $X$ be a smooth quasi-projective variety. Assume that there is a $\bC$-VHS on $X$ whose period mapping is injective at one  point. Then $X$ is pseudo Picard hyperbolic. \qed
\end{thm}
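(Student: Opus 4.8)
The plan is to distill from the $\bC$-VHS a negatively curved Hermitian pseudometric on $X$ whose degeneracy locus will play the role of the exceptional set $\Xi$, and then to promote the classical negative-curvature obstruction---which by itself only controls entire curves---to a removable-singularity statement at punctures, using Schmid's asymptotic description of the variation near the boundary. Fix a smooth projective compactification $\overline{X}$ of $X$ with $D:=\overline{X}\setminus X$ a simple normal crossing divisor, write $n=\dim X$, and let $p:X\to\sD/\Gamma$ be the period map, with $\sD$ the classifying space and $\Gamma$ the monodromy group.

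First I would build the Hodge pseudometric. By Griffiths' curvature computation, $\sD$ carries a natural Hermitian metric $g_{\sD}$ whose holomorphic sectional curvature, restricted to horizontal tangent directions, is bounded above by $-\gamma_0$ for some $\gamma_0>0$; since $p$ is holomorphic and horizontal, $h:=p^{*}g_{\sD}$ is a semipositive Hermitian pseudometric on $T_X$ with holomorphic sectional curvature $\leq-\gamma_0$ at every point where $dp$ is injective. The hypothesis that $p$ is injective near some point forces $dp$ to be injective somewhere, by a constant-rank argument, hence on a nonempty Zariski-open subset $X^{\circ}\subseteq X$---the immersion locus is Zariski-open, being the non-degeneracy locus of the Higgs field attached to the VHS, an algebraic object. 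Put $\Xi:=X\setminus X^{\circ}$, a proper Zariski-closed subset on whose complement $h$ is positive with holomorphic sectional curvature $\leq-\gamma_0$. Already at this stage the Ahlfors--Schwarz--Yau lemma yields pseudo Brody hyperbolicity with exceptional set $\Xi$, since an entire curve avoiding $\Xi$ would pull $h$ back to a nonzero negatively curved pseudometric on $\bC$, which is impossible.

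Next I would control $h$ near $D$. By Borel's theorem the local monodromies of the VHS around the components of $D$ are quasi-unipotent, so Schmid's several-variable nilpotent orbit theorem and the attendant norm estimates apply. These furnish two-sided, moderate (at most log-polynomial) control of $h$: in a polydisk chart $(\bD^{*})^{k}\times\bD^{\,n-k}$ around a point of $D$ one has $h\leq C\,\omega_{P}$ for a suitable Poincaré-type metric $\omega_{P}$---so $h$ extends to a continuous degenerate pseudometric on the logarithmic tangent bundle $T_{\overline{X}}(-\log D)$---while on $X^{\circ}$ the metric $h$ dominates a complete metric near $D$, making $X^{\circ}$ hyperbolically embedded near $D$ in $\overline{X}$. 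Given now a holomorphic $f:\bD^{*}\to X$ with $f(\bD^{*})\not\subseteq\Xi$, so that $f^{*}h\not\equiv0$, the Ahlfors--Schwarz--Yau lemma applied to the Finsler pseudometric $h$ gives $f^{*}h\leq C\,\omega_{\mathrm{Poin},\bD^{*}}$; combined with the boundary analysis this forces $f$ to have moderate growth at $0$---its $D$-normal coordinate functions are Poincaré-distance-decreasing maps $\bD^{*}\to\bD$ and hence extend over $0$, while completeness in those directions prevents the cluster set of $f$ at $0$ from being positive-dimensional or from meeting several boundary strata---whence the classical Kwack--Noguchi argument shows this cluster set to be a single point $\bar{x}\in\overline{X}$. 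In a coordinate chart around $\bar{x}$ the map $f$ is then bounded near $0$, so it extends holomorphically to $\bD\to\overline{X}$ by Riemann's removable-singularity theorem; since $\Xi$ is independent of $f$, this proves pseudo Picard hyperbolicity.

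The hard part will be the boundary analysis together with the single-point cluster-set claim: one has to extract from Schmid's $\mathrm{SL}_{2}$-orbit theorem and norm estimates the precise two-sided control of $h$ near $D$---particularly along $\Xi\cap D$ and along components of $D$ with trivial local monodromy, where $h$ may genuinely degenerate---and then combine it with the Ahlfors--Schwarz bound to exclude an essential singularity of $f$ at $0$, in the spirit of Borel's extension theorem for quotients of bounded symmetric domains. The various preliminary reductions (to unipotent monodromy, and to torsion-free $\Gamma$ if one wishes $\sD/\Gamma$ to be a manifold) are routine, being effected by finite base changes on the source $\bD^{*}$ or by finite étale covers of $X$, along which pseudo Picard hyperbolicity descends by the usual $z\mapsto z^{m}$ trick.
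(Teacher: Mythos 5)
This theorem is not proved in the survey: it is quoted from \cite{Denarxiv} with a \qed, and the text itself warns that the proofs there (and the simplified one in \cite{CD21}) are ``rather involved,'' rely on analytic non-abelian Hodge theory, and that ``Nevanlinna theory remains an essential ingredient in both works.'' Measured against that, the first half of your proposal is sound: pulling back the Griffiths--Schmid metric, identifying $dp$ with the Higgs field so that the immersion locus is a nonempty Zariski-open set $X\setminus\Xi$, and deducing pseudo Brody hyperbolicity from the Ahlfors--Schwarz lemma (equivalently, from the constancy of $p\circ f$ for entire $f$ and the discreteness of the fibres of $p$ over $X\setminus\Xi$) is the standard and correct argument.

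The Picard half, however, has a genuine gap precisely at the step you defer to ``the hard part,'' and the specific assertions you make there are not justified. The Ahlfors--Schwarz estimate $f^{*}h\le C\,\omega_{\mathrm{Poin},\bD^{*}}$ only bounds the $h$-length of the loops $f(\{|z|=r\})$, and $h$ is merely a degenerate pseudometric on $X$: it vanishes along $\Xi$, and near a stratum of $D$ around which the local monodromy is finite it is comparable to a smooth, bounded, non-blowing-up metric. Hence small $h$-length does not force small diameter in $\overline{X}$, and neither the claim that the ``$D$-normal coordinate functions are Poincar\'e-distance-decreasing maps $\bD^{*}\to\bD$'' nor the claim that ``completeness in those directions'' pins the cluster set down to a point can be extracted from the curvature hypothesis: both would require a lower bound of $h$ by a Poincar\'e-type metric of $(\overline{X},D)$, which fails exactly along $\Xi$ and along boundary strata where the VHS does not degenerate, so a Kwack--Noguchi argument run on $h$ does not close. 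This is why the actual proofs proceed differently: one builds a Finsler (pseudo)metric from the logarithmic Higgs bundle which, by the Mochizuki/Schmid norm estimates, dominates a fixed ample class on $\overline{X}$ up to controlled boundary terms outside a proper Zariski-closed subset; the Ahlfors--Schwarz inequality then gives $T_f(r,L)=O(\log r)$ for the Nevanlinna order function, and the criterion recalled in \S 3.1 of this survey (logarithmic growth of $T_f(r,L)$ excludes an essential singularity) yields the extension. Your outline is missing exactly this lower bound / order-function step, which is the real content of the theorem.
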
   
A similar result was discussed in \cite{BB20}.

In \cite{Nad89}, Nadel proved the nonexistence of certain level structures on abelian varieties over complex function fields, which was refined by Aihara-Noguchi and  Rousseau in \cite{AN91,Rou16}. Precisely, they proved the following theorem:
\begin{thm}[\cite{Nad89,Rou16}]
	Let $X$ be a smooth quasi-projective variety such that $X=\Omega/\Gamma$ where $\Omega$ is a bounded symmetric domain and $\Gamma$ is an arithmetic torsion free lattice acting on $\Omega$. Then there exists a finite index subgroup $\Gamma'\subset \Gamma$, such that  for the quasi-projective variety $X':=\Omega/\Gamma'$, its projective compactification $\overline{X'}$ is Brody (moreover Kobayashi) hyperbolic modulo the boundary $\overline{X'}\backslash X'$. 
\end{thm}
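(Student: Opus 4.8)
The plan is to take $\Gamma'$ to be a deep principal congruence subgroup and then combine the complete hyperbolicity of the interior with a positivity analysis of a toroidal compactification.

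First I would fix the level. Since $\Gamma$ is arithmetic, sitting inside $\mathbf{G}(\bZ)$ for its ambient $\bQ$-group $\mathbf{G}$, I take the principal congruence subgroups $\Gamma(m)$: for $m\geq 3$ these are neat, torsion-free, and cofinal among finite-index subgroups of $\Gamma$, so I may set $\Gamma':=\Gamma\cap\Gamma(m)$ with $m$ large (to be fixed later) and $X':=\Omega/\Gamma'$. By Ash--Mumford--Rapoport--Tai, after refining the fan data (and enlarging $m$ if needed) I obtain a smooth projective toroidal compactification $\overline{X'}$ with simple normal crossing boundary divisor $D:=\overline{X'}\setminus X'$. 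Since $\Omega$ is biholomorphic to a bounded domain and $\Gamma'$ acts freely and properly discontinuously, $X'$ is a \emph{complete hyperbolic} manifold; in particular every holomorphic map $\bC\to X'$ is constant, and by Borel's extension theorem every holomorphic map $\bD^*\to X'$ extends holomorphically to $\bD\to\overline{X'}$.

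Next I would reduce the statement to entire curves: it suffices to show that for $m$ large every non-constant entire curve $f:\bC\to\overline{X'}$ has image contained in $D$, after which Brody's reparametrization lemma together with the compactness of $\overline{X'}$ upgrades this to ``Kobayashi hyperbolic modulo $D$''. To control entire curves I would use that, for a quotient of a bounded symmetric domain, $\Omega^1_{\overline{X'}}(\log D)$ is nef and big and is ample modulo $D$: the Bergman metric descends to a metric on $\Omega^1_{X'}$ of seminegative curvature which is ``good'' along $D$ in Mumford's sense, hence extends with logarithmic poles; in particular $X'$ is of log general type. Using bigness of $\Omega^1_{\overline{X'}}(\log D)$ together with Demailly's holomorphic Morse inequalities for logarithmic jet bundles --- here deep level is what makes $X'$ an honest manifold, and Cadorel-type $L^2$-estimates near the cusps are what control the boundary --- I would produce global logarithmic jet differentials of some order $k$ vanishing along an ample divisor $A$. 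By the logarithmic fundamental vanishing theorem (Bloch--Ochiai, Siu--Yeung, Demailly), any such differential vanishes identically along the image of every entire curve, so if $f(\bC)\not\subseteq D$ then $f(\bC)$ lies in a proper closed subvariety $Y\subsetneq\overline{X'}$, namely the base locus of these differentials. I would then induct on $\dim\overline{X'}$: the key point is that any such $Y$ with $Y\not\subseteq D$ is again of the same kind, i.e.\ $Y\cap X'$ is a quasi-projective manifold of log general type whose normalized toroidal compactification is hyperbolic modulo its boundary, with base case $\dim=1$, where the condition reads simply that the compactified curve has genus $\geq 2$ --- guaranteed for $m$ large by the genus formula. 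Running the induction back gives $f(\bC)\subseteq D$.

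The hard part will be the boundary: closing the induction requires that, for deep level, the toroidal boundary strata of $\overline{X'}$ are themselves of log general type and hyperbolic modulo their own boundary. For a general bounded symmetric domain this needs the structure theory of rational boundary components --- each stratum is a fibration over an arithmetic quotient of a smaller bounded symmetric domain together with a toric part --- and a uniform-in-$m$ control showing that deep level forces the required genus/general-type bounds on all of the a priori infinitely many smaller Shimura-type subvarieties at once. A secondary technical difficulty is producing enough logarithmic jet differentials vanishing on an ample divisor when $\Omega^1_{\overline{X'}}(\log D)$ is only nef and big rather than ample, which is precisely where the good-metric estimates of Mumford and Cadorel near $D$ enter. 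In the Siegel case $\Omega=\mathfrak{H}_g$, $X'=\mathcal{A}_g(m)$, Nadel's original argument avoids jet differentials altogether, working directly with theta functions and a Schwarz-lemma estimate at the cusps of the toroidal compactification.
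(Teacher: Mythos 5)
The paper does not actually prove this statement: it is quoted from Nadel \cite{Nad89}, Aihara--Noguchi \cite{AN91} and Rousseau \cite{Rou16}, and their arguments are metric-theoretic. One descends the Bergman metric of $\Omega$ to $X'$, uses the depth of the level $\Gamma'$ to produce automorphic forms vanishing to high order along the boundary, and combines the two into a degenerate, negatively curved pseudo-metric on $\overline{X'}$ whose null locus is exactly $D=\overline{X'}\setminus X'$; the Ahlfors--Schwarz lemma then bounds the Kobayashi metric from below away from $D$, which yields Kobayashi (hence Brody) hyperbolicity modulo $D$ in one stroke. Your proposal replaces this by a logarithmic jet-differential strategy, and two of its steps fail.

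First, your reduction ``no entire curves outside $D$, plus Brody reparametrization, upgrades to Kobayashi hyperbolicity modulo $D$'' is false: the relative version of Brody's theorem does not hold. A compact manifold can be Brody hyperbolic modulo a divisor without being Kobayashi hyperbolic modulo it, because the Brody limit of a degenerating sequence of discs may escape into $D$ even when the discs themselves do not. The Kobayashi statement is precisely what forces a pointwise Schwarz-lemma estimate off $D$ and cannot be recovered a posteriori from the Brody statement. Second, the induction on base loci does not close. The fundamental vanishing theorem only confines an entire curve to the base locus $Y$ of the jet differentials you construct; $Y$ is an arbitrary subvariety, not a sub-Shimura variety, so it is not ``again of the same kind.'' Knowing that every subvariety of $X'$ not contained in $D$ is of log general type (which is true, by restricting the Bergman metric) does not let you conclude that it is hyperbolic --- that implication is the generalized Green--Griffiths--Lang conjecture, i.e.\ essentially the statement you are trying to prove. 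Your worry about the toroidal boundary strata is also misplaced: those strata are fibrations with toric and abelian pieces and are never hyperbolic, but they need not be, since the theorem permits entire curves inside $D$; the genuine unresolved difficulty is that the exceptional locus produced by the sub-induction on $Y$ need not be contained in $D$. The Bergman-metric-plus-cusp-forms proof avoids all of this, which is why it is the one in the literature.
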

In \cite{Denarxiv},   we obtained the following result which incorporates previous results by Nadel, Aihara-Noguchi and Rousseau. 
\begin{thm}[{\cite[Theorem B]{Denarxiv}}]\label{thm:Picard}
	Let $X$ be a smooth quasi-projective variety. Assume that there is a complex variation of  Hodge structures on $X$ whose period mapping is injective at one  point.  Then there exists a finite \'etale cover $X'$ of $X$ such that  its projective compactification $\overline{X'}$ is pseudo Picard hyperbolic and strongly of general type.  
\end{thm}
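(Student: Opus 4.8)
The plan is to view \cref{thm:Picard} as a common refinement of \cref{thm:PicardVHS} and of the Nadel--Rousseau theorem quoted above: \cref{thm:PicardVHS} already gives pseudo Picard hyperbolicity of the interior $X$, so the only new content is to arrange, after a suitable finite étale cover, that the \emph{compactification} $\overline{X'}$ — not merely its interior — is both of general type and hyperbolic modulo a proper subvariety. The guiding model is the curve $\bP^1\setminus\{0,1,\infty\}$: it is of log general type and pseudo Picard hyperbolic, while $\bP^1$ is neither of general type nor hyperbolic; but a finite étale cover ramifying to high order over the three punctures compactifies to a curve of genus $\geq 2$, which is both. I would reproduce this in arbitrary dimension using the variation of Hodge structures, whose monodromy is necessarily infinite since the period map is immersive somewhere.

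First I would construct the cover. Fix a smooth projective compactification $\overline{X}$ with simple normal crossing boundary $D$, let $\Gamma\subset\GL_N(\bC)$ be the monodromy of the $\bC$-VHS, and recall that immersivity of the period map forces $X$ to be of log general type, i.e.\ $K_{\overline{X}}+D$ big. Using residual finiteness of finitely generated linear groups — so that loops around the components of $D$ have arbitrarily large order in suitable finite quotients of $\pi_1(X)$ — together with Selberg's lemma, I would produce a finite étale cover $X'\to X$ with torsion-free monodromy for which the induced finite morphism $\mu:\overline{X'}\to\overline{X}$ of compactifications ramifies to large order along $D$. Then $K_{\overline{X'}}=\mu^*(K_{\overline{X}}+D)-D'$ with $D'=\overline{X'}\setminus X'$, so the bigness of $K_{\overline{X}}+D$ together with the large ramification makes $K_{\overline{X'}}$ big; running the same argument on subvarieties — which, off the degeneracy locus of the period map, inherit generically immersive period maps and are therefore of log general type — would show that $\overline{X'}$ is \emph{strongly of general type}, with exceptional locus $\Xi$ the Zariski closure of the non-immersive locus of the extended period map together with the boundary strata over which the limiting VHS degenerates.

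Next I would prove that $\overline{X'}$ is pseudo Picard hyperbolic with the same $\Xi$. Given $f:\bD^*\to\overline{X'}$ with $f(\bD^*)\not\subseteq\Xi$: if $f$ stays inside $X'$ this is \cref{thm:PicardVHS} for $X'$ (whose hypothesis is inherited from that of $X$); otherwise $f$ approaches $D'$ and everything hinges on the asymptotics of the period metric near the boundary. Using Schmid's analysis of degenerating period mappings, extended by Mochizuki to arbitrary tame harmonic bundles, I would show that on a polydisc neighborhood $(\bD^*)^k\times\bD^{n-k}$ of a boundary point the pullback of the Griffiths--Schmid metric — whose holomorphic sectional curvature is bounded above by a negative constant in horizontal directions — is, away from $\Xi$, bounded below by a complete Poincaré-type metric with negatively pinched curvature; this is where the cover is used, to make the local structure of the VHS along $D'$ pinched rather than merely bounded. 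An Ahlfors--Schwarz/big-Picard argument then forces $f$, restricted to the preimage of $\overline{X'}\setminus\Xi$, to extend continuously, hence holomorphically, across the puncture, and a descending induction on the dimension of the boundary strata — each carrying its own limiting VHS, to which the same analysis applies after the cover — handles the case where $f$ meets the deeper strata of $D'$.

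I expect the main obstacle to be this last metric estimate: producing a complete, negatively pinched lower bound for the period metric near the \emph{full} boundary, uniformly over strata of all codimensions, after the étale cover. In the Hermitian case this is precisely Nadel's and Rousseau's analysis of level structures near the cusps of a locally symmetric variety, and the work is to carry out the analogue for an arbitrary period domain, with Schmid's nilpotent and $\mathrm{SL}_2$-orbit theorems and Mochizuki's norm estimates replacing explicit Siegel-domain geometry; the étale cover in the first step must be chosen precisely so that these asymptotics come out pinched. A secondary point, already visible in the contrast between $\bC^*$ and $\bP^1\setminus\{0,1,\infty\}$, is to ensure that $\pi_1(X)$ is rich enough to admit an étale cover with $\overline{X'}$ of general type — which is guaranteed here by the infinitude of the monodromy.
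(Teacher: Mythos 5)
First, a caveat: this survey does not reproduce a proof of \cref{thm:Picard}; it quotes it from \cite{Denarxiv} and explicitly says the proof there is ``rather involved'', relies on analytic non-abelian Hodge theory, and that Nevanlinna theory remains essential both there and in the simplified proof of \cite{CD21}. Your skeleton --- an \'etale cover forcing large ramification of $\mu:\overline{X'}\to\overline{X}$ over $D$, the discrepancy formula $K_{\overline{X'}}=\mu^*(K_{\overline{X}}+D)-D'_{\mathrm{red}}$ to convert log-bigness (which follows from \cite{BC20}) into bigness, and Nadel's trick for the compactification --- is indeed the right frame. But there is a gap already in the construction of the cover: residual finiteness only helps for local monodromies of infinite order. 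A component of $D$ around which the local monodromy is finite admits \emph{no} ramification induced by an \'etale cover of $X$, and over such a component the discrepancy computation gives you nothing. One must first pass to the partial compactification across which the VHS extends, so that the remaining boundary has infinite local monodromy (this is exactly the ``infinite monodromy at infinity'' normalization of \cref{def:monodromy} and the reduction used in \cite{DY23}). You do not address this.

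The more serious gap is that your entire argument for the Picard hyperbolicity of $\overline{X'}$ rests on ``a complete, negatively pinched lower bound for the period metric near the full boundary'' after the cover. For a general period domain this is not available, and it is not how the cited proofs work: the Griffiths--Schmid metric is negatively curved only in horizontal directions, its restriction to $X$ is in general not complete, and ramified base change does not improve Poincar\'e-type asymptotics (under $z=w^{\ell}$ the Poincar\'e metric of $\bD^*$ pulls back to itself up to constants), so ``taking the cover deep enough'' does not by itself pinch anything. What replaces this step in \cite{Denarxiv} and \cite{CD21} is Nevanlinna theory: a Second Main Theorem/Arakelov-type inequality of the shape $T_f(r,K_{\overline{X}}+D)\le N^{(1)}_f(r,D)+o(T_f(r))$ for holomorphic maps into $X$, obtained from the curvature of the Hodge metric on the (logarithmic) Higgs bundle together with the logarithmic derivative lemma, combined with the elementary ramification trick: if $f$ lifts to a cover ramified to order $e$ over $D$, then $N^{(1)}_f(r,D)\le \tfrac{1}{e}T_f(r,D)+O(1)$, so for $e$ large the boundary term is absorbed and $T_f(r)=O(\log r)$, which forces extension across the puncture. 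You correctly identify the metric estimate as the main obstacle, but as written your route would at best recover the Hermitian locally symmetric case treated by Nadel and Rousseau; for a general $\bC$-VHS the Nevanlinna-theoretic substitute is not an optional refinement but the actual content of the theorem.
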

The proofs of \cref{thm:PicardVHS,thm:Picard} in \cite{Denarxiv} are rather involved and rely heavily on analytic techniques from non-abelian Hodge theory. Subsequently, in \cite{CD21}, Cadorel and the author gave a simplified proof and, moreover, established the following more general result. 
\begin{thm}[\cite{CD21}]
	Let $X$ be a smooth quasi-projective variety. Assume that there exists a harmonic bundle $(E,\theta,h)$ on $X$ such that the Higgs field $\theta\colon T_X \to \End(E)$ is injective at some point of $X$. Then there exists a finite \'etale cover $X’$ of $X$ whose projective compactification $\overline{X’}$ is pseudo-Picard hyperbolic.
\end{thm}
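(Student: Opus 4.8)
The plan is to reduce the statement to the existence, on a good compactification of a finite étale cover of $X$, of a singular Finsler metric on the logarithmic tangent bundle whose holomorphic sectional curvature is bounded above by a negative constant away from a proper subvariety, and which moreover does not degenerate too fast along the boundary; pseudo-Picard hyperbolicity then follows from an Ahlfors--Schwarz lemma together with a Borel--Kwack type extension theorem across the puncture. This is the same mechanism that underlies \cref{thm:Picard}; the new point is to produce the metric directly from the harmonic bundle $\shb$ rather than from a variation of Hodge structure. (One might hope instead to reduce to \cref{thm:Picard} via Simpson's $\bC^{*}$-action on Higgs bundles, whose $t\to 0$ limit is a system of Hodge bundles; but the injectivity of the Higgs field need not survive this degeneration, so a direct construction seems unavoidable.)

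First I would fix a smooth projective compactification $\overline{X}$ with $D:=\overline{X}\setminus X$ a simple normal crossing divisor, and invoke the theory of tame harmonic bundles of Simpson and Mochizuki: the Higgs bundle underlying $\shb$ prolongs to a filtered (parabolic) logarithmic Higgs sheaf on $(\overline{X},D)$, and Mochizuki's norm estimates control the behaviour of the harmonic metric $h$ near $D$. After replacing $X$ by a suitable finite étale cover $X'$ --- needed, exactly as in \cref{thm:Picard}, to make the prolonged data well behaved along the boundary --- one obtains on $\overline{X'}$ a prolonged bundle $\diae$ carrying a logarithmic Higgs field
\[
\theta\colon T_{\overline{X'}}(-\log D')\longrightarrow \End(\diae),\quad\text{where }D':=\overline{X'}\setminus X',
\]
which, since the cover is étale, is still injective at some point $x_0'$ of $X'$, hence on a Zariski dense open set.

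Next I would build a Finsler pseudo-metric $F$ on $T_{\overline{X'}}(-\log D')$ out of the pair $(\theta,h)$ by a formula of the shape $F(v)=\lVert\theta(v)\rVert_h$ (or a suitably smoothed, symmetrized variant ensuring the regularity needed up to $D'$), so that $F$ is a genuine positive metric precisely over the locus $U\subseteq X'$ where $\theta$ is injective. The technical heart is then a curvature estimate: the holomorphic sectional curvature of $F$ should be $\le -1$ on $U$ outside a proper Zariski closed subset, and moreover $F$ should dominate a constant multiple of a Poincaré type metric near $D'$. This estimate is driven by the harmonic bundle identities $\bar\partial\theta=0$, $\theta\wedge\theta=0$ and the Hitchin--Simpson curvature formula $R(h)=-[\theta,\theta^{*_h}]$ (pluriharmonicity), which play, for an arbitrary harmonic bundle, the role that Griffiths' negativity of the Hodge metric along horizontal directions plays for a variation of Hodge structure; the boundary part feeds Mochizuki's asymptotic analysis of $\shb$ along $D'$ into the same computation. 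Let $\Xi\subset\overline{X'}$ be the closure of the (proper, Zariski closed) locus where $\theta$ is not injective or where the curvature estimate degenerates, together with $D'$; it is a proper subset of $\overline{X'}$ because $\theta$ is injective at $x_0'$.

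Finally, for a holomorphic map $\gamma\colon\bD^*\to X'$ with $\gamma(\bD^*)\not\subseteq\Xi$, the Ahlfors--Schwarz lemma applied to $F$ shows that $\gamma$ is distance decreasing from the Poincaré metric of $\bD^*$ into $(T_{X'},F)$; combined with the lower bound for $F$ near $D'$, a Borel--Kwack type extension argument then forces $\gamma$ to extend holomorphically to $\bD\to\overline{X'}$. Hence $\overline{X'}$ is pseudo-Picard hyperbolic with exceptional locus $\Xi$. The main obstacle is the curvature estimate, and in particular making it uniform up to the boundary $D'$ and identifying the correct degeneracy locus $\Xi$: the interior bound requires the right normalization of $F$ so that the harmonic bundle identities yield a definite sign, while the boundary bound is precisely where Mochizuki's norm estimates --- and the passage to the finite étale cover --- are genuinely used.
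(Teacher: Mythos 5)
The survey does not actually prove this theorem---it is quoted from \cite{CD21} with no argument given---so there is nothing in the paper to compare your proposal against line by line; the only indication the text gives is the sentence immediately following the statement, which stresses that \emph{Nevanlinna theory remains an essential ingredient} in \cite{CD21}. Measured against that, your overall architecture (a Finsler metric built from the Higgs field, a curvature estimate off a proper Zariski closed subset, an extension criterion, an \'etale cover to tame the boundary) is the right skeleton. But the step you yourself call the technical heart is a genuine gap. For a general harmonic bundle the Hitchin equation $F_h=-[\theta,\theta^{*}]$ does \emph{not} give holomorphic sectional curvature $\le -1$ for $F(v)=\lVert\theta(v)\rVert_h$: the favorable curvature term one extracts is (up to normalization) $\lVert[\theta_v,\theta_v^{*}]\rVert^2/\lVert\theta_v\rVert^2$, which vanishes whenever $\theta_v$ is a normal operator. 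Concretely, on an elliptic curve the rank-one harmonic bundle $(\cO,\,dz,\,h_{\mathrm{flat}})$ has everywhere injective Higgs field and your $F$ is the flat metric, with zero curvature; and indeed no proof can exist there, since an elliptic curve (and each of its \'etale covers) is not pseudo-Picard hyperbolic. This shows two things at once: the statement as transcribed in the survey is missing the hypothesis that the harmonic bundle be \emph{nilpotent} (it is in the title and hypotheses of \cite{CD21}), and your curvature claim, asserted for arbitrary harmonic bundles, is false rather than merely technical. The actual argument must use nilpotency of $\theta_v$ in an essential way (a nonzero nilpotent operator is never normal, and in \cite{Denarxiv,CD21} one works with iterated Finsler metrics $F_k(v)\sim\lVert\theta(v)^k\rVert_h^{1/k}$ and a convex combination in Demailly's style so that the wrong-sign terms at each stage are absorbed by the next); none of this appears in your proposal.

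Two further points. First, the extension step in \cite{CD21} is not a Borel--Kwack argument: the curvature inequality is converted into the estimate $T_\gamma(r,L)=O(\log r)$ for an ample $L$ via the lemma on logarithmic derivatives, and extension then follows from the basic Nevanlinna-theoretic criterion recalled in \S 3.1 of this survey; this is what the remark about Nevanlinna theory being essential refers to, and your proposal replaces it with a mechanism that is only heuristically equivalent. Second, for pseudo-Picard hyperbolicity of the \emph{compactification} $\overline{X'}$ (rather than of $X'$) the boundary requirement is not that $F$ dominate a Poincar\'e-type metric near $D'$---that is the condition ensuring maps $\bD^*\to X'$ extend---but rather that, after passing to the \'etale cover (produced from the local monodromy at infinity, as in Nadel's argument and in \cref{thm:Picard}), the Finsler metric extends across $D'$ with only mild degeneration, so that it also controls discs meeting the boundary. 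Your proposal conflates these two boundary conditions, and it is exactly here that Mochizuki's norm estimates and the choice of cover must be used quantitatively.
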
 
Nonetheless, Nevanlinna theory remains an essential ingredient in both works.
\subsubsection{How Fundamental Groups Determine Hyperbolicity} 
It is natural to ask whether there is  a characterization of   hyperbolicity of algebraic varieties in terms of fundamental groups. 
As illustrated in Figure 1,   such characterization requires that the fundamental group $\pi_1(X)$ be must be  infinite and non-abelian.  However, this topological requirement is insufficient, as demonstrated by non-hyperbolic varieties whose fundamental groups meet this criterion. 
\begin{example} \label{example1}
	Let $C$ be a projective curve of genus $g \geq 2$. The product variety $X = C \times \mathbb{P}^1$ is clearly not pseudo Brody hyperbolic due to the $\mathbb{P}^1$ factor. Its fundamental group is $\pi_1(X) \simeq \pi_1(C)$, which is both infinite and non-abelian, showing the necessity of additional constraints.
\end{example} 
The failure of hyperbolicity in \Cref{example1} can be attributed to the fact that the non-hyperbolic factor $\mathbb{P}^1$ trivializes the fundamental group in a relative sense. This suggests that a useful condition must ensure that the fundamental group remains \emph{infinite} when restricted to relevant subvarieties. So having \emph{big fundamental groups} would indeed exclude the counterexample $C \times \mathbb{P}^1$. 

However, simply having a big fundamental group is not enough, as shown by varieties whose non-hyperbolicity stems from a different group-theoretic defect: 
\begin{example} \label{example2}
	Let $C$ be a projective curve of genus $g \geq 2$ and let $E$ be an elliptic curve. The variety $X = C \times E$ is not pseudo Brody hyperbolic. Its fundamental group is $\pi_1(X) \simeq \pi_1(C) \times \pi_1(E)$. Since $\pi_1(E) \simeq \mathbb{Z}^2$, $\pi_1(X)$ is large and non-abelian.
\end{example} 
The non-hyperbolicity of $X = C \times E$ is tied to the presence of normal abelian subgroup. More precisely, for any fixed point $x \in C$, the subgroup corresponding to the elliptic curve factor,
$$
{\rm Im}\left[\pi_1(\{x\}\times E)\to \pi_1(C\times E)\right] \triangleleft \pi_1(C\times E),
$$
is an infinite normal abelian subgroup of $\pi_1(X)$.Therefore, we should require that  fundamental groups are   ``highly non-abelian". 
\begin{dfn}[Semisimple Group]\label{dfn:semisimple}
A finitely generated group $G$ is called \emph{semisimple} if every abelian normal subgroup of $G$ is finite. 
\end{dfn} 
The variety $X = C \times E$ in \Cref{example2} has  fundamental group  that is  not semisimple.  
In conclusion, to give  the fundamental group  characterization  of varieties with strong hyperbolicity, we propose the following conjecture: 
\begin{conjecture}\label{conj:funhyp}
	Let $X$ be a quasi-projective normal variety. If there exists a quotient $\varrho:\pi_1(X)\twoheadrightarrow G$ such that $G$ is {semisimple} and $\varrho$ is {big} in the sense defined below, then $X$ is {strongly of log general type} and {pseudo Picard hyperbolic}.
\end{conjecture}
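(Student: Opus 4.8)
The plan is to prove \cref{conj:funhyp} in the \emph{linear} case, i.e.\ when $G$ admits a faithful representation into some $\GL_N(K)$; in full generality the conjecture is open, since every available tool requires a linear structure. So fix a faithful embedding $\iota\colon G\hookrightarrow\GL_N(K)$ and put $\tau:=\iota\circ\varrho\colon\pi_1(X)\to\GL_N(K)$, which is a big representation; replacing $K$ by the subfield generated by the entries of $\tau$, we may take $K$ finitely generated over $\bQ$. The first step is a reduction. Using that $G$ is semisimple in the sense of \cref{dfn:semisimple}, together with the structure theory of linear algebraic groups---replacing $\tau$ by its semisimplification, restricting to the derived group of the Zariski closure of its image, and passing to a finite \'etale cover of $X$ (equivalently, to a finite-index subgroup)---one arranges that the Zariski closure $H$ of the monodromy of the resulting representation is a \emph{semisimple} algebraic group. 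The delicate point is to carry this out \emph{without destroying bigness}: semisimplification and passage to subquotients can a priori collapse the image of a subvariety, but the subgroups that would be killed are exactly of the ``infinite abelian normal'' type exhibited by $\pi_1(E)$ in \cref{example2}, which semisimplicity of $G$ forbids. Making this precise---via the Shafarevich reduction recalled below---is the first genuine obstacle.

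Next I would run non-abelian Hodge theory for $\tau$ at every place of $K$. At the archimedean places one invokes Corlette--Simpson when $X$ is projective and Mochizuki's theory of tame pure-imaginary harmonic bundles when $X$ is quasi-projective, obtaining a $\tau_v$-equivariant harmonic map from $\widetilde X$ to the symmetric space attached to $\GL_N(\bC)$; at the non-archimedean places one uses Gromov--Schoen together with its recent extension to the quasi-projective setting (the logarithmic Gromov--Schoen theory discussed in \cref{sec:sha}) to produce $\tau_v$-equivariant harmonic maps to the corresponding Bruhat--Tits buildings. Assembling these maps over all places, and invoking the Shafarevich/reduction machinery of Eyssidieux--Katzarkov--Pantev--Ramachandran and its quasi-projective analogue, yields a \emph{Shafarevich morphism} $\mathrm{sh}_\tau\colon X\to\mathrm{Sh}_\tau(X)$ that contracts precisely the positive-dimensional subvarieties on which $\tau$ has virtually trivial image. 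Bigness of $\tau$ then translates into the statement that $\mathrm{sh}_\tau$ is generically finite over the very general point of $X$, which is what powers the ``strong'' conclusions below.

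The positivity input comes from the semisimplicity of $H$. On $\mathrm{Sh}_\tau(X)$ the representation is, up to finite data, faithful and detected by the combined harmonic/period map; when $H$ is semisimple one builds from the Higgs field of the associated harmonic bundle---equivalently, from the pullback of the canonical bundle of the relevant period-domain quotient, in the spirit of Griffiths, Zuo and Brunebarbe--Cadorel---a line bundle that is \emph{big} on $\mathrm{Sh}_\tau(X)$. Here semisimplicity is exactly what prevents the Higgs field from degenerating along a positive-dimensional algebraic foliation, which is the analytic incarnation of excluding $C\times E$. Pulling this bundle back through the generically finite $\mathrm{sh}_\tau$ shows that $K_{\overline X}+D$ is big, i.e.\ $X$ is of log general type; rerunning the argument on every positive-dimensional subvariety through a very general point, using bigness of $\varrho$ restricted to it together with the same anti-degeneracy principle, upgrades this to \emph{strongly of log general type}, with $\Xi$ taken to be the union of the locus where $\mathrm{sh}_\tau$ fails to be generically finite with the loci where the restricted monodromy fails to have semisimple Zariski closure.

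Finally, for pseudo Picard hyperbolicity I would feed the generically finite reduction map into Nevanlinna theory. The target of the combined period/harmonic map---a quotient of a bounded symmetric domain, or more generally a period domain, together with the building factors---is pseudo Picard hyperbolic by the theorems of Borel, Kobayashi--Ochiai and Nadel and by \cref{thm:PicardVHS,thm:Picard}, the non-archimedean directions being handled through the logarithmic Gromov--Schoen construction; since pseudo Picard hyperbolicity propagates through generically finite dominant morphisms up to enlarging the exceptional set (cf.\ \cref{extension theorem}), and since any holomorphic $f\colon\bD^*\to X$ with $f(\bD^*)\not\subseteq\Xi$ has non-constant composite $\mathrm{sh}_\tau\circ f$, such an $f$ extends holomorphically across the puncture into $\overline X$. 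The hardest part of the whole program---beyond the linearity hypothesis itself---is the coordination in the first two steps: showing that the \emph{simultaneous} reduction over all archimedean and non-archimedean places is algebraic, and that bigness genuinely forces generic finiteness of $\mathrm{sh}_\tau$. This is precisely where the theory of absolutely constructible subsets and the quasi-projective Shafarevich machinery of \cref{sec:sha} are indispensable.
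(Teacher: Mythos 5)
The statement you are proving is an open conjecture; the paper establishes only the linear case (\cref{main:hyper}, proved in \cite{CDY22,DY23b}), so restricting to linear $G$ is the correct scope. Your sketch names many of the right tools, but it deviates from the actual argument at two points where the deviation is not merely cosmetic but leaves real gaps. First, your plan to ``run non-abelian Hodge theory for $\tau$ at every place of $K$'' with $K$ finitely generated over $\bQ$ does not get off the ground: such a field has no usable set of places, and the whole difficulty is to manufacture an \emph{unbounded} representation into $G(K')$ for a non-archimedean local field $K'$. The paper does this via the rigid/non-rigid dichotomy: if $\varrho$ is rigid it underlies a $\bC$-VHS defined over a number field $k$, and one either lands in the integral case (discrete monodromy, period map, \cref{thm:PicardVHS}) or finds a finite place where $\varrho_v$ is unbounded; if $\varrho$ is non-rigid, \cref{claim:unbounded} produces a big, Zariski dense, unbounded $p$-adic representation directly from the non-compactness of the relevant component of the character variety. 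This construction -- which deliberately avoids reduction mod $p$ so as to preserve Zariski density and bigness -- is the key novelty of the non-rigid case and is entirely absent from your proposal.

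Second, your final step asserts that pseudo Picard hyperbolicity ``propagates through generically finite dominant morphisms,'' citing \cref{extension theorem}. That proposition goes the other way (meromorphic maps \emph{into} a pseudo Picard hyperbolic variety are rational); knowing that $\mathrm{sh}_\tau\circ f$ extends across the puncture does not formally imply that $f$ does. The paper's actual mechanism is Nevanlinna-theoretic: one passes to the spectral covering $\xsp\to X$ of \cref{prop:spectral}, proves via \cref{lem:BT2} that the quasi-Albanese map of $\xsp$ is generically finite onto its image and that $\xsp$ is of log general type (using almost-simplicity to rule out abelian fibers), controls the ramification counting function of the induced covering of the punctured disk, and only then invokes \cref{thm2nd}/\cref{cor:GGL}. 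Likewise, bigness of $K_{\overline X}+D$ is not obtained by pulling back a big line bundle from a Shafarevich base; it is descended from $\xsp$ to $X$ via the $H$-invariant symmetric differential vanishing on the ramification divisor together with the Campana--P\u{a}un criterion (\cref{thm:CP}). Without these two ingredients -- the unbounded representation in the non-rigid case and the spectral-cover/Nevanlinna machinery replacing the claimed formal propagation -- the proposed proof does not close.
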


We now define the precise requirement for the quotient map.

\begin{dfn}\label{dfn:big}
	A quotient map $\varrho:\pi_1(X)\twoheadrightarrow G$ is said to be {big} if, for every closed irreducible positive-dimensional subvariety $Z \subset X$ passing through a very general point, the image
	\[
	\varrho\left(\mathrm{Im} \left[\pi_1(Z^{\mathrm{norm}}) \longrightarrow \pi_1(X)\right]\right)
	\]
	is an infinite subgroup of $G$.
\end{dfn}

In \cite{CDY22,DY23b} together with Cadorel and Yamanoi, we proved \cref{conj:funhyp} for linear quotients. 

\begin{thm}\label{main:hyper}
	Let $X$ be a complex quasi-projective normal variety and let $G$ be a semisimple algebraic group defined over an algebraically closed field $K$. If there exists a big and Zariski dense representation $\varrho:\pi_1(X)\to G(K)$, then:
	\begin{thmlist}
		\item \label{item:hyper1} when ${\rm char}\, K = 0$, for any $\sigma\in {\rm Aut}(\mathbb{C}/\mathbb{Q})$, the Galois conjugate $X^\sigma := X\times_\sigma \mathbb{C}$ is pseudo Picard hyperbolic and strongly of log general type;
		\item \label{item:hyper2}when ${\rm char}\, K > 0$, $X$ is pseudo Picard hyperbolic and strongly of log general type.
	\end{thmlist}
\end{thm}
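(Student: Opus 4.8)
The plan is to deduce both assertions from the existence and the geometry of the \emph{Shafarevich/reduction morphism} $\mathrm{s}_\varrho$ attached to $\varrho$. Recall that $\mathrm{s}_\varrho$ is the quotient of $X$ that collapses a closed subvariety $Z$ precisely when $\varrho\big(\mathrm{Im}[\pi_1(Z^{\mathrm{norm}})\to\pi_1(X)]\big)$ is finite (and collapses chains of such subvarieties). That this quotient is an actual morphism $\mathrm{s}_\varrho\colon X\to S_\varrho$ onto a normal quasi-projective variety, which after a finite \'etale base change extends to a proper morphism $\overline{X}\to\overline{S_\varrho}$ of smooth log compactifications, is the content of the reductive Shafarevich theory for quasi-projective varieties (in characteristic $0$), respectively of its non-archimedean counterpart built from the extension of Gromov--Schoen theory to quasi-projective varieties (in characteristic $p$). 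The construction moreover equips $S_\varrho$ with extra structure: writing $F$ for the finitely generated field generated by the matrix entries of $\varrho$, the tame pure imaginary harmonic bundles / $\bC$-VHS attached to $\varrho$ at the archimedean places of $F$ (via Corlette--Simpson--Mochizuki) and the equivariant harmonic maps of $\widetilde X$ to the Bruhat--Tits buildings $\Delta(G,F_v)$ attached to the non-archimedean places, together with the Katzarkov--Zuo spectral covers they define, all descend to $S_\varrho$ and become \emph{generically immersive} there.

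The bigness of $\varrho$ (\cref{dfn:big}) now says exactly that through a very general point of $X$ no positive-dimensional subvariety has finite $\varrho$-image, i.e.\ $\mathrm{s}_\varrho$ has no positive-dimensional fibre through such a point; hence $\mathrm{s}_\varrho$ is generically finite onto its image and $\dim S_\varrho=\dim X$. The crux of the argument is then to prove that $\overline{S_\varrho}$ --- possibly after a further finite \'etale cover --- is pseudo Picard hyperbolic and strongly of log general type. Here the generic immersivity of the descended data is decisive: for the archimedean contribution one invokes the hyperbolicity theorems for bases of generically immersive variations, namely \cref{thm:PicardVHS,thm:Picard} and the Cadorel--Deng results for harmonic bundles with pointwise injective Higgs field; for the purely non-archimedean contribution one applies Yamanoi's Nevanlinna theory (second main theorem estimates) to the logarithmic jet differentials produced from the spectral covers associated with the harmonic maps to the buildings. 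Combined, these yield both the pseudo Picard hyperbolicity of $\overline{S_\varrho}$ and, through a negatively curved pseudo-metric together with a sufficient supply of logarithmic jet differentials, its strong log general type.

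It remains to transfer these properties along $\mathrm{s}_\varrho$. Let $\Xi\subsetneq\overline X$ be the proper Zariski closed set that is the union of $\mathrm{s}_\varrho^{-1}\big(\mathrm{Sp_p}(\overline{S_\varrho})\big)$ with the locus over which $\mathrm{s}_\varrho$ is not finite (equivalently, the union of its positive-dimensional fibres). If $Z\subset X$ is irreducible, positive-dimensional and $Z\not\subset\Xi$, then $\mathrm{s}_\varrho(Z)\not\subset\mathrm{Sp_p}(\overline{S_\varrho})$ and $Z\to\mathrm{s}_\varrho(Z)$ is generically finite; since $\overline{S_\varrho}$ is strongly of log general type, $\mathrm{s}_\varrho(Z)$ is of log general type, hence so is $Z$ (log general type is inherited under generically finite dominant morphisms, as $\mathrm{s}_\varrho^{*}(K_{\overline{S_\varrho}}+D_S)\le K_{\overline X}+D$ restricts to a big class on $Z$). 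Taking $Z=X$ shows $X$ is of log general type, and the above shows $X$ is strongly of log general type with exceptional set $\Xi$. For pseudo Picard hyperbolicity: a holomorphic map $f\colon\bD^{*}\to X$ with $f(\bD^{*})\not\subset\Xi$ composes with $\mathrm{s}_\varrho$ to a map $\bD^{*}\to S_\varrho$ avoiding $\mathrm{Sp_p}(\overline{S_\varrho})$, which extends holomorphically over the puncture to $\overline{S_\varrho}$; since $\mathrm{s}_\varrho\colon\overline X\to\overline{S_\varrho}$ is proper with finite fibre over the limit point, $f$ has a single limit point in $\overline X$ and extends there. Thus $X$ is pseudo Picard hyperbolic (and, a fortiori, pseudo Brody hyperbolic).

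The main obstacle is the middle step: the hyperbolicity and strong log general type of the base of the reduction, \emph{uniformly over all archimedean and non-archimedean places of $F$}. This is precisely where the full machinery of the reductive Shafarevich program is needed --- construction and positivity of Higgs/spectral data on the base, Mochizuki's theory of logarithmic extensions of tame harmonic bundles, the quasi-projective Gromov--Schoen theory, and Yamanoi's Nevanlinna theory. Two points remain. When $\mathrm{char}\,K=0$, the $\bC$-VHS entering the archimedean step depends on a chosen embedding $F\hookrightarrow\bC$, so the hyperbolicity produced is a priori that of the complex model attached to that embedding; but the reduction morphism is defined over the finitely generated field $F$ and is compatible with base change, so the argument runs verbatim after any $\sigma\in\mathrm{Aut}(\bC/\bQ)$, carrying along the conjugate representation $\varrho^{\sigma}\colon\pi_1(X^{\sigma})=\pi_1(X)\to G^{\sigma}(\bC)$ --- which is why \cref{item:hyper1} is stated for every $\sigma$. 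When $\mathrm{char}\,K>0$ there are no archimedean places and no complex structure to conjugate; a finitely generated subgroup of $G(F)$ Zariski dense in the semisimple group $G$ necessarily has $\mathrm{tr.deg}_{\bF_p}F>0$ and is unbounded at some non-archimedean place, so a single building-theoretic reduction already suffices, giving \cref{item:hyper2} directly.
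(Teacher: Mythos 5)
Your high-level architecture (reduce to a base on which $\varrho$ is ``seen'' by Hodge-theoretic and building-theoretic data, prove hyperbolicity there, pull back) is in the spirit of the actual argument, but the step you yourself flag as ``the main obstacle'' is not merely technical: as written it rests on a claim that is false in general, and it omits the dichotomy that makes the proof work. You assert that the harmonic bundles/VHS at archimedean places and the harmonic maps to Bruhat--Tits buildings at non-archimedean places descend to $S_\varrho$ and become \emph{generically immersive}. For a big, Zariski-dense $\varrho$ neither part of this is automatic. On the non-archimedean side, the spectral $1$-forms of \cref{prop:spectral} attached to an unbounded place can have rank $1$ even when $\varrho$ is big; the proof of \cref{main:unbounded hyperbolic} never establishes immersivity. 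What it establishes (\cref{claim:generically}) is the much weaker statement that the quasi-Albanese map of the spectral cover $\xsp$ is generically finite, and the mechanism is group-theoretic: the image of $\pi_1$ of a quasi-Albanese fiber is a \emph{bounded normal} subgroup of an unbounded Zariski-dense subgroup of an almost simple group, hence finite by \cref{lem:BT2}, contradicting bigness. One then applies Yamanoi's second main theorem \cref{thm2nd} for maps into log-general-type varieties of maximal quasi-Albanese dimension — not jet differentials manufactured from the spectral cover — and descends positivity from $\xsp$ to $X$ via the $H$-invariant section vanishing on the ramification locus together with Campana--P\u{a}un (\cref{thm:CP}). On the archimedean side, the period map of the VHS attached to $\varrho$ is generically finite only after one knows the monodromy is \emph{discrete}, which requires integrality at all finite places (\cref{lem:period}); ``generic immersivity on $S_\varrho$'' is exactly what has to be proved, not an input.

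The second gap is that your scheme has no answer when $\varrho$ is non-rigid and no obvious completion is unbounded. The paper's proof in characteristic $0$ runs on the dichotomy rigid/non-rigid: if $\varrho$ is rigid it underlies a VHS defined over a number field, and one splits into the integral case (discrete monodromy, \cref{thm:PicardVHS}) and the case of an unbounded finite place (\cref{main:unbounded hyperbolic}); if $\varrho$ is non-rigid, one must \emph{construct} a big, Zariski-dense, unbounded representation into $G(K)$ for some $p$-adic field $K$ from the non-compactness of the character variety (\cref{claim:unbounded}), and this construction — in particular preserving bigness and Zariski density — is one of the genuinely new ingredients. Your proposal never invokes rigidity and so cannot see why some place must be unbounded or why the archimedean data must be discrete. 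A smaller point: routing the argument through the Shafarevich morphism is an unnecessary detour here (bigness makes $s_\varrho$ birational, so the paper works on $X$ directly), and your transfer inequality $s_\varrho^{*}(K_{\overline{S_\varrho}}+D_S)\le K_{\overline X}+D$ presupposes that $s_\varrho$ extends to a morphism of log pairs with $s_\varrho^{-1}(D_S)\subset D$, which is not part of what the quasi-projective Shafarevich theory gives you for free.
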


In \cite{CDY22}, we proved \cref{main:hyper} in the case ${\rm char}\, K = 0$, and in \cite{DY23b}, we treated the case of positive characteristic.

\subsubsection{On GGL conjecture}
We discuss the generalized Green-Griffiths-Lang conjecture in this subsection.   
\cref{conj:GGL} is known to hold for curves. However, when $\dim X \geq 2$, there has been only few progress. We summarize below four classes of varieties for which \cref{conj:GGL} has been established: 
\begin{itemize}
	\item Complex projective surfaces with big cotangent bundles, proved by McQuillan \cite{McQ}, and later extended to quasi-projective surfaces with big logarithmic cotangent bundles by El Goul \cite{ElG}.
	\item Subvarieties of abelian varieties, by the classical theorem of Bloch, Ochiai and Kawamata, and more generally subvarieties of semi-abelian varieties, by Noguchi \cite{Nog81}.
	\item Projective varieties of maximal Albanese dimension, by Kawamata \cite{Kaw81} and Yamanoi \cite{Yam15}.
	\item General hypersurfaces in projective space $\bP^n$ ($n \geq 3$) of sufficiently high degree, proved in \cite{DMR} (based on the strategy of Siu \cite{Siu02}), with degree bounds subsequently improved in \cite{Dar16,MT,BK24,Cad24}; as well as complements of general hypersurfaces of high degree in $\bP^n$ ($n \geq 2$), proved in \cite{Dar16,BD19}, to mention only a few.
\end{itemize}
In \cite{CDY25}, Cadorel, Yamanoi, and the author first established \cref{conj:GGL} for quasi-projective varieties of maximal quasi-Albanese dimension. Our proof relies primarily on Nevanlinna theory, thereby generalizing the result of \cite{Yam15} to the non-compact setting. Building on the main result in \cite{CDY25}, we further established a \emph{non-abelian version} in \cite{CDY22,DY23b}.


\begin{thm}[\cite{CDY22,DY23b}]\label{thm:GGL}
	Let $X$ be a quasi-projective variety. If there exists a semisimple and big representation $\pi_1(X)\to \GL_N(\mathbb{C})$, or a big representation $\pi_1(X)\to \GL_N(K)$ for some field $K$ of positive characteristic, then \cref{conj:GGL} holds for $X$.
\end{thm}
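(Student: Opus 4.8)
The plan is to deduce \cref{conj:GGL} for $X$ from the two cases already in hand: \cref{main:hyper}, which covers a big Zariski-dense representation into a \emph{semisimple} group, and the result of \cite{CDY25} that \cref{conj:GGL} holds for quasi-projective varieties of maximal quasi-Albanese dimension. The first step is the purely formal one. Among the four properties, (ii)$\Rightarrow$(iii) is immediate, and (iv)$\Rightarrow$(i) holds because, for $\dim X\geq 1$, $X$ is itself a positive-dimensional closed subvariety not contained in the proper exceptional locus witnessing strong log-general-typeness (the case $\dim X=0$ being trivial). Hence it suffices to prove (i)$\Rightarrow$(ii) and (iii)$\Rightarrow$(iv), which together with the formal implications close the cycle (i)$\Rightarrow$(ii)$\Rightarrow$(iii)$\Rightarrow$(iv)$\Rightarrow$(i).

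Second, an algebraic reduction: replacing $\GL_N$ by the Zariski closure $H$ of $\varrho(\pi_1(X))$, which is reductive in characteristic $0$ since $\varrho$ is semisimple, and noting that the natural morphism $H\to H/R(H)\times H/H^{\mathrm{der}}$ has finite kernel, I would reduce to a pair $(\varrho_{\mathrm{ss}},\varrho_{\mathrm{ab}})$ with $\varrho_{\mathrm{ss}}\colon\pi_1(X)\to S(\bC)$ Zariski-dense in a semisimple group $S$ and $\varrho_{\mathrm{ab}}\colon\pi_1(X)\to T(\bC)$ valued in a torus $T$; since a subgroup of $H(\bC)$ is infinite if and only if its image in $S(\bC)\times T(\bC)$ is, bigness of $\varrho$ is detected jointly by $\varrho_{\mathrm{ss}}$ and $\varrho_{\mathrm{ab}}$. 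The positive characteristic statement is treated along the same lines, with the non-archimedean Gromov--Schoen theory of \cref{sec:sha} replacing archimedean non-abelian Hodge theory and with the second part of \cref{main:hyper} in place of the first.

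The geometric core is the Shafarevich morphism (the $\Gamma$-reduction) $g\colon X\to Y$ attached to the semisimple part $\varrho_{\mathrm{ss}}$, furnished by the reductive Shafarevich machinery of Eyssidieux--Katzarkov--Pantev--Ramachandran \cite{Eys04,EKPR12} in the quasi-projective form developed in \cref{sec:sha}; by construction $g$ contracts a subvariety $Z$ precisely when $\varrho_{\mathrm{ss}}(\mathrm{Im}[\pi_1(Z^{\mathrm{norm}})\to\pi_1(X)])$ is finite. This gives two things. On the base: after a finite étale cover of $X$ if necessary, $\varrho_{\mathrm{ss}}$ descends to a big Zariski-dense representation $\pi_1(Y)\to S$, so by \cref{main:hyper} (taking $\sigma=\mathrm{id}$) the variety $Y$ is pseudo Picard hyperbolic and strongly of log general type. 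On the general fiber $F$: the restriction $\varrho_{\mathrm{ss}}|_F$ has finite image, hence by bigness of $\varrho$ the restriction $\varrho_{\mathrm{ab}}|_F$ is big on $F$; since the quasi-Albanese morphism of $F$ contracts every subvariety on which all abelian representations have finite image, bigness of $\varrho_{\mathrm{ab}}|_F$ forces the quasi-Albanese morphism of $F$ to be generically finite, i.e.\ $F$ has maximal quasi-Albanese dimension, so \cref{conj:GGL} holds for $F$ by \cite{CDY25}.

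It remains to glue along $g$. For (iii)$\Rightarrow$(iv): if $X$ is pseudo Brody hyperbolic then so is the general fiber $F$ (an entire curve in $F$ is one in $X$, and lies outside the exceptional locus because the fibers sweep out $X$); by \cref{conj:GGL} for $F$ this forces $F$ to be of log general type, hence strongly of log general type, and combined with the strong log-general-typeness of $Y$ and Kawamata's subadditivity of the logarithmic Kodaira dimension over a base of log general type \cite{Kaw81} one concludes that $X$ is strongly of log general type. For (i)$\Rightarrow$(ii): if $X$ is of log general type then, by logarithmic easy addition, the general fiber $F$ is of log general type, hence pseudo Picard hyperbolic by \cref{conj:GGL} for $F$; together with the pseudo Picard hyperbolicity of $Y$, a relative big Picard extension theorem for the fibration $g$ then yields pseudo Picard hyperbolicity of $X$. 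I expect the main obstacles to be these last gluing steps---the relative big Picard statement above all, and the verification that bigness of $\varrho$ descends correctly to $\varrho_{\mathrm{ab}}$ on the general fiber and to the representation on the base---whereas the subadditivity and easy-addition inputs are, once the base is known to be of log general type, already available.
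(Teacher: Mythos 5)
Your setup matches the paper's: the same reduction of the Zariski closure to a semisimple quotient $S$ and a torus quotient $T$, the same factorization of the semisimple part through a fibration $f\colon X\to Y$ carrying a big Zariski-dense representation into $S$ (so that \cref{main:hyper} makes $Y$ pseudo Picard hyperbolic and strongly of log general type), and the same observation that bigness of $\varrho$ forces the abelian part to be big on the general fibers, hence forces the (quasi-)Albanese data to be generically finite there. The divergence — and the genuine gap — is in how you glue. The paper does \emph{not} argue fiberwise: it shows that the single morphism $(f,\mathrm{alb}_X)\colon X\to Y\times A$, with $A$ the quasi-Albanese of $X$ itself, is generically finite onto its image, and then invokes \cref{cor:GGL} (\cite[Corollary~10.8]{CDY25}) as a black box. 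That Nevanlinna-theoretic theorem is precisely the two implications you need — ($S$ pseudo Picard hyperbolic $+$ $X$ of log general type $\Rightarrow$ $X$ pseudo Picard hyperbolic, and $S$ strongly of log general type $+$ $X$ pseudo Brody hyperbolic $\Rightarrow$ $X$ strongly of log general type) — and it is proved by running the Second Main Theorem on the semiabelian factor, not by assembling the fibers.

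Your substitutes for this step do not close. The ``relative big Picard extension theorem'' for a fibration with pseudo Picard hyperbolic base and pseudo Picard hyperbolic general fibers is not available (and is not in the paper): a punctured disk mapping into a single fiber may land in the exceptional locus $\Xi_F$ of that fiber, and as $F$ varies these loci need not be contained in any proper Zariski-closed subset of $X$; the same non-uniformity defeats your (iii)$\Rightarrow$(iv) argument, where fiberwise GGL plus Kawamata subadditivity yields at best that $X$ (and each individual subvariety you test) is of log general type, but gives no single exceptional set $\Xi\subsetneq X$ witnessing \emph{strong} log-general-typeness. Producing that uniform $\Xi$ is exactly what the quasi-Albanese map of $X$ and the Nevanlinna machinery of \cite{CDY25} are for. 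A secondary point: in positive characteristic the paper's route is also structurally different from ``the same lines'' — it passes to the spectral covering $\xsp$ attached to unbounded non-archimedean representations, runs \cref{thm2nd} and \cref{claim:small ram} on $\xsp$, and descends positivity to $X$ via the Campana--P\u{a}un criterion (\cref{thm:CP}), rather than via a Shafarevich-type fibration over a hyperbolic base.
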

In \cite{CDY22}, we proved \cref{thm:GGL} in the case $\operatorname{char} K = 0$, and in \cite{DY23b}, we addressed the case of positive characteristic.  The case where $X$ is projective and $\operatorname{char} K = 0$ was also discussed in \cite{Bru22} using different methods.

\section{Non-Abelian Hodge Theories and the Shafarevich Conjecture}\label{sec:sha}
Let us mention that the techniques of non-abelian Hodge theory developed by Simpson~\cite{Sim92} in the archimedean setting and by Gromov--Schoen~\cite{GS92} in the non-archimedean setting were first recognized by Katzarkov~\cite{Kat97} as being applicable to the study of \cref{conj:Sha}. 
Together with Ramachandran, he proved \cref{conj:Sha} for projective surfaces with reductive fundamental group~\cite{KR98}. 
These techniques were subsequently further developed by Eyssidieux~\cite{Eys04}, and they proved to be highly effective in establishing the conjecture in the linear case~\cite{EKPR12}.

On the other hand, Zuo~\cite{Zuo96} and Yamanoi~\cite{Yam10} discovered that similar methods can also be applied to questions of hyperbolicity for algebraic varieties. 
More recently, Mese, Wang, and the author~\cite{DW24,DW24b,DMW24} realized that the techniques in studying the Shafarevich conjecture in \cite{Eys04,EKPR12,DY23,DY23b} can be used to investigate the topology of algebraic varieties, as discussed in \cref{sec:topology}.

In this section, we review some recent progress on the Shafarevich conjecture and outline several of the essential techniques involved in its proof, which are also relevant to the other conjectures discussed in \cref{sec:topology,sec:hyper}. 
A particularly  comprehensive survey is provided by Eyssidieux~\cite{Eys11}, which we strongly recommend to readers interested in this conjecture.

\subsection{Conjecture and some histories}
Let us recall an equivalent version of \cref{conj:Sha}. 
\begin{conjecture}[\cite{Sha77}]
	The universal cover $\widetilde{X}$ of a smooth projective variety $X$ is holomorphically convex.
\end{conjecture}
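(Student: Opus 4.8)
The plan is to prove the strongest currently available form of the statement --- that $\widetilde X$ is holomorphically convex whenever $\pi_1(X)$ is linear, which is \cref{thm:EKPR} --- and to isolate where the argument stalls in the general (possibly non-linear) case. Recall that, by Remmert reduction, a normal irreducible complex space $\widetilde X$ is holomorphically convex if and only if it admits a proper surjective holomorphic map $\mathrm{sh}\colon \widetilde X\to S$ with connected fibers onto a Stein space $S$; such an $S$ is then unique and $\Gamma=\pi_1(X)$ acts on it. So the goal is to \emph{construct} this ``Shafarevich morphism'' $\mathrm{sh}$ whose fibers are exactly the connected components of $\pi_X^{-1}(Z)$ for irreducible closed $Z\subset X$ with $\mathrm{Im}[\pi_1(Z^{\mathrm{norm}})\to\pi_1(X)]$ finite --- equivalently, the map collapsing precisely the positive-dimensional compact analytic subsets of $\widetilde X$ --- and then to show that the target $S$ is Stein.

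First I would fix a faithful $\varrho\colon\pi_1(X)\to\GL_N(\bC)$ and, after passing to the Zariski closure of the image and its Levi decomposition, reduce to combining two inputs: a reductive quotient $\varrho_{\mathrm{red}}$ and a unipotent (pro-nilpotent) part. The heart is the construction of a ``reductive Shafarevich morphism'' $\mathrm{sh}_{\varrho_{\mathrm{red}}}\colon X\to S_{\varrho_{\mathrm{red}}}$ (following Eyssidieux): one splits the monodromy into a factor with discrete Zariski-dense image in a $p$-adic group and a factor with possibly non-discrete image in a complex reductive group. For the $p$-adic places, running over a number field of definition, Gromov--Schoen produces $\varrho$-equivariant pluriharmonic maps $\widetilde X\to\mathcal B$ to Bruhat--Tits buildings whose Stein factorizations give holomorphic fibrations on $X$; for the archimedean factor, the Corlette--Simpson correspondence and the $\bC^*$-deformation of the Higgs field turn $\varrho_{\mathrm{red}}$ into a polarized complex variation of Hodge structure, and its period map, together with an absolute-constructibility argument (Mochizuki's extension of non-abelian Hodge theory in the quasi-projective refinement), cuts $X$ by ``Higgs fibrations''. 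Intersecting the fibrations coming from all places yields $\mathrm{sh}_{\varrho_{\mathrm{red}}}$, and one checks that its pullback to $\widetilde X$ is proper with $\varrho_{\mathrm{red}}$-fibers being exactly the images of subvarieties on which $\varrho_{\mathrm{red}}$ has finite monodromy.

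Next I would graft on the unipotent direction: using the mixed Hodge structure on the Malcev/pro-nilpotent completion of $\pi_1(X)$ (Hain, Morgan), the graded pieces of the lower central series provide a tower of fibrations analogous to iterated Albanese maps. Splicing these with $\mathrm{sh}_{\varrho_{\mathrm{red}}}$ and taking the Stein factorization of the product gives $\mathrm{sh}_\varrho\colon\widetilde X\to S$; faithfulness of $\varrho$ forces the fibers of $\mathrm{sh}_\varrho$ to be $\pi_1$-trivial, hence compact, so $\mathrm{sh}_\varrho$ is proper with connected fibers. That $S$ is Stein then follows by exhibiting a strictly plurisubharmonic exhaustion, assembled from the energy functions of the harmonic maps to the symmetric space and the buildings together with pluri-log-canonical sections produced \`a la Kollár--Gromov on the descended quotients.

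The hardest input is the non-discrete Zariski-dense reductive case: when $\varrho(\pi_1(X))$ fails to be discrete in $\GL_N(\bC)$, the naive factorization of the period map need not descend to an algebraic map on $X$, and one must instead use the Katzarkov--Zuo spectral-cover construction to replace $\varrho$ by a related representation with integral, quasi-unipotent-at-infinity monodromy, then invoke openness of the Shafarevich morphism and a descent argument. Beyond the linear case, the genuine obstruction --- for which no method is presently known --- is that an arbitrary $\pi_1(X)$ carries no linear structure at all, so non-abelian Hodge theory has nothing to act on; any unconditional proof of the full conjecture would require a fundamentally new handle on non-linear Kähler groups.
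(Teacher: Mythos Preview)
This statement is an open \emph{conjecture}; the paper does not prove it and explicitly says it ``remains open in full generality.'' What the paper does do is sketch the proof of the linear case (\cref{thm:EKPR}, due to Eyssidieux--Katzarkov--Pantev--Ramachandran) in \cref{sec:sha}, and you have correctly identified this as the appropriate target. Your outline is in the right neighborhood of the actual argument, but several of the load-bearing steps are misidentified.

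First, the unipotent direction is not handled via the Hain--Morgan mixed Hodge structure on the Malcev completion and an ``iterated Albanese'' tower. What \cite{EKPR12} actually uses is the Eyssidieux--Simpson construction \cite{ES11} of a canonical variation of \emph{mixed} Hodge structure on the formal local ring $\mathcal{O}_{\hat\rho}$ of the representation variety at a $\bC$-VHS point $\rho$; this tautological VMHS is what allows one to control the non-reductive piece. Second, your ``hardest input'' paragraph is garbled: the obstruction is not that the period map fails to descend algebraically, and the fix is not a Katzarkov--Zuo spectral cover plus ``openness of the Shafarevich morphism.'' In the paper's account (following \cite{Eys04,DY23}), one builds the simultaneous Stein factorization $s_{\rm fac}$ of all Katzarkov--Eyssidieux reductions over non-archimedean local fields, and the key \cref{prop:cons} shows that on each fiber of $s_{\rm fac}$ every reductive representation becomes a direct factor of a $\bC$-VHS with \emph{discrete} monodromy --- the discreteness being forced by integrality at all finite places of a number field of definition. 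Third, the Steinness of the target is not obtained from Koll\'ar--Gromov pluricanonical sections (those are used elsewhere, for bigness of $K_X$); rather, one shows that the sum of the canonical currents $T_{\tau_i}$ from the non-archimedean harmonic maps and the form $\sqrt{-1}\,\mathrm{tr}(\theta\wedge\theta^*)$ from the VHS has a K\"ahler cohomology class, and then applies the criterion of \cref{prop:stein} using the potentials of \cref{lem:potential,lem:potential3}. The hardest step, which you do not mention, is the case where the spectral $1$-forms have low rank: this requires Simpson's Lefschetz theorem for leaves of holomorphic foliations and his theory of absolutely constructible subsets of $M_{\rm B}$ (\cref{rem:byproduct}).

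Finally, your sentence ``faithfulness of $\varrho$ forces the fibers of $\mathrm{sh}_\varrho$ to be $\pi_1$-trivial, hence compact'' reverses the logic: compactness of the fibers on $\widetilde X$ comes from properness of the Shafarevich morphism on $X$ together with finiteness of the $\pi_1$-image on each fiber, not the other way around.
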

We recall the definition of holomorphic convexity. 
\begin{dfn}
	Let $Y$ be a complex space. For any compact subset $K\subset Y$, the \emph{holomorphic hull} of $K$ is defined as
	\[
	\widehat{K}=\widehat{K}_{\mathcal{O}(Y)}=\left\{z \in Y \mid |f(z)| \leqslant \sup_{w \in K}|f(w)|, \, \forall f \in \mathcal{O}(Y)\right\}.
	\]
	We say that $Y$ is \emph{holomorphically convex} if $\widehat{K}$ is compact for every compact subset $K \subset Y$.
	
	Furthermore, $Y$ is called \emph{Stein} if it is holomorphically convex and \emph{holomorphically separable} (i.e., for any distinct points $x, y \in Y$, there exists $f\in \mathcal{O}(Y)$ such that $f(x)\neq f(y)$).
\end{dfn} 

We have the following criterion for the holomorphic convexity of complex spaces.

\begin{thm}[Cartan-Remmert]\label{thm:Cartan}
	A complex space $Y$ is \emph{holomorphically convex} if and only if there exists a proper holomorphic fibration $Y\to S$ over a Stein space.
\end{thm}

If the Shafarevich conjecture holds for $X$, one can show that there exists a proper holomorphic fibration $\operatorname{sh}_X:X\to \operatorname{Sh}(X)$ with the property that for any subvariety $Z\subset X$, the image $\operatorname{Im}[\pi_1(Z)\to \pi_1(X)]$ is finite if and only if $\operatorname{sh}_X(Z)$ is a point. 
Such a holomorphic map, if it exists, is called the \emph{Shafarevich morphism} of $X$. 
More generally, we have
\begin{dfn}[Shafarevich morphism]\label{def:Shafarevich morphism}
	Let $X$ be a quasi-projective normal variety. Let $\varrho:\pi_1(X)\to \GL_{N}(K)$ be a representation where $K$ is any field. A dominant  holomorphic map ${\rm sh}_\varrho:X\to {\rm Sh}_\varrho(X)$ with general fibers connected is called the \emph{Shafarevich morphism associated with $\varrho$} if, for any closed subvariety $Z \subset X$, the image $\varrho(\operatorname{Im}[\pi_1(Z)\to \pi_1(X)])$ is finite if and only if $\operatorname{sh}_\varrho(Z)$ is a point.
\end{dfn}

We provide a brief historical overview of the developments surrounding the Shafarevich conjecture. We note that the literature is vast, and the following list is not exhaustive.

\begin{itemize} 
	\item Gurjar \cite{Gur85} and Napier \cite{Nap90}  initiated this investigation. 
	\item In 1993, Koll\'ar \cite{Kol93} and Campana \cite{Cam94} independently constructed a birational model of the Shafarevich morphism, known as the \emph{Shafarevich map}, for any variety $X$. 
	
	\item In 1998, Katzarkov and Ramachandran \cite{KR98} established the Shafarevich conjecture for surfaces with reductive fundamental group, i.e.\ those admitting an almost faithful semisimple representation $\pi_1(X) \to \operatorname{GL}_N(\mathbb{C})$. In the same work, they proved the holomorphic convexity of suitable intermediate Galois coverings. The latter result was recently  extended by Yuan Liu~\cite{Liu23} to the compact Kähler setting.
	
	\item In 2004, Eyssidieux \cite{Eys04} proved the reductive Shafarevich conjecture. 
	A central and particularly deep aspect of his work is the discovery of 
	sufficient conditions ensuring the holomorphic convexity of certain Galois coverings,
	associated with Simpson's absolutely constructible subsets \cite{Sim93b}. 
	This result has had a substantial impact on complex geometry: beyond resolving the 
	reductive case, the techniques developed by Eyssidieux have inspired numerous 
	applications by the present author and his collaborators in recent years, as will be discussed 
	later in this paper.
	
	\item Building on the foundational deformation theory of Deligne and Goldman-Millson \cite{GM88}, Eyssidieux and Simpson \cite{ES11} constructed a canonical variation of mixed Hodge structure associated with the formal local ring $\mathcal{O}_{\hat{\rho}}$ of the representation variety $R(\pi_{1}(X), \GL_{N})$ at a point $\rho$ corresponding to a $\mathbb{C}$-VHS. This construction provides a crucial framework for the study of the linear Shafarevich conjecture.
	
	\item In 2012, building upon the results in \cite{Eys04, ES11}, Eyssidieux, Katzarkov, Pantev, and Ramachandran \cite{EKPR12}  completely proved the Linear Shafarevich conjecture (cf. \cref{thm:EKPR}), representing a significant development in the study of this conjecture.

	\item In 2015, based on \cite{EKPR12}, Campana, Claudon and Eyssidieux  \cite{CCE15} proved the Linear Shafarevich conjecture for compact K\"ahler manifolds. 
	
	\item In 2023, Yamanoi, Katzarkov, and the author \cite{DY23} extended the reductive Shafarevich conjecture 
	to the case of \emph{singular} projective varieties. 
	Our work also introduced new perspectives in the proof and constructed Shafarevich morphisms 
	for quasi-projective varieties with reductive $\pi_1$,  which were obtained independently 
	by Brunebarbe \cite{Bru23}.
	
	\item In 2023, Green, Griffiths, and Katzarkov established the holomorphic convexity of universal coverings of quasi-projective varieties whose partial Albanese map is proper. This result was subsequently reproved by Aguilar and Campana in \cite{AC25} using different methods.
	
	\item In 2025, Yamanoi and the author~\cite{DY23b} constructed the Shafarevich morphism for quasi-projective normal varieties with linear fundamental groups in positive characteristic. We also proved the Shafarevich conjecture for projective normal surfaces in this setting.
	
	\item More recently, Bakker, Brunebarbe, and Tsimerman addressed the Shafarevich conjecture for \emph{quasi-projective varieties} with linear fundamental groups in their extensive work~\cite{BBT24}.
\end{itemize}
\subsection{Non-Abelian Hodge theory: archimedean setting}
Non-abelian Hodge theory explores  the geometry of local systems on complex algebraic varieties. The subject was initiated by the work of Siu \cite{Siu80} and Sampson \cite{Sam86} on harmonic maps from Kähler manifolds to non-positively curved target spaces. A major breakthrough came with the work of Corlette \cite{Cor88} and Donaldson \cite{Don87}, who proved the existence of equivariant harmonic maps associated with complex semisimple local systems over compact Kähler manifolds. Building on this, Simpson developed the full framework of non-abelian Hodge theory via Higgs bundles—a notion that had also been studied earlier by Hitchin \cite{Hit87} in the case of Riemann surfaces. He discovered the amazing connection between the work of Donaldson \cite{Don87b} and Uhlenbeck-Yau \cite{UY86} and the work by Griffiths \cite{Gri70}, Deligne, Schmid  et. al. on the variation of Hodge structures \cite{Sim88,Sim92}. 

The extension of the theory to the quasi-projective setting was initiated by Simpson 
for curves \cite{Sim90}, and later completed by Mochizuki 
\cite{Moc06,Moc07,Moc07b} in a monumental series of works spanning more than a 
thousand pages. As a consequence, the theory in the archimedean setting (concerning complex local systems) 
is now fully established.

In this subsection, we briefly recall a small portion of Simpson's non-abelian Hodge theory 
and its subsequent development by Mochizuki in the quasi-projective setting, providing the 
minimal technical framework needed for the results discussed in this paper. 
\subsubsection{The Simpson correspondence} 
\begin{dfn}[Higgs bundle]\label{Higgs}
	A \emph{Higgs bundle} on $X$  is a pair $(E,\theta)$ where $E$ is a holomorphic vector bundle, and $\theta:E\to E\otimes \Omega^1_X$ is a holomorphic one form with value in $\End(E)$, called the \emph{Higgs field},  satisfying $\theta\wedge\theta=0$.
\end{dfn}
Let  $(E,\theta)$ be a Higgs bundle  over a complex manifold $X$.  
Suppose that $h$ is a smooth hermitian metric of $E$.  Denote by $\nabla_h$  the Chern connection of $(E,h)$, and by $\theta^\dagger_h$  the adjoint of $\theta$ with respect to $h$.  We write $\theta^\dagger$ for $\theta^\dagger_h$ for short if no confusion arises.   The metric $h$ is  \emph{harmonic} if the connection  $\nabla_h+\theta+\theta^\dagger$
is flat.
\begin{dfn}[Harmonic bundle] A harmonic bundle on  $X$ is
	a Higgs bundle $(E,\theta)$ endowed with a  harmonic metric $h$.
\end{dfn}
The notion of harmonic comes from the harmonic maps that are interpreted as follows. 

Let $(M,g)$ be a closed Riemannian manifold. Assume that there exists a representation $\varrho:\pi_1(M)\to \GL_N(\bC)$. It corresponds to a flat bundle $(V,D)$ on $M$. Then for any smooth hermitian metric $h$ on $V$, it corresponds to a $\varrho$-equivariant smooth map 
$$
u_h:\widetilde{M}\to \GL_N(\bC)/U_N.
$$
Here note that $\GL_N(\bC)$ acts on the Riemannian symmetric space $S:=\GL_N(\bC)/U_N$ (which has non-positive sectional curvature). The main theorem by Corlette, proves that there exists a smooth metric $h$ for $V$ such that $u_h$ is energy minimizing (so-called \emph{harmonic map}), i.e. the energy 
$$
E(u_h)=\int_{X}|du_h|^2d\vol_g
$$
is the critical point. Such a harmonic map is unique up to some ambiguity. The precise equation is given by
\begin{align}
	d_\nabla^*du_h=0.
\end{align}
Here $du_h\in \Gamma(\widetilde{M}, \Omega_{\widetilde{M}}^1\otimes u_h^*T_S)$, and 
$$
d_\nabla:\Omega_{\widetilde{M}}^k\otimes u_h^*T_S \to \Omega_{\widetilde{M}}^{k+1}\otimes u_h^*T_S 
$$
is exterior covariant derivative induced by the connection 
$$
\nabla:u_h^*T_S\to \Omega_{\widetilde{M}}^1\otimes u_h^*T_S
$$
induced by canonical metric of $T_S$. Here $T_S$ is indeed the complexified vector bundle $T_{S}^\bC$.

When $(M,g)$ happens to be a K\"ahler manifold $(X,\omega)$, Siu-Sampson formula shows that $u_h$ is moreover pluriharmonic, i.e.
\begin{align}\label{eq:pluriharmonic}
	\db_\nabla d' u_h=0,\quad \d_\nabla d''u_h=0.
\end{align} 
here $du_h=d'u_h+d''u_h$ decompose according to $(1,0)$ and $(0,1)$-parts of $\Omega_{\widetilde{X}}^1=\Omega_{\widetilde{X}}^{1,0}\oplus \Omega_{\widetilde{X}}^{0,1}$, and also  $d_\nabla=\d_\nabla+\db_\nabla$ also decomposes accordingly. 

Let us see how the Higgs bundle structure comes. First note that the hermitian metric $h$ gives rise to a decomposition  for the flat connection $D$ of $V$ by
$$
D=\nabla_h+\Phi_h,
$$
where $\nabla_h$ is unitary, and $\Phi_h$ is self-adjoint.  Therefore, by the flatness of $D$, we have
\begin{align}\nonumber
	\nabla_h^2+[\Phi_h,\Phi_h]=0\\
	\nabla_h\Phi_h=0\label{eq:flat}
\end{align} 
Note that this holds for any choice of $h$!   

On the other hand, one can show that
\begin{align}\label{eq:coin}
	\Phi_h=du_h, \quad \nabla=\nabla_h
\end{align}  
The Siu-Sampson formula shows that
$$
[\Phi_h^{1,0},\Phi_h^{1,0}]=0. 
$$
Hence we have
$$
\nabla_h^2=(\nabla^2)^{(1,1)}. 
$$
Let $E:=u_h^*T_S$, that is a complex vector bundle (not yet a holomorphic one!). Since 
$$
\nabla:E\to E\otimes \Omega_{\widetilde{M}}^1
$$ 
is unitary, it follows that $\nabla^{0,1}$ gives rise to a complex structure for $E$.  
By \eqref{eq:pluriharmonic}, \eqref{eq:coin} and \eqref{eq:flat}, we have
\begin{align}
	\db_\nabla \Phi_h^{1,0}=0. 
\end{align} 
Hence $(E,\theta,h):=(E,\db_\nabla,\Phi_h^{1,0},h)$ is a harmonic bundle on $\widetilde{X}$. Since it is $\varrho$-equivariant, one can show that it descends to a harmonic bundle on $X$. In summary, one has
\begin{thm}[Corlette]\label{thm:Corlette}
	Let $(X,\omega)$ be a compact K\"ahler manifold.  A representation $\varrho:\pi_1(X)\to \GL_{N}(\bC)$ is reductive, if and only if  there exists a harmonic bundle $(E,\theta,h)$ such that the monodromy representation $\nabla_h+\theta+\theta_h^*$  is $\varrho$. 
\end{thm}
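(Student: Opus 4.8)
The plan is to prove the two directions of the equivalence separately, with essentially all the work concentrated in the implication ``$\varrho$ reductive $\Rightarrow$ a harmonic bundle exists'' (Corlette's existence theorem). Throughout let $(V,D)$ be the flat bundle attached to $\varrho$, and recall from the discussion preceding the statement that a Hermitian metric $h$ on $V$ is the same datum as a $\varrho$-equivariant smooth map $u_h\colon\widetilde X\to S$ with $S=\GL_N(\bC)/U_N$; that $h$ is harmonic exactly when $d_\nabla^*du_h=0$; and that in that case the Siu--Sampson formula \eqref{eq:pluriharmonic} promotes $u_h$ to a pluriharmonic map, whereupon the identities \eqref{eq:flat}--\eqref{eq:coin} exhibit $(E,\db_\nabla,\Phi_h^{1,0},h)$ as a harmonic bundle whose underlying flat connection $\nabla_h+\theta+\theta_h^*$ is exactly $D$ and therefore has monodromy $\varrho$. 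So it suffices to prove that $\varrho$ is reductive if and only if $V$ carries a $\varrho$-equivariant harmonic map to $S$.

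For the converse I would show that a harmonic metric forces complete reducibility. Given a $D$-flat sub-bundle $W\subset V$, let $\pi$ be the $h$-orthogonal projection onto $W$ and $\beta=(\mathrm{Id}-\pi)\circ D\circ\pi$ the associated second fundamental form. The harmonicity of $h$, together with the flatness of $W$ and the Kähler identities, yields a Bochner formula for $|\beta|^2$ whose integral over the compact manifold $X$ forces $\beta\equiv 0$; hence the $h$-orthogonal complement of $W$ is also flat and $\varrho$ splits accordingly. Iterating, $\varrho$ is a direct sum of irreducibles, i.e. reductive. (Alternatively: a harmonic metric makes the associated Higgs bundle polystable of degree zero, and a polystable degree-zero Higgs bundle corresponds to a semisimple local system.) This direction involves no hard analysis.

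For the forward direction, assume $\varrho$ is reductive. Since $S$ is contractible and of non-positive curvature, I would run the Eells--Sampson harmonic-map heat flow $\partial_t u=\tau(u)$ from an arbitrary $\varrho$-equivariant smooth map $u_0\colon\widetilde X\to S$ (i.e. from an arbitrary Hermitian metric on $V$): short-time existence is standard, the energy $E(u(\cdot,t))=\int_X|du|^2\,d\vol_\omega$ is non-increasing, and, the target being non-positively curved, the flow exists for all $t\ge 0$. Interior estimates make the family $\{u(\cdot,t)\}_{t\ge 0}$ precompact in $C^\infty_{\mathrm{loc}}(\widetilde X)$ modulo the isometry action of $\GL_N(\bC)$ on $S$ (which conjugates $\varrho$, harmlessly); if the family also stays in a bounded region of $S$ modulo this action, a subsequential limit is a $\varrho$-equivariant harmonic map, and feeding it into the Siu--Sampson construction above yields the harmonic bundle. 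The crux --- and the one place where reductivity is used --- is to exclude escape to infinity: a Cartan-decomposition and asymptotic-geometry analysis of the symmetric space $S$ shows that an escaping flow would force $\varrho(\pi_1(X))$ into a proper parabolic subgroup $Q\subsetneq\GL_N$ without being conjugate into a Levi factor of $Q$, which is impossible for a reductive $\varrho$. Hence the flow sub-converges and the harmonic map exists.

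The main obstacle is precisely this exclusion of escape to infinity. Because the target $S$ is non-compact one cannot apply Eells--Sampson as a black box; the delicate step is to translate the potential degeneration of the heat flow (an energy-minimising family running off toward the ideal boundary of $S$) into an algebraic constraint on $\varrho$ --- a proper parabolic containing $\varrho(\pi_1(X))$ relative to which $\varrho$ has a non-trivial unipotent part --- thereby contradicting reductivity. This rests on the fine structure theory of symmetric spaces of non-compact type (parabolic subgroups, horospherical coordinates, asymptotic cones) and is the technical heart of the theorem, due to Corlette, with related contributions by Donaldson, Jost--Yau, Labourie, and Mok.
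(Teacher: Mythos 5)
Your proposal is correct in outline, but it is worth noting that the paper does not actually prove this theorem: what the survey supplies is only the ``packaging'' step, namely, starting from an energy-minimizing $\varrho$-equivariant map $u_h\colon\widetilde X\to \GL_N(\bC)/U_N$ whose existence is simply attributed to Corlette, it applies the Siu--Sampson formula to obtain pluriharmonicity and then reads off the harmonic bundle $(E,\db_\nabla,\Phi_h^{1,0},h)$, exactly as in your first paragraph. Neither the existence of the harmonic map for reductive $\varrho$ nor the converse implication is argued in the text. Your sketch of both is the standard route (Eells--Sampson heat flow or direct energy minimization, plus an asymptotic analysis of the symmetric space to rule out escape to infinity; a Bochner/second-fundamental-form computation, or equivalently polystability of the associated Higgs bundle, for the converse), and it is the right strategy; so you are filling in precisely what the survey delegates to the literature.

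One point to sharpen: the assertion that an escaping flow would force $\varrho(\pi_1(X))$ into a proper parabolic $Q$ \emph{without} being conjugate into a Levi, ``which is impossible for a reductive $\varrho$,'' closes the argument one step too early. Escape to infinity produces a fixed point on the ideal boundary of $S$, hence a parabolic $Q$ containing the image; this by itself is perfectly compatible with reductivity (a direct sum of irreducibles is block-triangular, hence lies in a proper parabolic). Reductivity is used to conjugate the image into a Levi $L$ of $Q$, whose symmetric space $S_L$ is totally geodesic in $S$; one then restarts the minimization with target $S_L$ and concludes by induction on the dimension of the symmetric space (a harmonic map into a totally geodesic submanifold is harmonic into $S$). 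In a sketch this is a presentational issue rather than a gap, but in a full write-up the induction is where the argument actually terminates.
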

I refer the reader to the survey articles \cite{Lou20,DM23,Mau15} for the results mentioned above.

\medspace

On the other hand, Simpson introduce the stability of Higgs bundles, and he proved the following result.
\begin{thm}[Simpson]\label{thm:Simpson}
	Let $(X,\omega)$ be a compact K\"ahler manifold and let $(E,\theta)$ be  a $\mu_\omega$-polystable Higgs bundle on $X$. If $c_1(E)\wedge \{\omega\}^{n-1}=c_2(E)\wedge\{\omega\}^{n-2}=0$, then there exists a harmonic metric $h$ for $(E,\theta)$. 
\end{thm}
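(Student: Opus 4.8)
The plan is to prove this as the Higgs-bundle incarnation of the Hitchin--Kobayashi correspondence, following Simpson's strategy, which adapts Donaldson's heat-flow proof and the Uhlenbeck--Yau continuity-method proof of the classical bundle case. The argument splits naturally into two stages: first produce a Hermitian--Einstein (Yang--Mills--Higgs) metric out of $\mu_\omega$-polystability together with $c_1(E)\wedge\{\omega\}^{n-1}=0$; then promote it to a harmonic metric using the Bogomolov-type vanishing $c_2(E)\wedge\{\omega\}^{n-2}=0$.

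\emph{Stage 1 (Hermitian--Einstein metric).} Since $c_1(E)\wedge\{\omega\}^{n-1}=0$, the Higgs bundle $(E,\theta)$ has $\mu_\omega$-slope $0$, and being $\mu_\omega$-polystable it is an orthogonal direct sum of $\mu_\omega$-stable Higgs sub-bundles of slope $0$; hence it suffices to treat a stable summand and then take the orthogonal direct-sum metric. Fixing a smooth background metric $h_0$ on such a summand, I would run Simpson's nonlinear parabolic flow $h_t^{-1}\partial_t h_t=-\Lambda_\omega\!\bigl(F_{\nabla_{h_t}}+[\theta,\theta^\dagger_{h_t}]\bigr)$, or equivalently the Uhlenbeck--Yau continuity method on the perturbed equation $\Lambda_\omega(F_{\nabla_{h_\varepsilon}}+[\theta,\theta^\dagger_{h_\varepsilon}])=\varepsilon\log(h_0^{-1}h_\varepsilon)$. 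Short-time existence and interior smoothing are standard parabolic estimates, the Higgs term contributing only lower-order curvature; the heart of the matter is a priori control of $s_t:=\log(h_0^{-1}h_t)$. For this one introduces Simpson's modification of the Donaldson functional $M(h_0,\cdot)$ that incorporates $\theta$, checks that it is monotone along the flow, convex along geodesics in the space of Hermitian metrics, and dominates $\sup_X|s_t|$ and $\|s_t\|_{L^2_1}$, and shows that $\mu_\omega$-stability makes $M$ coercive. The step I expect to be the main obstacle is excluding divergence $\sup_X|s_t|\to\infty$ along a minimizing sequence: here one invokes the Uhlenbeck--Yau regularity theorem for weakly holomorphic projections to extract in the limit a weakly holomorphic subbundle $\pi$ of $E$ of slope $\geq 0$, and the input specific to the Higgs setting is to verify that $\pi$ is automatically $\theta$-invariant, hence a destabilizing sub-Higgs-sheaf, contradicting $\mu_\omega$-stability. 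With divergence ruled out, elliptic bootstrapping upgrades the convergence from $L^2_1$ to $C^\infty$ and yields a metric $h$ on $E$ with $\Lambda_\omega\bigl(F_{\nabla_h}+[\theta,\theta^\dagger_h]\bigr)=0$, the Einstein constant being $0$ since the slope is $0$.

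\emph{Stage 2 (Hermitian--Einstein $\Rightarrow$ harmonic).} It remains to see that this $h$ makes the Hitchin--Simpson connection $\nabla_h+\theta+\theta^\dagger_h$ flat. Its curvature decomposes as $F_{\nabla_h}+[\theta,\theta^\dagger_h]$ (the $(1,1)$-part, using that $\bar\partial_E\theta=0$ and $\partial_{\nabla_h}\theta^\dagger_h=(\bar\partial_E\theta)^\dagger=0$ automatically) plus the mutually adjoint $(2,0)$- and $(0,2)$-pieces $\partial_{\nabla_h}\theta$ and $\bar\partial_E\theta^\dagger_h$, so flatness amounts to $G_h:=F_{\nabla_h}+[\theta,\theta^\dagger_h]=0$ and $\partial_{\nabla_h}\theta=0$. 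I would now invoke the Bogomolov--L\"ubke inequality for Higgs bundles: expressing the Chern-number combination $\int_X\bigl(2\operatorname{rank}(E)\,c_2(E)-(\operatorname{rank}(E)-1)\,c_1(E)^2\bigr)\wedge\{\omega\}^{n-2}$ through the Chern--Weil forms of the Hitchin--Simpson connection and using $\Lambda_\omega G_h=0$, one rewrites it, up to positive constants, as a sum of $L^2$-norms of the primitive trace-free part $G_h^{\circ}$ and of $\partial_{\nabla_h}\theta$; combining the Hodge--Riemann relations (which control the sign of $\int_X c_1(E)^2\wedge\{\omega\}^{n-2}$ given $c_1(E)\wedge\{\omega\}^{n-1}=0$) with the hypothesis $c_2(E)\wedge\{\omega\}^{n-2}=0$ then forces $G_h^{\circ}=0$ and $\partial_{\nabla_h}\theta=0$, hence $F_{\nabla_h}+[\theta,\theta^\dagger_h]=0$ and the full curvature of $\nabla_h+\theta+\theta^\dagger_h$ vanishes. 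Thus $h$ is a harmonic metric for $(E,\theta)$, which is what we wanted.
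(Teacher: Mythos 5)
The paper itself gives no proof of this statement---it is quoted from Simpson \cite{Sim88,Sim92}---so I am comparing your sketch with Simpson's original argument. Your Stage 1 is a faithful outline of it: reduction to a stable summand, the heat flow / perturbed Uhlenbeck--Yau equation, the Higgs-modified Donaldson functional, and the extraction of a destabilizing weakly holomorphic subsheaf (with the check that it is $\theta$-invariant) when the $C^0$-bound fails. No complaints there.

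Stage 2, however, has a genuine gap, located exactly at the step you present as the conclusion. The L\"ubke--Bogomolov identity for the Hermitian--Einstein metric reads, up to positive constants, $\int_X\bigl(2r\,c_2(E)-(r-1)c_1(E)^2\bigr)\wedge\omega^{n-2}=C_1\|G_h^{\circ}\|^2+C_2\|(\partial_{\nabla_h}\theta)^{\circ}\|^2\ge 0$, while the Hodge--Riemann relations give $\int_X c_1(E)^2\wedge\omega^{n-2}\le 0$ for the $\omega$-primitive class $c_1(E)$. Substituting $c_2(E)\wedge\{\omega\}^{n-2}=0$, the left-hand side becomes $-(r-1)\int_X c_1(E)^2\wedge\omega^{n-2}\ge 0$: the two inequalities point the \emph{same} way, so they are merely consistent and force nothing to vanish. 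What actually closes the argument in \cite[Prop.~3.4]{Sim92} is the identity applied to $E$ itself, $8\pi^2\int_X\mathrm{ch}_2(E)\wedge\omega^{n-2}/(n-2)!=-\|G_h\|^2-2\|\partial_{\nabla_h}\theta\|^2$, combined with the hypothesis $\mathrm{ch}_2(E)\cdot[\omega]^{n-2}=0$. Under the hypotheses as literally stated one only gets $\mathrm{ch}_2(E)\cdot[\omega]^{n-2}=\tfrac12\int_X c_1(E)^2\wedge\omega^{n-2}\le 0$, and the trace part $\mathrm{tr}\,G_h$ survives.

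This is not something you can repair, because the statement as literally written is false: take $E=\sO_X\oplus L$ with $\theta=0$ on a projective surface, where $c_1(L)$ is nonzero and $\omega$-primitive. Then $E$ is $\mu_\omega$-polystable of slope $0$, $c_1(E)\wedge\{\omega\}^{n-1}=0$ and $c_2(E)=0$, yet a harmonic metric would make $L$ unitarily flat and force $c_1(L)=0$ in $H^2(X,\bR)$. The intended hypothesis (Simpson's) is $\mathrm{ch}_1(E)\cdot[\omega]^{n-1}=\mathrm{ch}_2(E)\cdot[\omega]^{n-2}=0$, equivalently $c_1(E)=0$ in $H^2(X,\bR)$ together with $c_2(E)\cdot[\omega]^{n-2}=0$. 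With that correction your Stage 2 closes at once from the displayed $\mathrm{ch}_2$ identity (no detour through the discriminant is needed), and the rest of your sketch is sound.
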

In other words, he established the one-to-one correspondence between semisimple local systems on $X$, and the polystable Higgs bundles with vanishing Chern classes.  This is called the \emph{Simpson correspondence} in the literature.  
\begin{rem}
	\begin{itemize}
		\item  	While harmonic metric depends on the choice of the metric on the Riemannian manifold, for K\"ahler manifolds, the pluriharmonic metric is independent of the choice of the K\"ahler metric on $X$.
		\item The term \emph{non-abelian Hodge theory} originates from the following distinction: 
		classical Hodge theory concerns the cohomology of the constant sheaf \(\mathbb{C}\) on \(X\), 
		whereas non-abelian Hodge theory is related to the cohomology of nontrivial local systems on \(X\), 
		whose monodromy groups are generally non-abelian.
	\end{itemize} 
\end{rem}
A straightforward application of Simpson's theorem is the following.
\begin{thm}[Simpson]\label{thm:reductive}
	Let $f \colon X \to Y$ be a morphism of projective varieties, where $Y$ is smooth and $X$ is normal. For any reductive representation $\varrho \colon \pi_1(Y) \to \mathrm{GL}_{N}(\mathbb{C})$, the pullback $f^*\varrho \colon \pi_1(X) \to \mathrm{GL}_{N}(\mathbb{C})$ is also reductive.
\end{thm}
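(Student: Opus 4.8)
The plan is to reduce to the case where $X$ is smooth and then transport a harmonic metric across $f$, using Corlette's theorem (\cref{thm:Corlette}) in both directions. First I would replace $X$ by a resolution of singularities $\pi\colon X'\to X$ chosen to be an isomorphism over the smooth locus $X^{\mathrm{sm}}$. Since $X$ is normal, $X^{\mathrm{sm}}$ has complement of complex codimension $\geq 1$ in both $X$ and $X'$, so $\pi_1(X^{\mathrm{sm}})\to\pi_1(X)$ is an isomorphism and $\pi_1(X^{\mathrm{sm}})\to\pi_1(X')$ is surjective; chasing the commutative triangle $X^{\mathrm{sm}}\hookrightarrow X'\to X$ shows $\pi_*\colon\pi_1(X')\to\pi_1(X)$ is surjective. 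Hence the images of $f^*\varrho$ and of $\pi^*(f^*\varrho)=(f\circ\pi)^*\varrho$ in $\GL_N(\bC)$ coincide, so one is reductive iff the other is. Replacing $(X,f)$ by $(X',f\circ\pi)$, we may assume $X$ is smooth projective, in particular compact Kähler.

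Next, by \cref{thm:Corlette} the reductivity of $\varrho$ produces a harmonic bundle $(E,\theta,h)$ on $Y$ whose associated flat connection $D:=\nabla_h+\theta+\theta^\dagger_h$ has monodromy $\varrho$. I would then pull everything back along the holomorphic map $f$: one obtains the Higgs bundle $(f^*E,f^*\theta)$ on $X$, which is holomorphic because $f$ is and satisfies $f^*\theta\wedge f^*\theta=f^*(\theta\wedge\theta)=0$, together with the smooth Hermitian metric $f^*h$. The crucial point is that, $f$ being holomorphic, the Chern connection of $(f^*E,f^*h)$ is $f^*\nabla_h$, the metric adjoint satisfies $(f^*\theta)^\dagger_{f^*h}=f^*(\theta^\dagger_h)$, and $f^*$ preserves the $(1,0)/(0,1)$ bidegree; therefore
\[
\nabla_{f^*h}+f^*\theta+(f^*\theta)^\dagger_{f^*h}=f^*\big(\nabla_h+\theta+\theta^\dagger_h\big)=f^*D,
\]
which is flat because $D$ is. Thus $(f^*E,f^*\theta,f^*h)$ is a harmonic bundle on the compact Kähler manifold $X$, and its monodromy along a loop $\gamma$ is the monodromy of $D$ along $f\circ\gamma$, namely $\varrho\circ f_*=f^*\varrho$. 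Applying the converse direction of \cref{thm:Corlette} on $X$, we conclude that $f^*\varrho$ is reductive.

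I expect the main technical point to be exactly the assertion that pullback along a holomorphic map carries harmonic bundles to harmonic bundles — concretely, the compatibility of the Chern connection, the metric adjoint, and the Dolbeault bigrading with $f^*$, all of which hold precisely because $f$ is holomorphic and not merely smooth. The same fact can be phrased in the language of the equivariant harmonic map of \cref{thm:Corlette}: the $\varrho$-equivariant pluriharmonic map $\widetilde Y\to\GL_N(\bC)/U_N$ pulls back under the holomorphic lift $\widetilde X\to\widetilde Y$ to an $(f^*\varrho)$-equivariant pluriharmonic map, since pluriharmonicity into a nonpositively curved symmetric space is stable under precomposition with holomorphic maps. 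A secondary, more routine ingredient is the reduction step, which relies only on the surjectivity of $\pi_1$ for a resolution of a normal variety noted above, together with the fact that reductivity of a representation depends only on the Zariski closure of its image.
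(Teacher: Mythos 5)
Your proposal is correct and follows essentially the same route as the paper: desingularize and use the surjectivity of $\pi_1$ of a resolution onto $\pi_1$ of a normal variety to reduce to smooth $X$, then pull back the harmonic bundle furnished by \cref{thm:Corlette} along $f$ and apply the converse direction of \cref{thm:Corlette}. The only (harmless) imprecision is your claim that $\pi_1(X^{\mathrm{sm}})\to\pi_1(X)$ is an isomorphism — for a normal variety one only has surjectivity in general, but that is all your diagram chase needs.
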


\begin{proof}
	Let $\mu \colon Z \to X$ be a desingularization. Since $X$ is normal, the induced homomorphism on fundamental groups
	\[
	\mu_*: \pi_1(Z) \longrightarrow \pi_1(X)
	\]
	is surjective. Consequently, the image of the representation $f^*\varrho$ coincides with the image of $(f \circ \mu)^*\varrho$. Thus, $f^*\varrho$ is reductive if and only if $(f \circ \mu)^*\varrho$ is reductive. Therefore, after replacing $X$ with a desingularization, we may assume without loss of generality that $X$ is smooth.
	
	We now apply the Simpson correspondence. Let $(E,\theta,h)$ be the harmonic bundle on $Y$ corresponding to the reductive representation $\varrho$. The pullback $(f^*E, f^*\theta, f^*h)$ is also a harmonic bundle on $X$ corresponding to the representation $f^*\varrho$. By \cref{thm:Corlette}, the existence of a harmonic metric implies that the associated representation $f^*\varrho$ is reductive. The theorem is proved.
\end{proof}

\subsubsection{Complex Variation of Hodge structures}
In \cite{Sim88}, Simpson discovered that $\bC$-VHS in algebraic geometry is indeed a special case of harmonic bundles. Let us recall the definition. 
\begin{dfn}
	Let $Y$ be a complex manifold. A $\bC$-VHS consists of $(V=\oplus_{p+q=r}V^{p,q},\nabla,Q) $ where $(V,\nabla)$ is a flat bundle and $Q$ is a non-definite hermitian form for $V$  such that
	\begin{itemize}
		\item $V=\oplus_{p+q=r}V^{p,q}$ is a direct sum of smooth vector bundles that is orthogonal with respect to $Q$,
		\item $Q$ is  $\nabla$-parallel such that $h:=(-1)^pQ|_{V^{p,q}}$ is positively definite
		\item we have   the Griffiths transversality:
		\[
		\begin{array}{ccc}
			\nabla: V^{p,q} \to A^{0,1}(V^{p+1,q-1})\!\!\!\! & \oplus A^1(V^{p,q}) &\!\!\!\! \oplus A^{1,0}(V^{p-1,q+1}) \\
			\bar{\theta} & D  & \theta
		\end{array}
		\]
	\end{itemize} 
\end{dfn} 
Since $D''^2=0$,  \(E^{p,q}:=(V^{p,q},D'')\) has a structure of  holomorphic vector bundle such that any  smooth local  section $s$ of $V^{p,q}$ is a holomorphic section of $E^{p,q}$ if and only if $D''s=0$.  
The pair \((E=\bigoplus_{p,q} E^{p,q},\theta)\) is then called a \emph{Hodge bundle}, where the 
\[
\theta: E^{p,q}\longrightarrow E^{p-1,q+1}\otimes \Omega_Y^1
\]
satisfying the relation \(\theta\wedge\theta=0\), is  the 
 Higgs field. 
The Hodge decomposition is orthogonal with respect to the Hermitian metric \(h\), which is 
referred to as the \emph{Hodge metric}. One can check that $(\oplus_{p+q=r},E^{p,q},\theta,h)$ is a harmonic bundle.

Note that a $\mathbb{Z}$-VHS originally arises geometrically as follows: let $f:X\to Y$ be a smooth projective family over a complex manifold $Y$ endowed with a relative ample line bundle $L\to X$. Then the primitive part of the higher direct image of the constant sheaf $\mathbb{C}$ on $X$, denoted by $R^kf_*(\mathbb{C})_{\mathrm{prim}}$, underlies a $\mathbb{C}$-VHS defined by$$\left( R^kf_*(\mathbb{C})_{\mathrm{prim}}\otimes_{\mathbb{C}} \mathcal{O}_Y \stackrel{C^\infty}{\simeq} \bigoplus_{p+q=k}R^qf_*(\Omega_{X/Y}^p), \nabla, Q \right)$$where $\nabla$ is the Gauss-Manin connection for $R^kf_*(\mathbb{C})$, and the sesquilinear form $Q$ is determined by the Hodge-Riemann bilinear relations:$$Q(\alpha, \beta) = i^{p-q}\int_{X_t}\alpha\wedge \overline{\beta}\wedge c_1(L_t)^{\dim X_t-k}$$ for any $\alpha, \beta\in H^{k}(X_t)_{\mathrm{prim}}$. Here $X_t$ denotes the fiber $f^{-1}(t)$ for each $t \in Y$.  Since the $\mathbb{C}$-VHS has an integral structure given by $R^kf_*(\mathbb{C})=R^kf_*(\mathbb{Z})\otimes_{\mathbb{Z}} \mathbb{C}$, this constitutes a $\mathbb{Z}$-VHS. We will not verify the axioms of $\bC$-VHS here, but refer the reader to the excellent textbook by Voisin \cite{Voi02} for more details.
\subsubsection{$\bC^*$-action on Higgs bundles}
For any Higgs bundle $(E,\theta)$ on a compact K\"ahler manifold $Y$ and any $t\in\bC^*$, we define the action by
$$
t.(E,\theta):=(E,t\theta).
$$
We shall use \cref{thm:Corlette,thm:Simpson} to define such action on representation of $\pi_1(Y)$. Let $\varrho:\pi_1(Y)\to\GL_N(\bC)$ be a semisimple representation. Then there exists a harmonic bundle $(E,\theta,h)$ corresponding to $\varrho$. By Simpson's theorem, $(E,\theta)$ is $\mu_\omega$-polystable with vanishing Chern classes. Note that $(E,t\theta)$ is also $\mu_\omega$-polystable. Therefore, we use Simpson's theorem again to get a harmonic metric $h_t$ for $(E,t\theta)$. Let $\varrho_t:\pi_1(Y)\to\GL_N(\bC)$ be the monodromy representation given by the flat connection $\nabla_t+\theta+\bar{\theta}_{h_t}$.  In general,  $\varrho_t$ is not conjugate to $\varrho$. 
However, one can show that
\begin{proposition}[\cite{Sim92}]
	The semisimple representation $\varrho$ corresponds to a $\bC$-VHS if and only if it is $\bC^*$-invariant, i.e. $\varrho_t$ is conjugate to $\varrho$ for any $t\in \bC^*$. 
\end{proposition}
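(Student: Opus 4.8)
The plan is to pass to the Dolbeault side. By the Corlette--Simpson correspondence (\cref{thm:Corlette,thm:Simpson}), a semisimple representation $\varrho'\colon\pi_1(Y)\to\GL_N(\bC)$ is conjugate to $\varrho$ if and only if the attached $\mu_\omega$-polystable Higgs bundle with vanishing Chern classes is isomorphic to $(E,\theta)$. Hence $\varrho$ is $\bC^*$-invariant if and only if
\[
(E,t\theta)\ \cong\ (E,\theta)\quad\text{as Higgs bundles, for every }t\in\bC^*,
\]
i.e.\ $[(E,\theta)]$ is a fixed point of the $\bC^*$-action on the Dolbeault moduli space. I would establish the two implications in this form.

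\textit{$\bC$-VHS $\Rightarrow$ invariance.} Suppose $(E=\bigoplus_{p+q=r}E^{p,q},\theta)$ is the Hodge bundle of a $\bC$-VHS carrying $\varrho$, with Hodge metric $h$. Let $g_t$ be the automorphism of $E$ acting on $E^{p,q}$ by the scalar $t^{-p}$. Griffiths transversality $\theta(E^{p,q})\subseteq E^{p-1,q+1}\otimes\Omega^1_Y$ yields $g_t\theta g_t^{-1}=t\theta$, so $g_t\colon(E,\theta)\to(E,t\theta)$ is an isomorphism of Higgs bundles, \emph{defined globally over $Y$}. Pushing $h$ forward through $g_t$ gives a harmonic metric for $(E,t\theta)$; since the harmonic metric of a polystable Higgs bundle is unique up to scaling on each summand---a scaling that does not affect the monodromy---the flat connection defining $\varrho_t$ is conjugate, via the globally defined gauge transformation $g_t$, to the one defining $\varrho$. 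As conjugation by a global gauge transformation changes monodromy only up to conjugacy in $\GL_N(\bC)$, it follows that $\varrho_t$ is conjugate to $\varrho$ for all $t$.

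\textit{Invariance $\Rightarrow$ $\bC$-VHS.} Conversely, assume $(E,t\theta)\cong(E,\theta)$ for all $t$. Decomposing $(E,\theta)=\bigoplus_i(E_i,\theta_i)^{\oplus m_i}$ into pairwise non-isomorphic stable summands, uniqueness of this decomposition shows that $t\mapsto t\cdot[(E_i,\theta_i)]$ is a morphism from the connected variety $\bC^*$ into the finite set $\{[(E_j,\theta_j)]\}$, hence constant; so each $(E_i,\theta_i)$ is itself $\bC^*$-fixed, and we may assume $(E,\theta)$ stable, whence $\mathrm{Aut}(E,\theta)=\bC^*\cdot\mathrm{id}$. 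Then
\[
\widetilde{G}:=\{(t,f)\in\bC^*\times\GL(E)\ :\ f\theta f^{-1}=t\theta\}
\]
is a connected algebraic group, an extension of $\bC^*$ by $\bC^*$ with the first projection surjective by hypothesis, hence itself a torus; the surjection $\widetilde{G}\to\bC^*$ therefore splits, producing an \emph{algebraic} one-parameter subgroup $t\mapsto g_t\in\GL(E)$ with $g_{ts}=g_tg_s$ and $g_t\theta g_t^{-1}=t\theta$. Its weight decomposition $E=\bigoplus_pE_p$, with $g_t|_{E_p}=t^{-p}$, consists of holomorphic subbundles, and $g_t\theta g_t^{-1}=t\theta$ forces $\theta(E_p)\subseteq E_{p-1}\otimes\Omega^1_Y$; thus $(E=\bigoplus_pE_p,\theta)$ is a system of Hodge bundles. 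Finally, for $|t|=1$ both $h$ and $(g_t^{-1})^*h$ are harmonic metrics for the stable bundle $(E,t\theta)$, hence proportional, so $g_t$ is conformal with respect to $h$ and its eigenbundles $E_p$ are mutually $h$-orthogonal; a standard computation then shows that $\nabla:=\nabla_h+\theta+\theta^\dagger_h$ together with $Q:=\sum_p(-1)^p\,h|_{E_p}$ endow $(E=\bigoplus_pE_p,\nabla,Q)$ with the structure of a $\bC$-VHS whose monodromy is $\varrho$.

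The crux of the proof is the passage from the pointwise statement ``$(E,t\theta)\cong(E,\theta)$ for each $t$'' to a \emph{coherent, algebraic} $\bC^*$-action on $E$: the isomorphisms are canonical only modulo $\mathrm{Aut}(E,\theta)$, so the substance of the argument is this rigidification---equivalently, realizing the fixed point $[(E,\theta)]$ as a genuine $\bC^*$-fixed Higgs bundle---together with the verification that the harmonic metric is compatible with the induced Hodge grading. The complete argument, including the polystable case, is in Simpson \cite{Sim88,Sim92}.
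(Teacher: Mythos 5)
The paper does not prove this proposition---it is quoted from \cite{Sim92} without argument---so there is no in-text proof to compare against; your sketch is a reconstruction of Simpson's original argument (the fixed-point characterization of systems of Hodge bundles in \cite{Sim88,Sim92}), and it is correct in outline. The forward direction (the grading operator $g_t=t^{-p}$ on $E^{p,q}$, Griffiths transversality giving $g_t\theta g_t^{-1}=t\theta$, and transport of the harmonic metric) is exactly right. In the converse, two steps are compressed. The splitting of $1\to\mathrm{Aut}(E,\theta)\to\widetilde G\to\bC^*\to1$ does hold as you assert, but only because you have already reduced to a \emph{stable} summand, so the kernel is the connected group $\bC^*$ and the sequence of character lattices $0\to\bZ\to\bZ^2\to\bZ\to0$ splits since $\bZ$ is free; Simpson, working with invariance under a finite-index subgroup and without first isolating stable factors, instead passes to a finite cover $t=s^N$, and you should flag that the direct splitting is special to your reduction. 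Second, the closing ``standard computation'' that $\nabla=\nabla_h+\theta+\theta^\dagger_h$ preserves $Q=\sum_p(-1)^p h|_{E_p}$ is where the Hodge-theoretic content actually sits (it uses that $\theta$ shifts the grading by exactly one together with the $h$-orthogonality of the $E_p$ you just established); for a sketch this is acceptable, but it is the step that justifies the alternating signs and hence the polarization axioms. With those two caveats made explicit, the argument is complete.
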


Note that most of Simpson's results, except those discussed in the next subsection,  have been extended to the quasi-projective setting in a series of works by 
Mochizuki \cite{Moc06,Moc07,Moc07b}. In particular, all of the results stated above 
remain valid in the quasi-projective case.
\subsubsection{Betti and Dolbeault Moduli spaces, rigid representation and ubiquity}  
In this subsection, we discuss the Betti moduli spaces  and rigid representations. Let $X$ be a  smooth projective variety. 
We have an affine scheme $\Hom(\pi_1(X), \GL_N)$ of finite type defined over $\bZ$ that represents the functor
$$
S\mapsto \Hom(\pi_1(X), \GL_N(S))
$$
for any ring $S$.  
The \emph{Betti moduli space} $M_{\rm B}(X,\GL_N):=\Hom(\pi_1(X), \GL_N)\slash\!\!\slash \GL_{N}$ is the GIT quotient of $\Hom(\pi_1(X), \GL_N)$ by $\GL_N$, where $\GL_{N}$ acts by conjugation.  Note that it might be reducible while it is called variety. It is indeed the  \emph{$\GL_{N}$-character variety}  for the finitely generated group $\pi_1(X)$.      Thus,  such quotient $\Hom(\pi_1(X), \GL_N)\to M_{\rm B}(X,\GL_N)$ is surjective.  For any representation $\varrho:\pi_1(X)\to \GL_N(\bC)$, it is a $\bC$-point in $\Hom(\pi_1(X), \GL_N)(\bC)$. We write $[\varrho]$ to denote  its image in $M_{\rm B}(X,\GL_{N})$.  We list a  crucial property  of  $\GL_{N}$-character   varieties that will be used in this paper. For further background and more comprehensive accounts, we refer the reader to the survey of Sikora \cite{Sik12} and to the monograph of Lubotzky–Magid \cite{LM85}.

Let $K$ be any algebraically closed field of characteristic zero.  
By \cite[Theorem 30]{Sik12}, the orbit of  any representation $\varrho$ in $ \Hom(\pi_1(X), \GL_N)(K)$ is closed if and only if  $\varrho$ is   semisimple. Two semisimple representations $\varrho,\varrho'$ are conjugate under $\GL_N(K)$ if and only if $[\varrho]=[\varrho']$.

In~\cite{Sim94,Sim94b}, Simpson constructed the moduli spaces of Higgs bundles with vanishing Chern classes on a smooth projective variety $X$. Fix a positive integer $N$ and a polarization $L$ on $X$. Simpson constructed a quasi-projective scheme $M_{\mathrm{Dol}}(X,N)$ parametrizing   $\mu_L$-semistable Higgs bundles $(E,\theta)$ of rank $N$ on $X$ with vanishing Chern classes. The points of the moduli space correspond to equivalence classes defined as follows:

\begin{itemize}
	\item Let $(E,\theta)$ be a $\mu_L$-semistable Higgs bundle with vanishing Chern classes. Let $\mathrm{Gr}(E, \theta)$ denote the graded object associated to a Jordan--H\"older filtration of $(E,\theta)$. Then $\mathrm{Gr}(E,\theta)$ is a polystable Higgs bundle with vanishing Chern classes. It is \emph{locally free} by~\cite[Theorem~2]{Sim92}. This result also holds in the compact K\"ahler setting (see the note by the author~\cite{DenSim}); however, it fails if the condition on the vanishing of Chern classes is dropped.
	\item Two $\mu_L$-semistable Higgs bundles $(E_1,\theta_1)$ and $(E_2,\theta_2)$ with vanishing Chern classes are called \emph{Jordan-equivalent} if there exists an isomorphism between $\mathrm{Gr}(E_1,\theta_1)$ and $\mathrm{Gr}(E_2, \theta_2)$.
\end{itemize}
\begin{thm}[\cite{Sim94}]\label{thm:analytic iso}
	Let $X$ be a smooth projective variety.  The points of $M_{\rm Dol}(X,N)(\bC)$ correspond to Jordan
	equivalence classes of $\mu_L$-semistable  Higgs bundles of rank $N$ with vanishing Chern classes.   Moreover, there is a real analytic isomorphism between $M_{\rm Dol}(X,N)(\bC)$ and $M_{\rm B}(X,\GL_N)(\bC)$. Moreover, the $\bC^*$-action on $M_{\rm Dol}(X,N)$ is given by $(E,\theta)\mapsto (E,t\theta)$ for any $t\in\bC^*$ is algebraic. 
\end{thm}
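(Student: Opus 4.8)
The plan is to follow Simpson's construction in \cite{Sim94,Sim94b}, proving the three assertions in order: the moduli interpretation of $M_{\rm Dol}(X,N)$, the real analytic comparison with $M_{\rm B}(X,\GL_N)$, and the algebraicity of the $\bC^*$-action. For the moduli interpretation I would realize a Higgs bundle with vanishing Chern classes as a module over the sheaf of rings $\Lambda=\bigoplus_k \Sym^k T_X$, equivalently a sheaf on $T^*X$ supported in a bounded region; semistability together with vanishing Chern classes fixes the Hilbert polynomial $P$. The key inputs are: (i) boundedness of the family of $\mu_L$-semistable Higgs bundles of rank $N$ with vanishing Chern classes (Simpson's boundedness theorem, resting on a Bogomolov--Gieseker type restriction argument); (ii) after a large twist every member is globally generated with vanishing higher cohomology, so the family embeds into a $\Lambda$-compatible Quot scheme $Q$ of quotients $\O_X(-m)^{\oplus P(m)}\twoheadrightarrow E$ carrying a $\GL_{P(m)}$-action; (iii) a Hilbert--Mumford computation identifying GIT-semistability of a point of $Q$ with $\mu_L$-semistability of the corresponding Higgs bundle, and closed orbits with $\mu_L$-polystability. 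Then $M_{\rm Dol}(X,N)$ is the GIT quotient of (the closed $\Lambda$-invariant subscheme of) $Q$, its points are orbit closures, and two semistable Higgs bundles map to the same point precisely when their Jordan--H\"older graded objects are isomorphic; local freeness of the polystable graded object is \cite[Theorem~2]{Sim92}, already quoted.

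For the second assertion, on $\bC$-points the underlying bijection is the Corlette--Simpson correspondence: by \cref{thm:Corlette,thm:Simpson} a $\mu_L$-polystable Higgs bundle with vanishing Chern classes carries a harmonic metric whose flat connection $\nabla_h+\theta+\theta^\dagger_h$ has semisimple monodromy, and conversely every semisimple $\varrho$ arises this way; combining this with the character-variety facts recalled above (closed $\GL_N(\bC)$-orbits $=$ semisimple representations, conjugacy $\Leftrightarrow$ equality in $M_{\rm B}$) and the Jordan-equivalence description of $M_{\rm Dol}(\bC)$, the two sets of $\bC$-points are identified by a bijection $\Psi$. To upgrade $\Psi$ to a real analytic isomorphism I would first show it is a homeomorphism, using continuous dependence of the harmonic metric (the solution of an elliptic system) on the Higgs bundle, together with the properness estimates that prevent energy from escaping under degeneration; then I would match the local analytic models. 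Near a polystable object both $M_{\rm Dol}$ and $M_{\rm B}$ are, by Goldman--Millson deformation theory and the Luna slice theorem, analytically isomorphic to the quotient of a quadratic cone $\{\xi\in H^1 : [\xi,\xi]=0 \text{ in } H^2\}$ by the stabilizer acting on $H^0$, and harmonic theory furnishes a canonical isomorphism, compatible with the quadratic forms, between the Dolbeault hypercohomology of $[\End E\xrightarrow{[\theta,-]}\End E\otimes\Omega^1_X]$ and the group cohomology of $\pi_1(X)$ with coefficients in $\mathrm{ad}\,\varrho$. Since $\Psi$ is already a homeomorphism these slice identifications patch, giving the real analytic isomorphism.

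The third assertion is comparatively formal. Scaling the Higgs field $(\mathcal{E},\Theta)\mapsto(\mathcal{E},t\Theta)$ preserves $\Theta\wedge\Theta=0$, the Chern classes, and $\mu_L$-semistability (a $\theta$-invariant subsheaf is exactly a $t\theta$-invariant subsheaf), and it varies algebraically with $t\in\bG_m$; on the Quot scheme above it amounts to rescaling the differential-operator part of the $\Lambda$-module structure, which commutes with the $\GL_{P(m)}$-action, so it descends through the good quotient to an algebraic action $\bG_m\times M_{\rm Dol}(X,N)\to M_{\rm Dol}(X,N)$. The main obstacle is the second step, specifically the passage from a set-theoretic bijection to a real analytic isomorphism: this requires controlling the regularity and dependence of the harmonic metric on its Higgs bundle (resp.\ flat bundle) uniformly enough both to get bicontinuity of $\Psi$ and to match the two families of Kuranishi slices analytically, and it is here that Simpson's hard analytic work on existence, uniqueness and smooth dependence of solutions to the Hermitian--Yang--Mills / harmonic map equations, together with the properness estimates, is indispensable; by contrast the GIT construction and the algebraicity of the $\bG_m$-action are routine once boundedness is established.
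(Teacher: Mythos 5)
The paper gives no proof of this statement---it is quoted directly from Simpson's two papers [Sim94, Sim94b]---so there is no internal argument to compare against; your outline is a faithful reconstruction of Simpson's actual route ($\Lambda$-modules and the Quot-scheme GIT construction with the Hilbert--Mumford analysis for the moduli interpretation, the Corlette--Simpson correspondence plus Goldman--Millson/Luna deformation theory for the comparison with $M_{\rm B}$, and the evident $\bG_m$-equivariance of the whole construction for the last assertion). The one point to flag is the sentence ``since $\Psi$ is already a homeomorphism these slice identifications patch'': a homeomorphism together with abstract local analytic isomorphisms of germs does not make $\Psi$ real analytic unless those local identifications are actually induced by $\Psi$ itself, so you must check that $\Psi$, written in the quadratic-cone coordinates on both sides, coincides with the harmonic-theoretic isomorphism of cohomologies you describe---equivalently, exhibit both $M_{\rm Dol}(X,N)(\bC)$ and $M_{\rm B}(X,\GL_N)(\bC)$ as quotients of a common gauge-theoretic space of solutions of Hitchin's equations, so that the comparison map becomes the identity on that common space and its regularity is read off from the two slice theorems. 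You correctly single out the regularity and continuous dependence of harmonic metrics, together with the properness estimates, as the hard analytic core; with that step made precise the sketch is complete and is exactly Simpson's proof, while the GIT and $\bC^*$-action parts are as routine as you say.
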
 
For any Higgs bundle $(E,\theta)$, consider the characteristic polynomial  of $\theta$ defined by   $\det (\lambda I-\theta)=\lambda^N+\sum_{i=1}^{N} a_i\lambda^{N-i}$, where $a_i\in H^0(X,{\rm Sym}^i\Omega_X)$. 
\begin{dfn}
	The map $M_{\rm Dol}(X,N)\to \oplus_{k\geq 1} H^0(X,\Omega_X^k)$  given by 
	$$
	(E,\theta)\mapsto (a_1,\ldots,a_N) 
	$$
	is called the \emph{Hitchin morphism}. 
\end{dfn} 
\begin{proposition}[\cite{Sim94b}]\label{prop:proper}
	The Hitchin morphism   $M_{\rm Dol}(X,N)\to \oplus_{k\geq 1} H^0(X,\Omega_X^k)$  is algebraic, and \emph{proper}. 
\end{proposition}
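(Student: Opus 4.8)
The plan is to treat algebraicity as essentially formal and to concentrate on properness, which I would establish through the valuative criterion. For algebraicity, recall that $M_{\rm Dol}(X,N)$ is a good (GIT) quotient of a locally closed subscheme $\mathcal{Q}$ of a Quot scheme parametrizing framed $\mu_L$-semistable Higgs sheaves of rank $N$ with vanishing Chern classes; over $\mathcal{Q}$ there is a universal Higgs field $\theta^{\mathrm{univ}}$, and the coefficients of $\det(\lambda\,\mathrm{Id}-\theta^{\mathrm{univ}})$ define a $\GL$-invariant morphism $\mathcal{Q}\to\bigoplus_{k\ge 1}H^0(X,\mathrm{Sym}^k\Omega^1_X)$. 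Since $\mathrm{tr}(\wedge^i\theta)$ is additive along short exact sequences of Higgs sheaves, this morphism factors through Jordan equivalence and therefore descends, by descent for the good quotient, to an algebraic Hitchin morphism $h$ on $M_{\rm Dol}(X,N)$. Both source and target are separated and of finite type over $\bC$, so $h$ is separated and of finite type; it remains to verify universal closedness, i.e. the valuative criterion for complete discrete valuation rings with algebraically closed residue field (after the usual reductions).

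So let $R$ be such a DVR, with fraction field $K$, uniformizer $\pi$, and central divisor $X_0\subset X_R:=X\times\mathrm{Spec}\,R$. We are given a $\mu_L$-semistable Higgs bundle $(E_K,\theta_K)$ of rank $N$ with vanishing Chern classes on $X_K$ whose Hitchin image already lies in $\bigoplus_k H^0(X,\mathrm{Sym}^k\Omega^1_X)\otimes_{\bC}R$, and we must produce, after a finite (possibly ramified) extension $R\subset R'$ and up to $S$-equivalence, a $\mu_L$-semistable Higgs sheaf on $X_{R'}$, flat over $R'$, extending it and semistable on the central fibre. First I would extend $E_K$ to a coherent sheaf $\mathcal{E}$ on $X_R$, flat over $R$ and torsion-free on $X_R$ (take a coherent extension of the pushforward and saturate); then $\theta_K$ extends to $\theta\colon\mathcal{E}\to\mathcal{E}\otimes\Omega^1_{X_R/R}(m X_0)$ for some $m\ge 0$, still with $\theta\wedge\theta=0$. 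The crucial point is to arrange $m=0$: if $m\ge 1$, multiplication by $\pi^{m}$ extracts from the top polar part a nonzero $\mathcal{O}_{X_0}$-linear Higgs field $\bar\theta$ on an appropriate subquotient of $\mathcal{E}|_{X_0}$, and the hypothesis that the characteristic coefficients have \emph{no} pole along $X_0$ forces their leading contributions, hence $\bar\theta$, to be nilpotent. One then performs an elementary (Hecke) modification of $\mathcal{E}$ along $X_0$ — replacing $\mathcal{E}$ by $\ker(\mathcal{E}\to(\mathcal{E}|_{X_0})/\mathcal{K})$ for $\mathcal{K}$ built from the kernel/image filtration of $\bar\theta$ — which strictly decreases a lexicographic refinement of the polar order, using nilpotency of $\bar\theta$ to guarantee termination. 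Iterating finitely often yields an honest Higgs sheaf $(\mathcal{E},\theta)$ on $X_R$, flat over $R$, restricting to $(E_K,\theta_K)$ generically.

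It remains to make the central fibre semistable, for which I would run Langton's semistable-reduction procedure adapted to Higgs sheaves: if $(\mathcal{E}|_{X_0},\theta|_{X_0})$ is not $\mu_L$-semistable, let $\mathcal{G}\subset\mathcal{E}|_{X_0}$ be its maximal $\theta$-destabilizing Higgs subsheaf and set $\mathcal{E}':=\ker(\mathcal{E}\twoheadrightarrow(\mathcal{E}|_{X_0})/\mathcal{G})$, again a Higgs sheaf on $X_R$ flat over $R$ with the same generic fibre. Langton's argument shows the process terminates, the key input being boundedness: the $\theta$-invariant destabilizing quotients that occur lie in a bounded family, because the (now everywhere regular) characteristic coefficients $a_\bullet$ control the slopes of $\theta$-invariant subsheaves, so the relevant Higgs sheaves on $X_0$ with fixed Hilbert polynomial form a bounded family. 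After finitely many steps the central fibre is $\mu_L$-semistable; its Chern classes equal those of $E_K$, hence vanish (Chern classes are locally constant in flat families over a connected base), and the graded object of a Jordan–Hölder filtration is polystable, hence locally free by \cite[Theorem~2]{Sim92} as recalled above. This gives an $R'$-point of $M_{\rm Dol}(X,N)$ lifting the given $K$-point and mapping to the prescribed $R'$-point of the Hitchin base, verifying the valuative criterion; together with separatedness and finite type this proves $h$ is proper. (Alternatively, one can use the $\bC^{*}$-action $t\cdot(E,\theta)=(E,t\theta)$, which is $\bC^{*}$-equivariant with positive weights on the affine base, to localize the problem to the "nilpotent cone'' $h^{-1}(0)$, but establishing that the limits $\lim_{t\to0}t\cdot(E,\theta)$ exist inside $M_{\rm Dol}(X,N)$ and that $h^{-1}(0)$ is proper ultimately requires the same degeneration analysis.)

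The main obstacle is precisely the step that makes the Hitchin morphism better behaved than the naive "forget $\theta$'' map: eliminating the poles of the Higgs field in the DVR extension using only regularity of its characteristic polynomial. Both ingredients are delicate — showing fibrewise over $X_0$ that the leading polar coefficient is nilpotent (a linear-algebra statement about traces of wedge powers), and, more seriously, designing an invariant on the sequence of Hecke modifications that strictly decreases so the procedure halts. The Langton-type boundedness in the last step is the other technical heart: in higher dimension it must be phrased via Hilbert polynomials and $\mu_L$-(semi)stability rather than through the spectral-curve picture available for curves, and this is where one genuinely uses that $X$ is projective with a fixed polarization.
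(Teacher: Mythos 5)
The survey offers no proof of this proposition; it is quoted verbatim from Simpson, and the argument in the cited source \cite{Sim94b} is analytic rather than algebraic. Simpson passes through the correspondence with harmonic bundles: the maximum principle applied to the Hermitian--Yang--Mills/Hitchin equations gives an a priori bound of the form $\sup_X\lvert\theta\rvert_h\le C\bigl(1+\sum_k\lVert a_k\rVert^{1/k}\bigr)$ on the Higgs field in terms of its characteristic coefficients, so a family whose Hitchin image stays in a bounded set has uniformly bounded Higgs fields, and Uhlenbeck-type compactness produces a limiting semistable Higgs bundle with vanishing Chern classes, which verifies properness. Your route --- valuative criterion, pole elimination via nilpotency of the leading polar part of $\theta$, then Langton's semistable reduction --- is the standard purely algebraic alternative, and its skeleton is sound: the descent of the $\GL_N$-invariant characteristic coefficients to the GIT quotient is fine, and the observation that regularity of the $a_k$ over $R$ forces $\mathrm{tr}(\wedge^k\bar\theta)=0$ for all $k$, hence nilpotency of the top polar coefficient, is correct. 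The trade-off is clear: Simpson's proof is short but imports hard analysis, while yours stays algebraic but must re-prove two nontrivial degeneration statements from scratch.

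Those two statements are precisely where your writeup stops being a proof. First, you assert but never construct the strictly decreasing invariant that makes the sequence of Hecke modifications terminate; saying one should use ``a lexicographic refinement of the polar order'' and that nilpotency of $\bar\theta$ ``guarantees termination'' is a placeholder for the heart of the pole-removal step, and in higher dimension one must additionally check that each modification $\ker\bigl(\mathcal{E}\to(\mathcal{E}|_{X_0})/\mathcal{K}\bigr)$ remains torsion-free and $R$-flat, and that the kernel/image filtration of $\bar\theta$ --- defined a priori only on the locally free locus of $\mathcal{E}|_{X_0}$ --- extends to a coherent filtration one can modify along. Second, Langton's algorithm for Higgs sheaves on a polarized variety of arbitrary dimension requires boundedness of the family of destabilizing Higgs subsheaves that can occur along the iteration; your one-sentence justification that the characteristic coefficients ``control the slopes of $\theta$-invariant subsheaves'' is not an argument but a restatement of the needed semistable-reduction theorem for Higgs sheaves ($\Lambda$-modules). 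Neither gap reflects a wrong turn, but together they constitute essentially all of the content of the proposition, so what you have is a correct roadmap along a genuinely different (and harder to complete) route than the one the cited proof actually takes.
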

Based on his theorem, he estbalished the following striking results.
\begin{lem}\label{lem:limit}
	The limit $\lim_{t\to 0}t.(E,\theta)$ exists, that corresponds to a $\bC$-VHS. 
\end{lem}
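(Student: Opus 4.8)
The plan is to combine the properness of the Hitchin morphism (\cref{prop:proper}) with the valuative criterion of properness, and then to identify the limit using the characterization of $\bC$-VHS among semisimple local systems as the $\bC^*$-fixed points. First I would record how the $\bC^*$-action interacts with the Hitchin morphism: if $\det(\lambda I-\theta)=\lambda^N+\sum_{i=1}^{N}a_i\lambda^{N-i}$ with $a_i\in H^0(X,\mathrm{Sym}^i\Omega_X)$, then replacing $\theta$ by $t\theta$ scales each eigenvalue by $t$, so $a_i(t\theta)=t^i a_i(\theta)$. Hence the Hitchin morphism $h\colon M_{\rm Dol}(X,N)\to\bigoplus_{k\geq 1}H^0(X,\mathrm{Sym}^k\Omega_X)$ is $\bC^*$-equivariant for the linear action with weight $k$ on the $k$-th summand. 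In particular, for the orbit map $\sigma\colon\mathbb{G}_m\to M_{\rm Dol}(X,N)$, $t\mapsto[(E,t\theta)]$, the composite $h\circ\sigma$ is the polynomial map $t\mapsto(ta_1,t^2a_2,\dots,t^Na_N)$, which extends to a morphism $\mathbb{A}^1\to\bigoplus_k H^0(X,\mathrm{Sym}^k\Omega_X)$ sending $0$ to the origin.

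Next I would apply the valuative criterion of properness to $h$ over the discrete valuation ring $R=\mathcal{O}_{\mathbb{A}^1,0}$ with fraction field $K=\bC(t)$: the $K$-point of $M_{\rm Dol}(X,N)$ given by the generic point of $\sigma$, together with the $R$-point of the Hitchin base given by the extension above, form a commutative square, so \cref{prop:proper} supplies a unique $R$-point of $M_{\rm Dol}(X,N)$ extending $\sigma$. Its closed point is a $\bC$-point $[(E_0,\theta_0)]\in M_{\rm Dol}(X,N)(\bC)$; by separatedness of $M_{\rm Dol}(X,N)$ it is unique, and passing to the analytification identifies it with $\lim_{t\to 0}t.(E,\theta)$ in the usual topology. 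Since points of $M_{\rm Dol}(X,N)$ correspond to Jordan-equivalence classes of semistable Higgs bundles with vanishing Chern classes, the limit point has a polystable representative, which under the homeomorphism $M_{\rm Dol}(X,N)\cong M_{\rm B}(X,\GL_N)$ of \cref{thm:analytic iso} corresponds to a semisimple representation.

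Finally I would show the limit point is $\bC^*$-fixed and deduce it underlies a $\bC$-VHS. Since the $\bC^*$-action on $M_{\rm Dol}(X,N)$ is algebraic (\cref{thm:analytic iso}), hence continuous, for $s\in\bC^*$ one has $s.[(E_0,\theta_0)]=s.\lim_{t\to 0}t.(E,\theta)=\lim_{t\to 0}(st).(E,\theta)=[(E_0,\theta_0)]$, so $[(E_0,\theta_0)]$ is a fixed point. By the Proposition of Simpson \cite{Sim92} recalled above---a semisimple representation corresponds to a $\bC$-VHS if and only if it is $\bC^*$-invariant---together with \cref{thm:analytic iso}, the fixed point $[(E_0,\theta_0)]$ corresponds to a $\bC$-VHS; equivalently, its polystable representative admits a grading $E_0=\bigoplus_p E_0^p$ with $\theta_0(E_0^p)\subset E_0^{p-1}\otimes\Omega_X^1$, i.e. it is a system of Hodge bundles, which with its harmonic metric is exactly a $\bC$-VHS.

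The hard part is really just the second paragraph: ensuring that the limit lies \emph{inside} $M_{\rm Dol}(X,N)$, rather than escaping to the boundary of a projective compactification. This is precisely what properness of the Hitchin morphism delivers, once one observes (first paragraph) that the Hitchin image of the orbit converges to the origin; everything else---the weight bookkeeping, the valuative criterion, and the dictionary between $\bC^*$-fixed points and $\bC$-VHS---is formal given the results already established in the excerpt.
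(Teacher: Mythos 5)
Your proposal is correct and follows essentially the same route as the paper's proof: observe that the Hitchin image of the orbit tends to the origin, invoke properness of the Hitchin morphism to obtain the limit, and identify it as a $\bC$-VHS via $\bC^*$-invariance. You simply spell out the weight computation and the valuative criterion that the paper leaves implicit.
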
 
\begin{proof}
	Note that the inverse image of the Hitchin morphism of the origin, corresponds to Higgs bundles with nilpotent $\theta$ (including $\bC$-VHS). The image of $t.(E,\theta)$ under the Hitchin morphism converges to the origin if $t\to 0$. Since the Hitchin morphism is proper, the limit exists.  It is easy to see that the limit is $\bC^*$-invariant, and thus it corresponds to a $\bC$-VHS. 
\end{proof}
We then apply the real analytic isomorphism $M_{\rm Dol}(X,N)(\bC)\to M_{\rm B}(X,\GL_N)(\bC)$ to obtain  the $\bC^*$-action on $M_{\rm B}(X,N)$, that is a \emph{continuous} action, but in general not algebraic!  
Therefore, we have the following theorem.
\begin{proposition}[\cite{Sim91}]\label{prop:ubiquity}
	In any connected component of $M_{\rm B}(X,N)$,  there exists a representation that underlies a $\bC$-VHS. Moreover, any representation can deforms to a $\bC$-VHS. 
\end{proposition}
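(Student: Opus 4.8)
The plan is to run the $\bC^*$-action ``backwards'' from the Dolbeault side. First I would fix an arbitrary point $[\varrho]$ of $M_{\rm B}(X,N)$ and, via the real analytic isomorphism of \cref{thm:analytic iso}, transport it to a point $(E,\theta)$ of $M_{\rm Dol}(X,N)$ representing a polystable Higgs bundle with vanishing Chern classes. Since that isomorphism is in particular a homeomorphism, it carries connected components to connected components, so it is enough to argue entirely inside $M_{\rm Dol}(X,N)$, where the action $t.(E,\theta)=(E,t\theta)$ is algebraic. By \cref{lem:limit}, the limit $(E_0,\theta_0):=\lim_{t\to 0}t.(E,\theta)$ exists in $M_{\rm Dol}(X,N)$ and corresponds to a $\bC$-VHS.

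The key step is to check that $(E,\theta)$ and $(E_0,\theta_0)$ lie in the same connected component, which I would deduce by exhibiting the orbit together with its limit as the image of $\bC$ under a single morphism. Consider the orbit map $\phi\colon\bC^*\to M_{\rm Dol}(X,N)$, $t\mapsto(E,t\theta)$. Under the Hitchin morphism of \cref{prop:proper}, the $\bC^*$-action scales the summand $H^0(X,\Omega_X^k)$ with weight $k\geq 1$, so the Hitchin image of $(E,t\theta)$ tends to the origin as $t\to 0$. Since $M_{\rm Dol}(X,N)$ is a separated scheme of finite type and the Hitchin morphism is proper, the valuative criterion applies to $\phi$ and produces an extension to a morphism $\overline{\phi}\colon\bC\to M_{\rm Dol}(X,N)$ with $\overline{\phi}(0)=(E_0,\theta_0)$. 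Its image is connected and contains both $(E,\theta)=\overline{\phi}(1)$ and the $\bC$-VHS point $(E_0,\theta_0)$; composing $\overline{\phi}$ with the homeomorphism of \cref{thm:analytic iso} yields a continuous path in $M_{\rm B}(X,N)$ from $[\varrho]$ to a representation underlying a $\bC$-VHS. As $[\varrho]$ was arbitrary, this simultaneously shows that every connected component of $M_{\rm B}(X,N)$ contains a $\bC$-VHS and that every representation deforms to one.

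The step I expect to need the most care is the extension across $t=0$ and the transfer of the conclusion to the Betti side. On $M_{\rm Dol}(X,N)$ the action and the limit are algebraic, and properness of the Hitchin morphism (\cref{prop:proper}) genuinely provides the morphism $\overline{\phi}$, with $\overline{\phi}(0)$ forced to equal $\lim_{t\to 0}t.(E,\theta)$; this matches the $\bC$-VHS of \cref{lem:limit} because $\overline{\phi}(0)$ is $\bC^*$-fixed (it satisfies $t.\overline{\phi}(0)=\lim_{s\to 0}\overline{\phi}(ts)=\overline{\phi}(0)$), and a semisimple representation underlies a $\bC$-VHS precisely when its $\bC^*$-orbit is a point. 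However, the $\bC^*$-action becomes merely continuous after the real analytic isomorphism, so the ``deformation'' of $\varrho$ to a $\bC$-VHS that this produces is a continuous---indeed real analytic---path, not an algebraic family, and I would state the result in those terms.
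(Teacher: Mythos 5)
Your proposal is correct and follows essentially the same route as the paper: the paper likewise obtains the $\bC$-VHS as the limit $\lim_{t\to 0}t.(E,\theta)$ via the properness of the Hitchin morphism (\cref{lem:limit}), notes that the limit is $\bC^*$-fixed and hence a $\bC$-VHS, and transfers the conclusion to $M_{\rm B}(X,N)$ through the real analytic isomorphism of \cref{thm:analytic iso}, with the same caveat that the resulting deformation is only continuous rather than algebraic.
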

This result is called \emph{Simpson's ubiquity theorem} in the literature.

\begin{dfn}
	A semisimple representation   
	$\varrho:\pi_1(X)\to \GL_N(\bC)$  is   \emph{rigid} if  the irreducible component  of $M_{\rm B}(X,\GL_{N})(\bC)$ containing $[\varrho]$  is zero dimensional,  otherwise it is called \emph{non-rigid}.   \qed
\end{dfn}
For a  rigid semisimple  representation $\varrho$, by  above arguments any continuous deformation $\varrho':\pi_1(X)\to \GL_N(\bC)$ of $\varrho$ which is semisimple is conjugate to $\varrho$  under $\GL_N(\bC)$. By \cref{prop:ubiquity}, we have the following result.
\begin{cor}\label{cor:rigid}
	Any rigid representation corresponds to a $\bC$-VHS. \qed
\end{cor}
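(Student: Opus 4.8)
The plan is to combine Simpson's ubiquity theorem (\cref{prop:ubiquity}) with the deformation–rigidity property of rigid representations recorded just above the statement. First I would realize the continuous deformation explicitly through the $\bC^*$-flow: starting from the semisimple representation $\varrho$, pass to the corresponding polystable Higgs bundle $(E,\theta)$ via the Simpson correspondence (\cref{thm:Corlette,thm:Simpson}), and consider the family $(E,t\theta)$ for $t\in\bC^*$. By \cref{lem:limit} the limit $\lim_{t\to0}(E,t\theta)$ exists in $M_{\rm Dol}(X,N)$ and underlies a $\bC$-VHS. Transporting this through the real-analytic isomorphism $M_{\rm Dol}(X,N)(\bC)\simeq M_{\rm B}(X,\GL_N)(\bC)$ of \cref{thm:analytic iso}, one obtains a continuous path $t\mapsto[\varrho_t]$ in $M_{\rm B}(X,\GL_N)(\bC)$, defined on all of $\bC$, with $[\varrho_1]=[\varrho]$ and $[\varrho_0]$ the class of a representation $\varrho_0$ underlying a $\bC$-VHS.

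Next I would invoke rigidity. Since $\varrho$ is rigid, its class $[\varrho]$ lies on a zero-dimensional irreducible component of $M_{\rm B}(X,\GL_N)(\bC)$; such a component cannot meet any other irreducible component, since a point strictly contained in a positive-dimensional irreducible closed set is not a maximal irreducible closed subset. Hence $\{[\varrho]\}$ is Zariski-open, so also open and closed in the analytic topology. As $\bC$ is connected and $t\mapsto[\varrho_t]$ is continuous with $[\varrho_1]=[\varrho]$, the path is constant, so $[\varrho_0]=[\varrho]$; that is, $[\varrho]=[\varrho_0]$ is the class of a $\bC$-VHS representation. Alternatively, one bypasses this topological step entirely and simply cites the remark preceding the statement: a semisimple continuous deformation of a rigid $\varrho$ is $\GL_N(\bC)$-conjugate to $\varrho$, applied to $\varrho_0$.

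Finally I would conclude by unwinding the conjugacy. Both $\varrho$ and $\varrho_0$ are semisimple (the Hodge metric makes a $\bC$-VHS a harmonic bundle, whose monodromy is semisimple by \cref{thm:Corlette}), and two semisimple representations with the same point in the $\GL_N$-character variety are conjugate under $\GL_N(\bC)$ (the conjugacy criterion recalled earlier, \cite[Theorem 30]{Sik12}). Conjugating the Hodge decomposition $V=\bigoplus_{p+q=r}V^{p,q}$, the Hodge metric $h$, and the Higgs field $\theta$ of the $\bC$-VHS underlying $\varrho_0$ by the element of $\GL_N(\bC)$ realizing the conjugacy yields a $\bC$-VHS with monodromy $\varrho$, which is the assertion.

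I do not expect a serious obstacle: the three analytic inputs—existence of the limiting $\bC$-VHS on the Dolbeault side, continuity of the $\bC^*$-action on the Betti side, and the conjugacy criterion on the character variety—are all already available. The only point requiring mild care is the elementary topology showing that a zero-dimensional irreducible component of $M_{\rm B}(X,\GL_N)(\bC)$ is an isolated (clopen) point, so that the continuous $\bC^*$-orbit through $[\varrho]$ must be constant; and even this is subsumed by the deformation–rigidity remark stated immediately before the corollary.
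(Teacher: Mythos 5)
Your proposal is correct and follows essentially the same route as the paper: the paper's proof consists precisely of the observation that a semisimple continuous deformation of a rigid $\varrho$ is conjugate to $\varrho$, combined with the ubiquity theorem (\cref{prop:ubiquity}), whose continuous deformation to a $\bC$-VHS is exactly the $\bC^*$-flow you describe via \cref{lem:limit} and \cref{thm:analytic iso}. Your elaborations (the clopen-point argument and the explicit transport of the Hodge structure by the conjugating element) are just the details the paper leaves implicit.
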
 
But the drawback is that such $\bC$-VHS in general does not have discrete monodromy. That motivates us to consider the non-archimedean representations in \cref{sec:na}.  

We note that most of the results in this subsection, except for 
\cref{prop:ubiquity,cor:rigid}, remain open in the quasi-projective setting. 
In particular, extending \cref{lem:limit} to the quasi-projective case would be a 
very interesting problem. 
\subsection{Non-abelian Hodge theory: non-archimedean cases}\label{sec:na}
While Simpson--Mochizuki's theory deals exclusively with complex local systems, 
\emph{all} of the problems discussed so far naturally lead to the study of 
$p$-adic local systems, as well as $\mathbb{F}_p((t))$-local systems, on 
complex algebraic varieties, as we will see shortly.  More precisely, given a Zariski dense representation
\[
\varrho \colon \pi_1(X) \longrightarrow G(K),
\]
where $G$ is a reductive algebraic group defined over a non-archimedean local field $K$, Gromov--Schoen \cite{GS92} proved that, when $X$ is compact Kähler, there exists a $\varrho$-equivariant harmonic map from the universal cover $\widetilde{X}$ to the \emph{Bruhat--Tits building} $\Delta(G)_K$, a CAT$(0)$ space on which $G(K)$ acts by isometries. We refer the reader to \cite{BH99} for the definition of CAT$(0)$ spaces.

More recently, Brotbek, Daskalopoulos, Mese, and the   author \cite{BDDM,DM24} extended the 
Gromov--Schoen theorem to the quasi-projective setting. 
In this subsection, we present this theorem and explain how one can extract algebraic and 
analytic structures from it.
\subsubsection{Bruhat-Tits buildings}  
Note that \(S:=\GL_N(\mathbb{C})/U_N\) is the symmetric space associated with \(\GL_N(\mathbb{C})\).
It satisfies the following properties:
\begin{itemize}
	\item it is \emph{non-positively curved} (indeed, it has non-positive sectional curvature);
	\item \(\GL_N(\mathbb{C})\) acts transitively on \(S\);
	\item the stabilizer of any point under this action is a compact subgroup.
\end{itemize}
If \(K\) is a non-archimedean local field, such a na\"ive construction is no longer available.
Indeed, a maximal compact subgroup of \(\GL_N(K)\) is conjugate to \(\GL_N(\mathcal{O}_K)\),
which is an open subgroup. Consequently, the na\"ive quotient
\(\GL_N(K)/\GL_N(\mathcal{O}_K)\) is discrete and lacks interesting geometric structure.
A natural and rich analogue of the symmetric space in the non-archimedean setting is provided
by the Bruhat--Tits building.

Let \(K\) be a non-archimedean local field of characteristic zero, and let \(G\) be a reductive 
algebraic group defined over \(K\). 
There exists a Euclidean building associated with \(G\), called the (enlarged) 
Bruhat--Tits building and denoted by \(\Delta(G)_K\) (or simply \(\Delta(G)\)). 
It is a non-positively curved space on which \(G(K)\) acts strongly transitively by isometries, 
and the stabilizer of any point under this action is a precompact subgroup of \(G(K)\). 
We refer the reader to \cite{KP23,Guy23} for the definition and basic properties of 
Bruhat--Tits buildings.

Associated with \(\Delta(G)_K\) is a pair \((V,W)\), where \(V\) is a real Euclidean space 
endowed with its Euclidean metric, and \(W\) is an affine Weyl group acting on \(V\). 
More precisely, \(W\) is a semidirect product
\[
W = T \rtimes W^{v},
\]
where \(W^{v}\) is the \emph{vectorial Weyl group}, a finite group generated by reflections of 
\(V\), and \(T\) is a group of translations of \(V\).

For any apartment $A$ in $ \Delta(G)$,  there exists an isomorphism $i_A:A\to V$, which is  called a chart. For two charts $i_{A_1}:A_1\to V$ and $i_{A_2}:A_2\to V$, if $A_1\cap A_2\neq\varnothing$,   it satisfies the following properties:
\begin{enumerate}[label=(\alph*)]
	\item $Y:=i_{A_2}(i_{A_1}^{-1}(V))$ is convex.
	\item \label{item:weyl}There is an element $w\in W$ such that $w\circ i_{A_1}|_{A_1\cap A_2}=i_{A_2}|_{A_1\cap A_2}$.
\end{enumerate} 
\begin{figure}[h] 
	\centering	
	\begin{tikzpicture}[
		grow cyclic,
		level distance=1cm,
		level/.style={
			level distance/.expanded=\ifnum#1>1 \tikzleveldistance/1.5\else\tikzleveldistance\fi,
			nodes/.expanded={\ifodd#1 fill\else fill=none\fi}
		},
		level 1/.style={sibling angle=120},
		level 2/.style={sibling angle=90},
		level 3/.style={sibling angle=90},
		level 4/.style={sibling angle=45},
		nodes={circle,draw,inner sep=+0pt, minimum size=2pt},
		]
		\path[rotate=30]
		node {}
		child foreach \cntI in {1,...,3} {
			node {}
			child foreach \cntII in {1,...,2} { 
				node {}
				child foreach \cntIII in {1,...,2} {
					node {}
					child foreach \cntIV in {1,...,2} {
						node {}
						child foreach \cntV in {1,...,2} {}
					}
				}
			}
		};
	\end{tikzpicture}   
	\caption{Bruhat-Tits building for ${\rm SL}_2(\bQ_p)$ with $p=2$.}
\end{figure}
\subsubsection{Harmonic mapping to Euclidean building} 
We first present our result on the extension of Gromov-Schoen's theorem. 
\begin{thm}[\cite{BDDM,DM24}] \label{thm:BDDMex} 
	Let $X$ be a smooth quasi-projective variety and let $G$ be a reductive group defined over a non-archidemean local field $K$. Let $\Delta(G)$ be  the  Bruhat-Tits building of $G$. Denote by  $\pi_X:\widetilde{X}\to X$ the universal covering map.   If $\varrho:\pi_1(X)\to G(K)$ is a Zariski dense representation, then the following statements hold:
	\begin{thmlist}
		\item  \label{item:existence}  There exists a $\varrho$-equivariant pluriharmonic map $u:\widetilde{X}\to \Delta(G)$ with  logarithmic energy growth. 
		\item \label{item:harmonic}$ {u}$ is harmonic with respect to any K\"ahler metric on $\widetilde{X}$.  	
		\item  \label{item:pullback}Let $f: Y \to X$ be a morphism from a smooth quasi-projective variety $Y$. Denote by $\tilde{f}: \widetilde{Y} \to \widetilde{X}$   the lift of $f$ between the universal covers of $Y$ and $X$.  Then the $f^*\varrho$-equivariant map $ {u }\circ \tilde{f}: \widetilde{Y} \to \Delta(G)$ is   pluriharmonic and has logarithmic energy growth.  
	\end{thmlist} 
\end{thm}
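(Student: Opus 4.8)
The plan is as follows. Fix a smooth projective compactification $\overline{X}$ of $X$ with simple normal crossing boundary $D=\overline{X}\setminus X$, and equip $X$ with a complete K\"ahler metric $\omega$ of Poincar\'e growth along $D$, so that $\pi_X^{*}\omega$ makes $\widetilde X$ a complete K\"ahler manifold. I would produce the map $u$ of \cref{item:existence} as a limit of $\varrho$-equivariant energy minimizers on an exhaustion of $X$ by compact domains, upgrade it to a pluriharmonic map using the Gromov--Schoen regularity theory together with the Siu--Sampson Bochner formula, and then deduce \cref{item:harmonic} and \cref{item:pullback} essentially formally.

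\emph{Existence and logarithmic energy growth.} Let $X_{\epsilon}=\overline X\setminus N_{\epsilon}(D)$ be the complement of a shrinking tubular neighbourhood of $D$, so that $\{X_{\epsilon}\}$ exhausts $X$ by compact manifolds with boundary. On each $\widetilde{X_{\epsilon}}$ one solves the $\varrho$-equivariant Dirichlet problem for maps into the CAT$(0)$ space $\Delta(G)$, prescribing boundary data from a fixed $\varrho$-equivariant Lipschitz map; the Korevaar--Schoen and Gromov--Schoen existence theory for non-positively curved targets then produces $\varrho$-equivariant energy minimizers $u_{\epsilon}$. The crucial point is a \emph{uniform} bound $E(u_{\epsilon};X_{\delta})\le C\,(1+|\log\delta|)$, valid for all $\epsilon\le\delta$ with $C$ independent of $\epsilon$. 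This rests on two ingredients. First, a local analysis near $D$: around a codimension-$k$ stratum the local fundamental group is $\mathbb Z^{k}$, whose image under $\varrho$ is a finitely generated abelian subgroup $A$ of $G(K)$; since $G(K)$ acts semisimply on $\Delta(G)$, $A$ has a well-defined joint translation part, and for the associated punctured polydisk with Poincar\'e metric a $\varrho|_{A}$-equivariant harmonic map has energy over $\{\,|z_i|>\epsilon\,\}$ of size $O(\log(1/\epsilon))$ --- this is the building counterpart of the logarithmic growth of tame harmonic bundles and $\bC$-VHS in the Simpson--Mochizuki theory (\cite{Moc06}), and it bounds the energy contributed by a neighbourhood of $D$. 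Second, the Zariski density of $\varrho$, which forces the image to fix no point at infinity of $\Delta(G)$, so that the minimizing family cannot escape towards an end of the target; this yields the interior bounds. Granting the uniform estimate, the Gromov--Schoen compactness theory extracts a subsequential limit $u:\widetilde X\to\Delta(G)$ which is $\varrho$-equivariant, harmonic, and has logarithmic energy growth.

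\emph{Pluriharmonicity and metric independence.} By the Gromov--Schoen regularity theory \cite{GS92}, $u$ is locally Lipschitz and its singular set $\mathcal S(u)\subset\widetilde X$ --- the locus where $u$ does not locally factor through a single apartment, i.e.\ a Euclidean space --- has Hausdorff codimension at least two. On $\widetilde X\setminus\mathcal S(u)$ the map $u$ is a smooth harmonic map into a Euclidean space, so the Sampson Bochner formula \cite{Sam86}, combined with the K\"ahler identities, forces the pluriharmonicity equations \eqref{eq:pluriharmonic} to hold there; the codimension bound on $\mathcal S(u)$, together with the energy control, lets one extend them across $\mathcal S(u)$, which proves \cref{item:existence}. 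Statement \cref{item:harmonic} is then immediate: pluriharmonicity of $u$ means precisely that $u$ restricts to a harmonic map along every germ of holomorphic curve in $\widetilde X$, and contracting this property against an arbitrary K\"ahler form shows that $u$ is harmonic for every K\"ahler metric on $\widetilde X$.

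\emph{Pullback.} Choose a smooth compactification $\overline Y$ of $Y$ over which $f$ extends to a morphism $\overline f:\overline Y\to\overline X$ with $\overline f^{-1}(D)\subseteq\overline Y\setminus Y$ (arrange this by further blow-ups), and lift $f$ to $\tilde f:\widetilde Y\to\widetilde X$. Then $v:=u\circ\tilde f$ is $f^{*}\varrho$-equivariant, and it is pluriharmonic: since $u$ is harmonic along every holomorphic curve in $\widetilde X$ and $\tilde f$ carries holomorphic curves to holomorphic curves, $v$ is harmonic along every holomorphic curve in $\widetilde Y$, hence, as in \cref{item:harmonic}, harmonic for every K\"ahler metric on $\widetilde Y$. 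Logarithmic energy growth is inherited: by the Schwarz--Ahlfors lemma $\overline f^{*}\omega\le C\,\omega_{Y}$ for a Poincar\'e-type metric $\omega_{Y}$ on $Y$, whence $E(v;Y\setminus N_{\epsilon}(D_{Y}))\le C'\,E(u;X\setminus N_{\epsilon'}(D))=O(|\log\epsilon|)$. I expect the main obstacle to be the uniform logarithmic energy estimate on the exhaustion: this is where the non-compactness of $X$, the behaviour of $\varrho$ near $D$, and the geometry of the Euclidean building genuinely interact, and it is the essential new input needed to push Gromov--Schoen beyond the compact K\"ahler setting. By contrast, the regularity theory and the Sampson Bochner argument are local in nature and should carry over from the original proofs with only technical modifications once the harmonic map has been produced with controlled energy growth at infinity.
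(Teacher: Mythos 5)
First, a point of comparison: the survey does not actually prove \cref{thm:BDDMex}; it is quoted from \cite{BDDM,DM24}, and the text only records that the semisimple case is \cite[Theorem~A]{BDDM} and the general reductive case is \cite[Theorem~A]{DM24}. So there is no in-paper proof to measure you against. That said, your architecture --- exhaustion of $X$ by compact pieces, equivariant Dirichlet problems into the CAT$(0)$ target, a uniform logarithmic energy bound driven by local models of the boundary monodromy, Gromov--Schoen compactness and regularity, and the Siu--Sampson Bochner argument on the regular set --- is indeed the route of the cited works for semisimple $G$, and you correctly identify the uniform energy estimate near $D$ as the genuinely new input beyond \cite{GS92}. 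The deduction of \cref{item:harmonic} from pluriharmonicity and the pluriharmonicity part of \cref{item:pullback} are essentially formal, as you say.

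There is, however, one concrete gap: your existence mechanism does not cover the reductive case as stated. You argue that Zariski density of $\varrho$ forces the image to fix no point at infinity of $\Delta(G)$, which is what prevents the minimizing family from escaping. This is correct for semisimple $G$ (stabilizers of boundary points are proper parabolics), but for reductive $G$ with a positive-dimensional central torus the enlarged building splits off a Euclidean factor on which $G(K)$, hence $\varrho(\pi_1(X))$, acts by translations; a group of translations of a Euclidean space always fixes points at infinity, so the non-escape argument breaks down in the central direction no matter how Zariski dense $\varrho$ is. This is precisely why \cite{DM24} is a separate paper: the central/abelian factor must be handled by a direct construction (equivariant harmonic maps to the Euclidean factor built from logarithmic $1$-forms representing the translation cocycle), and the two pieces then have to be assembled. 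A second, smaller issue is in \cref{item:pullback}: the Schwarz-lemma comparison $\overline{f}^{*}\omega\le C\,\omega_Y$ does not by itself give $E(v;\cdot)\le C'E(u;\cdot)$, since energy is not monotone under pullback by maps with positive-dimensional fibers; the logarithmic growth of $u\circ\tilde f$ has to be checked against the local monodromies of $f^{*}\varrho$ around $\overline{Y}\setminus Y$, using the explicit form of the growth condition rather than a metric comparison alone.
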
 
For the definitions appearing in the theorem, 
we refer the reader to \cite{BDDM} or to \cite[Section~2]{DM24} for more details. 
The above theorem was established in \cite[Theorem~A]{BDDM} by 
Brotbek, Daskalopoulos, Mese, and the  author in the case where $G$ is semisimple, building upon previous work of Daskalopoulos and Mese \cite{DM23J,DM24A,DM23M}.
Subsequently, building on \cite{BDDM}, 
 Mese and the author extended the result to the general reductive case 
in \cite[Theorem~A]{DM24}. 
As we will see later, this extension provides a crucial building block both for studying the hyperbolicity of 
(non-compact) algebraic varieties \cite{CDY22} and for addressing the  linear Shafarevich conjecture in the 
quasi-projective case \cite{DY23,DY23b,Bru23,BBT24}.

By \cref{thm:BDDMex}, there exists a $\varrho$-equivariant \emph{harmonic mapping with logarithmic energy growth} $u:\widetilde{X}\to \Delta(G)$.  
Moreover, such a map $u$ is pluriharmonic.  

We denote by $\mathcal{R}(u)$ the regular set of the harmonic map $u$. Explicitly, this is the set of points $x\in \widetilde{X}$ for which there exists an open neighborhood $\Omega_{x}$ of $x$ such that $u(\Omega_{x})\subset A$ for some apartment $A$. 

Since $G(K)$ acts transitively on the apartments of $\Delta(G)$ and $u$ is $\varrho$-equivariant, the set $\mathcal{R}(u)$ is the pullback of an open subset of $X$. By abuse of notation, we denote this subset of $X$ also by $\mathcal{R}(u)$.

In \cite{GS92}, Gromov and Schoen prove a   deep \emph{regularity theorem}: the Hausdorff codimension of the complement $X\setminus \mathcal{R}(u)$ is at least two. In \cite{DM24},   Mese  and the author proved that $X\setminus \mathcal{R}(u)$ is contained in a proper Zariski closed subset of $X$. 

It has long been a major open problem whether such regularity theorems for harmonic maps into Euclidean buildings hold for non-locally compact Euclidean buildings (e.g., the Bruhat-Tits building of $\operatorname{GL}_{N}(K)$, where $K=\mathbb{Q}((t))$). This was only recently resolved in a celebrated work by Breiner, Dees, and Mese \cite{BDM24}, who established the Gromov-Schoen regularity theorem for harmonic maps into {non-locally compact} Euclidean buildings.  
This is  one of the most significant breakthroughs in this subject over the past two decades. 

Let me also mention that, in \cite{DM21G}, Daskalopoulos and Mese proved a  similar regularity theorem for harmonic maps into \emph{hyperbolic buildings}, which is another highly interesting development in this subject.  

\medspace

We now fix orthogonal coordinates $(x_1,\ldots,x_N)$ for $V$. Define smooth real functions on $\Omega_x$ by setting
\begin{align}\label{eq:ui}
	u_{A,i} := x_i \circ i_A \circ u,
\end{align}
where $i_A:A\to V$ is the chart defined in the previous subsection. The pluriharmonicity of $u$ implies that $\bar{\partial}\partial u_{A,i}=0$ for each $i$; hence, $\partial u_{A,i}$ is a holomorphic 1-form. However, the multiset of holomorphic 1-forms $\{\partial u_{A,1},\ldots,\partial u_{A,N}\}$ depends on the choice of the chart and the apartment containing $u(\Omega_x)$. We can resolve this ambiguity by considering the multiset of holomorphic 1-forms:
$$
\{\partial x_i \circ w \circ i_{A}\circ u\}_{i=1,\ldots,N; \, w\in W^v}.
$$
One can verify (see, e.g., \cite{BDDM,CDY22}) that this multiset of holomorphic 1-forms is invariant under the action of $W^v$ and is independent of the choice of the chart. They glue together to define a multivalued 1-form on $\mathcal{R}(u)$, denoted by $\eta_u$. Since $u$ is $\varrho$-equivariant, one can verify that $\eta_u$ descends to a \emph{splitting} multivalued 1-form on $\mathcal{R}(u)$.

Furthermore, using the Lipschitz property of the harmonic map, one can show that $\eta_u$ extends to a multivalued 1-form on all of $X$. Since $u$ has logarithmic energy growth, in \cite{BDDM} we can prove that  $\eta_u$ extends to a multivalued section for the logarithmic cotangent bundle $\Omega_{\overline{X}}(\log D)$, where $\overline{X}$ is a smooth projective compactification for $X$ and $D:=\overline{X}\backslash X$ is a simple normal crossing divisor.     We refer the reader to \cite[\S 3.1]{CDY22} for the formal definition of \emph{multivalued sections} of a holomorphic vector bundle. Less formally, we have
\begin{dfn}[Multivalued  section]\label{def:multivalued}
	Let $X$ be a complex manifold and let $E$ be a  holomorphic  vector bundle on $X$.  A \emph{multivalued section} $\eta$ on $X$ is   a formal sum $ \sum_{i=1}^{m}n_i\Gamma_i$, where each $n_i\in \mathbb{Z}_{>0}$ and each $\Gamma_i$  is an irreducible closed subvariety of $E$ such that  
	the natural projection $\Gamma_i\to X$ is a finite surjective morphism.  
\end{dfn}
In \cite{DM24},  Mese and the author established the uniqueness of harmonic maps in a suitable setting. In particular, we showed that $\eta_u$ is independent of the choice of $u$. Therefore, we denote this form by $\eta_\varrho$.   It also extends to a multivalued section of the logarithmic bundle $\Omega_{\overline{X}}(\log D)\to \overline{X}$. 
\begin{dfn}[Multivalued 1-form]\label{def:multivaluedform}
	The multivalued 1-form $\eta_\varrho$ on $X$ described above is called the \emph{multivalued 1-form associated with $\varrho$}.
\end{dfn}
On the other hand, we can construct an analytic object associated with $\varrho$ as follows.   For the open neighborhood  $\Omega_x$ of $x$ as above, we define  a smooth, real, semi-positive $(1,1)$-form by
\begin{align}\label{eq:form}
	\sqrt{-1}\sum_{i=1}^{N}\partial u_{A,i} \wedge \bar{\partial} u_{A,i}.
\end{align}
One can verify that this closed real semi-positive $(1,1)$-form is independent of the choice of $A$ and the orthogonal coordinates $(x_1,\ldots,x_N)$ for $V$ (see \cite[\S 3]{DW24b}). Moreover, it is invariant under the $\pi_1(X)$-action. Consequently, it descends to a smooth, real, closed, semi-positive $(1,1)$-form on $\mathcal{R}(u)$. 
The Lipschitz property of $u$ and elliptic regularity imply  that this form extends to a positive closed $(1,1)$-current $T_{\varrho}$ on $X$ with continuous potential. 
\begin{dfn}[Canonical current]\label{def:canonical}
	The closed positive $(1,1)$-current $T_\varrho$ on $X$ defined above is called the \emph{canonical current} of $\varrho$.
\end{dfn}

\begin{rem}
	Although the $\varrho$-equivariant harmonic map $u$ may not be unique, the uniqueness result in \cite{DM24} implies that both $\eta_\varrho$ and $T_\varrho$ are independent of the choice of $u$.
\end{rem}

\subsubsection{Spectral covering and Katzarkov-Eyssidieux reduction map}

Directly manipulating such multivalued 1-forms in algebraic geometry is often impractical. A standard approach is to construct a finite (generally ramified) covering $\pi \colon \xsp \to X$ such that $\pi^*\eta$ becomes a set of global logarithmic 1-forms on $\xsp$, with an explicitly describable ramification locus. We recall the main result from \cite[Proposition 3.1]{CDY22}.

\begin{proposition}[Spectral covering]\label{prop:spectral}
	Let $\eta_\rho$ be the multivalued 1-form described in \cref{def:multivaluedform}. There exists a finite Galois cover $\pi \colon \overline{\xsp} \to \overline{X}$ with Galois group $H$, together with a multiset of holomorphic sections 
	\[
	\{\omega_1, \dots, \omega_m\} \subset H^0(\overline{\xsp}, \pi^*\Omega_{\overline{X}}(\log D))
	\]
	that is $H$-invariant. Furthermore, the ramification locus of $\pi$ is contained in 
	\[
	\bigcup_{\omega_i \neq \omega_j} \{ \omega_i - \omega_j = 0 \}. 
	\]
\end{proposition}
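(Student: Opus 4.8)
The plan is to exhibit $\eta_\rho$ as a \emph{spectral variety} sitting inside the total space $\bV(\Omega_{\overline X}(\log D))$ of the logarithmic cotangent bundle, and then to pass to a finite Galois cover of $\overline X$ over which this spectral variety decomposes into the graphs of genuine global sections. Write $\eta_\rho=\sum_i n_i\Gamma_i$ as in \cref{def:multivalued}; since $\overline X$ is irreducible, each $\Gamma_i\subset\bV(\Omega_{\overline X}(\log D))$ is finite surjective over $\overline X$ of some degree $d_i$. Set $F=\bC(\overline X)$, fix an algebraic closure $\overline F$, and trivialize $\Omega_{\overline X}(\log D)$ on a Zariski neighbourhood of the generic point, so that the generic fibre of $\bV(\Omega_{\overline X}(\log D))$ is identified with $\bA^n_F$, $n=\dim X$. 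The $\overline F$-points of the generic fibre of $\bigcup_i\Gamma_i$ then form a finite $\mathrm{Gal}(\overline F/F)$-stable subset of $\bA^n(\overline F)$; let $L\subset\overline F$ be the subfield generated over $F$ by all of their coordinates. Since that set of coordinates is $\mathrm{Gal}(\overline F/F)$-stable, $L/F$ is finite Galois, and I take $\pi\colon\overline\xsp\to\overline X$ to be the normalization of $\overline X$ in $L$, with $H=\mathrm{Gal}(L/F)$, so that $\pi$ is a finite Galois cover with group $H$ and $\overline\xsp/H=\overline X$.

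Next I would produce the sections. By the choice of $L$, for each $i$ the ring $F(\Gamma_i)\otimes_F L$ is a product of $d_i$ copies of $L$; equivalently $\pi^{-1}(\Gamma_i)\subset\bV(\pi^*\Omega_{\overline X}(\log D))$ splits generically over $\overline\xsp$ into $d_i$ sheets, i.e.\ it contains $d_i$ (reduced) irreducible components dominating $\overline\xsp$, each of which maps to the \emph{normal} variety $\overline\xsp$ finitely and birationally, hence isomorphically. Composing the inverse of such an isomorphism with the embedding $\pi^{-1}(\Gamma_i)\hookrightarrow\bV(\pi^*\Omega_{\overline X}(\log D))$ yields a morphism $\overline\xsp\to\bV(\pi^*\Omega_{\overline X}(\log D))$ over $\mathrm{id}_{\overline\xsp}$, that is, a global holomorphic section of $\pi^*\Omega_{\overline X}(\log D)$. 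Running over all sheets of all $\Gamma_i$ and repeating each section arising from $\Gamma_i$ with multiplicity $n_i$ gives the multiset $\{\omega_1,\dots,\omega_m\}$, $m=\sum_i n_id_i$, whose formal sum of graphs represents $\pi^*\eta_\rho$. The group $H$ acts on $\overline\xsp$ over $\overline X$, hence on $\bV(\pi^*\Omega_{\overline X}(\log D))=\bV(\Omega_{\overline X}(\log D))\times_{\overline X}\overline\xsp$ through the second factor; it preserves each $\pi^{-1}(\Gamma_i)$ and permutes its $d_i$ sheets transitively, so it permutes the $\omega_j$ while preserving multiplicities, which gives the asserted $H$-invariance. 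Moreover the coordinates of the $\omega_j$ generate $L$ over $F$, so the kernel of the $H$-action on the set of pairwise distinct sections among $\omega_1,\dots,\omega_m$ is trivial: this action is \emph{faithful}.

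The remaining and most delicate point is the ramification bound. Since $\overline X$ is regular, Zariski--Nagata purity of the branch locus shows that the branch divisor of $\pi$, and hence the ramification locus $R_\pi\subset\overline\xsp$, is pure of codimension one, so it is enough to check that every prime divisor $P'\subset R_\pi$ lies in $\bigcup_{\omega_i\neq\omega_j}\{\omega_i-\omega_j=0\}$. Let $P=\pi(P')$ and let $I=I(P'/P)\subset H$ be the inertia group of the corresponding discrete valuations; as $\mathrm{char}=0$ and $\pi$ ramifies along $P'$, we have $I\neq\{1\}$, so we may choose $1\neq\gamma\in I$. By the faithfulness just established, $\gamma$ sends some $\omega_a$ to a section $\omega_b$ with $\omega_a\neq\omega_b$. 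On the other hand $\gamma\in I$ fixes $P'$ and acts trivially on its residue field $\kappa(P')$, and because $\pi^*\Omega_{\overline X}(\log D)$ is pulled back from $\overline X$, the induced action of $\gamma$ on the fibre $\big(\pi^*\Omega_{\overline X}(\log D)\big)\otimes\kappa(P')$ over the generic point of $P'$ is trivial. Hence $\omega_a$ and $\omega_b$ have the same restriction to $P'$, i.e.\ the global section $\omega_a-\omega_b$ of $\pi^*\Omega_{\overline X}(\log D)$ vanishes along $P'$, so $P'\subset\{\omega_a-\omega_b=0\}$, as desired.

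I expect the inertia-group argument of the last paragraph to be the crux: everything before it is a fairly formal spectral-cover construction, but pinning the ramification down inside the ``discriminant-type'' locus $\bigcup_{\omega_i\neq\omega_j}\{\omega_i-\omega_j=0\}$ is where geometry enters, and it relies on the two features that $\overline X$ is regular (for purity and for the triviality of the $\gamma$-action on $\pi^*\Omega_{\overline X}(\log D)|_{P'}$) and that $\overline\xsp$ was built to be precisely the splitting field of the spectral data (for faithfulness). A secondary technical point to handle carefully is the bookkeeping with the multivalued-section formalism of \cref{def:multivalued} --- in particular keeping track of the multiplicities $n_i$ and verifying that $\pi^{-1}(\Gamma_i)$ is generically reduced over $\overline\xsp$, so that its $d_i$ dominating components genuinely are graphs of holomorphic sections.
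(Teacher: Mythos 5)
Your argument is correct. Note that the survey does not prove this proposition — it only cites \cite[Proposition 3.1]{CDY22} — but your construction is essentially the standard one from that source (going back to Eyssidieux): normalize $\overline{X}$ in the splitting field $L$ of the spectral data to get the Galois cover, use that a finite birational morphism onto a normal variety is an isomorphism to turn the sheets of $\pi^{-1}(\Gamma_i)$ into genuine sections, and control the ramification via Zariski--Nagata purity together with the inertia-group argument. The two points you flag as delicate are indeed the right ones, and both are handled correctly: faithfulness of the $H$-action on the distinct sections follows because their coordinates generate $L$ over $\bC(\overline{X})$, and the triviality of the inertia action on the fibre of $\pi^*\Omega_{\overline{X}}(\log D)$ along a ramification divisor uses precisely that the bundle is pulled back from the regular base, so a nontrivial inertia element forces two distinct sections to agree along that divisor.
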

Such Galois covering $\pi$ is called the \emph{spectral covering of $X$} associated with $\varrho$. 

Building on \cref{prop:spectral} and the properties of harmonic mappings into Euclidean buildings, we derive the following theorem.

\begin{thm}[{\cite[Theorem E]{CDY22}}]\label{thm:KE}
	Let $X$ be a  quasi-projective normal variety and let $\rho \colon \pi_1(X) \to \GL_n(K)$ be a   representation, where $K$ is a non-archimedean local field. Then there exists a  dominant morphism $s_\rho \colon X \to S_\rho$ to a normal projective variety with connected general fibers, such that for any irreducible closed subvariety $Z \subset X$, the following properties are equivalent: 
	\begin{thmlist}
		\item \label{item:KZ1} The image $\rho(\operatorname{Im}[\pi_1(Z^{\mathrm{norm}}) \to \pi_1(X)])$ is bounded;
		\item \label{item:KZ3} The image $\rho(\operatorname{Im}[\pi_1(Z) \to \pi_1(X)])$ is bounded;
		\item \label{item:KZ2} The image $s_\rho(Z)$ is a point;
		\item \label{item:KZ4} The restriction of the canonical current $T_\rho|_Z$ defined in \cref{def:canonical} is trivial.
		\item  \label{item:KZ5} The restriction of the multivalued 1-form $\eta_\rho|_Z$ defined in \cref{def:multivaluedform} is trivial.
	\end{thmlist}   
\end{thm}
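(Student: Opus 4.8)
The plan is to build $s_\rho$ as the ``Stein factorization of the harmonic map'' to the Bruhat--Tits building supplied by \cref{thm:BDDMex} --- equivalently, as the algebraic quotient of $X$ by the foliation annihilated by the multivalued $1$-form $\eta_\rho$ --- and then to read all five equivalences off the local structure of that map together with its $\pi_1(X)$-equivariance. I would first reduce to the case where $\rho$ is Zariski dense in a reductive group: if $G$ denotes the Zariski closure of $\rho(\pi_1(X))$ in $\GL_{n,K}$ and $G^{\mathrm{red}}=G/R_u(G)$, then for every \emph{finitely generated} subgroup $\Gamma\le\pi_1(X)$ --- and $\operatorname{Im}[\pi_1(Z^{\mathrm{norm}})\to\pi_1(X)]$, $\operatorname{Im}[\pi_1(Z)\to\pi_1(X)]$ are finitely generated, $Z^{\mathrm{norm}}$ and $Z$ being projective --- the image $\rho(\Gamma)$ is bounded in $G(K)$ iff its image in $G^{\mathrm{red}}(K)$ is, since finitely generated subgroups of the unipotent group $R_u(G)(K)$ are bounded ($\operatorname{char}K=0$). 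So one replaces $\rho$ by its composite with $G\to G^{\mathrm{red}}$; after that, \cref{thm:BDDMex} furnishes a $\rho$-equivariant pluriharmonic map $u\colon\widetilde X\to\Delta(G)$ with logarithmic energy growth --- harmonic for every Kähler metric and functorial under pullback (\cref{item:harmonic,item:pullback}) --- with regular locus $\mathcal{R}(u)\subseteq X$ whose complement is a proper Zariski closed subset by \cite{DM24}, multivalued $1$-form $\eta_\rho$ (\cref{def:multivaluedform}) and canonical current $T_\rho$ (\cref{def:canonical}).

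The heart of the proof is the construction of $s_\rho$, and this is the step I expect to be hardest. On $\mathcal{R}(u)$ the local functions $u_{A,i}$ of \eqref{eq:ui} are pluriharmonic, so each $\partial u_{A,i}$ is a \emph{closed} holomorphic $1$-form; their common kernel is an integrable holomorphic distribution (Frobenius), invariant under the vectorial Weyl group $W^v$ and under $\pi_1(X)$, hence it descends to a foliation on $\mathcal{R}(u)$ whose annihilator is $\eta_\rho$ (and which extends over $X$, indeed over $\overline X$, because $\eta_\rho$ does). Passing to the spectral covering $\pi\colon\overline{\xsp}\to\overline X$ of \cref{prop:spectral} trades $\eta_\rho$ for an $H$-invariant multiset of genuine \emph{closed logarithmic} $1$-forms $\omega_1,\dots,\omega_m$ on $\overline{\xsp}$, whose difference loci contain the ramification of $\pi$, so that the foliation lifts to $\bigcap_i\ker\omega_i$ on $\xsp$ with an explicitly described singular locus. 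The crucial point is then that this foliation is \emph{algebraically integrable with projective leaf space}: its leaves are open in algebraic subvarieties --- precisely the maximal connected locally closed subvarieties along which $u$ is constant --- and the leaf space is (quasi-)projective. I would establish this by adapting Eyssidieux's construction \cite{Eys04} of the reduction of a harmonic map to a building, using the Gromov--Schoen regularity theorem and, decisively, the \emph{logarithmic} energy growth of $u$ to control the leaves near the boundary $D=\overline X\setminus X$ and to compactify the leaf space; one builds the quotient on $\overline{\xsp}$ first, descends it along the $H$-action, and --- using that $X$ is normal --- takes a Stein factorization with normal projective base. The output is a dominant morphism $s_\rho\colon X\to S_\rho$ onto a normal projective variety, with all fibres connected, whose fibres are exactly the connected closed subvarieties $F\subseteq X$ with $\eta_\rho|_F\equiv 0$ (equivalently $T_\rho|_F\equiv 0$).

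With $s_\rho$ in hand, the geometric equivalences are immediate. Condition (iii) $\Leftrightarrow$ (v) is the fibre characterization just stated. For (iv) $\Leftrightarrow$ (v), pull back along a resolution $\mu\colon\widehat Z\to Z$ and use \cref{item:pullback}: the $\mu^*\rho$-equivariant map $u\circ\tilde\mu$ is pluriharmonic with associated multivalued form $\mu^*\eta_\rho$, represented locally by $1$-forms $\{\alpha_i\}$, and the formula \eqref{eq:form} gives $\mu^*T_\rho=\sqrt{-1}\sum_i\alpha_i\wedge\overline{\alpha_i}$ (up to the $W^v$-action); a sum of semi-positive $(1,1)$-forms of this shape vanishes identically iff every $\alpha_i$ does, so $T_\rho|_Z$ is trivial iff $\eta_\rho|_Z$ is. (If $Z$ lies in the bad locus $X\setminus\mathcal{R}(u)$ one simply works on $\widehat Z$ throughout, which \cref{item:pullback} legitimizes.)

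Finally the group-theoretic conditions. The inclusion $\operatorname{Im}[\pi_1(Z^{\mathrm{norm}})\to\pi_1(X)]\subseteq\operatorname{Im}[\pi_1(Z)\to\pi_1(X)]$ gives (ii) $\Rightarrow$ (i) for free. For (i) $\Leftrightarrow$ (v) I would first record the base case $Z=X$: a representation into $G(K)$ has bounded image iff $u$ is constant iff $\eta_\rho\equiv 0$ --- the nontrivial implication being that, if $\rho$ is bounded, then $u$ has bounded image (by equivariance, using logarithmic energy growth near $D$), so it admits a $\rho$-fixed circumcenter $q$, and radially contracting $u$ towards $q$ strictly lowers the energy unless $u$ is constant, contradicting the energy-minimality of $u$; here the logarithmic energy growth is exactly what makes these comparisons legitimate on the non-compact $X$. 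For a general $Z$ one then pulls back along $\nu\colon Z^{\mathrm{norm}}\to X$ (passing through a resolution if $Z^{\mathrm{norm}}$ is singular, which does not change the image in $\pi_1(X)$): by \cref{item:pullback} the harmonic map of $\nu^*\rho$ is $u\circ\tilde\nu$, with multivalued form $\nu^*\eta_\rho=\eta_\rho|_{Z^{\mathrm{norm}}}$, so (i) $\Leftrightarrow$ $\eta_\rho|_{Z^{\mathrm{norm}}}\equiv 0$ $\Leftrightarrow$ $\eta_\rho|_Z\equiv 0$, the last step because the two restrictions agree over the dense smooth locus of $Z$. It remains to prove (iii) $\Rightarrow$ (ii): if $s_\rho(Z)$ is a point then $Z$ is contained in the connected fibre $F:=s_\rho^{-1}(s_\rho(Z))$, on which $u$ is constant (it is constant on the dense union of leaves and continuous, and $F$ is connected); fixing $y_0\in F$ with a lift $\tilde y_0$, the component of $\pi_X^{-1}(F)$ through $\tilde y_0$ is stabilized exactly by $\operatorname{Im}[\pi_1(F,y_0)\to\pi_1(X)]$, so for $\gamma$ in that image the equivariance $\rho(\gamma)u(\tilde y_0)=u(\gamma\tilde y_0)=u(\tilde y_0)$ shows $\rho(\operatorname{Im}[\pi_1(F)\to\pi_1(X)])$ fixes $u(\tilde y_0)\in\Delta(G)$, hence is bounded; since $\operatorname{Im}[\pi_1(Z)\to\pi_1(X)]\subseteq\operatorname{Im}[\pi_1(F)\to\pi_1(X)]$, this gives (ii). This closes the cycle (i) $\Leftrightarrow$ (ii) $\Leftrightarrow$ (iii) $\Leftrightarrow$ (iv) $\Leftrightarrow$ (v).
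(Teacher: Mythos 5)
Your overall architecture --- reduce to the reductive quotient, invoke \cref{thm:BDDMex} to obtain the equivariant pluriharmonic map with logarithmic energy growth, pass to the spectral covering of \cref{prop:spectral}, and read the five equivalences off the fibre characterization of $s_\rho$ together with equivariance --- matches the route of \cite{CDY22}, and your handling of the equivalences themselves (the fixed-point/circumcenter argument for (i)$\Leftrightarrow$(v), the local identity $T_\rho=\sqrt{-1}\sum_i\partial u_{A,i}\wedge\bar\partial u_{A,i}$ for (iv)$\Leftrightarrow$(v), and the stabilizer argument for (iii)$\Rightarrow$(ii)) is sound. The genuine gap is in the step you yourself flag as crucial: the claim that the foliation $\bigcap_i\ker\omega_i$ on $\xsp$ is \emph{algebraically integrable with projective leaf space}, its leaves being open in algebraic subvarieties, is false, and $S_\rho$ is \emph{not} the leaf space of that foliation. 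Already for $X$ an abelian surface $A$ and a generic unbounded character $\rho:\pi_1(A)\cong\bZ^4\to\GL_1(\bQ_p)$, the single spectral form is a constant-coefficient $1$-form $\omega$ of irrational slope: its leaves are dense and non-algebraic, no positive-dimensional algebraic subvariety satisfies $\omega|_Z=0$, and hence $s_\rho$ is an isomorphism while the foliation has one-dimensional leaves. In general the fibres of $s_\rho$ are strictly smaller than the leaves, so the ``leaf space'' mechanism cannot produce the morphism. The construction actually used in \cite{Eys04,CDY22} integrates the spectral forms into the partial (quasi-)Albanese morphism $a:\xsp\to A$, notes that $a(Z)$ is a point if and only if all $\omega_i|_Z$ vanish, descends $X\to A/H$ along the Galois action and takes its Stein factorization; this yields precisely the fibre characterization (fibres are the maximal connected \emph{algebraic} subvarieties on which $\eta_\rho$ vanishes) that all your subsequent steps rely on. So the output you assert at the end of that paragraph is correct, but the mechanism you propose for obtaining it would fail.

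Two smaller points. The theorem allows ${\rm char}\, K>0$, whereas your reduction to the reductive quotient is justified only for ${\rm char}\, K=0$; the same reduction works in positive characteristic because finitely generated subgroups of unipotent groups there are finite (cf.\ \cref{lem:finite group}). Moreover, in that reduction the implication ``bounded image in $G/R_u(G)$ implies bounded image in $G$'' does not follow merely from the boundedness of finitely generated subgroups of $R_u(G)(K)$, since $G$ is not a product; one needs the block-triangular ultrametric estimate showing that a finitely generated subgroup of a parabolic whose Levi part is bounded is itself bounded. Both issues are readily repaired.
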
 
We call the map $s_\rho$ the \emph{Katzarkov-Eyssidieux reduction map} for $\rho$. When $X$ is compact K\"ahler, such a map $s_\rho$ was constructed by Katzarkov and Eyssidieux in \cite{Eys04}. It plays an important role in the construction of the Shafarevich morphism in the next subsection. 
\subsection{Construction of the Reductive Shafarevich Morphism}\label{sec:sha mor}

Let $X$ be a smooth quasi-projective variety. We will construct the Shafarevich morphism ${\rm sh}_X:X\to {\rm Sh}(X)$ in this subsection when $\pi_1(X)$ is reductive, i.e.,   there exists a semisimple representation $\varrho:\pi_1(X)\to \GL_N(\bC)$ such that $\ker\varrho$ is finite (we say $\varrho$ is \emph{almost faithful}).  We begin with the following definition.

\begin{dfn}[Infinite monodromy at infinity]\label{def:monodromy}
	Let $X$ be a quasi-projective normal variety and let $\overline{X}$ be a projective compactification of $X$. We say a subset $M\subset M_{\rm B}(X,N)(\bC)$ \emph{has infinite monodromy at infinity} if for any holomorphic map $\gamma:\bD\to \overline{X}$ with $\gamma^{-1}(\overline{X}\setminus X)=\{0\}$, there exists a reductive representation $\varrho:\pi_1(X)\to \GL_N(\bC)$ such that $[\varrho]\in M$ and the restriction $\gamma^*\varrho:\pi_1(\bD^*)\to \GL_N(\bC)$ has infinite image.
\end{dfn}

A simple consequence is the following:

\begin{lem}\label{lem:period}
	If   $\varrho:\pi_1(X)\to \GL_{N}(\bC)$ underlies a $\bC$-VHS $\cL$ with discrete monodromy, then after replacing $X$ by a partial compactification, the $\bC$-VHS extends, and the associated period map $p:X\to \sD/\Gamma$ is proper, where $\sD$ is the period domain and $\Gamma$ is the monodromy group of $\cL$. In this case, the Shafarevich morphism ${\rm sh}_\varrho:X\to {\rm Sh}_\varrho(X)$ of $\varrho$ is the Stein factorization of $p$.
\end{lem}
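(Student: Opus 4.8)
The plan is to extend the period map over the partial compactification obtained by deleting exactly the boundary divisors with infinite local monodromy, to deduce properness from Schmid's asymptotic analysis of degenerating variations, and finally to identify the Stein factorisation of the resulting proper map with the Shafarevich morphism via a ``constant period map $\Leftrightarrow$ finite monodromy'' dichotomy. I will allow a harmless preliminary finite \'etale cover of $X$: by Selberg's lemma applied to the finitely generated linear group $\Gamma$ we may assume $\Gamma$ is torsion free, and since Shafarevich morphisms are compatible with finite \'etale covers together with the Galois action, the final descent step is pure bookkeeping and I suppress it. Fix a smooth projective $\overline{X}\supseteq X$ with $D=\overline{X}\setminus X=\bigcup_iD_i$ simple normal crossing, and let $T_i\in\Gamma$ be the local monodromy of $\cL$ around $D_i$ (well defined up to conjugacy); by torsion-freeness each $T_i$ is trivial or of infinite order. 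Put $X^{*}:=\overline{X}\setminus\bigcup_{|T_i|=\infty}D_i$. Along $X^{*}\setminus X$ every local monodromy is trivial --- and there is no issue at crossings, since pairwise-commuting trivial elements generate the trivial group --- so Schmid's nilpotent orbit theorem in the degenerate case $N=0$ extends the flat bundle, the Hodge filtration and the Hodge metric across $X^{*}\setminus X$, producing a $\bC$-VHS on $X^{*}$ restricting to $\cL$; the added boundary loops die in $\pi_1(X^{*})$, so $\varrho$ descends to $\pi_1(X^{*})$ with the same image $\Gamma$ acting on the same period domain $\sD$, and the period map extends to a holomorphic $p\colon X^{*}\to\sD/\Gamma$. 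Rename $X:=X^{*}$.

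The crux is that $p$ is proper, i.e. that it diverges along every $D_i$ with $|T_i|=\infty$. For a holomorphic disk $\gamma\colon\bD\to\overline{X}$ meeting $\bigcup_{|T_i|=\infty}D_i$ transversally at $\gamma(0)$ and contained in $X$ elsewhere, $\gamma^{*}\cL$ on $\bD^{*}$ has local monodromy conjugate to $T_i$, which is quasi-unipotent (Borel) and of infinite order; after a ramified cover $z\mapsto z^{k}$ it is unipotent with $N_i=\log T_i^{k}\neq 0$, and the nilpotent orbit theorem writes the lift of $p\circ\gamma$ as $w\mapsto\exp(wN_i)\,\psi(w)$ with $\psi$ extending holomorphically across the puncture into the compact dual $\check{\sD}$. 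Since $\|\exp(wN_i)\|\to\infty$ as $\operatorname{Im} w\to\infty$ while $\Gamma$ acts properly discontinuously --- so the divergence cannot be absorbed by the discrete monodromy --- the image $p(\gamma(z))$ leaves every compact subset of $\sD/\Gamma$ as $z\to 0$. Because any sequence leaving $X$ eventually enters such a coordinate disk, $p^{-1}(K)$ is compact for every compact $K\subset\sD/\Gamma$.

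Now form the Stein factorisation $X\xrightarrow{\,q\,}S\xrightarrow{\,r\,}\sD/\Gamma$ of the proper map $p$, with $q$ proper, surjective and with connected fibres and $r$ finite; I claim $q$ obeys \cref{def:Shafarevich morphism}, so that $q=\operatorname{sh}_{\varrho}$ and $S=\operatorname{Sh}_{\varrho}(X)$. Let $Z\subseteq X$ be an irreducible closed subvariety. If $q(Z)$ is a point then so is $p(Z)$, hence the period map of $\cL|_{Z}$ is constant; lifting to the universal cover of $Z$ one sees that the monodromy representation then fixes a point of $\sD$, whose isotropy group is compact, so $\varrho(\operatorname{Im}[\pi_1(Z)\to\pi_1(X)])$ lies in a compact subgroup of the discrete group $\Gamma$ and is finite. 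Conversely, if $\varrho(\operatorname{Im}[\pi_1(Z)\to\pi_1(X)])$ is finite then so is the monodromy of $\cL|_{Z^{\mathrm{norm}}}$, which is therefore unitary; by the Corlette--Simpson correspondence (\cref{thm:Corlette}, \cref{thm:Simpson}) its associated Higgs bundle has vanishing Higgs field, and since the Higgs field of a $\bC$-VHS is the derivative of its period map, the latter is constant on the connected smooth variety $Z^{\mathrm{norm}}$. As $Z^{\mathrm{norm}}\to Z$ is surjective, $p(Z)$ is a point, so $q(Z)$ --- connected and contained in the finite set $r^{-1}(p(Z))$ --- is a point, as required.

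The main obstacle is the properness asserted in the second paragraph: upgrading the soft fact ``infinite local monodromy obstructs a continuous extension of $p$'' to genuine divergence of $p$ forces one into Schmid's nilpotent and $SL_2$-orbit theorems and the Hodge norm estimates, and hidden inside it is the quasi-unipotence of the local monodromies at infinity --- classical for integral variations but requiring a separate argument for a bare $\bC$-VHS.
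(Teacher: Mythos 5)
Your proposal is essentially correct and, for the identification of the Stein factorization with the Shafarevich morphism, follows the same logic as the paper: the forward direction (constant period map $\Rightarrow$ the monodromy lies in a point stabilizer, which is compact, intersected with the discrete group $\Gamma$, hence finite) is identical. In the converse direction you take a genuinely different route: you argue that finite monodromy forces a unitary local system, hence vanishing Higgs field by the Corlette--Simpson correspondence, hence constant period map; the paper instead passes to a finite \'etale cover where the monodromy is trivial and invokes Schmid's rigidity (uniqueness of the $\bC$-VHS structure on the trivial local system) to conclude the VHS, and therefore the period map, is constant. Both work, but note that \cref{thm:Corlette,thm:Simpson} as stated apply to compact K\"ahler manifolds, whereas $Z^{\mathrm{norm}}$ may be singular and non-compact; you should either pass to a desingularization and use Mochizuki's quasi-projective extension, or adopt the paper's rigidity argument, which sidesteps the issue. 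The paper also tests against $\pi_1(Z)$ rather than $\pi_1(Z^{\mathrm{norm}})$, per \cref{def:Shafarevich morphism}.

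The one step where your justification is not yet a proof is properness. From the nilpotent orbit expression $\exp(wN_i)\psi(w)$, the divergence of $\lVert\exp(wN_i)\rVert$ does not by itself show that the image escapes every compact subset of $\sD/\Gamma$: the points $\exp(wN_i)\psi(w)$ converge in the compact dual $\check\sD$ to a boundary point, and what must be excluded is that some sequence returns to a compact subset of $\sD$ after translation by elements of $\Gamma$. Your parenthetical ``the divergence cannot be absorbed by the discrete monodromy'' is precisely the assertion requiring proof. The standard closure of this gap is the one the paper alludes to: the period map is distance-decreasing from the Poincar\'e metric on $\bD^*$ to the Griffiths--Schmid metric on $\sD$ (Ahlfors--Schwarz), so $d_{\sD}(\tilde p(w),T_i\tilde p(w))\to 0$ as $\operatorname{Im}w\to\infty$; if the image accumulated at an interior point of $\sD$ modulo $\Gamma$, proper discontinuity would force $T_i$ to fix a point of $\sD$ and hence, being in the discrete group $\Gamma$ with compact stabilizers, to have finite order --- contradicting $|T_i|=\infty$. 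You correctly flag properness as the main obstacle, but the mechanism you name (norm divergence of the nilpotent orbit) should be replaced by, or supplemented with, this hyperbolicity argument.
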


\begin{proof}
	For the first claim, we refer the reader to \cite{Gri70}. Note that in this case, the monodromy representation of the extended $\cL$ has infinite monodromy at infinity. The proof relies on the hyperbolicity of the period domain and a standard application of the Ahlfors-Schwarz lemma.   Note that the period domain of $\cL$ has the form $\sD=G/V$, where $G$ is a real reductive group and $V\subset G$ is a compact subgroup (see \cite{CMP17}).
	
	Let $Z\subset X$ be a closed subvariety and let $\mu:Y\to Z$ be a desingularization.
	
	If $p(Z)$ is a point $P$, then for the lift of $p$ to the Galois  covering $\tilde{p}:\widetilde{X}_\varrho\to \sD$ associated with $\ker\varrho$, the image $p(\widetilde{X})$ is also a point $P\in \sD$. The image $\varrho({\rm Im}[\pi_1(Z)\to \pi_1(X)])$ is contained in the stabilizer of $P$, which is a conjugate $V'$ of $V$, and hence is compact. On the other hand, since $\cL$ has discrete monodromy,
	$$
	\varrho({\rm Im}[\pi_1(Z)\to \pi_1(X)]) \subset \Gamma\cap V'.
	$$
	This is a discrete subgroup of a compact group, and is therefore finite.  
	
	Conversely, if $\varrho({\rm Im}[\pi_1(Z)\to \pi_1(X)])$ is finite, then the $\bC$-VHS $\mu^*\cL$ on $Y$ has finite monodromy. After replacing $Y$ by a finite \'etale cover, the monodromy becomes trivial. By the uniqueness of the $\bC$-VHS structure underlying a local system (see \cite{Sch73}), $\mu^*\cL$ must be the trivial $\bC$-VHS. In particular, its period mapping $\tilde{p}_Y:\widetilde{Y}\to \sD$ is constant. Since we have the commutative diagram
	\[
	\begin{tikzcd}
		\widetilde{Y}\arrow[d] \arrow[r,"\tilde{p}_Y"] &\sD\\
		\widetilde{X} \arrow[ur,"\tilde{p}"'] &
	\end{tikzcd}
	\]
	it follows that $p(Z)$ is a point. The lemma is proved.
\end{proof}
\begin{rem}
It is expected that the Shafarevich morphism of a quasi-projective variety, when it exists, is algebraic. In \cite{Som78}, Sommese proved that ${\rm Sh}_\varrho(X)$ appearing in \cref{lem:period} is properly bimeromorphic to a quasi-projective variety, using H\"ormander’s $L^2$–techniques. More recently, Bakker, Brunebarbe, and Tsimerman \cite{BBT23} established that ${\rm Sh}_\varrho(X)$ is indeed algebraic, thereby confirming a longstanding conjecture of Griffiths.
\end{rem}

If $\varrho$ is not a   $\bC$-VHS with discrete monodromy, the approach in \cite{DY23}, following the strategy initiated in \cite{Eys04}, attempts to find a fibration such that the restriction of $\varrho$ to each fiber is a direct sum of a $\bZ$-VHS.

We now introduce an important morphism $s_{\rm fac}:X\to S_{\rm Fac}(X)$. We first start with the following lemma.  
\begin{lem}[{\cite[Lemma 1.28]{DY23}}]\label{lem:simultaneous}
	Let $V$ be a smooth quasi-projective  variety and let $(f_{\lambda}:V\to S_{\lambda})_{\lambda\in\Lambda}$ be a family of   morphisms into  quasi-projective varieties $S_{\lambda}$.
	Then there exist a  quasi-projective normal
	variety $S_{\infty}$ and a morphism $f_{\infty}:V\to S_{\infty}$ such that 
	\begin{itemize}
		\item
		$f_{\infty}$ is  dominant and has connected  general fibers,
		\item for every subvariety $Z\subset V$, $f_{\infty}(Z)$ is a point if and only if $f_{\lambda}(Z)$ is a point for every $\lambda\in \Lambda$, 
		and
		\item
		there exist $\lambda_1,\ldots,\lambda_n\in\Lambda$ such that $f_{\infty}:V\to S_{\infty}$ is the  Stein factorization of $$(f_1,\ldots,f_n):V\to S_{\lambda_1}\times\cdots S_{\lambda_n}.$$ 
	\end{itemize} 
	The above $f_\infty:X\to S_\infty$ will called the \emph{simultaneous Stein factorization} for $(f_{\lambda}:V\to S_{\lambda})_{\lambda\in\Lambda}$.  \qed
\end{lem}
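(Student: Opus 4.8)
The plan is to produce $f_{\infty}$ as the Stein factorization of $(f_{\lambda_1},\dots,f_{\lambda_n})$ for a finite subfamily chosen by a two-fold Noetherian stabilization followed by an induction on $\dim V$. First, for each finite $I\subseteq\Lambda$, I would form $g_I=(f_\lambda)_{\lambda\in I}\colon V\to\prod_{\lambda\in I}S_\lambda$ and its Stein factorization $V\xrightarrow{\,h_I\,}S_I\to\overline{g_I(V)}$, carried out over a fixed projective compactification of $V$ so that every morphism in sight is proper: $S_I$ is normal quasi-projective, $h_I$ is dominant with connected general fibers, and $S_I\to\overline{g_I(V)}$ is finite. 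Since a finite morphism is quasi-finite and sends irreducible sets to irreducible sets, for every irreducible $Z\subseteq V$ one has $h_I(Z)$ a point $\iff$ $f_\lambda(Z)$ a point for all $\lambda\in I$. For $I\subseteq J$, the inclusion $\mathbb{C}(\overline{g_I(V)})\subseteq\mathbb{C}(\overline{g_J(V)})$ together with the universal property of normalization yields a canonical dominant morphism $\pi_{JI}\colon S_J\to S_I$ with $\pi_{JI}\circ h_J=h_I$, and the $\pi_{JI}$ compose.

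Next come the two stabilizations. For the first, note that the subfields $h_I^{*}\mathbb{C}(S_I)\subseteq\mathbb{C}(V)$ increase with $I$, and their union is a subfield of the finitely generated field $\mathbb{C}(V)$, hence is itself finitely generated, hence already equals $h_{I_0}^{*}\mathbb{C}(S_{I_0})$ for some finite $I_0$; as $\mathbb{C}(S_{I_0})$ is algebraically closed in $\mathbb{C}(V)$, the map $\pi_{JI_0}$ is birational for every finite $J\supseteq I_0$. For the second, let $E_J\subseteq S_{I_0}$ be the closed locus where $\pi_{JI_0}$ is not a local isomorphism; I would check that $E_J\subseteq E_{J'}$ whenever $I_0\subseteq J\subseteq J'$. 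Indeed, if $\pi_{J'I_0}$ is an isomorphism at $s$, then $\pi_{JI_0}^{-1}(s)$ is finite (its preimage under the proper birational $\pi_{J'J}$ lies inside $\pi_{J'I_0}^{-1}(s)$), so $\pi_{JI_0}$ is quasi-finite near that fiber, hence finite there by properness, hence — being birational onto the normal $S_{I_0}$ — an isomorphism near $s$. By Noetherianity of $S_{I_0}$ the increasing family $\{E_J\}$ is eventually constant, so after enlarging $I_0$ I may assume $E_J=E$, a fixed proper closed subset, for every finite $J\supseteq I_0$.

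Now the conclusion, by induction on $\dim V$ (the case $\dim V=0$ being trivial). Put $f_{\infty}=h_{I_0}$; the implication ``$f_\lambda(Z)$ a point for all $\lambda\in\Lambda$ $\Rightarrow$ $f_{\infty}(Z)$ a point'' is immediate. For the converse, fix $\lambda\in\Lambda$: since $f_\lambda^{*}\mathbb{C}(S_\lambda)\subseteq\mathbb{C}(S_{I_0})$, the morphism $f_\lambda$ factors through $h_{I_0}$ as a rational map $q_\lambda\colon S_{I_0}\dashrightarrow S_\lambda$; as $f_\lambda$ factors through $h_{I_0\cup\{\lambda\}}$ by an honest morphism and $\pi_{I_0\cup\{\lambda\},I_0}$ is an isomorphism over $S_{I_0}\setminus E$, the map $q_\lambda$ is regular on $S_{I_0}\setminus E$ and $f_\lambda=q_\lambda\circ h_{I_0}$ on $h_{I_0}^{-1}(S_{I_0}\setminus E)$. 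Hence every irreducible $Z$ with $h_{I_0}(Z)$ a point outside $E$ has $f_\lambda(Z)$ a point for all $\lambda$. The remaining $Z$ satisfy $Z\subseteq V_E:=h_{I_0}^{-1}(E)$, a proper closed subset with $\dim V_E<\dim V$; applying the lemma inductively to a desingularization of each component of $V_E$ with the restricted family furnishes a finite $I'\subseteq\Lambda$ whose associated Stein factorization detects points on $V_E$ for all $\lambda$. Then $I_{\infty}:=I_0\cup I'$ works on all of $V$: if $h_{I_\infty}(Z)$ is a point then so is $h_{I_0}(Z)$, and either it lies off $E$ (conclude as above) or $Z\subseteq V_E$ (the choice of $I'$, together with surjectivity of the desingularization onto the relevant component, forces every $f_\lambda(Z)$ to be a point). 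Taking $S_{\infty}=S_{I_\infty}$ and $f_{\infty}=h_{I_\infty}$ finishes the argument.

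I expect the crux to be the second stabilization and its use in the last step: each induced rational map $q_\lambda$ is controlled only individually, so a priori $\bigcup_{\lambda\in\Lambda}\mathrm{indet}(q_\lambda)$ need not be a proper closed subset of $S_{I_0}$, and it is exactly the monotonicity $E_J\subseteq E_{J'}$ (via Zariski's main theorem) plus Noetherian stabilization that confines all indeterminacies into a single fixed $E$; marrying this to the dimension induction that absorbs the subvarieties landing in $E$ is the delicate bookkeeping. A further routine-but-necessary point is to make the Stein factorization functorial enough over compactifications that the transition maps $\pi_{JI}$ are genuine morphisms rather than mere rational maps.
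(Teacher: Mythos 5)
Your overall architecture --- finite-subset Stein factorizations $h_I\colon V\to S_I$, detection of ``$f_\lambda(Z)$ is a point'' through quasi-finiteness of $S_I\to\overline{g_I(V)}$, and the first (field-theoretic) stabilization producing an $I_0$ with $\pi_{JI_0}$ birational for all finite $J\supseteq I_0$ --- is sound. The gap is exactly where you predicted the crux would be: the second stabilization. You assert that the increasing family of closed subsets $\{E_J\}$ is eventually constant ``by Noetherianity of $S_{I_0}$.'' But a Noetherian topological space satisfies the \emph{descending} chain condition on closed subsets, not the ascending one (in $\mathbb{A}^1$ the chain $\{1\}\subseteq\{1,2\}\subseteq\cdots$ never stabilizes), so the monotonicity $E_J\subseteq E_{J'}$ buys you nothing. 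The step is load-bearing: without a single proper closed $E$ containing $E_{I_0\cup\{\lambda\}}$ for \emph{every} $\lambda$, the union of the indeterminacy loci of the $q_\lambda$ may a priori be Zariski dense in $S_{I_0}$, the set $V_E$ is undefined, and your dimension induction cannot start. Each $E_J$ has codimension at least $2$, but an infinite union of such sets can still be dense, so no soft argument rescues the claim as stated.

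The standard way to get the needed uniformity is to run the Noetherian argument on a \emph{decreasing} family instead: set $R_I:=V\times_{S_I}V\subseteq V\times V$, which is closed since $S_I$ is separated, and note that $R_J\subseteq R_I$ whenever $I\subseteq J$. By the descending chain condition one may choose a finite $I_\infty\supseteq I_0$ with $R_{I_\infty}$ minimal in this family, whence $R_{I_\infty}=R_{I_\infty\cup\{\lambda\}}\subseteq V\times_{S_\lambda}V$ for every $\lambda\in\Lambda$. If $h_{I_\infty}(Z)$ is a point, then $Z\times Z\subseteq R_{I_\infty}$, hence $f_\lambda(x)=f_\lambda(y)$ for all $x,y\in Z$ and all $\lambda$, which is precisely the hard direction of the second bullet; the converse and the third bullet are then immediate. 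This replacement makes both the exceptional-locus bookkeeping and the induction on $\dim V$ unnecessary. A secondary, repairable point: the $f_\lambda$ need not extend to a projective compactification of $V$, so to obtain the transition maps $\pi_{JI}$ as genuine morphisms you should define $S_I$ as the normalization of $\overline{g_I(V)}$ in the algebraic closure of its function field inside $\mathbb{C}(V)$ and use normality of $V$ to lift $g_I$, rather than compactifying $V$ itself.
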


\medspace

\begin{dfn}\label{def:fac}
	Fix some $N\in \bN$.	 Consider the set $\Upsilon:=\{\tau:\pi_1(X)\to \GL_N(K)\}$, where $\tau$ ranges over all semisimple representations with $K$ being any non-archimedean local field of characteristic zero. Consider the set of Katzarkov-Eyssidieux reductions $\{s_\tau\}_{\tau\in\Upsilon}$. We take a \emph{simultaneous Stein factorization} of all these $s_\tau$, denoted by $s_{\rm fac}:X\to S_{\rm Fac}(X)$. 
\end{dfn}
Note that  $s_{\rm fac}:X\to S_{\rm Fac}(X)$   is a dominant morphism with connected general fibers such that for any $\tau\in \Upsilon$, we have a factorization
\[
\begin{tikzcd}
	X\arrow[r,"s_{\rm fac}"] \arrow[dr,"s_\tau"'] & S_{\rm Fac}(X)\arrow[d]\\
	& S_\tau
\end{tikzcd}
\]
Moreover, for any subvariety $Z$ of $X$, if $s_\tau(Z)$ is a point for each $\tau\in \Upsilon$, then $s_{\rm fac}(Z)$ is also a point.

This factorization possesses the following crucial property.

\begin{proposition}[{\cite[Proposition 3.10]{DY23}}]\label{lem:conjugate3}
	Let $X$ be a smooth quasi-projective variety, and let $f:Y\to X$ be a morphism from a smooth quasi-projective variety $Y$ such that $s_{\rm fac}\circ f(Y)$ is a point. Let $\{\tau_i:\pi_1(X)\to {\rm GL}_N(\bC)\}_{i=1,2}$ be reductive representations such that $[\tau_1]$ and $[\tau_2]$ are in the same geometric connected component of $M(\bC)$. Then $f^*\tau_1 $ is conjugate to $f^*\tau_2 $. In other words, $j(M_{\rm B}(X,N))$ is zero-dimensional, where $j:M_{\rm B}(X,N)\to M_{\rm B}(Y,N)$ is the natural morphism of character varieties induced by $f$.
\end{proposition}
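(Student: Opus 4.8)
The goal is to show that the pullback morphism $j\colon M_{\rm B}(X,N)\to M_{\rm B}(Y,N)$ of character varieties is constant on each connected component of $M_{\rm B}(X,N)(\bC)$; this is equivalent to the displayed assertion that $j(M_{\rm B}(X,N))$ is zero-dimensional, and it specializes to $f^*\tau_1\sim f^*\tau_2$ since $[\tau_1]$ and $[\tau_2]$ lie in one component. Note first that by \cref{thm:reductive} each $f^*\tau$ is again semisimple, so $j([\tau])=[f^*\tau]$ makes sense. The plan proceeds in three stages: reformulating the hypothesis in terms of non-archimedean monodromy, reducing — via Simpson's ubiquity and the $\bC^*$-action — to $\bC$-variations of Hodge structure, and then running the archimedean/non-archimedean comparison of \cite{Eys04}.

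\emph{Step 1: unwinding the hypothesis.} By the defining property of the simultaneous Stein factorisation (\cref{lem:simultaneous}), $s_{\rm fac}$ contracts $\overline{f(Y)}$ if and only if every Katzarkov--Eyssidieux reduction $s_{\tau'}$ does, where $\tau'$ ranges over all semisimple representations $\pi_1(X)\to\GL_N(K)$ with $K$ a non-archimedean local field of characteristic zero; by \cref{thm:KE} this is in turn equivalent to: for each such $\tau'$, the image $f^*\tau'(\pi_1(Y))$ is bounded in $\GL_N(K)$ and $\eta_{\tau'}|_{f(Y)}$ is trivial. Since every representation of the finitely generated group $\pi_1(X)$ into $\GL_N(\bC)$ is defined over a finitely generated field $F$, and $F$ embeds into $\overline{\bQ}_p$ for infinitely many primes $p$, the hypothesis yields: for \emph{every} semisimple $\tau\colon\pi_1(X)\to\GL_N(\bC)$, every prime $p$, and every embedding $F\hookrightarrow\overline{\bQ}_p$, the base-changed representation $f^*\tau\otimes_F\overline{\bQ}_p$ of $\pi_1(Y)$ has bounded image and trivial associated multivalued $1$-form along $f(Y)$.

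\emph{Step 2: reduction to $\bC$-VHS via the $\bC^*$-action.} Through the real-analytic identification of \cref{thm:analytic iso}, $j$ corresponds to pullback of Higgs bundles $(E,\theta)\mapsto(f^*E,f^*\theta)$, which is algebraic and equivariant for the $\bC^*$-action. By Simpson's ubiquity (\cref{prop:ubiquity}) together with \cref{lem:limit}, every connected component $M^0$ of $M_{\rm B}(X,N)(\bC)$ is connected to — and flows, as the parameter tends to $0$, into — its sub-locus of $\bC$-VHS; since $j$ is continuous and $\bC^*$-equivariant, it suffices to understand $j$ on this $\bC$-VHS locus and to propagate the conclusion along the flow. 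Concretely, one is reduced to proving: for every $\bC$-VHS $\tau_0$ on $X$ lying in $M^0$, the pullback $f^*\tau_0$ has \emph{finite monodromy}; granting this, the $\bC$-VHS points of $M^0$ are carried by $j$ into the finite set (finite because $M_{\rm B}(Y,N)$ is of finite type) of classes of finite-image representations, whence $j(M^0)$ — connected, $\bC^*$-invariant, with only these finitely many classes among its $\bC^*$-limits — collapses to a point, and summing over the finitely many components shows $j(M_{\rm B}(X,N))$ is zero-dimensional.

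\emph{Step 3: the crux.} It remains to show that $f^*\tau_0$ has finite monodromy for every $\bC$-VHS $\tau_0$ on $X$. Assume not: after a finite étale cover of $Y$ one may take $f^*\tau_0$ to be a $\bC$-VHS with infinite discrete monodromy and a non-constant period map into $\sD=G/V$, with $G$ reductive (carrying a $\bQ$-structure from the generic Mumford--Tate group) and $V$ compact, as in \cref{lem:period}. From the non-constancy of this period map, and using the $\bQ$-structure on $G$ together with tensor-, symmetric-, and exterior-power operations applied to $\tau_0$, one produces a semisimple representation $\pi_1(X)\to\GL_M(\overline{\bQ}_p)$ for some prime $p$ whose restriction to $f(Y)$ has \emph{unbounded} image — contradicting Step 1 through \cref{thm:KE}. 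Equivalently, the comparison to be carried out is between the multivalued $1$-form attached to the equivariant harmonic map to the Bruhat--Tits building $\Delta(\GL_M)$ furnished by \cref{thm:BDDMex} — which Step 1 forces to vanish on $f(Y)$ — and the Hodge-theoretic data of $\tau_0$ near $f(Y)$, which a non-constant period map prevents from being trivial; reconciling these two incarnations of the non-abelian Hodge correspondence for a representation defined over a number ring, and controlling the monodromy at infinity in the quasi-projective setting (using the logarithmic energy growth in \cref{thm:BDDMex}), is the main technical obstacle, handled along the lines of the reduction-map arguments of \cite{Eys04}. Once this is established, each $f^*\tau_0$ has finite monodromy, and the proposition follows.
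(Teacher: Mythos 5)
Your Step~1 is the right starting point, but Steps~2--3 reduce the proposition to a statement that is \emph{false}, so the argument cannot be completed as written. The proposition asserts only that all representations in a connected component of $M_{\rm B}(X,N)(\bC)$ become \emph{mutually conjugate} after pullback by $f$; it does not assert, and it is not true, that these pullbacks have \emph{finite} monodromy. Compare with \cref{prop:cons}, whose conclusion is that $f^*\varrho$ is a direct factor of a $\bC$-VHS with \emph{discrete} (possibly infinite) monodromy. A concrete counterexample to your intermediate claim: take $f=\mathrm{id}_X$ with $X$ a compact quotient of a bounded symmetric domain by a superrigid lattice, so that every semisimple representation of $\pi_1(X)$ is rigid and integral; then $s_{\rm fac}$ is constant, the hypothesis holds, the proposition correctly says $M_{\rm B}(X,N)$ is zero-dimensional, yet the tautological representation has infinite image. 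The same example shows that Step~3 cannot work: a non-constant (even injective) period map does \emph{not} force the existence of an unbounded non-archimedean representation. Two subsidiary claims in Step~2 are also incorrect: the set of conjugacy classes of finite-image representations of $\pi_1(Y)$ in $\GL_N(\bC)$ need not be finite (already $\pi_1(Y)\cong\bZ$, $N=1$ gives infinitely many torsion characters), and ``connected, $\bC^*$-invariant, with finitely many $\bC^*$-limits'' does not imply ``a point''.

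The paper's proof is a short compactness argument that uses exactly the input you isolated in Step~1 and nothing else. Suppose $M:=j(M_{\rm B}(X,N))$ had a positive-dimensional component; its Zariski closure is then a positive-dimensional affine variety, hence non-compact in the analytic topology over any non-archimedean local field $K$ of characteristic zero. On the other hand, the locus of classes of bounded representations in $M_{\rm B}(Y,N)(K)$ is compact, so one finds a semisimple $\tau:\pi_1(X)\to\GL_N(K)$ with $f^*\tau$ unbounded. By \cref{thm:KE} the reduction $s_\tau$ then does not contract $f(Y)$, contradicting the definition of $s_{\rm fac}$ via \cref{lem:simultaneous}. No passage through $\bC$-VHS, the $\bC^*$-action, or period maps is needed; I would suggest discarding Steps~2--3 and completing the argument directly from your Step~1 along these lines.
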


The main idea of the proof proceeds as follows. Consider the morphism between character varieties $M_{\rm B}(X,N)\to M_{\rm B}(Y,N)$. If the image of $M_{\rm B}(X,N)$ under this morphism, denoted by $M$, is not zero-dimensional, then its closure is a Zariski closed subset of the affine scheme, and is hence non-compact. However, one can show that for any fixed non-archimedean local field $K$ of characteristic zero, the set of bounded representations $\tau:\pi_1(Y)\to \GL_N(K)$ is a compact subset of $M_{\rm B}(Y,N)(K)$. Therefore, there exists some $\tau:\pi_1(X)\to \GL_{N}(K)$ such that $f^*\tau$ is unbounded. By the construction of $s_{\rm fac}$, this is impossible. Hence, we conclude that $M$ is zero-dimensional.

In \cite{DY23}, we proved the following crucial result.
\begin{proposition}[{\cite[Proposition 3.13]{DY23}}]\label{prop:cons}
	There exists a $\bC$-VHS $\cL$ on $X$ such that for any morphism $f:Y\to X$ from a smooth quasi-projective variety $Y$ to $X$, if $s_{\rm fac}\circ f(Y)$ is a point, then for any semisimple representation $\varrho:\pi_1(X)\to \GL_{N}(\bC)$,  $f^*\varrho$ is a direct factor of the $\bC$-VHS $f^*\cL$, which has discrete monodromy.
\end{proposition}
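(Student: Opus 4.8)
\medskip
\noindent\emph{Proof strategy.} The plan is to reduce the statement, via Simpson's ubiquity theorem and \cref{lem:conjugate3}, to a finite list of $\bC$-VHS on $X$; then to exploit the hypothesis on $s_{\rm fac}$ through the non-archimedean reductions of \cref{thm:KE} to extract an \emph{integrality} property of the relevant pullbacks; and finally to manufacture $\cL$ by a restriction-of-scalars construction whose pullback is automatically discrete. Concretely: since $M_{\rm B}(X,N)$ is of finite type over $\bZ$, the space $M_{\rm B}(X,N)(\bC)$ has finitely many connected components $C_1,\dots,C_r$, and by \cref{prop:ubiquity} we may choose in each $C_i$ a point $[\varrho_i]$ with $\varrho_i$ underlying a $\bC$-VHS. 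If $f\colon Y\to X$ is a morphism from a smooth quasi-projective $Y$ with $s_{\rm fac}\circ f(Y)$ a point, and $\varrho\colon\pi_1(X)\to\GL_N(\bC)$ is semisimple with $[\varrho]\in C_i$, then $\varrho$ and $\varrho_i$ are semisimple points of the same geometric component of $M_{\rm B}(X,N)$, so \cref{lem:conjugate3} gives that $f^*\varrho$ is conjugate to $f^*\varrho_i$; hence $f^*\varrho$ underlies the pullback to $Y$ of the $\bC$-VHS attached to $\varrho_i$. It therefore suffices to build, out of the finite data $\varrho_1,\dots,\varrho_r$, one $\bC$-VHS $\cL$ on $X$ such that, for every such $f$, the pullback of the VHS attached to each $\varrho_i$ is a direct factor of $f^*\cL$ and $f^*\cL$ has discrete monodromy.

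Next I would extract integrality. By \cref{lem:conjugate3} the image of $M_{\rm B}(X,N)$ in $M_{\rm B}(Y,N)$ is $0$-dimensional, so its closed points in characteristic zero live over number fields; hence, after a finite extension, $f^*\varrho_i$ is defined by some $\sigma_i\colon\pi_1(Y)\to\GL_N(L)$ with $L$ a number field. For each finite place $v$ of $L$, the completion $L_v$ is a non-archimedean local field of characteristic $0$, and composing a semisimple $L$-model of the (isolated) point of $C_i$ with $L\hookrightarrow L_v$ produces a representation $\pi_1(X)\to\GL_N(L_v)$ whose Katzarkov--Eyssidieux reduction is one of those combined into $s_{\rm fac}$ (\cref{def:fac}). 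Since $s_{\rm fac}\circ f(Y)$ is a point, this reduction also contracts $f(Y)$, and the equivalence in \cref{thm:KE} between boundedness of the image and contraction by the reduction map forces $\sigma_i\otimes_L L_v$ to be bounded. Thus $\sigma_i$ is bounded at every finite place of $L$, so by the standard patching argument for semisimple representations of finitely generated groups the restriction of scalars $\bigoplus_{\gamma\colon L\hookrightarrow\bC}\sigma_i^{\gamma}$ is defined over $\bZ$; it has monodromy inside $\GL_M(\bZ)$ with $M=N[L:\bQ]$, hence discrete in $\GL_M(\bC)$, and it contains $\sigma_i\cong f^*\varrho$ as a direct factor. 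When $C_i$ is zero-dimensional, $\varrho_i$ is rigid, each Galois conjugate $\varrho_i^{\gamma}$ on $X$ is again rigid and hence a $\bC$-VHS by \cref{cor:rigid}, so $\cL_i:=\bigoplus_{\gamma}\varrho_i^{\gamma}$ is a $\bC$-VHS on $X$ defined over $\bQ$ with all the required properties, and one takes $\cL:=\bigoplus_i\cL_i$.

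The hard part will be the positive-dimensional components $C_i$. There $\varrho_i$ is transcendental, so it has no finite $\operatorname{Aut}(\bC)$-orbit, and while $C_i$ certainly contains a $\overline{\bQ}$-point, such a point need not itself underlie a $\bC$-VHS — so one cannot naively exhibit a $\bC$-VHS on $X$ carrying $f^*\varrho_i$ as a direct factor. To circumvent this I would use that the fibre over $[\sigma_i]$ of the character-variety map $M_{\rm B}(X,N)\to M_{\rm B}(Y,N)$ is an absolutely constructible subset in Simpson's sense \cite{Sim93b} — in particular defined over $\overline{\bQ}$ and stable under $\operatorname{Aut}(\bC)$ — and combine this with the Eyssidieux--Simpson canonical variation of mixed Hodge structure attached to the formal deformation ring of $\varrho_i$ \cite{ES11}, so as to locate, inside a genuine $\bC$-VHS on $X$, a summand whose pullback under every $s_{\rm fac}$-contracting $f$ is integral and contains $f^*\varrho_i$ as a direct factor; the restriction-of-scalars trick of the previous paragraph then upgrades this to discrete monodromy, and combining over $i$ produces the desired $\cL$. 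I expect this step — reconciling the transcendental $\bC$-VHS $\varrho_i$ with a Galois-stable integral structure on $X$ — to be the main obstacle; everything else is essentially formal once \cref{lem:conjugate3,thm:KE,prop:ubiquity,cor:rigid} are available.
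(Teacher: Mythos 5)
Your first two paragraphs track the paper's actual argument: reduce to one $\bC$-VHS representative per geometric connected component via \cref{prop:ubiquity} and \cref{lem:conjugate3}, extract boundedness at every finite place from the $s_{\rm fac}$ hypothesis via \cref{thm:KE} and \cref{def:fac}, and use the discreteness of $\GL_N(\cO_k)$ in $\prod_{w}\GL_N(\bC)$. The problem is your third paragraph: you declare the positive-dimensional components to be the ``main obstacle,'' propose to attack them with absolutely constructible subsets and the Eyssidieux--Simpson VMHS, and leave that step unfinished. That obstacle is illusory, and as written your proof is incomplete exactly where the statement has content (non-rigid components).

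What you miss is that the $\bC$-VHS on $X$ never needs to be defined over $\overline{\bQ}$, Galois-stable, or integral; only its \emph{pullback to $Y$} must be matched with something integral, and \cref{lem:conjugate3} does precisely that. For a geometric component $M$, pick \emph{any} semisimple $\tau\colon\pi_1(X)\to\GL_N(\overline{\bQ})$ with $[\tau]\in M(\overline{\bQ})$ --- in general neither rigid nor a VHS --- defined over a number field $k$. For each archimedean place $w$ of $k$, let $\tau_w\colon\pi_1(X)\to\GL_N(\bC)$ be the corresponding conjugate and, by ubiquity, choose a (possibly transcendental) $\bC$-VHS $\tau_w^{\vhs}$ in the same component as $[\tau_w]$; set $\cL_M:=\oplus_{w}\tau_w^{\vhs}$, a genuine $\bC$-VHS on $X$. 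When $s_{\rm fac}\circ f(Y)$ is a point, \cref{thm:KE} forces $f^*\tau_v$ to be bounded at every finite place $v$, so $f^*\tau$ lands in $\GL_N(\cO_k)$ and $\prod_w f^*\tau_w$ has discrete image; since \cref{thm:reductive} and \cref{lem:conjugate3} make $f^*\tau_w$ conjugate to $f^*\tau_w^{\vhs}$, the monodromy of $f^*\cL_M$ is discrete, and $f^*\varrho$ for any semisimple $\varrho$ with $[\varrho]\in M$ is conjugate to $f^*\tau$, hence a direct factor. Summing over components finishes the proof. Your restriction-of-scalars device $\oplus_\gamma\varrho_i^{\gamma}$ on $X$ only yields a VHS when every Galois conjugate of $\varrho_i$ is again a VHS, i.e.\ in the rigid case via \cref{cor:rigid}; in the non-rigid case the correct move is to take Galois conjugates of an arbitrary $\overline{\bQ}$-point and reattach VHS structures component by component \emph{before} pulling back, not to manufacture a Galois-stable integral VHS on $X$ itself.
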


Note that $\cL$ itself might not have discrete monodromy.

\begin{proof}
	Fix a  geometric connected component $M$ of  $M_{\rm B}(X,N)$. Since $M_{\rm B}(X,N)$ and $\Hom(\pi_1(X), \GL_N)$ are defined over $\bZ$, we can choose a semisimple representation $\tau:\pi_1(X)\to \GL_{N}(\overline{\bQ})$ such that  $[\tau]\in M(\overline{\bQ})$. As $\pi_1(X)$ is finitely generated, there exists a number field $k$ such that $\tau:\pi_1(X)\to \GL_N(k)$.
	
	Let ${\rm Ar}(k)$ be the set of all archimedean places of $k$, with $w_1$ being the identity map. By the ubiquity theorem (\cref{prop:ubiquity}),  and its extension to the quasi-projective setting by Mochizuki \cite{Moc06},  for any $w\in {\rm Ar}(k)$, there exists a reductive representation $\tau_{w}^{\text{\tiny{VHS}}}:\pi_1(X)\to {\rm GL}_N(\bC)$ that underlies a $\bC$-VHS, such that $[\tau_{w}^{\text{\tiny{VHS}}}]$ is in the same connected component of $[\tau_{w}]$. Let $\cL$ be the $\bC$-VHS defined as the direct sum $\oplus_{w\in {\rm Ar}(k)}\tau_w^\vhs$.
	
	By \cref{thm:reductive}, both \(f^{*}\tau_{w}\) and \(f^{*}\tau_{w}^{\vhs}\) are reductive. Then, by \cref{lem:conjugate3}, \(f^{*}\tau_{w}^{\vhs}\) is conjugate to \(f^{*}\tau_{w}\). Hence, each \(f^{*}\tau_{w}\) underlies a \(\mathbb{C}\)-VHS. 
	
	Let $v$ be any non-archimedean place of $k$ and let $k_v$ be the non-archimedean completion of $k$ with respect to $v$. Let $\tau_{v}:\pi_1(X)\to {\rm GL}_N(k_v)$ be the representation induced by $\tau$. By the definition of $s_{\rm fac}$, it follows that $f^*\tau_{v}(\pi_1(Z))$ is bounded. Therefore, we have a factorization
	$$
	f^*\tau:\pi_1(Z)\to {\rm GL}_N(\cO_k).
	$$
	Note that the embedding ${\rm GL}_N(\cO_k)\to \prod_{w\in {\rm Ar}(k)}{\rm GL}_N(\bC)$ has a discrete image by \cite[Proposition 6.1.3]{Zim}. It follows that for the product representation
	$$
	\prod_{w\in {\rm Ar}(k)}f^*\tau_{w}:\pi_1(Z)\to \prod_{w\in {\rm Ar}(k)}{\rm GL}_N(\bC),
	$$
	the image is discrete.
	
	Since \(f^{*}\tau_{w}\) is conjugate to \(f^{*}\tau_{w}^{\text{\tiny VHS}}\) for each 
	\(w\in \mathrm{Ar}(k)\), it follows that \(f^{*}\mathcal{L}\) has discrete monodromy. 
	If \([\varrho]\) and \([\tau]\) both belong to \(M\), then \(f^{*}\varrho\) is conjugate to 
	\(f^{*}\tau\), and hence \(f^{*}\varrho\) appears as a direct factor of \(f^{*}\mathcal{L}\).
	Note that the local system \(\mathcal{L}\) is chosen so as to be associated with \(M\).
	
	Repeating this construction for each geometrically connected component of 
	\(M_{\mathrm{B}}(X,N)\), and taking the direct sum of the resulting 
	\(\mathbb{C}\)-VHS, we obtain the desired \(\mathbb{C}\)-VHS.  
	The proposition follows.
\end{proof}

We now turn to the construction of the Shafarevich morphism of $X$ when $\pi_1(X)$ is reductive. In \cite[Proposition 3.19]{DY23}, it is shown that after replacing $X$ by an \'etale cover, there exists a partial compactification such that $\varrho$ extends to a representation with infinite monodromy at infinity. Let $\cL$ be the $\bC$-VHS from \cref{prop:cons} and let $p:\widetilde{X}\to\sD$ be the associated period map. Consider the holomorphic map
\begin{align*}
	\Phi:\widetilde{X} &\to \sD \times S_{\rm Fac}(X)\\
	x & \mapsto (p(x), s_{\rm fac}\circ \pi_X(x)),
\end{align*}
where $\pi_X:\widetilde{X}\to X$ is the universal covering map. Using \cref{prop:cons} together with \cref{lem:period}, one can deduce that each connected component of the fiber of $\Phi$ is compact. Then, by \cref{lem:Stein}  below, $\Phi$ factors through a proper holomorphic fibration
$$
{\rm sh}_{\widetilde{X}}: \widetilde{X}\to {\rm Sh}(\widetilde{X})
$$
onto a normal complex space ${\rm Sh}(\widetilde{X})$. One can prove that ${\rm Sh}(\widetilde{X})$ does not contain any compact closed subvariety of positive dimension. Hence, the $\pi_1(X)$-action on $\widetilde{X}$ maps fibers of ${\rm sh}_{\widetilde{X}}$ to fibers, thereby inducing an action on ${\rm Sh}(\widetilde{X})$. This action is properly discontinuous, and ${\rm sh}_{\widetilde{X}}$ is equivariant with respect to it. We take the quotient of ${\rm sh}_{\widetilde{X}}$ to obtain a proper holomorphic fibration ${\rm sh}_X: X\to {\rm Sh}(X)$ such that  we have the commutative diagram
\[
\begin{tikzcd}
	\widetilde{X}\arrow[r] \arrow[d,"{\rm sh}_{\widetilde{X}}"] & X\arrow[d,"{\rm sh}_X"] \\
	{\rm Sh}(\widetilde{X}) \arrow[r]& {\rm Sh}(X).
\end{tikzcd}
\]
One can show that ${\rm sh}_X: X\to {\rm Sh}(X)$  is the Shafarevich morphism of $X$. 

We need the following generalized Stein factorization proved by Henri Cartan in \cite[Theorem 3]{Car60}.

\begin{thm}\label{lem:Stein}
	Let $X$ and $S$ be complex spaces and let $f: X \rightarrow S$ be a holomorphic map. Suppose that $X$ is normal and that every connected component $F$ of a fiber of $f$ is compact. Then the set $Y$ of connected components of the fibers of $f$ can be endowed with the structure of a normal complex space such that $f$ factors through the natural map $e: X \rightarrow Y$, which is a proper holomorphic fibration. \qed
\end{thm}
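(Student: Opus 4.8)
The plan is to deduce the statement from the classical Stein factorization of \emph{proper} holomorphic maps between complex spaces, by first cutting $f$ into proper pieces, then gluing the resulting local quotients. (Equivalently, one may phrase the argument through Cartan's theory of proper analytic equivalence relations applied to the relation ``lying in the same connected component of a fibre of $f$''; the local-to-global construction below amounts to the same thing.)

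\emph{Localization to a proper situation.} Fix $x\in X$, put $s=f(x)$, and let $F$ be the connected component of $f^{-1}(s)$ containing $x$; by hypothesis $F$ is compact. Since $X$ is locally compact Hausdorff and $F$ is a compact connected component of the closed set $f^{-1}(s)$, a standard point-set lemma (a compact connected component of a closed subset of a locally compact Hausdorff space has a neighbourhood basis of relatively clopen sets) provides a relatively compact open $U\supset F$ with $\overline{U}\cap f^{-1}(s)=F$, so in particular $\partial U\cap f^{-1}(s)=\varnothing$. As $\partial U$ is compact, $f(\partial U)$ is closed and misses $s$, so one may choose an open $V\ni s$ in $S$ with $V\cap f(\partial U)=\varnothing$. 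Then $U\cap f^{-1}(V)$ is open and closed in $f^{-1}(V)$, and the restriction $f\colon U\cap f^{-1}(V)\to V$ is \emph{proper} (the preimage of a compact set is closed in the compact set $\overline{U}$). Moreover, because $F$ was separated off as an entire component, the connected components of the fibres of this restriction near $F$ coincide with the connected components of the corresponding fibres of $f$ on all of $X$; and as $x$ varies, the open sets $U\cap f^{-1}(V)$ cover $X$, since every point of $X$ lies in a compact connected component of its own fibre.

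\emph{Local Stein factorization and gluing.} To each proper restriction apply the Cartan--Remmert Stein factorization (built on Grauert's coherence and proper mapping theorems): it factors as $U\cap f^{-1}(V)\xrightarrow{\,e_{U,V}\,}Y_{U,V}\xrightarrow{\,g_{U,V}\,}V$ with $e_{U,V}$ proper, surjective, with connected fibres, $(e_{U,V})_*\mathcal{O}=\mathcal{O}_{Y_{U,V}}$, and $g_{U,V}$ finite; the fibres of $e_{U,V}$ are precisely the connected components of the fibres of $f|_{U\cap f^{-1}(V)}$, and normality of $X$ forces $Y_{U,V}$ to be normal. On overlaps, uniqueness of the Stein factorization yields canonical biholomorphisms between the corresponding open subsets of two pieces $Y_{U,V}$ and $Y_{U',V'}$, compatible with the maps down to $X$ and to $S$ and satisfying the cocycle condition. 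Gluing produces a complex space $Y$ — set-theoretically the set of connected components of fibres of $f$ — together with holomorphic maps $e\colon X\to Y$ (the gluing of the $e_{U,V}$) and $g\colon Y\to S$ with $f=g\circ e$; normality of $Y$ is inherited from the $Y_{U,V}$ since it is a local property. The one delicate point is that $Y$ is Hausdorff: two distinct points of $Y$ are two distinct connected components $F_1\neq F_2$ of fibres; if they lie over distinct points of $S$ one separates them by pulling back disjoint neighbourhoods from $S$, while if they lie in the same fibre one separates $F_1$ from $F_2$ by disjoint open subsets of $X$ (using their compactness) and then shrinks the base neighbourhood.

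\emph{Conclusion and the main obstacle.} By construction $e$ is holomorphic and surjective, its fibres are the connected components of the fibres of $f$ (hence compact and connected), and properness is local on the target, so since $e$ coincides locally with the proper map $e_{U,V}$, it is proper; thus $e\colon X\to Y$ is a proper holomorphic fibration factoring $f$, as required. I expect the main obstacle to be the gluing step, and within it the Hausdorffness of $Y$: this is exactly where the hypothesis that connected components of fibres are compact is indispensable (without it the ``quotient'' can fail to be Hausdorff, as happens for leaf spaces of foliations). A secondary technical point is the point-set lemma invoked in the localization step, which is what permits the choice of a separating neighbourhood $U$ with $\partial U\cap f^{-1}(s)=\varnothing$ and must be applied with some care.
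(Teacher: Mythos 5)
The paper offers no proof of this statement --- it is quoted directly from Cartan \cite[Theorem 3]{Car60} --- and your argument is the standard one underlying Cartan's result (quotient by a proper equivalence relation): properify $f$ locally around each compact fibre component, take the classical Stein factorization of each proper piece, and glue via uniqueness; it is correct, including the identification of Hausdorffness of $Y$ as the crux. The one point worth making explicit is that a neighbourhood with $\overline{U}\cap f^{-1}(s)=F$ exists because $f^{-1}(s)$ is an analytic set, hence locally connected, so the component $F$ is relatively clopen in it (the general \v{S}ura-Bura lemma you invoke would a priori only give $\overline{U}\cap f^{-1}(s)$ equal to some compact relatively clopen set containing $F$, which would also suffice for the rest of the argument).
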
 

\begin{rem}The proof presented here is technically different from that of \cite{Eys04} in the compact case, 
	although we follow the essential ideas introduced by Eyssidieux.  
	In \cite{Eys04}, Eyssidieux studies representations 
	\(\pi_1(X)\to \GL_{N}(k((t)))\) over non-archimedean fields, where \(k\) is a number field, 
	arising from closed curves \(C \subset \mathrm{Hom}(\pi_1(X),\GL_{N})\).  
	Since \(k((t))\) is not locally compact, the Gromov–Schoen regularity theorem was not available 
	at that time. 
 	In this setting, Eyssidieux used reduction modulo \(p\) arguments to work with representations 
	\(\pi_1(X)\to \GL_{N}(\mathbb{F}_{q}((t)))\) in order to \emph{factor through non-rigidity}.  
	For integrality issues, he must further consider representations 
	\(\pi_1(X)\to \GL_{N}(K)\), where \(K\) is a finite extension of some \(\mathbb{Q}_p\).  
	This makes the proof substantially more involved.
	
	The advantage of our construction of \(s_{\mathrm{fac}}\) is that it allows us to treat 
	\emph{non-rigidity} and \emph{integrality} simultaneously, as we have seen in \cref{prop:cons}.  
	Moreover, it is a significant simplification that we only work with local fields of 
	characteristic zero.
	
	When the faithful representation \(\varrho:\pi_1(X)\to \GL_{N}(\mathbb{C})\) is not reductive and 
	\(X\) is projective, one must invoke the deep results of Eyssidieux and Simpson \cite{ES11} on the 
	construction of suitable tautological representations arising from variations of mixed Hodge 
	structures in order to study the Shafarevich conjecture. 
	We omit the details here and refer the reader to \cite{EKPR12} for a comprehensive treatment.
\end{rem}
\subsection{Shafarevich morphism in positive characteristic}
The previous subsection constructed the Shafarevich morphism for smooth quasi-projective varieties $X$ assuming the representation of the fundamental group is reductive. In this subsection, we outline the construction of the Shafarevich morphism by Yamanoi and the author  \cite{DY23b} for the case where there exists an almost faithful representation $\rho \colon \pi_1(X) \to \operatorname{GL}_{N}(K)$, where $K$ is a field of   characteristic $p>0$.
\subsubsection{A lemma on finite groups}
\begin{lem}[{\cite[Lemma 3.1]{DY23b}}]\label{lem:finite group}
	Let $K$ be an algebraically closed field of positive characteristic and let $\Gamma$ be a finitely generated group. Let $\varrho:\Gamma\to \mathrm{GL}_N(K)$ be a representation such that its semisimplification has finite image. Then $\varrho(\Gamma)$ is finite.
\end{lem}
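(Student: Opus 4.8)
The plan is to reduce, via the composition series defining $\varrho^{\mathrm{ss}}$, to a statement about finitely generated unipotent matrix groups over $K$, and there to exploit the fact that in characteristic $p$ unipotent matrices have bounded order.

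First I would fix a $\Gamma$-stable filtration $0=V_0\subsetneq V_1\subsetneq\cdots\subsetneq V_\ell=K^N$ whose successive quotients $W_i:=V_i/V_{i-1}$ are irreducible $\Gamma$-modules, so that $\varrho^{\mathrm{ss}}$ is the representation on $\bigoplus_i W_i$. Set $\Gamma_0:=\ker\bigl(\Gamma\to\prod_i \GL(W_i)\bigr)$. By hypothesis $\Gamma/\Gamma_0$ is finite, so $\Gamma_0$ is a finite-index --- hence itself finitely generated --- subgroup of $\Gamma$. Choosing a basis of $K^N$ adapted to the filtration, every $\varrho(\gamma)$ with $\gamma\in\Gamma_0$ preserves the flag and acts trivially on each $W_i$, hence is block upper-triangular with identity diagonal blocks; thus $\varrho(\Gamma_0)$ lies in the group $U_N(K)$ of upper unitriangular $N\times N$ matrices over $K$. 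Since $[\varrho(\Gamma):\varrho(\Gamma_0)]\le[\Gamma:\Gamma_0]<\infty$, it suffices to prove that $\varrho(\Gamma_0)$ is finite.

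Now comes the point that makes positive characteristic essential: $U_N(K)$ has bounded exponent. Indeed, if $u=\mathrm{Id}+\nu\in U_N(K)$ then $\nu$ is nilpotent with $\nu^N=0$, and in characteristic $p$ one has $u^{p^m}=\mathrm{Id}+\nu^{p^m}$ (all intermediate binomial coefficients being divisible by $p$), so $u^{p^m}=\mathrm{Id}$ as soon as $p^m\ge N$. Hence every element of $\varrho(\Gamma_0)$ has order dividing $p^m$. Moreover $U_N(K)$ is nilpotent (of class $<N$), so $\varrho(\Gamma_0)$ is a finitely generated nilpotent group that is torsion of bounded exponent. Such a group is finite: arguing by induction on the nilpotency class, $G/Z(G)$ is finitely generated, nilpotent, torsion, of smaller class, hence finite by induction, so $Z(G)$ has finite index in $G$; being a finite-index subgroup of a finitely generated group it is finitely generated, and being abelian and torsion it is therefore finite; thus $G=\varrho(\Gamma_0)$ is finite, and consequently $\varrho(\Gamma)$ is finite.

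I do not expect a serious obstacle: the only delicate points are bookkeeping ones --- checking that $\Gamma_0$ is finitely generated and that $\varrho(\Gamma_0)$ lands in a unipotent group --- together with the elementary but characteristic-$p$-specific identity $u^{p^m}=\mathrm{Id}+\nu^{p^m}$, which is precisely the input that fails in characteristic zero (where the statement itself is false, e.g.\ $\bZ\hookrightarrow U_2(\bQ)$ is finitely generated and infinite with trivial semisimplification).
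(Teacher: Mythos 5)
Your proof is correct and follows essentially the same route as the paper: both arguments reduce, via the finite image of the semisimplification, to a finitely generated subgroup of the unitriangular group $\mathrm{U}_N(K)$ and then exploit that the additive group of $K$ is $p$-torsion. The only difference is cosmetic and lies in the last step --- the paper filters the image by the central series of $\mathrm{U}_N(K)$ with quotients in $\bG_{a,K}$ and uses that a finitely generated subgroup of $(K,+)$ is a finite $\bF_p$-vector space, whereas you invoke the bounded exponent $u^{p^m}=\mathrm{Id}$ of unipotent elements together with the finiteness of finitely generated nilpotent torsion groups; both are sound.
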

\begin{proof}
	Since the semisimplification $\varrho^{ss}$ of $\varrho$ has finite image,  we can replace $\Gamma$ by a finite index subgroup such that $\varrho^{ss}(\Gamma)$ is trivial.  Therefore, some conjugate $\sigma$ of $\varrho$ has image in the subgroup ${\rm U}_N(K)$ consisting 
	of all upper-triangular matrices in $\GL_{N}(K)$
	with 1's on the main diagonal.  
	
	Note ${\rm U}_N(K)$  admits a central normal series whose successive
	quotients are isomorphic to $\bG_{a,K}$.  We remark that a finitely generated subgroup  of $\bG_{a,K}$ is a finite group, for $K$ has positive characteristic.
	By \cite[Proposition 4.17]{ST00},   any finite index subgroup   of a finitely generated  group  is also finitely generated.   Consequently,    $\sigma(\Gamma)$ admits a   central normal series whose successive
	quotients are   finitely generated subgroups of $\bG_{a,K}$, which are  all finite groups. 
	It follows that $\sigma(\Gamma)$  is finite. The lemma is proved. 
\end{proof}
\subsubsection{Character varieties in positive characteristic} \label{sec:2.2}   
Recall that the variety of $N$-dimensional linear  representations of $\pi_1(X)$ in characteristic zero is represented by  an affine $\bZ$-scheme $R(X ,N)$ of finite type. 
Namely, given a commutative ring $A$, the set of $A$-points of $R(X ,N)$ is:
$$
R(X ,N)(A)={\rm Hom}(\pi_1(X) , \GL_N(A)).
$$ 
Let $p $ be  a prime number.  
Consider $R(X,N)_{\bF_p}:=R(X ,N)  \times_{\spec \bZ}\spec \bF_p$ and note that the general linear group over $\bF_p$, denoted by $\GL(N,\bF_p)$, acts on $R(X ,N)_{\bF_p}$ by conjugation.  
Using Seshadri's extension of geometric invariant theory quotients for schemes of arbitrary field \cite[Theorem 3]{Ses77}, we can take the GIT quotient
of $R(X ,N)_{\bF_p}$ by $\GL(N,\bF_p)$, denoted by $M_{\rm B}(X ,N)_{\bF_p}$. 
Then $M_{\rm B}(X ,N)_{\bF_p}$  is also an affine $\bF_p$-scheme of finite type.  For any algebraically closed field $K$ of characteristic $p$,   the $K$-points  $M_{\rm B}(X ,N)_{\bF_p}(K)$ is identified with the conjugacy classes of semi-simple representations $X \to \GL_N(K)$.     

\subsubsection{Shafarevich morphism in positive characteristic}
\begin{thm}[{\cite[Theorem 3.9]{DY23b}}]\label{thm:Shapositive} 
	Let $X$ be a smooth quasi-projective variety. Assume that there exists an almost faithful representation $\rho \colon \pi_1(X) \to \operatorname{GL}_{N}(K)$, where $K$ is a field of   characteristic $p>0$.	The Shafarevich morphism ${\rm sh}_X:X\to {\rm Sh(X)}$ of $X$  is obtained through the simultaneous Stein factorization $s_\infty:X\to S_\infty(X)$  of the reductions   $\{s_{\tau}:X\to S_\tau\}_{\tau\in \Upsilon}$,  where
	$\Upsilon$ consists of all reductive representations with $K$  a non-archimedean local field of characteristic $p$,  and $s_\tau:X\to S_\tau$ is the Katzarkov-Eyssidieux reduction map  defined in \cref{thm:KE}.  
\end{thm}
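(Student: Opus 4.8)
The plan is to verify that the morphism $s_\infty\colon X\to S_\infty(X)$ obtained from the simultaneous Stein factorization satisfies the characterization in \cref{def:Shafarevich morphism}. That $S_\infty(X)$ is a normal quasi-projective variety and that $s_\infty$ is dominant with connected general fibres is immediate from \cref{lem:simultaneous} (up to isomorphism there are only countably many non-archimedean local fields of characteristic $p$, so $\Upsilon$ is a set, and by the last item of \cref{lem:simultaneous} finitely many members of $\Upsilon$ already compute $s_\infty$). Fix a closed irreducible subvariety $Z\subset X$ --- the reducible case following componentwise --- and set $H:=\operatorname{Im}[\pi_1(Z)\to\pi_1(X)]$, a finitely generated group since complex varieties have finitely presented fundamental groups. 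By \cref{lem:simultaneous}, $s_\infty(Z)$ is a point iff $s_\tau(Z)$ is a point for every $\tau\in\Upsilon$, and by \cref{thm:KE} this holds iff $\tau(H)$ is bounded for every $\tau\in\Upsilon$. Since $\rho$ is almost faithful, $\rho(H)$ is finite iff $H$ is finite. So it suffices to show: \emph{(A)} if $H$ is finite then $\tau(H)$ is bounded for every $\tau\in\Upsilon$; and \emph{(B)} if $\tau(H)$ is bounded for every $\tau\in\Upsilon$, then $H$ is finite.

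Statement (A) is immediate, since a finite group has bounded image under any representation. Statement (B) is the heart of the argument, and I would prove its contrapositive by a specialization argument --- the characteristic-$p$ counterpart, with no archimedean or variation-of-Hodge-structure input, of the construction of $s_{\mathrm{fac}}$ in \cref{sec:sha mor}. Suppose $H$ is infinite; since $\rho$ is almost faithful, $\rho(H)$ is an infinite finitely generated linear group over a finitely generated field $k_0$ of characteristic $p$ over which $\rho$ is defined. By the classical positive solution of the Burnside problem for linear groups, $\rho(H)$ is not torsion, so some $h_0\in H$ has $\rho(h_0)$ of infinite order. Let $\mathbb{F}_q:=k_0\cap\overline{\mathbb{F}_p}$ be the field of constants. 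If the characteristic polynomial $P_{h_0}\in k_0[x]$ of $\rho(h_0)$ had all coefficients in $\mathbb{F}_q$, then all eigenvalues of $\rho(h_0)$ would be roots of unity, and since a unipotent matrix in characteristic $p$ has finite order, $\rho(h_0)$ would have finite order --- a contradiction. Hence some coefficient $a$ of $P_{h_0}$ is transcendental over $\mathbb{F}_p$.

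Choose a finitely generated integral $\mathbb{F}_p$-algebra $A\subset k_0$ with $\operatorname{Frac}(A)=k_0$ and $\rho(\pi_1(X))\subset\GL_N(A)$; then $a\in A$ is a non-constant function on $V:=\operatorname{Spec}A$. By a Bertini-type argument (allowing, if necessary, a finite extension $\mathbb{F}_{q'}$ of the field of constants) there is a geometrically irreducible curve $C\subset V$ with $a|_C$ non-constant; write $\overline{C}$ for its smooth projective model over $\mathbb{F}_{q'}$, $k_C=\mathbb{F}_{q'}(\overline{C})$, and $\rho_C\colon\pi_1(X)\to\GL_N(k_C)$ for the induced representation, so that $a|_C\in k_C$ is non-constant and is still a coefficient of the characteristic polynomial of $\rho_C(h_0)$. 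Pick a closed point $v$ of $\overline{C}$ at which $a|_C$ has a pole; the completion $(k_C)_v$ is then a bona fide non-archimedean local field of characteristic $p$. Let $\tau$ be the semisimplification of $\rho_C\otimes_{k_C}(k_C)_v$; this is semisimple, hence $\tau\in\Upsilon$, and $\tau(h_0)$ has the same characteristic polynomial as $\rho_C(h_0)$, one of whose coefficients has a pole at $v$. If $\tau(H)$ were bounded it would stabilize an $\mathcal{O}_v$-lattice and hence be conjugate into $\GL_N(\mathcal{O}_v)$, forcing every coefficient of the characteristic polynomial of $\tau(h_0)$ into $\mathcal{O}_v$ --- a contradiction. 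Therefore $\tau(H)$ is unbounded, so $s_\tau(Z)$ is not a point by \cref{thm:KE}, whence $s_\infty(Z)$ is not a point. This proves (B). Together, (A) and (B) show that $s_\infty$ is the Shafarevich morphism associated with $\rho$; since $\rho$ is almost faithful, it is the Shafarevich morphism of $X$.

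The only genuinely delicate step is the descent from the possibly high--transcendence-degree function field $k_0$ to the function field of a curve over a \emph{finite} field without destroying the infiniteness of the image of $H$ --- this is precisely what brings the locally compact Bruhat--Tits machinery of \cref{thm:BDDMex,thm:KE} (and hence the data defining $\Upsilon$) to bear --- together with the bookkeeping needed to feed a representation of $\pi_1(X)$, rather than of $H$, into \cref{thm:KE}; the latter is handled here by passing to characteristic polynomials, which are insensitive to semisimplification and to field extension, with \cref{lem:finite group} the natural tool should one instead wish to organize the argument via the positive-characteristic character variety $M_{\mathrm B}(X,N)_{\mathbb{F}_p}$ and trade $\rho$ directly for its semisimplification. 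Everything else is formal once \cref{thm:KE,lem:simultaneous} are available, and the construction is markedly simpler than its characteristic-zero analogue because there are no archimedean places to treat.
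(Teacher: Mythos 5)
Your proposal is correct, and it reaches the conclusion by a genuinely different route than the paper for the key implication. The paper first proves the stronger intermediate statement (\cref{claim:zero}) that boundedness of all restricted local-field representations forces the image of the entire character variety $M_{\rm B}(X,N)_{\bF_p}$ in $M_{\rm B}(Z,N)_{\bF_p}$ to be zero-dimensional — this is done via curves in the representation scheme $R(X,N)_{\bF_p}$, integrality of characteristic polynomials at all points at infinity of such curves, and the Brauer--Nesbitt theorem — and then exploits that the geometric connected component of $[\varrho]$ contains an $\overline{\bF_p}$-point $\eta$ with finite image, so that $(f^*\varrho)^{ss}$ is conjugate to $(f^*\eta)^{ss}$ and \cref{lem:finite group} upgrades finiteness of the semisimplification to finiteness of $f^*\varrho(\pi_1(Z))$. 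You instead work only with the single given representation $\rho$: Burnside--Schur extracts an element $h_0$ of infinite order from an infinite image, the observation that unipotents have finite order in characteristic $p$ (which is precisely the content of \cref{lem:finite group}, reappearing in your argument in a different guise) forces a characteristic-polynomial coefficient of $\rho(h_0)$ to be transcendental over $\bF_p$, and specializing the coefficient ring along a curve over a finite field manufactures a single unbounded member of $\Upsilon$ restricting nontrivially to $Z$. The two arguments share the same engine — boundedness at every place of a global function field forces characteristic-polynomial coefficients into the constants — but run it over different spaces (the representation scheme versus $\operatorname{Spec}$ of the coefficient ring). The paper's version buys the zero-dimensionality claim, which is reused elsewhere (e.g.\ in Case 2 of the proof of \cref{thm:sym}); yours is more economical for this theorem alone, at the cost of invoking Burnside--Schur and a Bertini-type argument over finite fields, whose details (finding a geometrically integral curve on which $a$ remains non-constant, possibly after extending the constant field) you correctly flag as the only delicate point and which do go through.
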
 
\begin{proof}
For simplicity, write \(R = R(X,N)_{\mathbb{F}_p}\) and \(M = M_{\mathrm{B}}(X,N)_{\mathbb{F}_p}\).
Let \(f:Y \to X\) be a morphism from another smooth quasi-projective variety. In the same vein as \cref{lem:conjugate3}, we have the following result.
\begin{claim}\label{claim:zero}
	If \(s_\infty \circ f(Y)\) is a point, then \(h(M)\) is zero-dimensional, where 
	\(h : M \to M_{\mathrm{B}}(Y,N)_{\mathbb{F}_p}\) is the natural morphism induced by \(f\).
\end{claim}
	\begin{proof} 
		Assume by contradiction that $h(M)$ has a positive dimensional component.  Then we can find  any irreducible affine curve $T\subset R$ defined over $\overline{\bF_p}$, such that its image in  $M_{\rm B}(Y ,N)_{\bF_p}$  under the composite morphism $$\Phi:R\to M\stackrel{h}{\to} M_{\rm B}(Y,N)_{\bF_p}$$
		is non a point. 	Take $\bar{C}$ as the compactification of the normalization $C$ of $T$, and let $\{P_1,\ldots,P_\ell\}= \bar{C}\setminus C$.  
		There exists $q=p^n$ for some $n\in \bZ_{>0}$ such that $\bar{C}$ is defined over $\bF_q$ and $P_i\in \bar{C}(\bF_q)$ for each $i$. 
		By the universal property of the representation scheme $R$, $C$ gives rise to a representation $\varrho_C:\pi_1(X)\to \GL_N(\bF_q[C])$, where $\bF_q[C]$ is the coordinate ring of $C$. 
		Consider the discrete valuation $v_i:\bF_q(C)\to \bZ$ defined by $P_i$, where $\bF_q(C)$ is the function field of $C$. 
		Let $\widehat{\bF_q(C)}_{v_i}$  be the completion of $F_{q}(C)$ with respect to $v_i$.  Then we have the isomorphism $\big(\widehat{\bF_q(C)}_{v_i},v_i\big)\simeq \big(\bF_q((t)),v\big)$, where $ \big(\bF_q((t)),v\big)$ is the formal Laurent field of $\bF_q$ with the valuation $v$ defined by  $v(\sum_{i=m}^{+\infty}a_it^i)=\min \{i\mid a_i\neq 0\}$.    
		Let $\varrho_i:\pi_1(X)\to \GL_N(\bF_q((t)))$  be the extension of $\varrho_C$ with respect to $\widehat{\bF_q(C)}_{v_i}$.  
		
		By our definition of $s_\infty$, and the assumption that $s_\infty\circ f(Y)$ is a point, and \cref{thm:KE},    $f^*\varrho_i(\pi_1(X))$ is bounded for each $i$.
		Thus after we replace $f^*\varrho_i$ by some conjugate, we have $f^*\varrho_i(\pi_1(X))\subset \GL_{N}(\bF_q[[t]])$, where  the $\bF_q[[t]]$ is the ring of integers of $\bF_q((t))$, i.e.
		$$
		\bF_q[[t]]:=\{\sum_{i=0}^{+\infty}a_it^i\mid a_i\in \bF_q \}.
		$$ 
		For any matrix $A\in \GL_N(K)$,  we denote by $\chi(A)=T^N+\sigma_1(A)T^{N-1}+\cdots+\sigma_N(A)$ its characteristic polynomial.     
		Since  $f^*\varrho_i(\pi_1(X))\subset \GL_{N}(\bF_q[[t]])$ for each $i$, it follows that $\sigma_{j}(f^*\varrho_i(\gamma))\in \bF_q[[t]]$ for each $i$.  
		Therefore, by the definition of $\varrho_i$, $v_i\big(\sigma_j(f^*\varrho_C(\gamma))\big)\geq 0$ for each $i$.  It follows that $\sigma_j(f^*\varrho_C(\gamma))$  extends to a regular function on $\overline{C}$, which is thus constant.  
		This implies that for any  two representations $\eta_1:\pi_1(X)\to \GL_{N}(K_1)$ and $\eta_2:\pi_1(X)\to \GL_{N}(K_2)$  such that ${\rm char}\, K_1={\rm char}\, K_2=p$ and  $\eta_i\in C(K_i)$,  we have  $\chi(f^*\eta_1(\gamma))=\chi(f^*\eta_2(\gamma))$ for each $\gamma\in \pi_1(Y)$.  
		In other words, $\eta_1$ and $\eta_2$ has the same characteristic polynomial.  
		It follows that $[f^*\eta_1]=[f^*\eta_2]$ by the Brauer-Nesbitt theorem. 
		Hence $\Phi(T)$ is a point.    We obtain a contradiction. 
	\end{proof} 
	Since $R$ is defined over $\overline{\bF}_p$, there exists some representation  $\eta:\pi_1(X)\to \GL_{N}(\overline{\bF_p})$   such that $\eta$ are in the same geometric connected component of $\varrho$.   Since $\pi_1(X)$ is finitely generated, it follows that $\eta(\pi_1(X))$ is finite. Let $f:Y\to X$ be as in \cref{claim:zero}.  By  \cref{claim:zero}, we have 
	$$
	[f^*\eta]=[f^*\varrho]. 
	$$
	Therefore,   the semisimplification $(f^*\eta)^{ss}$  of $f^*\eta$ and the semisimplifcation $(f^*\varrho)^{ss}$ of $f^*\varrho$  are conjugate.  By virtue of \cref{lem:finite group},  we conclude that $f^*\varrho(\pi_1(Y))$ is finite.   As $\varrho$ is almost faithful, it follows that
	$$
	{\rm Im}[\pi_1(Y)\to \pi_1(X)]
	$$
	is finite. 
	
	On the other hand, if $g:Z\to X$ be a morphism such that $s_\infty\circ g(Z)$ is not a point, by \cref{thm:KE}, there exists some $\tau\in \Upsilon$ such that $g^*\tau$ is unbounded, thus $g^*\tau(\pi_1(Z))$ has infinite image.  This proves that $s_\infty$ is the Shafarevich morphism of $X$.  
\end{proof} 
\begin{rem}
	As noted in \cref{thm:Shapositive}, the Shafarevich morphism in positive characteristic relies solely on Katzarkov-Eyssidieux reductions. Furthermore, by virtue of \cref{lem:finite group}, the linear case can be handled without additional difficulty. This implies that while the problem in positive characteristic is in some sense simpler, it also exhibits a less rich structure than the characteristic zero case. Consequently, the Shafarevich conjecture for projective varieties whose fundamental groups admit a faithful representation into $\GL_{N}(K)$ with ${\rm char}\, K>0$  remains open, with only the case of surfaces having been proven in \cite{DY23b}.
\end{rem}

It is worth noting that in \cite{DY23,DY23b} we establish the existence of the Shafarevich morphism in a more general setting, without assuming that the fundamental group of  $X$ is linear. More precisely, we prove the following. 
\begin{thm}[\cite{DY23,DY23b}]\label{thm:Shafarevich}
	Let  $X$ be a normal quasi-projective variety, and let
	$\varrho\colon \pi_1(X)\to \GL_N(K)$ be a representation, where $K$ is an arbitrary field. 
	If ${\rm char}\,K=0$, we additionally assume that $\varrho$ is reductive. 
	Then the Shafarevich morphism associated with $\varrho$, denoted by 
	${\rm sh}_\varrho\colon X\to {\rm Sh}_\varrho(X)$, exists. \qed
\end{thm}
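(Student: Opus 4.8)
The plan is to reduce the statement to the constructions already carried out in this section and then to assemble them. First I would reduce to the case where $X$ is smooth: let $\mu\colon X'\to X$ be a desingularization. Since $X$ is normal, $\mu_*\colon\pi_1(X')\to\pi_1(X)$ is surjective and has connected fibres, so $\varrho':=\mu^*\varrho$ has the same image as $\varrho$ (and remains reductive in characteristic zero by \cref{thm:reductive}), every $\mu$-fibre is $\varrho'$-trivial, and hence the Shafarevich morphism of $(X',\varrho')$, once built, is constant on $\mu$-fibres and descends to the desired ${\rm sh}_\varrho\colon X\to {\rm Sh}_\varrho(X)$. Next, since $\pi_1(X)$ is finitely generated, $\varrho$ factors through $\GL_N(k)$ for a finitely generated subfield $k\subset K$; if ${\rm char}\,K=p>0$ and $k$ is finite the image is finite and ${\rm Sh}_\varrho(X)$ is a point, so I may assume $k$ has positive transcendence degree over its prime field and in particular carries many discrete valuations $v$ whose completions $k_v$ are non-archimedean local fields of the same characteristic.

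For the non-archimedean data I would, for each such $v$, pass to the semisimplification of $\varrho_v\colon\pi_1(X)\to\GL_N(k_v)$ and take its Katzarkov--Eyssidieux reduction $s_{\varrho_v}\colon X\to S_{\varrho_v}$ from \cref{thm:KE}, then form via \cref{lem:simultaneous} a simultaneous Stein factorization $s_{\varrho,\mathrm{fac}}\colon X\to S_{\varrho,\mathrm{Fac}}(X)$ of the family $\{s_{\varrho_v}\}_v$ --- the $\varrho$-adapted analogue of the morphism $s_{\rm fac}$ of \cref{def:fac}. When ${\rm char}\,K=p>0$ this is already the Shafarevich morphism: as in the proof of \cref{thm:Shapositive} (a simpler version of it, since $\varrho$ is a single representation), if a subvariety $Z$ with desingularization $g\colon Y\to Z\hookrightarrow X$ is contracted by $s_{\varrho,\mathrm{fac}}$, then every $g^*\varrho_v(\pi_1(Y))$ is bounded, so the characteristic polynomials of $g^*\varrho(\pi_1(Y))$ are $v$-integral for all $v$, hence extend to regular --- thus constant --- functions on a projective model of $k$; by Brauer--Nesbitt the semisimplification of $g^*\varrho|_{\pi_1(Y)}$ then has finite image, and \cref{lem:finite group} upgrades this to finiteness of $\varrho(\operatorname{Im}[\pi_1(Z^{\mathrm{norm}})\to\pi_1(X)])$, the converse implication being immediate from \cref{thm:KE}.

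When ${\rm char}\,K=0$ (so $\varrho$ is reductive) I would build the archimedean part as in \cref{sec:sha mor}: for each archimedean place $w$ of $k$ fix an embedding $k\hookrightarrow\bC$, use Corlette--Simpson together with Simpson's $\bC^*$-action and ubiquity (\cref{prop:ubiquity,lem:limit}) to deform $\varrho_w$ inside its connected component of the Betti moduli space to a representation $\tau_w^{\vhs}$ underlying a $\bC$-VHS, and set $\cL:=\bigoplus_w\tau_w^{\vhs}$. Then, by a verbatim adaptation of the proof of \cref{prop:cons} (using integrality of $\varrho$ over the ring of $v$-integers of $k$, the discreteness of $\GL_N(\mathcal O_k)\hookrightarrow\prod_w\GL_N(\bC)$, and the zero-dimensionality argument of \cref{lem:conjugate3}), over every subvariety contracted by $s_{\varrho,\mathrm{fac}}$ the local system $\cL$ has discrete monodromy and $\varrho$ is one of its direct factors. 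After a finite \'etale cover and a partial compactification (so that the extended $\cL$ has a proper period map $p\colon\widetilde X\to\sD$, cf.\ \cref{lem:period,def:monodromy}), I would set $\Phi=(p,\,s_{\varrho,\mathrm{fac}}\circ\pi_X)\colon\widetilde X\to\sD\times S_{\varrho,\mathrm{Fac}}(X)$, invoke \cref{lem:period,prop:cons} to see that every connected component of every fibre of $\Phi$ is compact, apply the generalized Stein factorization \cref{lem:Stein} to factor $\Phi$ through a proper holomorphic fibration ${\rm sh}_{\widetilde X}\colon\widetilde X\to{\rm Sh}_\varrho(\widetilde X)$ onto a normal complex space with no positive-dimensional compact subvariety, and descend it through the (properly discontinuous) $\pi_1(X)$-action, removing the \'etale cover at the end by Galois equivariance. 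The defining property then follows in both cases by chasing equivalences: ${\rm sh}_\varrho(Z)$ is a point iff $s_{\varrho,\mathrm{fac}}(Z)$ is a point and $p$ collapses $Z$, iff the semisimplification of $\varrho|_{\pi_1(Z^{\mathrm{norm}})}$ is bounded at every place of $k$ and lands in a compact stabilizer at the archimedean ones, iff it has finite image (via discreteness of $\GL_N(\mathcal O_k)$ in $\prod_w\GL_N(\bC)$, or \cref{lem:finite group} in characteristic $p$), iff $\varrho(\operatorname{Im}[\pi_1(Z^{\mathrm{norm}})\to\pi_1(X)])$ is finite.

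I expect the main obstacle to be the characteristic-zero construction, and within it the compactness of the connected components of the fibres of $\Phi$ on $\widetilde X$: this is precisely the point where the two sources of non-properness must be tamed simultaneously --- \emph{non-rigidity} of $\varrho$ has to be absorbed into the non-archimedean reductions defining $s_{\varrho,\mathrm{fac}}$, so that on their fibres $\varrho$ becomes a sub-object of a $\bC$-VHS (via \cref{prop:cons,lem:conjugate3}), while \emph{integrality} is what upgrades ``$\bC$-VHS'' to ``$\bC$-VHS with discrete monodromy'' and hence to a proper period map (\cref{lem:period}), after which \cref{lem:Stein} assembles the global fibration and equivariant descent produces ${\rm sh}_\varrho$. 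By contrast the positive-characteristic case should be comparatively soft, since \cref{lem:finite group} trivializes the linear-algebra input and only the Katzarkov--Eyssidieux reductions are needed.
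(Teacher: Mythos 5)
Your overall architecture is the paper's: Katzarkov--Eyssidieux reductions plus a simultaneous Stein factorization for the non-archimedean part, a $\bC$-VHS built by ubiquity for the archimedean part, the generalized Stein factorization of Cartan (\cref{lem:Stein}) and equivariant descent, and the Brauer--Nesbitt/\cref{lem:finite group} mechanism in positive characteristic. The positive-characteristic half and the reduction to the smooth case are essentially sound (modulo the caveat that completions of a finitely generated field of transcendence degree $\geq 2$ at divisorial valuations are not local fields; the actual argument restricts to curves in a model over $\bF_q$ so as to only ever use $\bF_q((t))$-type fields).

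The characteristic-zero half, however, has a genuine gap. You define $s_{\varrho,\mathrm{fac}}$ as the simultaneous Stein factorization of the reductions $s_{\varrho_v}$ only for the completions of $\varrho$ at places $v$ of its trace field $k$. But the step you call a ``verbatim adaptation of \cref{prop:cons}'' --- namely that on a fiber $Y$ of $s_{\varrho,\mathrm{fac}}$ the restriction $\varrho_w|_Y$ is conjugate to $\tau_w^{\vhs}|_Y$ and hence a direct factor of $\cL|_Y$ --- rests on \cref{lem:conjugate3}, whose proof requires that \emph{every} semisimple representation $\tau\colon\pi_1(X)\to\GL_N(K)$ over \emph{every} non-archimedean local field $K$ of characteristic zero restrict to a bounded representation on $Y$: that is what forces the image of the whole connected component of $M_{\rm B}(X,N)$ in $M_{\rm B}(Y,N)$ to be zero-dimensional (bounded classes form a compact subset of an affine variety). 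Your family $\{\varrho_v\}_v$ gives no control whatsoever over the non-rigid deformations joining $[\varrho_w]$ to $[\tau_w^{\vhs}]$; these deformations are generally not of the form $\varrho_v$, and the unbounded representation produced by the compactness argument need not lie in your family. Consequently $\varrho_w|_Y$ and $\tau_w^{\vhs}|_Y$ may fail to be conjugate, the period map of $\cL$ may collapse $Y$ while $\varrho|_Y$ has infinite image, and both the compactness of the fibers of $\Phi$ and the ``only if'' direction of the defining property of ${\rm sh}_\varrho$ break down. This is exactly why $\Upsilon$ in \cref{def:fac} ranges over \emph{all} semisimple non-archimedean representations of rank $N$ in characteristic zero: non-rigidity must be absorbed into $s_{\rm fac}$ before integrality can upgrade $\cL$ to a VHS with discrete monodromy on the fibers.
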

We refer the interested reader to \cite{DY23,DY23b} for further details.

\subsubsection{An application to Esnault's conjecture}  
A intriguing question by Esnault asks whether a complex projective variety with infinite fundamental group must admit non-trivial symmetric differentials. This question was first answered affirmatively by Brunebarbe, Klingler, and Totaro \cite{BKT13} in the case where there exists a representation $\varrho:\pi_1(X)\to \GL_N(K)$, over an arbitrary field $K$, such that $\varrho(\pi_1(X))$ is infinite.

More recently, Brotbek, Daskalopoulos, Mese, and the author \cite{BDDM} extended this result to the quasi-projective setting. 
\begin{thm}[{\cite[Theorem B]{BDDM}}]\label{thm:sym}
	Let $X$ be a smooth quasi-projective variety. If there exists a repersentation $\varrho:\pi_1(X)\to\GL_{N}(K)$ for any field $K$ such that $\varrho(\pi_1(X))$ is infinite. Then  for some $k\in \bN$, 
	we have
	$$
	H^0(\overline{X}, {\rm Sym}^k\Omega_{\overline{X}}(\log D))\neq 0,
	$$
	where  $\overline{X}$ is a smooth projective compactification, such that $D=\overline{X}\backslash X$ is a simple normal crossing divisor. 
\end{thm}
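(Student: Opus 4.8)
The plan is to route every infinite-image linear representation, via elementary facts about finitely generated linear groups, into one of two machines for producing symmetric differentials: a \emph{non-archimedean} one based on harmonic maps to Bruhat--Tits buildings, and an \emph{archimedean} one based on non-abelian Hodge theory. The projective case of the whole statement is Brunebarbe--Klingler--Totaro \cite{BKT13}, so the real content is the logarithmic refinement of those two machines, for which \cref{thm:BDDMex} (harmonic maps with logarithmic energy growth, together with the associated multivalued $1$-form $\eta_\varrho$ of \cref{def:multivaluedform} and the spectral covering \cref{prop:spectral}) and Mochizuki's prolongations of harmonic bundles \cite{Moc06,Moc07,Moc07b} are the new inputs. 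Concretely: let $\Gamma=\varrho(\pi_1(X))$, which is infinite and finitely generated, and pick a field $F$ finitely generated over its prime field with $\Gamma\subset\GL_N(F)$. A finitely generated subgroup of $\GL_N(F)$ that is relatively compact at every place of $F$ is finite (a standard specialization argument; in characteristic $p$ one also uses \cref{lem:finite group}); hence, since $\Gamma$ is infinite, either \textbf{(A)} there is an archimedean place, yielding $\varrho_1\colon\pi_1(X)\to\GL_N(\mathbb{C})$ with non-relatively-compact image, or \textbf{(B)} there is a non-archimedean place, yielding $\varrho_2\colon\pi_1(X)\to\GL_N(F_v)$ with unbounded image over a non-archimedean local field $F_v$. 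In characteristic $p$ only (B) can occur.

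\textbf{Case (B): the building machine.} Replacing $\varrho_2$ by its semisimplification preserves unboundedness — a finitely generated subgroup of a unipotent group over a local field is bounded in characteristic $0$ (it is a finitely generated $\mathbb{Z}$-submodule of a finite-dimensional $F_v$-space) and finite in characteristic $p$ (it is a finitely generated $\mathbb{F}_p$-vector space) — so let $G$ be the Zariski closure of the (now semisimple) image, a reductive group over $F_v$, with $\varrho_2\colon\pi_1(X)\to G(F_v)$ Zariski dense and unbounded. By \cref{thm:BDDMex} there is a $\varrho_2$-equivariant pluriharmonic map $u\colon\widetilde{X}\to\Delta(G)$ with logarithmic energy growth; $u$ is non-constant, since otherwise $\varrho_2(\pi_1(X))$ would fix a point of $\Delta(G)$ and thus be precompact. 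Hence $\eta_{\varrho_2}\neq 0$, and by the discussion following \cref{thm:BDDMex} it extends to a non-zero multivalued section of $\Omega_{\overline{X}}(\log D)$. Applying \cref{prop:spectral} we get a finite Galois cover $\pi\colon\overline{\xsp}\to\overline{X}$ with group $H$ and an $H$-invariant multiset $\{\omega_1,\dots,\omega_m\}\subset H^0(\overline{\xsp},\pi^*\Omega_{\overline{X}}(\log D))$ representing $\pi^*\eta_{\varrho_2}$, not all $\omega_i$ zero. The elementary symmetric functions $\sigma_k(\omega_1,\dots,\omega_m)\in H^0(\overline{\xsp},\pi^*\mathrm{Sym}^k\Omega_{\overline{X}}(\log D))$ are $H$-invariant, and at least one is non-zero: if all vanished, then $\prod_i(\lambda-\omega_i)=\lambda^m$ in the polynomial ring over the domain $\bigoplus_k H^0(\overline{\xsp},\pi^*\mathrm{Sym}^k\Omega_{\overline{X}}(\log D))$, forcing every $\omega_i=0$. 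Since $\overline{X}$ is smooth and $\pi$ is finite Galois, $(\pi_*\mathcal{O}_{\overline{\xsp}})^H=\mathcal{O}_{\overline{X}}$, so such a non-zero $H$-invariant section descends to a non-zero element of $H^0(\overline{X},\mathrm{Sym}^k\Omega_{\overline{X}}(\log D))$.

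\textbf{Case (A): the Higgs machine and bookkeeping.} Write $\varrho_1^{\mathrm{ss}}$ for the semisimplification. If $\varrho_1^{\mathrm{ss}}$ has non-relatively-compact image, it is a non-unitary semisimple complex representation, so by \cref{thm:Corlette} and Mochizuki's extension it underlies a harmonic bundle $(E,\theta,h)$ on $X$ with $\theta\neq 0$, which prolongs to a logarithmic Higgs bundle $(\overline{E},\overline{\theta})$ on $(\overline{X},D)$ with $\overline{\theta}\colon\overline{E}\to\overline{E}\otimes\Omega_{\overline{X}}(\log D)$; either a coefficient $a_i$ of $\det(\lambda\cdot\mathrm{id}-\overline{\theta})$ is a non-zero section of $\mathrm{Sym}^i\Omega_{\overline{X}}(\log D)$, and we are done, or $\overline{\theta}$ is nilpotent, in which case one extracts a non-zero logarithmic symmetric differential by the semistability/negativity argument of \cite{BKT13} (after Zuo \cite{Zuo96})—pass to the largest iterate $\overline{\theta}^{\,\ell}\neq 0$, restrict to a suitable Higgs sub-line-bundle, and use that $(\overline{E},\overline{\theta})$ is parabolically polystable with trivial parabolic Chern classes. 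If $\varrho_1^{\mathrm{ss}}$ has relatively compact but infinite image, look at $[\varrho_1^{\mathrm{ss}}]\in M_{\mathrm{B}}(X,N)(\mathbb{C})$: if it is non-rigid, its component is positive-dimensional and affine, hence non-compact, whereas the image of the compact space of unitary representations is compact, so the component contains a non-unitary semisimple representation, reducing to the previous situation; if it is rigid, then $[\varrho_1^{\mathrm{ss}}]$ is defined over $\overline{\mathbb{Q}}$, so $\varrho_1^{\mathrm{ss}}$ is conjugate to a representation into $\GL_N$ of a number field whose (infinite) image is unbounded at some place—either reducing to case (B) or producing a rigid non-unitary representation, which underlies a $\mathbb{C}$-VHS by \cref{cor:rigid} and hence has $\overline{\theta}$ nilpotent and non-zero. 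Finally, if $\varrho_1^{\mathrm{ss}}$ has finite image, then on a finite \'etale cover $X'$ the representation $\varrho_1$ factors through a unipotent complex group with non-trivial image, yielding a non-zero homomorphism $\pi_1(X')\to(\mathbb{C},+)$ and hence $H^0(\overline{X'},\Omega_{\overline{X'}}(\log D'))\neq 0$ via Deligne's mixed Hodge structure on $H^1(X')$; taking the norm of such a section over the Galois cover $\overline{X'}\to\overline{X}$ gives a non-zero element of $H^0(\overline{X},\mathrm{Sym}^k\Omega_{\overline{X}}(\log D))$.

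\textbf{Main obstacle.} The delicate point is case (A) with $\overline{\theta}$ nilpotent but non-zero, which is unavoidable: rigid representations give rise to $\mathbb{C}$-variations of Hodge structure, whose Higgs fields are always nilpotent. Producing a global logarithmic symmetric differential there requires carrying the iterated Kodaira--Spencer/negativity argument of \cite{BKT13,Zuo96} through the logarithmic category, which rests on Mochizuki's theory of parabolic prolongations of harmonic bundles and on the compatibility of the resulting logarithmic Higgs data with semistability and with the vanishing of parabolic Chern classes. The remaining steps—the finitely-generated-linear-group dichotomy and the various norm and descent arguments—are essentially bookkeeping.
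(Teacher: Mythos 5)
Your overall architecture --- reduce to a representation that is unbounded at some place of a finitely generated field, then feed the non-archimedean case to the harmonic-map/spectral-covering machine and the archimedean case to non-abelian Hodge theory --- is the right one, and your Case (B) (including the elementary-symmetric-function descent from the spectral cover via \cref{thm:BDDMex} and \cref{prop:spectral}) is essentially the paper's Cases 1.2.2 and 2. The finite-semisimplification case also matches the paper's Case 1.1. The problem is Case (A), exactly at the point you flag as the ``main obstacle'', and it is a genuine gap rather than bookkeeping.

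The load-bearing claim there is that a parabolically polystable logarithmic Higgs bundle with vanishing parabolic Chern classes and nilpotent, non-zero Higgs field forces $H^0(\overline{X},\mathrm{Sym}^k\Omega_{\overline{X}}(\log D))\neq 0$. This is not what \cite{BKT13} proves, and the mechanism you sketch does not deliver it: polystability together with the iterated Higgs field $\overline{\theta}^{\,\ell}$ yields a non-zero map from a quotient Higgs sheaf of non-negative degree into a sub-Higgs sheaf of non-positive degree tensored with $\mathrm{Sym}^{\ell}\Omega_{\overline{X}}(\log D)$, hence at best a subsheaf of $\mathrm{Sym}^{\ell}\Omega_{\overline{X}}(\log D)$ of non-negative degree --- which is far from a global section (a non-torsion degree-zero line bundle has none). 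And this case is unavoidable in your scheme, since the rigid representations you are driven to underlie $\bC$-VHS, whose Higgs fields are always nilpotent. What \cite{BKT13} actually does in the projective case is global and contrapositive: absence of symmetric differentials makes the Hitchin base trivial, hence $M_{\rm Dol}(X,N)$ compact by \cref{prop:proper}, hence $M_{\rm B}(X,N)$ compact and therefore zero-dimensional, so every semisimple representation is rigid and (after ruling out unboundedness at finite places) integral, underlies a VHS with \emph{discrete} infinite monodromy, and the symmetric differentials come from the period \emph{image}, not from the Higgs bundle itself. That fallback is moreover closed to you here, since the paper notes that the properness of the Hitchin map and the Dolbeault--Betti correspondence are open in the quasi-projective setting. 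The paper's route around all of this is \cref{prop:cons}: boundedness at every finite place puts the image in $\GL_N(\cO_k)$, which is discrete in the product of the archimedean completions, so the direct sum of the Galois conjugates is a $\bC$-VHS with discrete and infinite monodromy; then \cref{lem:period} gives a positive-dimensional period image and \cite{BC20} gives bigness of its logarithmic cotangent bundle. To repair your Case (A) you must either reproduce that integrality-plus-period-image argument, or supply a genuinely new proof that a nilpotent non-zero logarithmic Higgs field produces a global section of some $\mathrm{Sym}^k\Omega_{\overline{X}}(\log D)$; neither is in your sketch.
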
   
Our approach relies on the theorem on the existence of $\varrho$-equivariant harmonic maps to Euclidean buildings (see \cref{thm:BDDMex}). We now briefly sketch the proof, which differs slightly from the one presented in \cite{BDDM}   (though ideas are exactly the same).
\begin{proof}[Proof  of \cref{thm:sym} (sketch)]
	The existence of logarithmic symmetric differentials is preserved under finite \'etale covers.
	Therefore, throughout the proof, we may freely replace $X$ by a finite \'etale cover.
	
	\medskip
	\noindent\textbf{Case~1:} ${\rm char}\, K = 0$.
	
	We may assume that $K=\bC$. Let $\sigma$ be the semisimplification of $\varrho$.
	
	\medskip
	\noindent\textbf{Case~1.1:} $\sigma$ has finite image.
	
	After replacing $X$ by a finite \'etale cover, the representation $\varrho$ becomes unipotent.
	Its image then admits a filtration whose successive quotients are abelian. Consequently,
	there exists an abelian representation of $\pi_1(X)$ with infinite image. This implies that
	$H^1(X,\bC)$ is infinite-dimensional.
	
	Recall that
	\[
	H^1(X,\bC)
	= H^0\bigl(\overline{X}, \Omega_{\overline{X}}(\log D)\bigr)
	\oplus H^{0,1}(X).
	\]
	Hence $H^0\bigl(\overline{X}, \Omega_{\overline{X}}(\log D)\bigr)\neq 0$.
	
	\medskip
	\noindent\textbf{Case~1.2:} $\sigma$ has infinite image.
	
	\medskip
	\noindent\textbf{Case~1.2.1:}  
	For every representation $\tau:\pi_1(X)\to \GL_N(K)$, where $K$ is a non-archimedean local
	field of characteristic zero, the image of $\tau$ is bounded.
	
In this case, the factorization map \(s_{\rm fac}: X \to S_{\rm Fac}(X)\) is constant. 
By \cref{prop:cons}, \(\sigma\) appears as a direct factor of a complex variation of Hodge 
structure \(\mathcal{L}\) with discrete monodromy. 
Since \(\sigma\) has infinite image, it follows that the monodromy of \(\mathcal{L}\) is infinite.
	
	By \cref{lem:period}, the associated period map $p:X\to \sD/\Gamma$ has positive-dimensional
	image. Let $Z$ be a desingularization of the closure of $p(X)$. Using curvature properties
	of period domains in the horizontal direction \cite{Gri70}, Brunebarbe and Cadorel
	\cite{BC20} proved that $Z$ has a big logarithmic cotangent bundle. In particular,
	$Z$ admits many logarithmic symmetric differentials. Pulling them back to $X$ yields
	the desired conclusion.
	
	\medskip
	\noindent\textbf{Case~1.2.2:}  
	There exists a representation $\tau:\pi_1(X)\to \GL_N(K)$, where $K$ is a non-archimedean
	local field of characteristic zero, whose image is unbounded.
	
By \cref{thm:KE}, there exists a nontrivial multivalued logarithmic \(1\)-form \(\eta_{\tau}\).
Taking local products of \(\eta_{\tau}\), one obtains logarithmic symmetric differentials on a 
Zariski open subset of \(\overline{X}\).
One then shows that these extend to logarithmic symmetric differentials on the pair 
\((\overline{X}, D)\).
	
	\medskip
	
	\noindent\textbf{Case~2:} ${\rm char}\, K = p>0$.
	
	By the proof of \cref{thm:Shapositive}, it follows that there exists an unbounded
	representation
	\[
	\tau:\pi_1(X)\to \GL_N\bigl(\bF_q((t))\bigr),
	\]
	where $q=p^\ell$ for some $\ell\in \bN$. Otherwise, the character variety
	$M_{\rm B}(X,N)_{\bF_p}$ would be zero-dimensional, and it would follow from
	\cref{lem:finite group} that $\varrho$ has finite image, contradicting the assumption.
	We then apply the same argument as in  {Case~1.2.2} to conclude that $X$
	admits a non-trivial logarithmic symmetric differential.
\end{proof}

\begin{rem}
	When $X$ is projective and $K=\bC$, the original argument in \cite{BKT13} is a
	beautiful application of non-abelian Hodge theory. We briefly recall their proof.
	 
If the semisimplification \(\varrho^{\mathrm{ss}}\) has finite image, the argument proceeds 
exactly as above. We may therefore assume that \(\varrho\) is semisimple with infinite image.
Suppose, by contradiction, that \(X\) admits no nontrivial symmetric differentials.
Then, by \cref{prop:proper}, the Dolbeault moduli space \(M_{\mathrm{Dol}}(X,N)\) is compact.
By \cref{thm:analytic iso}, there exists an analytic isomorphism between 
\(M_{\mathrm{B}}(X,N)\) and \(M_{\mathrm{Dol}}(X,N)\), and hence \(M_{\mathrm{B}}(X,N)\) is also 
compact. Since \(M_{\mathrm{B}}(X,N)\) is affine, it follows that it is zero-dimensional.
Consequently, every semisimple representation $\pi_1(X)\to \GL_{N}(\bC)$ is rigid, and in 
particular \(\varrho\) is rigid. By \cref{cor:rigid}, \(\varrho\) underlies a 
\(\mathbb{C}\)-variation of Hodge structure. However, such a \(\mathbb{C}\)-VHS may \emph{a 
	priori} have non-discrete monodromy.

Since \(M_{\mathrm{B}}(X,N)\) is defined over \(\mathbb{Q}\), after replacing \(\varrho\) by a 
conjugate we may assume that it is defined over a number field \(k\). Let \(v\) be a 
non-archimedean place of \(k\), and let \(k_v\) denote the completion of \(k\) at \(v\).
We denote by
\[
\varrho_v:\pi_1(X)\longrightarrow \GL_N(k_v)
\]
the induced representation. If \(\varrho_v\) is unbounded for some non-archimedean place \(v\), then the same 
argument as in Case~1.2.2 yields the existence of a nontrivial logarithmic symmetric 
differential on \(X\), contradicting our assumption.
	
 Hence, we may assume that for every non-archimedean place \(v\) of \(k\), the representation 
 \(\varrho_v\) is bounded; this property is usually referred to as the \emph{integrality} of 
 \(\varrho\). 
 By the same argument as in \cref{prop:cons}, it follows that \(\varrho\) is a direct sum of a 
 \(\mathbb{C}\)-variation of Hodge structure \(\mathcal{L}\) with discrete and infinite 
 monodromy. 
 Applying the argument of Case~1.2.1, we conclude that \(X\) admits nontrivial symmetric 
 differentials, which yields a contradiction.
  
  \medspace
  
 The theorem of \cite{BKT13} is in fact more general, as it also applies to the compact 
 K\"ahler case. 
 In this setting, one does not have a satisfactory theory of analytic morphisms between the 
 moduli spaces \(M_{\mathrm{B}}(X,N)\) and \(M_{\mathrm{Dol}}(X,N)\).
 Nevertheless, Klingler \cite{Kli13} observed that the analytic gauge-theoretic arguments of 
 \cite{Sim92} suffice to show that the absence of symmetric differentials still forces 
 \(M_{\mathrm{B}}(X,N)\) to be zero-dimensional. 
 This idea also underlies the proof of the first version of \cite{BDDM}. 
\end{rem}

\begin{rem}
It is worth noting that Simpson proposed in \cite{Sim92} a celebrated conjecture on the 
integrality of rigid local systems on smooth complex projective varieties, which was proved by 
himself and Corlette in \cite{CS08} for rank two local systems.
	A major breakthrough on this conjecture was achieved by Esnault and Groechenig
	\cite{EG18,EG25b}, who proved Simpson's conjecture for cohomologically rigid local
	systems.    Esnault also pointed out to me that, if one knows that $M_{\rm B}(X,N)$ is
	zero-dimensional, then their arguments still apply and imply the integrality of
	arbitrary rigid local systems.
\end{rem}

\subsection{Holomorphic convexity}
In the previous subsections, we constructed the Shafarevich morphism for smooth quasi-projective varieties under the assumption that the fundamental group is reductive or linear in positive characteristic. However, this construction offers no insight into the holomorphic convexity of the universal covering of $X$ when $X$ is compact.

While the techniques developed above suffice for applications to the linear 
Chern--Hopf--Thurston conjecture (\cref{thm:CHT}), the proof of the linear case of 
Koll\'ar’s conjecture (\cref{main:kollar}) relies crucially on auxiliary results 
established in the study of the Shafarevich conjecture, notably in 
\cite{Eys04,DY23,EKPR12}. 
In this subsection, we outline the key ideas of this approach and present the 
essential ingredients needed for the proof of \cref{main:kollar}. 
More precisely, we  outline  a proof of \cref{conj:Sha} in the case of projective surfaces 
and sketch the main ideas underlying the general higher-dimensional case.
\subsubsection{Some analytic tools from harmonic mapping} 
We shall give some analytic tools.  Based on \cite[Proposition 4.15]{DM24}, in \cite[\S 3]{DW24b}, we prove  that  
\begin{lem}\label{lem:potential}
	Let $X$ be a smooth projective variety endowed with a    K\"ahler form $\omega$.  Let $\tau:\pi_1(X)\to G(K)$ be  a Zariski dense representation, where $G$ is a reductive group over   a non-archimedean local field $K$. Let $u:\widetilde{X}\to \Delta(G)_K$ be the $\tau$-equivariant pluriharmonic map ensured by Gromov-Schoen's theorem \cite{GS92}. Fix a base point $x_0\in \widetilde{X}$. Define a 1-form 
	\begin{align}\label{eq:1-form}
		\beta_\tau:=\sn\db d_{\Delta(G)}^2(u(x),u(x_0)). 
	\end{align} 
	 on the universal cover $\widetilde{X}$ of $X$.  	Then we have
	\begin{itemize}
		\item  $\beta_\tau$ has $L^1$-local coefficients, and is smooth almost everywhere.
		\item There exists some constant $C>0$ such that we have \[|\beta_\tau|_{\pi_X^*\omega}\leq_{\rm a.e.} C(1+d_{\widetilde{X}}(x,x_0)),\] where $d_{\widetilde{X}}(x,x_0)$ is the distance function on $\widetilde{X}$ induced by $\pi_X^*\omega$. 
	\item Define $\phi_{\tau_i} := d_{\Delta(G)}^{2}\bigl(u(x), u(x_0)\bigr)$, which is a continuous semipositive function on $\widetilde{X}$. Then we have
	\[
	d\beta_\tau = \sn \partial\bar{\partial} \phi_{\tau_i},
	\]
	which is a real, closed, and positive $(1,1)$-current, and it satisfies
	\[
	d\beta_\tau \geq \pi^* T_\tau .
	\] 
	\end{itemize}  
\end{lem}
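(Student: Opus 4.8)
The plan is to reduce everything to a local computation on the regular set $\mathcal{R}(u)$, where the pluriharmonic map $u\colon\widetilde X\to\Delta(G)$ factors through a single Euclidean apartment, and then to propagate the resulting identities and inequalities across the negligible set $\widetilde X\setminus\mathcal{R}(u)$ by classical potential theory. First I would recall that, by the Gromov--Schoen and Korevaar--Schoen regularity theory for harmonic maps into CAT$(0)$ spaces (in the form provided by \cref{thm:BDDMex} and \cite[Proposition~4.15]{DM24}), $u$ is locally Lipschitz; since $u$ is $\tau$-equivariant and $G(K)$ acts by isometries, the energy density descends to the compact variety $X$ and is therefore bounded, say $|du|\le C_1$ almost everywhere and $u$ is $C_1$-Lipschitz on $\widetilde X$. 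Because $y\mapsto d_{\Delta(G)}^2(y,q)$ is locally Lipschitz on $\Delta(G)$, the composite $\phi_\tau=d_{\Delta(G)}^2(u(\cdot),u(x_0))$ is locally Lipschitz on $\widetilde X$; hence $d\phi_\tau\in L^\infty_{\mathrm{loc}}$ and $\beta_\tau=\sn\,\db\phi_\tau$ has $L^1_{\mathrm{loc}}$ coefficients. Removing from $\mathcal{R}(u)$ the $u$-preimage of the walls of $\Delta(G)$ — a locally finite union of real-analytic hypersurfaces there, by pluriharmonicity — produces a dense open set $U$ of full measure on which $u$ maps a neighbourhood of each point into a single apartment $A'$ that also contains $q=u(x_0)$; writing $v_j=x_j\circ i_{A'}\circ u$ one gets $\phi_\tau=\sum_j(v_j-c_j)^2$ for suitable constants $c_j$, with each $v_j$ pluriharmonic, so $\phi_\tau$ is real-analytic on $U$. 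This settles the first item.

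For the growth bound, on $U$ the chain rule gives $|d\phi_\tau|\le 2\,d_{\Delta(G)}(u(x),q)\,|du|(x)$, while the global Lipschitz property yields $d_{\Delta(G)}(u(x),q)\le C_1\,d_{\widetilde X}(x,x_0)$; since $|\beta_\tau|_{\pi_X^*\omega}\le|d\phi_\tau|$ pointwise, we conclude $|\beta_\tau|_{\pi_X^*\omega}\le 2C_1^2\,d_{\widetilde X}(x,x_0)$ almost everywhere, which is the asserted estimate with $C=2C_1^2$.

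For the last item, $\phi_\tau$ is real and $\beta_\tau=\sn\,\db\phi_\tau$, so $d\beta_\tau=\sn\,\partial\db\phi_\tau$ is real and $d$-closed. On $U$ each $\partial v_j$ is holomorphic, whence
\[
d\beta_\tau=\sn\,\partial\db\phi_\tau=2\sn\sum_j\partial v_j\wedge\db v_j,
\]
and by the apartment-independence built into the construction of the canonical current (cf. \eqref{eq:form} and \cref{def:canonical}) the right-hand side equals $2\,\pi_X^*T_\tau$; in particular $d\beta_\tau=2\,\pi_X^*T_\tau\ge\pi_X^*T_\tau\ge0$ on $U$. To globalize, I would use that $T_\tau$ has a continuous local potential $\psi$ by \cref{def:canonical}: then $\phi_\tau-\psi$ is continuous on $\widetilde X$ and plurisubharmonic on $U$, hence — by continuity — plurisubharmonic on all of $\mathcal{R}(u)$, and finally on all of $\widetilde X$ by the removable-singularity theorem for plurisubharmonic functions, since $\widetilde X\setminus\mathcal{R}(u)$ is contained in a proper analytic subset (\cite{DM24}), hence is pluripolar. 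Therefore $d\beta_\tau\ge\pi_X^*T_\tau$ as currents on $\widetilde X$.

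The genuinely hard ingredient is the regularity theory invoked at the outset — the local Lipschitz estimate for $u$ and the structure of its regular set, i.e. the Gromov--Schoen and Korevaar--Schoen theory together with \cite[Proposition~4.15]{DM24}; once these are available, the remainder is a computation in local apartment coordinates combined with the standard extension properties of plurisubharmonic functions and positive closed currents across pluripolar (analytic) sets.
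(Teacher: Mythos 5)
The overall skeleton you propose (uniform Lipschitz bound for $u$ giving the $L^1_{\rm loc}$ and linear-growth statements, then a local computation in apartments followed by a globalization) is the right one, and your treatment of the first two bullet points is essentially correct. However, the proof of the third bullet point has two genuine gaps. First, your claim that after removing the $u$-preimage of the walls, each regular point has a neighbourhood whose image lies in a single apartment $A'$ \emph{that also contains} $q=u(x_0)$ is false in general: already for a tree, if $I=u(\Omega_x)$ is an open arc of an apartment $A$ and the gate (nearest-point projection) of $q$ onto $A$ lies in the interior of $I$, then no apartment contains $I\cup\{q\}$, and consequently $\phi_\tau|_{\Omega_x}$ is \emph{not} of the form $\sum_j(v_j-c_j)^2$ (in the tree it is $(|v-p|+c)^2$, which is not smooth along $\{v=p\}$ and whose $\partial\bar{\partial}$ carries a singular part). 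So the exact identity $d\beta_\tau=2\pi_X^*T_\tau$ on $U$, and the real-analyticity of $\phi_\tau$ there, do not follow as written; what survives is only the inequality $d\beta_\tau\geq 2\,\sqrt{-1}\sum_j\partial v_j\wedge\bar{\partial}v_j$, and this must come from the $2$-convexity of $d^2_{\Delta(G)}(\cdot,q)$ along geodesics in a CAT$(0)$ space, not from an apartment identity.

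Second, and more seriously, the globalization step ``$\phi_\tau-\psi$ is continuous and psh on $U$, hence psh on $\mathcal{R}(u)$ by continuity'' is not a valid implication: the function $-|\mathrm{Re}\,z|$ is continuous on $\bC$ and pluriharmonic off the real hypersurface $\{\mathrm{Re}\,z=0\}$, yet is not psh. The set you remove from $\mathcal{R}(u)$ (preimages of walls) is a union of real hypersurfaces, which are \emph{not} pluripolar, so no removable-singularity theorem applies there; the distributional $\sqrt{-1}\partial\bar{\partial}\phi_\tau$ can a priori acquire a negative singular part along these hypersurfaces. Showing that it does not --- i.e.\ that the ``kinks'' of $\phi_\tau$ across wall preimages point upward --- is precisely the content of the Gromov--Schoen/Korevaar--Schoen argument (pullback of a uniformly convex function on an NPC space under a pluriharmonic map is psh, proved by a sub-mean-value or mollification argument, not by continuity), and it is the actual heart of the lemma as established in \cite[Proposition~4.15]{DM24} and \cite[\S 3]{DW24b}. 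Your passage from $\mathcal{R}(u)$ to all of $\widetilde{X}$ across the analytic singular set is fine, but the passage across the walls needs the convexity input you have omitted.
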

In the archimedean case, Eyssidieux proved an analogous result in 
\cite[Proposition~4.5.1]{Eys00}, which we recall below.
\begin{lem}\label{lem:potential3}
Let \(\mathcal{L}\) be a \(\mathbb{C}\)-variation of Hodge structure on \(X\).
There exists a smooth  real-valued   plurisubharmonic function 
\(\phi_\cL\) on \(\widetilde{X}\), bounded from below, such that 
$$
\sn \d\db \phi_\cL\ge \pi_X^{*}\!\left(\sqrt{-1}\,\mathrm{tr}\,\theta \wedge \theta^{*}\right), \qquad 
\lvert d\phi_\cL\rvert_{\pi_X^{*}\omega} \le C .
$$
Here \(\theta\) denotes the Higgs field of \(\mathcal{L}\), and \(\theta^{*}\) its adjoint with 
respect to the Hodge metric.  
 \end{lem}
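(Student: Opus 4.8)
The plan is to translate the statement into a potential‑theoretic property of the period domain attached to $\cL$, which is essentially classical. First I would recall that the $\bC$‑VHS $\cL$ on the compact $X$ determines a holomorphic, horizontal, $\varrho$‑equivariant period map $p\colon\widetilde X\to\sD$, where $\sD=G/V$ is the associated period domain ($G$ real reductive, $V\subset G$ compact; cf. \cite{CMP17}) and $\varrho\colon\pi_1(X)\to G$ is the monodromy representation. Equip $\sD$ with a $G$‑invariant Hodge metric, so that its restriction $\omega_{\sD}$ to the horizontal holomorphic subbundle is the curvature form of the Hodge bundles. Since $\theta$ is a global holomorphic section of $\End(E)\otimes\Omega^{1}_X$ over the compact $X$, the norm $|\theta|$ is bounded with respect to $\pi_X^{*}\omega$; hence $p$ is globally Lipschitz from $(\widetilde X,\pi_X^{*}\omega)$ to $(\sD,\text{Hodge metric})$, and, by the absence of degenerations over the compact base, its image stays in a region that is relatively compact in $\sD$. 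The basic Griffiths computation \cite{Gri70} identifies $\pi_X^{*}\bigl(\sqrt{-1}\,\mathrm{tr}\,\theta\wedge\theta^{*}\bigr)$ with $p^{*}\omega_{\sD}$, since $dp$ is, under the identification of $\theta$ with the horizontal derivative of the period map, exactly the Higgs field.

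The heart of the proof is to exhibit, for a fixed base point $o\in\sD$, a smooth function $\psi=\psi_{o}$ on $\sD$ which is (i) bounded below, (ii) of bounded gradient for the Hodge metric, and (iii) strictly plurisubharmonic along the horizontal subbundle with $\bigl(\sqrt{-1}\partial\bar\partial\psi\bigr)(\xi,\bar\xi)\ge\omega_{\sD}(\xi,\bar\xi)$ for every horizontal holomorphic tangent vector $\xi$. I would extract this from the Griffiths--Schmid curvature theory of period domains: in horizontal directions $\sD$ has holomorphic sectional curvature bounded above by a negative constant, and the Hodge line bundle $\det F^{p}$, normalized by the Hodge metric, is \emph{Bergman positive} along horizontal directions --- the higher‑rank analogue of the fact that $-\log\det(\mathrm{Im}\,Z)$ on a Siegel domain, or $-\log y$ on the upper half plane, has curvature a positive multiple of the invariant Kähler form and gradient of constant norm. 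Concretely, $\psi$ would be $-\log$ of the Hodge norm of a frame of the appropriate Hodge line bundle, rescaled so that (iii) holds; then (ii) is the boundedness of its logarithmic derivative in the Hodge metric, and (i) follows on the region met by $p(\widetilde X)$ from the relative compactness recorded above together with the uniform Hodge‑norm estimates available in the absence of degenerations.

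Granting such a $\psi$, I would set $\phi_{\cL}:=\psi\circ p$ on $\widetilde X$; it is smooth, real, and bounded below since $\psi$ is bounded below on the image of $p$. Plurisubharmonicity and the curvature inequality follow from the horizontality of $p$, whose differential lands in the horizontal subbundle (a complex subbundle), so that $\sqrt{-1}\partial\bar\partial\phi_{\cL}=p^{*}\bigl(\sqrt{-1}\partial\bar\partial\psi\bigr)\ge p^{*}\omega_{\sD}=\pi_X^{*}\bigl(\sqrt{-1}\,\mathrm{tr}\,\theta\wedge\theta^{*}\bigr)$. Finally the chain rule gives $|d\phi_{\cL}|_{\pi_X^{*}\omega}\le|d\psi|_{\text{Hodge}}\cdot\|dp\|\le C$, using the bounded gradient of $\psi$ and the Lipschitz bound on $p$.

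The main obstacle is the second step: producing a single function on $\sD$ that is simultaneously horizontally strictly plurisubharmonic with complex Hessian dominating $\omega_{\sD}$ \emph{and} of bounded gradient. Each property in isolation is easy --- the squared distance to $o$ in the symmetric space $G/K$, pulled up to $\sD$, gives a Hessian bound but only linearly growing gradient, while Busemann‑type functions have bounded gradient but a degenerate complex Hessian in rank $>1$ --- and the point is that the Hodge‑norm potential of the Hodge line bundle reconciles the two by virtue of the precise horizontal curvature structure of period domains. This is exactly \cite[Proposition~4.5.1]{Eys00}, whose careful verification is the technical core of the argument.
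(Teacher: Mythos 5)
First, a point of reference: the paper does not prove this lemma at all --- it is quoted from \cite[Proposition~4.5.1]{Eys00} --- so there is no internal proof to compare against. Your overall strategy (pull back, via the horizontal period map $p\colon\widetilde X\to\sD$, a potential $\psi$ on the period domain whose horizontal complex Hessian dominates the invariant form and whose gradient is bounded) is indeed the shape of Eyssidieux's argument, and your identification of the tension between the two requirements on $\psi$ is the right diagnosis. But the sketch has two concrete defects and one circularity.

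(1) The claim that $p(\widetilde X)$ is relatively compact in $\sD$ is false: $p$ is equivariant for the monodromy representation, so its image is invariant under the monodromy group and is unbounded whenever that group is (already for a compact Shimura curve, $p\colon\mathbb H\to\mathbb H$ is the identity). Only the image of a fundamental domain is relatively compact. Since you deduce boundedness below of $\phi_\cL=\psi\circ p$ from this relative compactness, that step fails; boundedness below must be a global property of $\psi$ on $\sD$, or at least on the whole monodromy orbit of the image. (2) Your candidate $\psi$, ``$-\log$ of the Hodge norm of a frame of the Hodge line bundle,'' is not a well-defined global function, since the Hodge determinant bundle is nontrivial on the compact dual and admits no global holomorphic frame; and your own model $-\log y$ on $\mathbb H$, while it does have bounded Poincar\'e gradient and $\sn\d\db(-\log y)=\tfrac{\sqrt{-1}}{4}\,dz\wedge d\bar z/y^{2}>0$, is \emph{unbounded below} (it tends to $-\infty$ as $y\to\infty$), i.e.\ it violates exactly the remaining property you need. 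What makes the construction work is comparing \emph{two} metrics on the Hodge determinant bundles --- the Hodge metric against the background metric induced by the flat trivialization of the pulled-back local system on $\widetilde X$ --- so that the logarithm of their ratio is a genuine, frame-independent global function; in the model case this is $\log\frac{1+|z|^{2}}{2y}\ge 0$, and boundedness below becomes a pointwise linear-algebra inequality between the two metrics rather than a compactness statement. Finally, you defer the existence of $\psi$ to \cite[Proposition~4.5.1]{Eys00} itself, which is precisely the statement being proved; so as written the proposal locates the technical core correctly but does not supply it.
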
 
 If we  introduce a  smooth 1-form  on $\widetilde{X}$ by setting 
 $$
 \beta_\cL:=\sn \db \phi_\cL, 
 $$
 then we have  
\begin{align}\label{lem:potential2}
	  d\beta_{\mathcal{L}} \ge \pi_X^{*}\!\left(\sqrt{-1}\,\mathrm{tr}\,\theta \wedge \theta^{*}\right),
	 \qquad 
	 \lvert \beta_{\mathcal{L}} \rvert_{\pi_X^{*}\omega} \le C.
\end{align} 
 
Next, we give  the criterion for an infinite topological Galois covering of a compact complex normal space to be Stein.   
\begin{lem}[{\cite[Proposition 4.1.1]{Eys04}}]\label{prop:stein}
Let \(S\) be a compact complex normal space, and let \(\nu:\Sigma \to S\) be an infinite 
topological Galois covering.  
Let \(T\) be a closed positive \((1,1)\)-current on \(S\) with continuous local potentials, 
whose cohomology class \(\{T\}\) is K\"ahler. 
Assume that there exists a real-valued continuous plurisubharmonic function 
\(\phi\) on \(\Sigma\), bounded from below, such that
\[
\sqrt{-1}\,\partial\bar{\partial}\phi \ge \nu^{*}T .
\]
Then \(\Sigma\) is a Stein space.  \qed
\end{lem}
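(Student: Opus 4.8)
The plan is to turn the hypothesis $\sqrt{-1}\partial\bar\partial\phi\ge\nu^*T$, together with the fact that $\{T\}$ is a K\"ahler class, into the construction of enough global holomorphic functions on $\Sigma$ — via $L^2$-estimates for $\bar\partial$ — to conclude that $\Sigma$ is Stein.

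First I would \emph{normalise the current}. Since $\{T\}$ is a K\"ahler class on the compact normal K\"ahler space $S$, fix a smooth K\"ahler form $\omega_S$ with $\{\omega_S\}=\{T\}$, and use the $\partial\bar\partial$-lemma for compact K\"ahler spaces (pass to a resolution of $S$, then push the potential down, using the normality of $S$) to write $T=\omega_S+\sqrt{-1}\partial\bar\partial g$ with $g$ continuous on $S$. Setting $\psi:=\phi-\nu^*g$, a direct computation gives $\sqrt{-1}\partial\bar\partial\psi=\sqrt{-1}\partial\bar\partial\phi-\nu^*T+\nu^*\omega_S\ge\nu^*\omega_S>0$, so $\psi$ is a continuous, strictly plurisubharmonic function on $\Sigma$, still bounded from below (as $\phi$ is and $g$ is bounded). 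Two structural remarks then come for free: (i) the metric $\nu^*\omega_S$ — or its perturbation $\mu^*\omega_S+\varepsilon\omega$ on a resolution $\widehat\Sigma\to\Sigma$, which is itself a covering of a resolution of $S$ — is a \emph{complete} K\"ahler metric, because $\nu$ is a covering of the compact $S$; in particular closed metric balls in $\Sigma$ are compact. (ii) $\Sigma$ contains no positive-dimensional compact subvariety: the restriction of $\psi$ to such a $Z$ would be plurisubharmonic on the compact $Z$, hence constant, forcing $\sqrt{-1}\partial\bar\partial(\psi|_Z)=0$, contradicting $\sqrt{-1}\partial\bar\partial(\psi|_Z)\ge(\nu^*\omega_S)|_Z$ and $\int_Z(\nu^*\omega_S)^{\dim Z}>0$ (valid since $\nu$ is \'etale). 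In particular the hypotheses already force $\nu$ to be infinite.

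The heart of the argument is to show that $\Sigma$ is \emph{holomorphically convex}. Here I would follow Eyssidieux \cite[Proposition~4.1.1]{Eys04}: run the H\"ormander--Andreotti--Vesentini $L^2$-estimates for $\bar\partial$ on the complete K\"ahler manifold $(\widehat\Sigma,\nu^*\omega_S+\varepsilon\omega)$ with plurisubharmonic weights of the form $m\psi$ augmented by logarithmic poles; since the curvature of such a weight is $\ge m\sqrt{-1}\partial\bar\partial\psi\ge m\,\nu^*\omega_S>0$, solving $\bar\partial u=\bar\partial(\chi\cdot\text{data})$ against these weights and subtracting the solution produces global holomorphic functions on $\Sigma$ that realise any prescribed finite jet at any point, separate any finite set of points, and — the delicate point — exhibit $\widehat K$ as bounded, hence relatively compact, for every compact $K\subset\Sigma$. (Alternatively one can invoke Napier's convexity theorem \cite{Nap90} for infinite Galois coverings of compact complex manifolds carrying such a plurisubharmonic function.) The \textbf{main obstacle} is exactly that $\psi$ itself need \emph{not} be a proper exhaustion — so one cannot simply quote the Grauert--Narasimhan criterion for Steinness — and properness has to be bypassed through a careful arrangement of the weights together with the Galois action; this is precisely the non-formal content of Eyssidieux's proposition, and I expect it to be where essentially all of the work lies.

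Finally, granting holomorphic convexity, one closes the argument via the Cartan--Remmert criterion (\cref{thm:Cartan}): the Remmert reduction $\Sigma\to\Sigma'$ is a proper holomorphic fibration onto a Stein space whose fibres are the maximal connected compact subvarieties of $\Sigma$. By remark (ii) above these fibres are points, so $\Sigma\xrightarrow{\ \sim\ }\Sigma'$, and therefore $\Sigma$ is Stein.
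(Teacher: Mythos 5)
The paper itself offers no proof of this lemma: it is quoted from \cite[Proposition~4.1.1]{Eys04} and stated without argument, so there is no internal proof to compare against. Judged on its own terms, your proposal organizes the soft parts correctly. The normalization $T=\omega_S+\sqrt{-1}\,\partial\bar{\partial}g$ with $g$ continuous (hence bounded on the compact $S$), the resulting continuous, bounded-below function $\psi=\phi-\nu^{*}g$ with $\sqrt{-1}\,\partial\bar{\partial}\psi\ge \nu^{*}\omega_S$, the completeness of the pulled-back metric, the exclusion of positive-dimensional compact subvarieties of $\Sigma$, and the Cartan--Remmert endgame via \cref{thm:Cartan} reducing Steinness to holomorphic convexity are all sound, and are exactly how one would set the problem up.

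The gap is that the single step carrying the analytic content --- holomorphic convexity of $\Sigma$ --- is not proved but delegated to ``Eyssidieux's proposition,'' i.e.\ to the very statement under proof; that is circular. The fallback you offer, Napier's theorem \cite{Nap90}, does not apply verbatim either: Napier's convexity criteria for coverings of compact K\"ahler manifolds require hypotheses (a proper, or real-analytic, plurisubharmonic exhaustion, or conditions on the ends) that your $\psi$ --- continuous, bounded below, but in no way proper --- does not satisfy, and $S$ here is only a normal space with a current rather than a smooth form; upgrading Napier's argument to this setting is precisely the content of \cite[Proposition~4.1.1]{Eys04}. What is missing is the actual construction: given a compact $K\subset\Sigma$ and a sequence in $\widehat{K}$ leaving every compact set, one must exhibit a global holomorphic function unbounded along it, typically by H\"ormander--Andreotti--Vesentini $L^{2}$-estimates on a resolution with weights built from $m\psi$ and logarithmic singularities at points of a Galois orbit, and it is exactly here that the infiniteness of the covering and the lower bound on $\psi$ enter in an essential, non-formal way. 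Until that construction is supplied, your text is a correct reduction of the lemma to the literature --- which is all the survey itself does --- but not a proof.
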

\subsubsection{Two lemmas on finiteness and boundedness criteria}
We will need the following results concerning finiteness and boundedness criteria for subgroups. 
Their proofs rely on the geometry of Bruhat--Tits buildings.
\begin{lem}[{\cite[Lemma 5.3]{CDY22}}]\label{lem:BT2}
	Let \(G\) be an almost simple algebraic group defined over a non-archimedean local field \(K\). 
	Assume that \(\Gamma \subset G(K)\) is an unbounded subgroup whose Zariski closure contains 
	\(G^{\circ}(K)\), where \(G^{\circ}\) denotes the identity component of \(G\). 
	If \(N \triangleleft \Gamma\) is a bounded normal subgroup, then \(N\) is finite.
	\qed
\end{lem}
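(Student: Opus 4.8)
The plan is to exploit the normality of $N$ in $\Gamma$ together with the Zariski density of $\Gamma$ to determine the Zariski closure $L\subseteq G$ of $N$, thereby reducing to a clean dichotomy, and then to discard the ``large'' alternative using the geometry of the Bruhat--Tits building $\Delta(G)$. First I would observe that, since $\gamma N\gamma^{-1}=N$ for every $\gamma\in\Gamma$, the group $\Gamma$ lies in the normalizer $N_G(L)$, which is Zariski closed in $G$; hence the Zariski closure of $\Gamma$ normalizes $L$, and since this closure contains $G^{\circ}$ by hypothesis, $G^{\circ}$ normalizes the identity component $L^{\circ}$. As $L^{\circ}$ is then a connected normal algebraic subgroup of the almost simple group $G^{\circ}$, we are forced into one of two cases: $L^{\circ}=\{1\}$, or $L^{\circ}=G^{\circ}$.

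In the first case $L$ is a finite algebraic group and $N\subseteq L(K)$, so $N$ is finite and we are done. The entire difficulty is therefore to rule out the second case, in which $L\supseteq G^{\circ}$, that is, $N$ is Zariski dense in $G$. Here I would pass to the isometric action of $G(K)$ on $\Delta:=\Delta(G)$, a complete $\mathrm{CAT}(0)$ space, noting that $G$ is necessarily isotropic (otherwise $\Delta$ is a point and $G(K)$ is compact, contradicting that $\Gamma$ is unbounded). Since $N$ is bounded, its closure is compact, so by the Bruhat--Tits fixed point theorem the fixed-point set $Y:=\mathrm{Fix}(N)\subseteq\Delta$ is non-empty, and it is automatically closed and convex. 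Because $N\triangleleft\Gamma$, the computation $n(\gamma x)=\gamma\bigl((\gamma^{-1}n\gamma)x\bigr)=\gamma x$ for $n\in N$, $\gamma\in\Gamma$, $x\in Y$ shows that $Y$ is $\Gamma$-invariant. I would then split on the size of $Y$: if $Y$ is bounded, its circumcenter is a canonical, hence $\Gamma$-fixed, point of $\Delta$, forcing $\Gamma$ into a point stabilizer, which is compact---contradicting the unboundedness of $\Gamma$; if $Y$ is unbounded, it contains a geodesic ray fixed pointwise by $N$, so $N$ fixes the corresponding point $\xi\in\partial_\infty\Delta$, whence $N$ lies in the stabilizer of $\xi$. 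But a Zariski dense subgroup of $G(K)$ fixes no point of $\partial_\infty\Delta(G)$, since the stabilizers of boundary points are proper parabolic-type $K$-subgroups; this contradicts the Zariski density of $N$. As $Y$ must be bounded or unbounded, the case $L^{\circ}=G^{\circ}$ cannot occur, and the first case gives the conclusion.

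The step I expect to be the main obstacle is the handling of the unbounded case above: it relies on the Bruhat--Tits structure theory saying that the $G(K)$-stabilizer of a point of $\partial_\infty\Delta(G)$ is contained in a proper parabolic $K$-subgroup of $G^{\circ}$ (equivalently, that a Zariski dense subgroup fixes no point at infinity), together with the compactness of the stabilizers of points of $\Delta(G)$. These are the only non-formal inputs; everything else is bookkeeping with normalizers and with fixed-point sets of isometry groups of $\mathrm{CAT}(0)$ spaces. In writing this up I would pay attention to the minor subtleties caused by $G$ being possibly disconnected---namely that $L^{\circ}$ is characteristic in $L$, hence normalized by anything normalizing $L$, and that a proper parabolic of $G^{\circ}$ has strictly smaller dimension than $G^{\circ}$, so that $L\supseteq G^{\circ}$ genuinely rules out $L$ being Zariski-contained in the stabilizer of $\xi$.
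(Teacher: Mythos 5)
Your argument is correct and is essentially the proof of this lemma given in the cited reference \cite{CDY22} (the survey itself only states the result with a citation): normality of $N$ forces its Zariski closure to be normalized by $G^{\circ}$, hence finite or all of $G^{\circ}$ by almost simplicity, and the latter is excluded by the dichotomy on the $\Gamma$-invariant convex set $\mathrm{Fix}(N)\subset\Delta(G)$ --- circumcenter versus a fixed point at infinity lying in a proper parabolic. The two non-formal inputs you flag (precompactness of point stabilizers, and stabilizers of boundary points being proper parabolic subgroups, together with local compactness of $\Delta(G)$ to extract the ray) are exactly the Bruhat--Tits facts the original proof invokes, so there is nothing to add.
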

 (cf. \cite{Bru22} for an independent proof).
\begin{lem}[{\cite[Lemma 7.15]{DY23b}}]\label{lem:BT}
	Let $G$ be a semisimple algebraic group over a  
	non-archimedean local field $K$.  Let $\Gamma\subset G(K)$ be a finitely generated subgroup   whose Zariski closure contains 
	\(G^{\circ}(K)\), where \(G^{\circ}\) denotes the identity component of \(G\). 
	If  its derived group $\cD \Gamma$ is  \emph{bounded}, then  $\Gamma$ is also  bounded. \qed
\end{lem}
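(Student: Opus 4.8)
The plan is to work inside the Bruhat--Tits building $\Delta(G^\circ)$ and to exploit that $\cD\Gamma$ is normal in $\Gamma$. First I would reduce to the case where $G$ is connected semisimple: since $G/G^\circ$ is finite, $\Gamma^\circ:=\Gamma\cap G^\circ(K)$ has finite index in $\Gamma$; its derived group lies in $\cD\Gamma$ and is therefore bounded; and its Zariski closure still contains $G^\circ$, because a finite-index subgroup of a Zariski-dense subgroup is Zariski dense in the identity component. If $\Gamma^\circ$ is bounded, then $\Gamma$ is bounded, being a finite union of translates of $\Gamma^\circ$. So from now on assume $G$ connected semisimple and $\Gamma$ Zariski dense in $G$. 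The next observation is that $\cD\Gamma$ is itself Zariski dense in $G$: its Zariski closure $\overline{\cD\Gamma}$ is a closed normal subgroup of $\overline{\Gamma}=G$ (normalized by the dense subgroup $\Gamma$), and the quotient $G/\overline{\cD\Gamma}$ receives a Zariski-dense abelian image of $\Gamma$, hence is abelian; a connected semisimple group has no nontrivial abelian quotient, so $\overline{\cD\Gamma}=G$.

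For the core step, since $\cD\Gamma$ is bounded it has bounded orbits on the complete CAT(0) space $\Delta:=\Delta(G^\circ)$, so by the standard fixed-point theorem for bounded group actions on complete CAT(0) spaces its fixed-point set $F:=\mathrm{Fix}_\Delta(\cD\Gamma)$ is a nonempty closed convex subset, and $F$ is preserved by $\Gamma$ because $\cD\Gamma\triangleleft\Gamma$. It then suffices to prove that $F$ is bounded: granting this, every $\Gamma$-orbit in $F$ is bounded, so $\Gamma$ is a bounded subgroup of $G(K)$, as desired. Suppose $F$ were unbounded. Since $K$ is a local field, $\Delta$ is a proper metric space, so the closed convex set $F$ contains a geodesic ray $\rho$; set $\xi=\rho(\infty)\in\partial_\infty\Delta$. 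Every element of $\cD\Gamma$ fixes $\rho$ pointwise, hence fixes $\xi$. Now $\partial_\infty\Delta$ is the spherical Tits building of $G$ over $K$, whose simplices correspond to proper $K$-parabolic subgroups, and parabolics are self-normalizing; hence the $G(K)$-stabilizer of $\xi$ is contained in $P(K)$ for some proper $K$-parabolic $P\subsetneq G$. Thus $\cD\Gamma\subseteq P(K)$, contradicting the Zariski density of $\cD\Gamma$ in $G$. Therefore $F$ is bounded, and the lemma follows.

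The main obstacle is precisely this boundedness of $F=\mathrm{Fix}(\cD\Gamma)$, which rests on two standard but nontrivial facts from Bruhat--Tits theory: that $\Delta(G^\circ)$ is proper (using local compactness of $K$), so that an unbounded closed convex subset contains a geodesic ray, and the identification of $\partial_\infty\Delta$ with the spherical building of $K$-parabolics, so that stabilizers of points at infinity are proper parabolic subgroups. One could alternatively avoid the visual boundary by analyzing how the parahoric subgroups containing $\cD\Gamma$ shrink along a fixed ray and recognizing their ``Levi-type'' Zariski closures as proper subgroups of $G$, but the boundary formulation is the cleanest route.
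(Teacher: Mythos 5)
Your proof is correct, and it follows what is essentially the standard route for this kind of statement: reduce to $G$ connected, observe that $\cD\Gamma$ is Zariski dense, take the nonempty closed convex fixed-point set $F$ of the bounded normal subgroup $\cD\Gamma$, and rule out unboundedness of $F$ by producing a ray in $F$ and hence a fixed point of $\cD\Gamma$ in the spherical building at infinity, forcing $\cD\Gamma$ into a proper parabolic. The survey states this lemma without proof (citing \cite[Lemma 7.15]{DY23b}), but your argument is the same building-theoretic mechanism underlying the companion statement \cref{lem:BT2}; note also that your proof never uses the finite-generation hypothesis, which is harmless here since the lemma is only ever applied to finitely generated groups.
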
 
\subsubsection{Ideas of proof of holomorphic convexity}
By \cref{thm:Shafarevich}, one can show that, after passing to a finite étale cover, an almost 
faithful representation factors through a large local system on the Shafarevich variety 
\(\mathrm{Sh}(X)\). 
Consequently, it suffices to establish the Steinness of the universal cover in the case of 
\emph{large and reductive} representations.  
In this subsection, we outline the proof of the reductive Shafarevich conjecture for smooth 
projective surfaces.
\begin{thm}
	Let $X$ be a smooth projective surface. If there exists a semisimple and  large representation $\varrho:\pi_1(X)\to \GL_{N}(\bC)$, then the universal covering $\widetilde{X}$ of $X$ is Stein.
\end{thm}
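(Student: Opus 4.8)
The plan is to manufacture a single closed positive $(1,1)$-current on $X$ with continuous local potential and \emph{K\"ahler} cohomology class, together with a continuous, bounded-below plurisubharmonic potential for its pullback to $\widetilde{X}$, and then to conclude by Eyssidieux's Steinness criterion \cref{prop:stein}. For the set-up: since $\varrho$ is large, $\pi_1(X)$ is infinite, and (as in the reduction preceding the theorem) we may assume $\varrho$ is almost faithful, so $\pi_1(X)$ is reductive and $\pi_X:\widetilde{X}\to X$ is an infinite Galois covering. By \cref{prop:cons} we fix a $\bC$-VHS $\mathcal{L}$ on $X$, with Higgs field $\theta$, Hodge metric, and period map $p:\widetilde{X}\to\sD$, such that $f^*\varrho$ is a direct factor of the discrete-monodromy VHS $f^*\mathcal{L}$ for every $f:Y\to X$ with $s_{\rm fac}(f(Y))$ a point; by \cref{lem:simultaneous} the simultaneous Stein factorization $s_{\rm fac}:X\to S_{\rm Fac}(X)$ of the Katzarkov--Eyssidieux reductions is already the Stein factorization of $(s_{\tau_1},\dots,s_{\tau_k})$ for finitely many reductive $\tau_i:\pi_1(X)\to\GL_N(K_i)$ with $K_i$ non-archimedean local. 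Running the construction of \cref{sec:sha mor} yields the Shafarevich morphism $\mathrm{sh}_X:X\to\mathrm{Sh}(X)$, and largeness forces its fibers to be $0$-dimensional: a positive-dimensional fiber $Z$ would have $s_{\rm fac}(Z)$ a point and $p$ constant on $Z$, so by \cref{prop:cons} and the argument in \cref{lem:period} the group $\varrho(\mathrm{Im}[\pi_1(Z^{\mathrm{norm}})\to\pi_1(X)])$ would be a discrete subgroup of a compact stabilizer, hence finite, contradicting largeness. As $\dim X=2$, it follows that $\mathrm{sh}_X$ is generically finite.

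Next I would introduce the current $T:=\sqrt{-1}\operatorname{tr}(\theta\wedge\theta^*)+\sum_{i=1}^{k}T_{\tau_i}$ on $X$, where $T_{\tau_i}$ is the canonical current of \cref{def:canonical}; this is a closed positive $(1,1)$-current with continuous local potential, hence nef. I would then show $\{T\}$ is a K\"ahler class by the Nakai--Moishezon criterion for real $(1,1)$-classes on the projective surface $X$. For every irreducible curve $C\subset X$, largeness gives that $\varrho(\mathrm{Im}[\pi_1(C^{\mathrm{norm}})\to\pi_1(X)])$ is infinite, so by \cref{prop:cons} it cannot happen simultaneously that $s_{\rm fac}(C)$ is a point and $p$ is constant on $C$; hence either $\int_C\sqrt{-1}\operatorname{tr}(\theta\wedge\theta^*)>0$ (period map non-constant along $C$) or $s_{\tau_i}(C)$ is not a point for some $i$, whence $\int_C T_{\tau_i}>0$ by \cref{thm:KE}. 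In both cases $\{T\}\cdot[C]=\int_C T>0$. For $\{T\}^2>0$: on the regular locus $\mathcal{R}$ of the building-valued harmonic maps — Zariski dense by the Gromov--Schoen regularity theorem together with \cite{DM24} — the current $T$ restricts to a sum of smooth semipositive $(1,1)$-forms whose combined rank equals the rank of the differential of the combined reduction $(p,s_{\tau_1},\dots,s_{\tau_k})$; generic finiteness of $\mathrm{sh}_X$ makes this rank equal to $2$ at a general point, so $T\wedge T>0$ there, and therefore $\{T\}^2=\int_X T^2\ge\int_{\mathcal{R}}T\wedge T>0$.

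For the potential, \cref{lem:potential} furnishes for each $i$ a continuous, bounded-below function $\phi_{\tau_i}=d^2_{\Delta(G)}(u_{\tau_i}(\cdot),u_{\tau_i}(x_0))$ on $\widetilde{X}$ with $\sqrt{-1}\partial\bar\partial\phi_{\tau_i}=d\beta_{\tau_i}\ge\pi_X^*T_{\tau_i}$, and \cref{lem:potential3} a smooth, bounded-below $\phi_{\mathcal{L}}$ with $\sqrt{-1}\partial\bar\partial\phi_{\mathcal{L}}\ge\pi_X^*(\sqrt{-1}\operatorname{tr}(\theta\wedge\theta^*))$. Then $\phi:=\phi_{\mathcal{L}}+\sum_{i=1}^{k}\phi_{\tau_i}$ is continuous, plurisubharmonic and bounded below on $\widetilde{X}$, and satisfies $\sqrt{-1}\partial\bar\partial\phi\ge\pi_X^*T$. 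Applying \cref{prop:stein} with $S=X$, $\nu=\pi_X$, the K\"ahler-class current $T$ and the function $\phi$, we conclude that $\widetilde{X}$ is Stein.

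The main obstacle is the positivity of $\{T\}$, and above all the inequality $\{T\}^2>0$: one must convert the generic finiteness of the Shafarevich morphism into rank-$2$ positivity of the combined current at a general point, pairing the Higgs field of $\mathcal{L}$ (the differential of the period map) against the multivalued $1$-forms $\eta_{\tau_i}$ that control the fibers of the $s_{\tau_i}$ via \cref{thm:KE}; this leans on the pluriharmonicity of the harmonic maps into Euclidean buildings and on the Zariski density of the regular locus. By comparison, the Nakai--Moishezon input for real classes on the surface, and checking that the potentials of \cref{lem:potential,lem:potential3} add up with the stated curvature bound, are routine.
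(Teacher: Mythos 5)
Your overall architecture (assemble $T=\sqrt{-1}\,\mathrm{tr}(\theta\wedge\theta^*)+\sum_i T_{\tau_i}$, verify $\{T\}$ is K\"ahler by the Nakai--Moishezon/Demailly--P\u{a}un criterion, sum the potentials of \cref{lem:potential,lem:potential3}, and invoke \cref{prop:stein}) is exactly the paper's, and the curve-positivity step $\{T\}\cdot C>0$ and the potential bookkeeping are fine. But there is a genuine gap at the step $\{T\}^2>0$. You assert that on the regular locus the pointwise rank of $T$ equals the rank of the differential of the combined reduction $(p,s_{\tau_1},\dots,s_{\tau_k})$, and that generic finiteness of $\mathrm{sh}_X$ therefore forces rank $2$ at a general point. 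This identification is false. The pointwise rank of $T_{\tau_i}$ is the rank of the multivalued $1$-form $\eta_{\tau_i}$ (via \eqref{eq:form}), whereas the fibers of $s_{\tau_i}$ are only the \emph{algebraic} subvarieties on which $\eta_{\tau_i}$ restricts trivially. When all spectral forms span a single line $\bC\eta$, the foliation $\eta=0$ may have no algebraic leaves through a general point, so $s_{\mathrm{fac}}$ is generically finite while $T_{\bm\tau}\wedge T_{\bm\tau}=0$ identically; and since \cref{prop:cons} only controls $\cL$ on positive-dimensional fibers of $s_{\mathrm{fac}}$ (which are now generically absent), the Hodge-theoretic term gives no rescue, so $\int_X T^2$ could vanish.

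This is precisely the paper's Case 3.2.2, the ``most difficult case,'' and ruling it out is where the real work lies: one shows via \cref{lem:BT} and Simpson's Lefschetz theorem for integral leaves of holomorphic $1$-forms that the semisimple part $\tau'$ of any unbounded $\tau$ is bounded, so the unboundedness lives entirely in the torus quotient $G''$, and then uses Simpson's absolutely constructible subsets of $M_{\mathrm B}(Y,G'')$ to manufacture \emph{new} unbounded representations whose spectral $1$-forms have rank $2$, contradicting the standing assumption. Your finitely many $\tau_i$ from \cref{lem:simultaneous} need not contain any such representation, so the positivity $\{T\}^2>0$ cannot be extracted from them alone. Without an argument of this kind (or the Castelnuovo--De Franchis dichotomy of Case 3.2.1 to dispose of the rank-one, higher-span configuration), your proof does not close.
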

\begin{proof}
	Let $\cL$ be the $\bC$-VHS defined in \cref{prop:cons}.  Let  $\theta$ be its Higgs field   and $\theta^*$ be  the  adjoint of $\theta$ with respect to the Hodge metric.  
	
	\medspace
	
	\noindent {\it Case 1: The set $\Upsilon$ in \cref{def:fac} contains no unbounded representation.  }  
	
	This means that $s_{\rm fac}:X\to S_{\rm Fac}(X)$ is a constant map.   Then by \cref{prop:cons},  $\varrho$ is  a direct factor of a $\bC$-VHS $\cL$ with discrete monodromy.  	Since $\varrho$ is large,  the monodromy representation of $\cL$  is thus also large.  Hence, by \cref{lem:period}  its period map $p:X\to \sD/\Gamma$ is finite.  One can show that for the real \((1,1)\)-form 
	\(\sqrt{-1}\,\mathrm{tr}\,\theta \wedge \theta^{*}\), its cohomology class
	\[
	\bigl\{\sqrt{-1}\,\mathrm{tr}\,\theta \wedge \theta^{*}\bigr\}
	\]
	is a K\"ahler class.
	By \cref{lem:potential3}, there exists a smooth  real-valued   plurisubharmonic function 
	\(\phi_\cL\) on \(\widetilde{X}\), bounded from below, such that 
\[\sn\d\db \phi_{\mathcal{L}} \ge \pi_X^{*}\!\left(\sqrt{-1}\,\mathrm{tr}\,\theta \wedge \theta^{*}\right).
	\]
	Then, by \cref{prop:stein}, the universal cover \(\widetilde{X}\) is Stein.
	
	\medspace
	
	From now on, we assume that $\Upsilon$  contains at least one  unbounded representation. By \cref{lem:simultaneous}, there exists  unbounded and  reductive representations $\btau:=\{\tau_i:\pi_1(X)\to \GL_{N}(K_i)\}_{i=1,\ldots,k}$ with $K_i$ a non-archimedean local field of characteristic $0$ such that the Stein factorization of  products of   Katzarkov-Eyssidieux reduction maps $$s_{\tau_1}\times\cdots\times s_{\tau_k}:X\to S_{\tau_1}\times\cdots\times S_{\tau_k}$$ coincides with $s_{\rm fac}:X\to S_{\rm Fac}$.     
	Let $\xsp\to X$ be the spectral covering associated with $\btau$.   Let  $\{\eta_1,\ldots,\eta_k\}\subset H^0(\xsp, \pi^*\Omega_X^1)$  be the associated spectral 1-forms as in \cref{prop:spectral}.  We define the \emph{rank} as follows:     if there exist   \( i \) and \( j \) such that \( \eta_i \wedge \eta_j \neq 0 \), then we say that the rank is \( 2 \); otherwise, the rank is \( 1 \). 
	
	\medspace
	
	\noindent {\it Case 2: The spectral 1-forms have rank 2}.   By our construction of canonical currents in \cref{def:canonical}, we have
	$$
	C\pi^*(T_{\tau_1}+\cdots+T_{\tau_k})=\sn\sum_{i=1}^{m} \eta_i\wedge\overline{\eta_i}
	$$
	for some constant $C>0$. 
	Then 
	$$T_\btau:=T_{\tau_1}+\cdots+T_{\tau_k}$$ is strictly positive at general points since the spectral forms associated with $\btau$ has rank 2. Since $T_\btau$ has continuous local potentials, it follows that $T_\btau$  is a  big and nef class.   By the construction of $T_\btau$, one can show that there exists a closed positive $(1,1)$-current $T$ on $S_{\rm Fac}(X)$ such that 
	$$
	s_{\rm fac}^*T=T_\btau. 
	$$
	This in particular shows that $s_{\rm fac}$ is a birational morphism. 
	On the other hand, as a consequence of \cref{thm:KE}, we can show that  $\{T\}\cdot C>0$ for any irreducible curve $C$ on $S_{\rm Fac}(X)$. 	  
	We now apply a theorem by Demailly-P\u{a}un \cite{DP04} (or \cite{Lam99}) to  conclude that $\{T\}$ is a K\"ahler class on $S_{\rm Fac}(X)$.   
	
	On the other hand,  for each positive dimensional fiber $F$ of $s_{\rm fac}$, by \cref{prop:cons}, $\cL|_{F^{\rm norm}}$ is a $\bC$-VHS with discrete monodromy, and $\varrho|_{\pi_1(F^{\rm norm})}$ is a direct factor of  $\cL|_{F^{\rm norm}}$. Since $\varrho$ is large, then the period map $F^{\rm norm}\to \sD/\Gamma$ is finite, where $\Gamma$ is the monodromy group of $\cL|_{F^{\rm norm}}$. One can show that
	$$
	\{(	 	\sn {\rm tr}\theta\wedge\theta^*)|_{F}\}
	$$  
	is a K\"ahler class. Therefore,
	$$
	\{	\sn {\rm tr}\theta\wedge\theta^*+T_{\btau}\} 
	$$
	is a K\"ahler class on  $X$.  Let $\phi_{\tau_i}$ be the continuous semipositive function associated with $\tau_i$, as defined in \cref{lem:potential}, and let $\phi_{\cL}$ be the continuous function, bounded from below, associated with $\cL$, as defined in \cref{lem:potential3}.
	   Then we have  
	$$
\sn\d\db(\phi_\cL+\sum_{i=1}^{k}\phi_{\tau_i}) \geq 	\pi_X^*(\sn {\rm tr}\theta\wedge\theta^*+T_{\btau})
	$$
	By \cref{prop:stein}, we conclude that $\widetilde{X}$ is Stein.  
	
	\medspace
	

	\noindent {\it Case 3: Regardless of the elements $\tau_1,\ldots,\tau_m \in \Upsilon$ chosen, their spectral 1-forms have rank 1}.

	\noindent {\it Case 3.1: $\dim S_{\rm Fac}(X)=1$}.  
	
	The proof is the same as  Case 2. We leave it as an exercise to the readers. 
	
	\noindent {\it Case 3.2: $\dim S_{\rm Fac}(X)=2$}.   
	
	\noindent {\it Case  3.2.1: The dimension of spectral 1-forms is at least two}: 
	 
	Suppose  
	$
	{\rm dim}_\bC{\rm Span}\{\eta_1,\ldots,\eta_\ell\}\geq 2.
	$

	Without loss of generality, we may assume that $\eta_1\wedge\eta_2\equiv 0$ and $\eta_1\not\in \{\bC\eta_2\}$. 
	According to the Castelnuovo-De Franchis theorem (cf. \cite[Theorem 2.7]{ABCKT}), there exists a proper fibration $h:\xsp\to C$ over a smooth projective curve $C$ such that $\{\eta_1,\eta_2\}\subset h^*H^0(C,\Omega_C^1)$.  
	Since $s_{\rm fac}$ 
	is birational,  we can choose a general fiber $F$ of $h$, which is irreducible and  such that $s_{\rm fac}\circ\pi(F)$  
	is not a point. 
		By \cref{thm:KE}, we can show that there exists some $i$ such that $\eta_i|_{F}\neq 0$. 
		Given that  $\eta_1|_{F}\equiv 0$, this implies that $\eta_i\wedge\eta_1\neq 0$. It contradicts with our assumption that the spectral $1$-forms have rank 1.  
		Therefore, this case cannot occur. 
		
		\medspace
		
		We now turn to the final—and most difficult—case, which also relies on several deep results  from non-abelian Hodge theories by Simpson. 
		
		\medspace
		
		\noindent \textit{Case 3.2.2: Regardless of the elements $\tau_1,\ldots,\tau_m \in \Upsilon$ chosen, the dimension of    spectral 1-forms remains 1.}

		Pick any non-archimedean representation $\tau:\pi_1(X)\to \GL_{N}(K)$ that is unbounded.  
		Let  $G$ be the Zariski closure of $\tau(\pi_1(X))$, which is reductive.  Consider the isogeny
		$g:G\to G/Z\times G/\cD G$ 
		where $Z$ is the central torus of $G$ and $\cD G$ is the derived group of $G$. 
		As a result, $G':=G/Z$  is semisimple and $G'':=G/\cD G$ is a torus. 
		Let
		$
		\tau':\pi_1(X)\to G'(\overline{K}) 
		$ 
		be the composite of $\tau$ with the projection $G\to G'$, and  
		$
		\tau'':\pi_1(X)\to   G''(\overline{K}) 
		$ 
		be the composite of $\tau$ with the projection $G\to G''$.  Then $\tau'$ and $\tau''$ are both Zariski dense representations. 
		\begin{claim}
			The representation $\tau'$ is bounded. 	 
		\end{claim}
		\begin{proof}
			Let $\xsp\to X$ be the spectral covering associated with $\tau$ and let $\nu:Y\to \xsp$ be a desingularization. By the assumption, the spectral 1-forms are a subset of $\bC \eta$, where $\eta$ is a  1-form in $  H^0(Y,\Omega_Y^1)$. Consider the \emph{partial Albanese morphism} $a:Y\to A$ induced by $\eta$ (see \cite[Definition 3.25]{CDY22} for the definition).   Then there exists a one form $\eta'\in H^0(A,\Omega_A^1)$ such that $a^*\eta'=\eta$.  If $\dim a(Y)=1$, then the Stein factorization $h:Y\to C$ of $a$ is a proper holomorphic fibration   over a smooth projective curve $C$ such that $\eta_1\in h^*H^0(C,\Omega_C^1)$.  We are now in  a situation akin to Case 3.1, and we can apply the same arguments to reach a contradiction.  Hence $\dim a(Y)=2$.   Let $\pi_A:\widetilde{A}\to A$ denote the universal covering map. We denote by $Y':=Y\times_{\widetilde{A}}A$  a connected component of the fiber product and let $\pi':Y'\to Y$ be the induced   \'etale cover.   
			It is worth noting that $\pi'^*\eta$ is  exact. Consequently, we can define the following holomorphic map: 
			\begin{align*}
				h: Y'&\to \bC\\
				y&\mapsto \int_{y_0}^{y}\pi'^*\eta.
			\end{align*} We then have the following commutative diagram:
			\begin{equation*}
				\begin{tikzcd}
					\widetilde{Y}\arrow[r, "p"]\arrow[rr, bend left=30, "\pi_Y"]&	Y'\arrow[dd, bend right=30, "h"']\arrow[r, "\pi'"] \arrow[d] & Y\arrow[d, "a"]\\
					&	\widetilde{A}\arrow[d] \arrow[r, "\pi_A"]& A\\
					&		\bC
				\end{tikzcd}
			\end{equation*} 
			The   holomorphic map $\widetilde{A}\to \bC$ in the above diagram is  defined by the linear 1-form $\pi_A^*\eta'$ on $\widetilde{A}$.   
			By Simpson's Lefschetz theorem \cite{Sim93}, for any $t\in \bC$, $h^{-1}(t)$ is connected and $\pi_1(h^{-1}(t))\to \pi_1(Y')$ is surjective.  
			By  
			definition of $h$,  $\pi_Y^*\eta|_{Z}\equiv 0$ where $Z$ is any connected component of $p^{-1}(h^{-1}(t))$. 
			Here $p:\widetilde{Y}\to Y'$ is the natural covering map. 
			
			Consider the  Zariski dense  representation $\tau':\pi_1(X) \to G'\big( \overline{K}\big)$ as defined previously. 
			Let  $L$ be a finite extension of $K$ such that $G'$ is defined on $L$ and  $\tau':\pi_1(X)\to G'(L)$. 
			We denote by $\sigma:\pi_1(Y)\to G'\big(L\big)$ the pullback of $\tau'$ via the morphism $Y\to X$.   
			The existence of a $\sigma$-equivariant harmonic mapping $u:\widetilde{Y}\to \Delta(G')_L$ is guaranteed by \cite{GS92}, where $\Delta(G')_L$ is the Bruhat-Tits building of $G'$.
			
			We note that $\pi_Y^*\eta$ is the (1,0)-part of the complexified differential of the harmonic mapping $u$ at general points of $\widetilde{Y}$, with $\pi_Y:\widetilde{Y}\to Y$ denoting the universal covering. 
			For  any connected component $Z$ of $p^{-1}(h^{-1}(t))$ for a general $t\in \bC$, since $\pi_Y^*\eta|_{Z}\equiv 0$,   and  all the spectral forms are assumed to be $\bC$-linearly equivalent, it follows that $u(Z)$ is a point $P\in \Delta(G')_L$. 
			Since $u$ is $\sigma$-equivariant, it follows that $\pi'^*\sigma({\rm Im}[\pi_1(h^{-1}(t))\to \pi_1(Y')])$ is contained in the subgroup of $G'(L)$  fixing the point $P$, which is thus bounded. 
			Recall that $\pi_1(h^{-1}(t))\to \pi_1(Y')$  is surjective. 
			Hence $\pi'^*\sigma(\pi_1(Y'))$ is a bounded subgroup of $G'(L)$. 	
			Additionally, note that the derived group $$\cD \pi_1(Y)\subset {\rm Im}[\pi_1(Y')\to \pi_1(Y)],$$ and it follows that $\sigma(\cD \pi_1(Y))$ is bounded.  	
			Since $\tau'$ is Zariski dense, and ${\rm Im}[\pi_1(Y)\to \pi_1(X)]$ is a finite index subgroup of $\pi_1(X)$,    the Zariski closure  of $\sigma(\pi_1(Y))$ contains the identity component of $G'$, and it is also semisimple.     
			We apply \cref{lem:BT}  to conclude  that $\sigma( \pi_1(Y))$ is bounded.	  Hence $\tau'$ is bounded. 
		\end{proof} 
		By \cref{thm:KE}, the Katzarkov-Eyssidieux reduction map $s_{\tau'}$ is constant. This implies that the reduction map $s_{\tau}$ is identified with $s_{\tau''}$.
		
		Note that $G''$ is a torus. A key idea originally introduced by Eyssidieux in \cite{Eys04} is the use of Simpson's \emph{absolutely constructible subsets} to handle this situation. We sketch the arguments  in \cite[Theorem 4.5]{DY23}; the following discussion is intended as an outline rather than a rigorous proof.
		
		Consider the composition of natural morphisms among Betti moduli spaces:
		\[
	\Psi:	M_{\rm B}(X,G)\to M_{\rm B}(Y,G) \to M_{\rm B}(Y,G'').  
		\]
		By \cite{Sim93b}, the image of $M_{\rm B}(X,G)$ under this composition is an \emph{absolutely constructible subset}, denoted by $M_{\rm acs}$, of $M_{\rm B}(Y,G'')$. Since $G''$ is a torus, $M_{\rm B}(Y,G'')$ is essentially a product of copies of $M_{\rm B}(Y,1)$. 
		
		For simplicity, we assume that $G''$ is a one-dimensional torus. Then Simpson's theorem \cite{Sim93b} states that the closure of $M_{\rm acs}$ is a finite union of torsion translates of subtori in $M_{\rm B}(Y,1)$. 
		
	Since we assume that $\dim a(Y)=2$, a crucial step in \cite[Theorem~4.5]{DY23} shows that one can find sufficiently many representations
	\[
	\{\sigma_i \colon \pi_1(X) \to G(K_i)\}_{i=1,\ldots,\ell},
	\]
	where each $K_i$ is a non-archimedean local field of characteristic zero, such that their images
	\[
	\{\Psi(\sigma_i) \colon \pi_1(X) \to \GL_1(K_i)\}_{i=1,\ldots,\ell}
	\]
	in $M_{\rm acs}$ are unbounded representations, and the holomorphic $1$-forms on $Y$ induced by them have rank~$2$.
	This implies that the associated spectral $1$-forms have rank~$2$, which leads to a contradiction. 
	\end{proof}
	\begin{rem}\label{rem:byproduct}
		In the general case, one must extend Simpson’s Lefschetz theorem for leaves of holomorphic foliations defined by \emph{systems of holomorphic $1$-forms}. 
		This extension is a crucial step in the proof of the reductive Shafarevich conjecture and was established by Eyssidieux in \cite{Eys04}. 
		Based on this general Lefschetz theorem, together with a suitable application of absolutely constructible subsets, we prove in \cite[Proof of Theorem~4.31]{DY23} that there exist representations
		\[
		\{\tau_i \colon \pi_1(X) \to G(K_i)\}_{i=1,\ldots,\ell},
		\]
		where each $K_i$ is a non-archimedean local field of characteristic zero, such that the sum of the canonical currents associated with these representations,
$
		\sum_{i=1}^{\ell} T_{\tau_i},
$
		is the pullback via $s_{\rm fac}$ of a closed positive $(1,1)$-current $T_{\rm fac}$ on $S_{\rm Fac}(X)$, whose cohomology class $\{T_{\rm fac}\}$ is \emph{K\"ahler}.
		
		Recall that for the $\bC$-VHS $\cL$ on $X$ constructed in \cref{prop:cons}, its restriction to each fiber $F$ of $s_{\rm fac}$ has discrete monodromy. 
		If we assume that there exists a big and semisimple representation on $X$, then for a general fiber $F$ of $s_{\rm fac}$, the period map of $\cL|_{F}$ is generically finite onto its image. 
		Consequently, one can show that the cohomology class
		\[
		\left\{ \sum_{i=1}^{\ell} T_{\tau_i} + \sn\,{\rm tr}(\theta \wedge \theta^*) \right\}
		\]
		is big and nef on $X$. 
		This fact is crucial in the proof of the linear Koll\'ar conjecture (\cref{main:kollar}). 
	\end{rem}
	\begin{rem}
	In \cite[Appendix]{DY23}, Katzarkov, Yamanoi, and the   author proved the reductive Shafarevich conjecture for projective \emph{normal} varieties $X$ admitting a faithful reductive representation
	\[
	\varrho \colon \pi_1(X) \to \GL_N(\mathbb{C}),
	\]
	thereby extending the results of \cite{Eys04} to the singular setting.
	We now outline the main strategy of the proof in \cite{DY23}.

		Let \(\mu:Y\to X\) be a desingularization. 
		Since the induced morphism \(\pi_1(X)\to \pi_1(Y)\) is surjective, 
		the natural morphism of Betti moduli spaces
		\[
		\iota:M_{\mathrm{B}}(X,N)\hookrightarrow M_{\mathrm{B}}(Y,N)
		\]
		induced by~\(\mu\) is a closed immersion.
		Using a theorem of Lerer~\cite{Ler22}, we show that the image of~\(\iota\) is an 
		\emph{absolutely constructible subset} of \(M_{\mathrm{B}}(Y,N)\) in the sense of 
		Budur--Wang~\cite{BW20}.
		
		Define the subgroup
		\[
		\Gamma:=\bigcap_{\tau}\ker\tau,
		\]
		where \(\tau:\pi_1(Y)\to \GL_{N}(\mathbb{C})\) ranges over all reductive representations 
		lying in \(\iota(M_{\mathrm{B}}(X,N))\).
		We then prove that the covering \(\widetilde{Y}_{\Gamma}\to Y\)  of $Y$ is holomorphically convex.
		
		Next, define
		\[
		\Gamma':=\bigcap_{\tau}\ker\tau,
		\]
		where \(\tau:\pi_1(X)\to \GL_{N}(\mathbb{C})\) ranges over all reductive representations in 
		\(M_{\mathrm{B}}(X,N)\).
		The morphism~\(\mu\) lifts to a proper holomorphic map
		\(\tilde{\mu}:\widetilde{Y}_{\Gamma}\to \widetilde{X}_{\Gamma'}\), fitting into the 
		commutative diagram
		\[
		\begin{tikzcd}
			\widetilde{Y}_{\Gamma}\arrow[r]\arrow[d,"\tilde{\mu}"] & Y\arrow[d,"\mu"]\\
			\widetilde{X}_{\Gamma'}\arrow[r] & X .
		\end{tikzcd}
		\]
		By \cref{thm:Cartan}, it follows that \(\widetilde{X}_{\Gamma'}\) is holomorphically convex.
		Since the representation \(\varrho:\pi_1(X)\to \GL_{N}(\mathbb{C})\) is faithful and reductive, 
		we conclude that \(\widetilde{X}_{\Gamma'}=\widetilde{X}\), which completes the proof.
	\end{rem}
Let me conclude this section by presenting a recent application of the linear Shafarevich conjecture (\cref{thm:EKPR}). In \cite{EF25}, Eyssidieux and Funar established new constraints on algebro-geometric subgroups of mapping class groups. As a consequence, they proved that the universal cover of the smooth proper Deligne–Mumford stack  $\overline{\mathcal{M}}_{g,n}[{\bf{k}}]$, the universal covering space of the stack is, in most cases, a Stein manifold. Further applications of the techniques used to study the linear Shafarevich conjecture, as presented in this section, will be discussed in the subsequent sections.

		\section{Hyperbolicity and funmamental groups: ideas of proofs}\label{sec:hyperbolicity}
	In this section, we sketch the main idea behind the proofs of   \cref{main:hyper,thm:GGL}.   
	\subsection{A theorem of Nevanlinna theory on  semiabelian variety}\label{sec:second}
	In this subsection, we recall some notions from Nevanlinna theory. Readers who are not familiar with Nevanlinna theory are referred to the excellent survey by Yamanoi \cite{Yam15b} or Demailly \cite{Dem97}.

	In this section, $A$ is a semi-abelian variety and $Y$ is a Riemann surface with a proper surjective holomorphic map $\pi:Y\to\mathbb C_{>\delta}$, where $\mathbb C_{>\delta}:=\{z\in\bC\mid \delta<|z|\}$ with some fixed positive constant $\delta>0$.
	
	For $r>2\delta$, define
	$
	Y(r)=\pi^{-1}\big( \mathbb C_{>2\delta}(r) \big)
	$ 
	where $\mathbb C_{>2\delta}(r)=\{z\in \bC\mid  2\delta<|z|<r\}$. 
	In the following, we assume that $r>2\delta$.
	The \emph{ramification counting function} of the covering $\pi:Y\to  \bC_{>\delta}$ is defined by
	$$
	N_{{\rm ram}\,  \pi}(r):=\frac{1}{{\rm deg} \pi}\int_{2\delta}^{r}\left[\sum_{y\in {Y}(t)} \ord_y \ram \pi \right]\frac{dt}{t},
	$$ 
	where $\ram \pi\subset Y$ is the ramification divisor of $\pi:Y\to\mathbb C_{>\delta}$.

	Let $L$ be a line bundle on $X$.
	Let $f:Y\to X$ be a holomorphic map.
	We define the order function $T_f(r,L)$ as follows.
	First suppose that $X$ is smooth.
	We equip with a smooth hermitian metric $h_L$, and let $c_1(L,h_L)$ be the curvature form of $(L, h_L)$.  
	$$
	T_f(r,L):=\frac{1}{\operatorname{deg} \pi} \int_{2\delta}^{r}\left[\int_{Y(t)} f^{*}c_1(L,h_L)\right] \frac{d t}{t}.
	$$
	This definition is independent of the choice of  the hermitian metric up to a function  $O(\log r)$.  
	
	\begin{thm}[{\cite[Theorem A]{CDY25}}]\label{thm2nd}
		Let $X$ be a smooth quasi-projective variety which is of log general type.
		Assume that there is a morphism $a:X\to A$ such that $\dim X=\dim a(X)$.
		Then there exists a proper Zariski closed set $\Xi\subsetneqq X$ with the following property:
		let $f:Y\to X$ be a holomorphic map such that $N_{\ram\pi}(r)=O(\log r)+o(T_f(r))||$ and that $f(Y)\not\subset \Xi$.
		Then $f$ does not have essential singularity over $\infty$, i.e.,	there exists an extension $\overline{f}:\overline{Y}\to\overline{X}$ of $f$, where $\overline{Y}$ is a Riemann surface such that  $\pi:Y\to \mathbb C_{>\delta}$ extends to a proper map $\overline{\pi}:\overline{Y}\to \mathbb C_{>\delta}\cup\{\infty\}$  
		and $\overline{X}$ is a compactification of $X$.	
	\end{thm}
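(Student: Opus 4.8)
The plan is to reduce \cref{thm2nd} to a \emph{logarithmic Second Main Theorem} for the composed map $a\circ f\colon Y\to A$ in the relative (punctured-disk) setting, extending the work of Noguchi \cite{Nog81} and Yamanoi \cite{Yam15} from entire curves in $\bC$ to holomorphic maps from ramified covers $\pi\colon Y\to\bC_{>\delta}$, and then to convert such an inequality into the claimed extension across $\infty$ by exploiting the bigness of $K_{\overline{X}}(\log D)$, which is guaranteed by the log general type hypothesis. Throughout, $T_f(r)$ denotes the order function with respect to a fixed ample line bundle on $\overline{X}$, well defined up to $O(\log r)$.

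First I would fix the geometry: a smooth projective compactification $\overline{X}\supset X$ with $D=\overline{X}\setminus X$ simple normal crossing, a smooth equivariant compactification $\overline{A}$ of the semi-abelian variety $A$ with boundary $\partial A$, and (after blowing up $\overline{X}$) an extension of $a$ to a morphism $\overline{a}\colon\overline{X}\to\overline{A}$. The exceptional set $\Xi$ is then defined, independently of $f$, as the union of the non-finite locus of $a\colon X\to a(X)$, the part of the stable base locus of $K_{\overline{X}}(\log D)$ lying in $X$, and the images and $a$-preimages of the translates of positive-dimensional semi-abelian subvarieties appearing in Kawamata's structure theorem \cite{Kaw81} and in Yamanoi's analysis. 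Checking that this $\Xi$ is a \emph{proper} Zariski closed subset is precisely where the hypothesis ``$X$ is of log general type'' enters a second time: by Kawamata's theorem the offending translates cannot dominate $X$.

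The heart of the argument is to establish, after enlarging $\Xi$ if necessary, that for every $f\colon Y\to X$ with $f(Y)\not\subset\Xi$ and $N_{\ram\pi}(r)=O(\log r)+o(T_f(r))\ \|$, and every $\varepsilon>0$,
\[
T_f\bigl(r,K_{\overline{X}}(\log D)\bigr)\ \le\ \varepsilon\,T_f(r)+N_{\ram\pi}(r)+O(\log r)\qquad\|
\]
Following Yamanoi's strategy, one pushes forward to $A$ via $\overline{a}$, lifts to an appropriate logarithmic jet space of $(\overline{A},\partial A)$, and invokes the deep combinatorial estimate of \cite{Yam15} relating the $1$-truncated counting function to the order function for holomorphic curves in semi-abelian varieties. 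The genuinely new ingredient is a \emph{relative lemma on the logarithmic derivative} for the branched cover $\pi\colon Y\to\bC_{>\delta}$, in which the defect in Jensen's formula produces exactly the error term $N_{\ram\pi}(r)$; this is the only place the hypothesis on $\pi$ is used, the slow-ramification assumption rendering that term negligible. I expect this step---transplanting Yamanoi's counting-function machinery to ramified covers and to the logarithmic (semi-abelian, rather than abelian) setting, while keeping $\Xi$ independent of $f$---to be the main obstacle.

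Finally I would deduce the theorem. Since $K_{\overline{X}}(\log D)$ is big, write $m\,K_{\overline{X}}(\log D)\sim H+E$ with $H$ ample and $E$ effective, and enlarge $\Xi$ so that $\Supp E\cap X\subset\Xi$; then $T_f(r,E)$ is bounded below by the First Main Theorem, whence $T_f(r,H)\le m\,T_f\bigl(r,K_{\overline{X}}(\log D)\bigr)+O(1)$, and comparing $H$ with the fixed ample bundle defining $T_f(r)$ gives $T_f(r)\le C\,T_f\bigl(r,K_{\overline{X}}(\log D)\bigr)+O(1)$. Substituting into the displayed inequality and using $N_{\ram\pi}(r)=O(\log r)+o(T_f(r))$, a choice of sufficiently small $\varepsilon$ forces $T_f(r)=O(\log r)\ \|$; by monotonicity of $T_f$ this in fact holds without the exceptional set. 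A classical one-variable removable-singularity argument then concludes: properness of $\pi$ makes $Y$ near $\infty$ a finite disjoint union of punctured-disk ends, on each of which a holomorphic map into the projective variety $\overline{X}$ with characteristic $O(\log r)$ extends holomorphically---in dimension one a meromorphic extension is automatically holomorphic---so filling in the finitely many points of $\overline{\pi}^{-1}(\infty)$ yields the Riemann surface $\overline{Y}$ together with the desired extension $\overline{f}\colon\overline{Y}\to\overline{X}$. This proves \cref{thm2nd}.
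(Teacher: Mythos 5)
The paper does not actually prove this statement: it is quoted from \cite{CDY25}, and the authors explicitly omit the proof as ``highly involved,'' saying only that the goal is to estimate the order function $T_f(r,L)$ by Nevanlinna-theoretic tools (logarithmic derivative lemma, Second Main Theorem, jet differentials) and that the extension across $\infty$ follows once $T_f(r,L)=O(\log r)$. Your skeleton---reduce to a Second Main Theorem--type estimate for $T_f\bigl(r,K_{\overline{X}}(\log D)\bigr)$, use bigness of the log canonical bundle to bound $T_f(r)$ by a multiple of it, absorb the $\varepsilon T_f(r)$ and $N_{\ram \pi}(r)$ error terms, conclude $T_f(r)=O(\log r)$, and then run the classical removable-singularity argument on the punctured-disk ends of $Y$---is consistent with that description and with Yamanoi's compact-case strategy, and your last two steps (the bigness bootstrap and the extension) are correct as written.

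However, the proposal is not a proof: the displayed inequality $T_f\bigl(r,K_{\overline{X}}(\log D)\bigr)\le \varepsilon T_f(r)+N_{\ram\pi}(r)+O(\log r)$ is where the entire content of the theorem lives, and you assert it by appeal to ``Yamanoi's strategy'' plus a relative lemma on the logarithmic derivative. Two remarks. First, holomorphic maps from ramified covers $\pi:Y\to\bC_{>\delta}$ with the error term $N_{\ram\pi}(r)$ are already the framework of \cite{Yam15}, so the branched-cover aspect is not the genuinely new difficulty you take it to be. Second, and more importantly, the difficulty the authors themselves single out (see the discussion following the theorem and \cite[Remark 10.11]{CDY25}) is the failure of the Poincar\'e reducibility theorem for semi-abelian varieties: Yamanoi's compact-case argument repeatedly splits the abelian variety up to isogeny in order to induct on quotients and subvarieties, and this splitting is unavailable for semi-abelian $A$. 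The authors overcome this by working with a notion of ``cover'' more general than an \'etale cover. Your proposal does not address this point at all, so the reduction machinery you would need to import from \cite{Yam15} breaks precisely at the step where non-compactness matters. As a roadmap the proposal is reasonable; as a proof it has a gap at the theorem's core.
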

	Note that \cref{thm2nd} is proven by Yamanoi in \cite{Yam15} when $X$ is compact. Its proof  is based on techniques in Nevanlinna theories  in \cite{Yam15}.  Compared with the compact case treated in \cite{Yam15}, the lack of Poincar{\'e} reducibility theorem is a major difficulity to treat the non-compact case.
	We use a more general ``cover'' than {\'e}tale cover to overcome this problem.   We refer the readers to \cite[Remark 10.11]{CDY25} for the main difficulty and novelty in the non-compact cases.  Since the proof of \cref{thm2nd} is highly involved and unrelated to other aspects of the paper, we choose to omit it. Instead, we present a fundamental result from Nevanlinna theory.
	\begin{claim}
		Let $f:Y\to X$ be as above. If the order function $T_f(r,L)=O(\log r)$ as $r\to\infty$, then $f$ does not have essential singularity at infinity. 
	\end{claim}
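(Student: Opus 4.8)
The plan is to reduce the statement to a removable-singularity problem on a punctured disk and then settle that by a classical Nevanlinna-type estimate. We may assume that $L$ is ample and that $f$ is non-constant. After enlarging $\delta$ we restrict attention to $Y_0:=\pi^{-1}(\{|z|>R\})$ for $R\gg 0$; in the coordinate $w=1/z$ the map $\pi|_{Y_0}$ is a proper finite holomorphic covering of a punctured disk, so each connected component $Y_j$ of $Y_0$ is biholomorphic to a punctured disk $\bD^*$, on which $\pi$ has the form $w\mapsto w^{k_j}$ for some $k_j\ge 1$. Choosing a positively curved metric $h_L$, so that $c_1(L,h_L)\ge 0$, one checks by an elementary change of variables that $T_f(r,L)$ is, up to the factor $\deg\pi$ and an $O(\log r)$ error, bounded below by $\sum_j T_{f|_{Y_j}}$; hence the hypothesis $T_f(r,L)=O(\log r)$ gives $T_{f|_{Y_j}}(s,L)=O(\log\tfrac1s)$ as $s\to 0$ on each $Y_j$. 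Thus it suffices to prove: a holomorphic map $g:\bD^*\to X$ with $T_g(s,L)=O(\log\tfrac1s)$ extends to a holomorphic map $\overline g:\bD\to\overline X$ for a suitable projective compactification $\overline X$. Adding to $Y$ one point over $\infty$ for each component $Y_j$ and extending $\pi$ by $w\mapsto w^{k_j}$ then produces the asserted extension $\overline f:\overline Y\to\overline X$.

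For the punctured-disk statement I would fix $m\ge 1$ with $L^{\otimes m}$ very ample and use the resulting embedding $\overline X\hookrightarrow\bP^M$, so that (up to the factor $m$) one may take $L=\cO_{\bP^M}(1)|_{\overline X}$. Write $g=[g_0:\cdots:g_M]$ with $g_i\in\cO(\bD^*)$ having no common zero, and, after reordering, assume $g_0\not\equiv 0$. Each $h_i:=g_i/g_0$ is the composition of $g$ with the linear projection $\bP^M\dashrightarrow\bP^1$, $[z_0:\cdots:z_M]\mapsto[z_0:z_i]$, whose indeterminacy locus has codimension two and is not met by $g$; since that projection is given by linear forms, functoriality of the order function yields $T_{h_i}(s)\le T_g(s,\cO_{\bP^M}(1))+O(1)=O(\log\tfrac1s)$. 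It then remains to establish the classical fact that a meromorphic function $h$ on $\bD^*$ with $T_h(s)=O(\log\tfrac1s)$ is meromorphic at $0$. Granting this for every $h_i$, set $k:=\max\{0,-\min_i\ord_0 h_i\}$; then $(z^k,z^kh_1,\dots,z^kh_M)$ is a tuple of functions holomorphic at $0$ and without common zero there, so it defines a holomorphic map $\overline g:\bD\to\bP^M$ extending $g$, whose image lies in $\overline X$ by continuity.

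To prove the classical fact, note first that $N(s,h)+N(s,1/h)\le 2T_h(s)+O(1)=O(\log\tfrac1s)$, and since the unintegrated counting functions are monotone in $s$, this forces $h$ to have only finitely many zeros and poles in $\bD^*$; hence on some punctured disk $\bD_\varepsilon^*$ the function $u:=\log|h|$ is harmonic and $\tfrac1{2\pi}\int_0^{2\pi}|u(se^{i\theta})|\,d\theta=m(s,h)+m(s,1/h)=O(\log\tfrac1s)$. Expanding $u$ into its Fourier--Laurent series on circles, $u(se^{i\theta})=\alpha_0\log s+\beta_0+\sum_{n\ne 0}\bigl(b_ns^{|n|}+c_ns^{-|n|}\bigr)e^{in\theta}$, the $L^1$-bound $|b_ns^{|n|}+c_ns^{-|n|}|=O(\log\tfrac1s)$ forces $c_n=0$ for all $n$; therefore $u=\alpha_0\log|z|+(\text{harmonic across }0)$, with $\alpha_0\in\bZ$ by single-valuedness of $h$, so $\ord_0 h=\alpha_0$ and $h$ is meromorphic at $0$. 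The bookkeeping with the covering $\pi$ and the functoriality of order functions is routine; the one genuinely substantive ingredient is this last removable-singularity statement for meromorphic functions of logarithmic growth on $\bD^*$, and that is where I would concentrate the argument.
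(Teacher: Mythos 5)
The paper states this claim without proof, presenting it as a classical fact from Nevanlinna theory, so there is no argument in the text to compare yours against; what you have written is the standard proof of that classical fact, and it is essentially correct. Two implicit hypotheses should be surfaced. First, the claim is only true (and only non-vacuous) if $L$ is ample on $\overline{X}$ and the metric is chosen with $c_1(L,h_L)\ge 0$ --- for $L$ trivial one has $T_f(r,L)=O(\log r)$ for every $f$ --- so your reading is the right one, but it is an interpretation rather than something stated. Second, your opening step, the decomposition of $\pi^{-1}(\{|z|>R\})$ into finitely many punctured disks on which $\pi$ is $w\mapsto w^{k_j}$, requires $\pi$ to be unramified over $\{|z|>R\}$ for some $R$. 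This does not follow from the hypothesis on $T_f$ (take $f$ constant); it is a hypothesis on the covering itself, without which the asserted proper extension $\overline{\pi}:\overline{Y}\to\mathbb{C}_{>\delta}\cup\{\infty\}$ cannot exist for any $f$, since $\overline{\pi}^{-1}(\infty)$ must be finite and $\overline{\pi}$ is locally $w\mapsto w^{k}$ near each of its points. In the intended application (Theorem 4.2) this is guaranteed because $N_{\mathrm{ram}\,\pi}(r)=O(\log r)+o(T_f(r))$ together with $T_f(r)=O(\log r)$ forces $\mathrm{ram}\,\pi$ to be a finite divisor. Granting these points, the remaining steps are sound: the transfer of the logarithmic bound to each end uses only the positivity of $c_1(L,h_L)$ and the substitution $|z|=|\zeta|^{-k_j}$; the comparison $T_{h_i}(s)\le T_g(s,\mathcal{O}(1))+O(\log\tfrac1s)$ is the first main theorem on annuli (the error there is $O(\log\tfrac1s)$ rather than $O(1)$ because of the $\log s$ term in Jensen's formula on an annulus, which is harmless here); and the Fourier--Laurent removability argument is complete once one adds the one-line observation that $N(s,h)\ge n(\sqrt{s},h)\cdot\tfrac12\log\tfrac1s$, so that $N(s,h)=O(\log\tfrac1s)$ indeed forces finitely many poles and zeros.
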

	In a nutshell, the ultimate goal in proving  \cref{thm2nd} is to estimate  the order  function $T_f(r,L)$ utilizing Nevanlinna theory tools, such as the logarithmic derivative lemma, the Second Main Theorem, jet differentials, and other related techniques.

	In the context of Nevanlinna theory in \cite[\S 10]{CDY25}, another crucial result is obtained.
	\begin{thm}[{\cite[Corollary 10.8]{CDY25}}]\label{cor:GGL}
		Let $X$ be a smooth quasi-projective variety and let $a:X\to A\times S$ be a morphism such that $\dim X=\dim a(X)$, where $S$ is a smooth quasi-projective variety ($S$ can be a point). Write $b:X\to S$ as the composition of $a$ with the projection map $A\times S\to S$. Assume that $b$ is dominant.
		\begin{thmlist}
			\item\label{coritem1} 
			Suppose $S$ is pseudo Picard hyperbolic.
			If $X$ is of log general type, then $X$  is pseudo Picard hyperbolic.
			\item \label{coritem2} Suppose $S$ is strongly of log general type. 
			If $X$ is pseudo Brody hyperbolic, then $X$ is strongly of log general type.   \qed
		\end{thmlist} 
	\end{thm}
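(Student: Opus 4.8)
The plan is to treat the two parts by different routes: \cref{coritem2} will follow essentially formally from results already in hand, while \cref{coritem1} rests on a relative form—over the base $S$—of the Nevanlinna-theoretic \cref{thm2nd}, and that relativization is the real technical content. Throughout I fix a smooth projective compactification $\overline{X}$ carrying an extension $\overline{a}\colon\overline{X}\to\overline{A}\times\overline{S}$ of $a$, with $\overline{A}$ an equivariant compactification of $A$ and $\overline{b}=\pr_{\overline{S}}\circ\overline{a}$; I let $\mathrm{Exc}(a)\subsetneq X$ be the closed locus where $a$ fails to be quasi-finite onto its image, which is a proper subset since $\dim X=\dim a(X)$, and I use repeatedly that $b$ dominant forces $b^{-1}(W)\subsetneq X$ for every proper closed $W\subsetneq S$.

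For \cref{coritem2}, let $\Xi_S\subsetneq S$ witness that $S$ is strongly of log general type, $\Xi_X\subsetneq X$ witness pseudo Brody hyperbolicity of $X$, and set $\Xi:=\Xi_X\cup\mathrm{Exc}(a)\cup b^{-1}(\Xi_S)$, a proper Zariski closed subset. Given a positive-dimensional closed $Z\subseteq X$ with $Z\not\subseteq\Xi$, put $T:=\overline{b(Z)}$, so $T\not\subseteq\Xi_S$; after resolving $Z$ and passing to the Stein factorization of $\widetilde{Z}\to T$, let $F$ be a general fibre. Then $F$ is smooth quasi-projective, not contained in the preimage of $\Xi$, and—since $Z\not\subseteq\mathrm{Exc}(a)$—maps generically finitely onto its image in $A\times\{t\}\cong A$, hence has maximal quasi-Albanese dimension; so \cref{conj:GGL} holds for $F$ by \cite{CDY25}. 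If $F$ were not of log general type it would fail to be pseudo Brody hyperbolic, producing a non-constant entire curve $\bC\to F\subseteq X$ whose image misses $\Xi\supseteq\Xi_X$, contradicting pseudo Brody hyperbolicity of $X$; hence $F$ is of log general type. If $\dim T=0$ then $Z$ is birational to $F$ and we are done; otherwise $T$ is of log general type because $S$ is strongly of log general type, and the known case of $\overline{C}_{n,m}$ for fibrations whose general fibre is of log general type gives $\overline{\kappa}(Z)\ge\overline{\kappa}(F)+\overline{\kappa}(T)=\dim Z$, so $Z$ is of log general type.

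For \cref{coritem1}, the new ingredient is a relative version of \cref{thm2nd}: if $X$ is of log general type, $a\colon X\to A\times S$ is generically finite onto its image and $b$ is dominant, then there is a proper closed $\Xi_0\subsetneq X$ such that every $f\colon Y\to X$ as in \cref{thm2nd} with $f(Y)\not\subseteq\Xi_0$ and with $b\circ f\colon Y\to S$ having no essential singularity over $\infty$ is itself non-essential over $\infty$. This is proved by carrying out the machinery behind \cref{thm2nd}—the logarithmic derivative lemma, the second main theorem on the semi-abelian variety $A$, and the jet-differential estimates furnished by $\overline{\kappa}(X)=\dim X$—uniformly in the parameter $s\in S$, the $S$-component of $a\circ f$ entering only as a finite-order error term because $b\circ f$ is non-essential; this is the content of \cite[\S 10]{CDY25}. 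Granting it, set $\Xi:=\Xi_0\cup b^{-1}(\Xi_S)$, where $\Xi_S\subsetneq S$ witnesses pseudo Picard hyperbolicity of $S$: for $g\colon\bD^*\to X$ with $g(\bD^*)\not\subseteq\Xi$, either $b\circ g$ maps into $\Xi_S$—so $g$ maps into $b^{-1}(\Xi_S)\subseteq\Xi$, excluded—or $b\circ g$ extends across the puncture by pseudo Picard hyperbolicity of $S$, and then the relative theorem forces $g$ to extend. Hence $X$ is pseudo Picard hyperbolic.

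The main obstacle is exactly this relativization underlying \cref{coritem1}: the compact-fibre arguments of \cite{Yam15} and their non-compact refinements in \cite{CDY25} must be made uniform over the base $S$, which is delicate for the same reason (failure of Poincaré reducibility) already flagged for \cref{thm2nd} itself. By contrast \cref{coritem2} is largely formal once one invokes the established case of \cref{conj:GGL} in maximal quasi-Albanese dimension together with subadditivity of the logarithmic Kodaira dimension; the only care needed there is to check that a general fibre $F$ of $Z\to T$ genuinely escapes the fixed locus $\Xi$, which is immediate because such an $F$ passes through a general point of $Z\not\subseteq\Xi$.
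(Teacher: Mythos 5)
The survey gives no proof of this statement --- it is quoted verbatim from \cite[Corollary~10.8]{CDY25} and stated without argument --- so there is no in-paper proof to compare yours against; I can only assess the proposal on its merits. Your treatment of \cref{coritem2} is sound: reducing a subvariety $Z\not\subseteq\Xi$ to a general fibre $F$ of $Z\to\overline{b(Z)}$, noting that $F$ has maximal quasi-Albanese dimension (a generically finite map $F\to A$ forces $F\to\mathrm{Alb}(F)$ to be generically finite onto its image), invoking the case of \cref{conj:GGL} established in \cite{CDY25} to get $\bar\kappa(F)=\dim F$, and then applying Fujino's subadditivity over the log-general-type base is correct and essentially complete, modulo the routine check that a non-constant entire curve in $F$ lying in the exceptional locus of the resolution $\widetilde Z\to Z$ is already confined to a proper closed subset of the general fibre.

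The gap is in \cref{coritem1}. Your ``relative version of \cref{thm2nd}'' is not something you derive from the available results --- it \emph{is} the substance of the statement to be proved --- and your justification (``this is the content of \cite[\S 10]{CDY25}'') is circular, since Corollary~10.8 is itself a result of that section and the only ingredients from \cite{CDY25} quoted in this survey are \cref{thm2nd} and the maximal quasi-Albanese case of \cref{conj:GGL}. The relative statement genuinely does not follow from \cref{thm2nd}: the composite $X\to A\times S\to A$ need not be generically finite onto its image (take $X=C\times S$ with only $C\to A$ generically finite), so you cannot apply \cref{thm2nd} to the $A$-component alone; and a fibrewise application over $S$ fails both because the general fibre of $b$ need not be of log general type when $X$ is, and because \cref{thm2nd} produces an exceptional set $\Xi_s\subsetneq X_s$ depending on $s$ with no uniformity. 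What is actually required is a second-main-theorem estimate of the shape $T_f(r,L)\le C\bigl(T_{b\circ f}(r,L_S)+N_{\mathrm{ram}\,\pi}(r)\bigr)+o(T_f(r,L))+O(\log r)$ off a fixed $\Xi_0\subsetneq X$, from which \cref{coritem1} follows exactly as you describe; but establishing that estimate means redoing the Nevanlinna analysis behind \cref{thm2nd} with the factor $S$ present, and your proposal supplies only the assertion that this can be done. As written, \cref{coritem1} rests on an unproved claim rather than on a proof.
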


	\subsection{Hyperbolicity and non-archimedean local system}
	A crucial step for the proof of  \cref{main:hyper} is the following result.
	
	\begin{thm}[{\cite[Theorem F]{CDY22}}]\label{main:unbounded hyperbolic}
		Let $X$ be a quasi-projective normal variety and let $G$ be an   almost simple algebraic group defined over a non-archimedean local field $K$. If $\varrho:\pi_1(X)\to G(K)$ is a big, Zariski dense, and unbounded representation, then $X$ is   of log general type, and  pseudo Picard hyperbolic.  
	\end{thm}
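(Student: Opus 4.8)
The plan is to reduce the statement to the Nevanlinna-theoretic extension theorem \cref{thm2nd} (equivalently \cref{cor:GGL} with base a point), whose geometric hypothesis — a variety of log general type carrying a generically finite morphism to a semiabelian variety — I will manufacture from the harmonic map into the Bruhat--Tits building, using the big, Zariski-dense and unbounded hypotheses to verify the required non-degeneracy. First, by \cref{thm:BDDMex} there is a $\varrho$-equivariant pluriharmonic map $u\colon\widetilde{X}\to\Delta(G)_K$ with logarithmic energy growth; from it I take the multivalued logarithmic $1$-form $\eta_\varrho$ of \cref{def:multivaluedform}, the canonical closed positive current $T_\varrho$ with continuous potentials of \cref{def:canonical}, and the spectral covering $\pi\colon\overline{\xsp}\to\overline{X}$ of \cref{prop:spectral}, which carries genuine closed logarithmic $1$-forms $\omega_1,\dots,\omega_m\in H^0\big(\overline{\xsp},\pi^{*}\Omega_{\overline{X}}(\log D)\big)$ with $\pi^{*}T_\varrho$ proportional to $\sn\sum_i\omega_i\wedge\overline{\omega_i}$ and with branch locus inside the zero loci of the differences $\omega_i-\omega_j$. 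After a finite étale base change the span of the $\omega_i$ integrates to a partial quasi-Albanese morphism $a\colon\xsp\to A$ into a semiabelian variety $A$.

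Next, let $s_\varrho\colon X\to S_\varrho$ be the Katzarkov--Eyssidieux reduction of \cref{thm:KE}. If $\dim S_\varrho<\dim X$, a very general point of $X$ lies on a positive-dimensional general fibre $F$; the subgroup $N:=\operatorname{Im}[\pi_1(F^{\mathrm{norm}})\to\pi_1(X)]$ is normal in $\pi_1(X)$ — it is the image of the (normal) fibre subgroup in the fundamental group of the open locus, which surjects onto $\pi_1(X)$ — and $\varrho(N)$ is bounded by \cref{thm:KE}. Since $G$ is almost simple and $\varrho(\pi_1(X))$ is Zariski dense and unbounded, \cref{lem:BT2} forces $\varrho(N)$ to be finite, contradicting the bigness of $\varrho$. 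Hence $s_\varrho$ is generically finite, so by the equivalences in \cref{thm:KE} the form $\eta_\varrho$ restricts non-trivially to every positive-dimensional subvariety meeting the locus where $s_\varrho$ is finite; equivalently $\{T_\varrho\}$ is a big and nef class on $\overline{X}$ and $a\colon\xsp\to A$ is generically finite onto its image $Z\subseteq A$.

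Then I claim the Ueno fibration of $Z$ is trivial, whence $Z$ is of log general type. Otherwise a general fibre is a translate of a positive-dimensional semiabelian subvariety $B\subseteq A$; its preimage $F_B\subset\xsp$ is a positive-dimensional subvariety through a very general point, generically finite over that translate, so the restriction of the pullback of $\varrho$ to the normal subgroup $\operatorname{Im}[\pi_1(F_B^{\mathrm{norm}})\to\pi_1(\xsp)]$ — on which $\varrho$ pulls back to a Zariski-dense unbounded representation — is virtually abelian, hence has Zariski closure a normal virtually abelian subgroup of the almost simple group $G$, hence finite; then $\varrho(\operatorname{Im}[\pi_1(F_B)\to\pi_1(X)])$ is finite, again contradicting bigness. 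Since $a$ is generically finite onto the log-general-type $Z$ and the branch divisor of $\pi$ is cut out by sections of a symmetric power of $\Omega_{\overline{X}}(\log D)$, a comparison of log-canonical bundles together with the bigness of $\{T_\varrho\}$ shows $K_{\overline{X}}+D$ is big, i.e.\ $X$ is of log general type. Feeding the pair $(\xsp,\ a\colon\xsp\to A)$ into \cref{thm2nd} (or \cref{cor:GGL} with base a point) produces a proper Zariski-closed $\Xi'\subsetneq\xsp$ outside which every holomorphic map $\bD^{*}\to\xsp$ extends across the origin; descending along the finite map $\pi$, by lifting after a base change $z\mapsto z^{k}$ over the locus where $\pi$ is étale, yields that $X$ is pseudo Picard hyperbolic.

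The hard part will be the passage in the middle two steps from the purely algebraic notion of bigness of $\varrho$ to the analytic bigness of the canonical current: this hinges on the correct normality of fibre subgroups inside $\pi_1(X)$ itself and on the structure theory of subvarieties of semiabelian varieties to exclude a nontrivial abelian part, both fed into the Bruhat--Tits input \cref{lem:BT2}. The secondary difficulty is the bookkeeping needed to transfer "of log general type" and "pseudo Picard hyperbolic" back and forth between $X$ and its ramified spectral cover, which relies on controlling the branch locus by logarithmic-cotangent data.
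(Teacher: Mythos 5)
Your overall architecture matches the paper's: spectral covering, a (partial) Albanese map shown to be generically finite via boundedness of fibre images plus \cref{lem:BT2}, descent of positivity through the branch divisor, and reduction of Picard hyperbolicity to \cref{thm2nd}. But the middle step — getting ``log general type'' — has a genuine gap. You try to prove that the \emph{image} $Z=a(\xsp)$ in the semiabelian variety is of log general type by showing its Ueno fibration is trivial, and the contradiction you derive rests on the claim that $\varrho$ restricted to $\operatorname{Im}[\pi_1(F_B^{\mathrm{norm}})\to\pi_1(\xsp)]$ is virtually abelian because $F_B$ is generically finite over a translate of a semiabelian subvariety $B$. That inference is false: $\pi_1(F_B^{\mathrm{norm}})\to\pi_1(B)$ is surjective up to finite index but can have a huge kernel on which $\varrho$ is unconstrained, so nothing forces the image under $\varrho$ to be virtually abelian. (Concretely: a genus-$2$ curve double-covering an elliptic curve carries big Zariski-dense unbounded representations into almost simple groups; your argument would ``prove'' their images are virtually abelian. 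This example also shows the intermediate claim itself can fail — if the spectral forms span a rank-one subspace, $Z$ can be a semiabelian variety, hence not of log general type, while $\xsp$ still is.) The paper avoids this by running the contradiction on the log Iitaka fibration of $\xsp$ itself: its general fibre $F$ satisfies $\bar\kappa(F)=0$ \emph{and} $\dim F=\dim a(F)$, and the characterization of semiabelian varieties (\cite[Lemma 1.4]{CDY25}) then forces $\pi_1(F)$ itself to be abelian; only after that does the normal-abelian-subgroup-of-an-almost-simple-group argument apply. That characterization is the missing ingredient in your write-up, and $Z$ being of log general type is neither needed nor always true.

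Two secondary points. First, your assertion that $\{T_\varrho\}$ is big and nef is unjustified (and false when the spectral forms have rank one); the paper's descent of positivity to $X$ does not use it, but rather the $H$-invariant section $\prod h^*(\eta_i-\eta_j)$, which gives a nonzero map $\cO_{\overline X}(R)\to \Sym^M\Omega_{\overline X}(\log D)$ vanishing on the branch divisor, combined with \cref{thm:CP}; you gesture at this correctly, so just drop the current-theoretic bigness. Second, for Picard hyperbolicity the descent from $\xsp$ to $X$ cannot be done by a cyclic base change $z\mapsto z^k$: the lift $p\colon Y\to\bD^*$ of a disk meeting the branch locus is ramified at points accumulating at the puncture, and \cref{thm2nd} is applied to $f\colon Y\to\xsp$ only after establishing the Nevanlinna estimate $N_{\mathrm{ram}\,p}(r)=O(\log r)+o(T_f(r))$ outside a proper Zariski-closed subset (\cref{claim:small ram}); this estimate is a real input, not bookkeeping.
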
  
	We would like to sketch the idea of the proof of \cref{main:unbounded hyperbolic} since the methods are new even if $X$ is projective (compared with \cite{CCE15}).
	\begin{proof}[Proof of \cref{main:unbounded hyperbolic} (sketch)]
		For simplicity, we assume that $G$ is geometrically connected. 	Let $\pi:\overline{\xsp}\to \overline{X}$ be the spectral covering  associated with $\varrho$ in \cref{prop:spectral}. By \cref{prop:spectral}, it is a finite Galois covering with the Galois group $H$ and satisfies the following properties:
		\begin{itemize}
			\item there exists forms $\{\eta_1,\ldots,\eta_\ell\}\subset H^0(\overline{\xsp},\pi^*\Omega_{\overline{X}}^1(\log D))$ such that $\{\eta_1,\ldots,\eta_\ell\}$ coincides with the mutivalued one-forms $\pi^*\{\omega_1,\ldots,\omega_\ell\}$ induced by the $\varrho$-equivariant pluriharmonic map $u$ with logarithmic energy at infinity constructed in \cref{thm:BDDMex}.
			\item 
			Let us denote by ${\rm Ram}(\pi)$  the ramification locus of $\pi:\overline{\xsp}\to \overline{X}$. Then we have
			\begin{align}\label{eq:ramification}
				{\rm Ram}(\pi)\subset \bigcup_{\eta_i\neq\eta_j}(\eta_i-\eta_j=0).
			\end{align} 
			\item  
			$\{\eta_1,\ldots,\eta_\ell\}$ is invariant under the Galois group $H$. 
		\end{itemize}
		\begin{claim}\label{claim:generically}
			The quasi-Albanese map $a:\xsp\to A$  satisfies $\dim \xsp=\dim a(\xsp)$.
		\end{claim}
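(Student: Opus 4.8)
The plan is to reduce the claim to a statement about the span of the logarithmic $1$-forms $\eta_i$ on $\overline{\xsp}$, and then argue by contradiction via the Katzarkov--Eyssidieux reduction map of \cref{thm:KE}.

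First I would record that the quasi-Albanese map $a\colon \xsp\to A$ is (after possibly enlarging $A$) the morphism attached to the subspace $V:=\mathrm{Span}_{\bC}\{\eta_1,\dots,\eta_\ell\}$, viewed inside $H^0\bigl(\overline{\xsp},\Omega^1_{\overline{\xsp}}(\log D')\bigr)$ via the natural inclusion $\pi^{*}\Omega^1_{\overline X}(\log D)\hookrightarrow \Omega^1_{\overline{\xsp}}(\log D')$, where $D':=\pi^{-1}(D)_{\mathrm{red}}$. Hence $\dim a(\xsp)$ equals the generic rank of $da$, which equals the generic dimension of $\{\eta_i(x)\}_i\subset \Omega^1_{\overline{\xsp},x}$. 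Thus \cref{claim:generically} is equivalent to the assertion that at a general point $x\in\xsp$ the covectors $\eta_1(x),\dots,\eta_\ell(x)$ span the logarithmic cotangent space; equivalently, since by \cref{prop:spectral} the multiset $\{\eta_i\}$ is the $(1,0)$-part of $du$ read in a chart, that the $\varrho$-equivariant pluriharmonic map $u\colon\widetilde X\to\Delta(G)$ of \cref{thm:BDDMex} has differential of maximal rank at a general point.

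Then I would argue by contradiction. If $a$ is not generically finite onto its image, take the Stein factorization $\xsp\xrightarrow{g}W\to A$ of $a$; a general fibre $F$ of $g$ is irreducible of dimension $\dim\xsp-\dim W\ge 1$ and is contracted by $a$, so $\eta_i|_F\equiv 0$ for every $i$. Put $Z:=\pi(F)\subset X$. Since $\pi|_F\colon F\to Z$ is finite and surjective, hence (in characteristic zero) generically \'etale, and since by construction of the spectral covering $\pi^{*}\eta_\varrho=\{\eta_1,\dots,\eta_\ell\}$, pulling back gives $(\pi|_F)^{*}(\eta_\varrho|_Z)=\{\eta_i|_F\}=\{0\}$; injectivity of pull-back of (multivalued) differential forms along a generically \'etale morphism then forces $\eta_\varrho|_Z$ to be trivial. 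By \cref{thm:KE} (applied through a closed embedding $G\hookrightarrow\GL_N$), this means $s_\varrho(Z)$ is a point and $\varrho\bigl(\mathrm{Im}[\pi_1(Z^{\mathrm{norm}})\to\pi_1(X)]\bigr)$ is bounded. As $F$ varies over the general fibres of $g$, the subvarieties $Z=\pi(F)$ sweep out a Zariski dense open subset of $X$, so $s_\varrho$ contracts a covering family of positive-dimensional subvarieties, whence its general fibre is positive-dimensional and $\dim S_\varrho<\dim X$.

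The main obstacle is turning this into a contradiction. One needs that $\varrho$ being \emph{big} already forces its Katzarkov--Eyssidieux reduction map $s_\varrho$ to be generically finite onto its image; granting this, $\dim S_\varrho<\dim X$ is absurd and the claim follows. This reconciliation is the delicate point, because \cref{thm:KE} only equates $s_\varrho(Z)$ being a point with $\varrho|_{\pi_1(Z^{\mathrm{norm}})}$ being \emph{bounded}, whereas bigness a priori only yields that it is \emph{infinite}; bridging the gap requires analysing infinite bounded subgroups of $G(K)$ through the geometry of the Bruhat--Tits building $\Delta(G)$ (in the spirit of \cref{lem:BT2,lem:BT}) and, if necessary, iterating the construction together with a simultaneous Stein factorization as in \cref{lem:simultaneous}. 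This reduction is exactly what is carried out at the beginning of the proof of \cref{main:unbounded hyperbolic}, after which one may assume $s_\varrho$ generically finite; with \cref{claim:generically} in hand, $\xsp$ carries the morphism $a$ with $\dim a(\xsp)=\dim\xsp$ needed to invoke \cref{thm2nd,cor:GGL}.
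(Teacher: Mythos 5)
Your overall strategy---argue by contradiction, observe that the spectral one-forms $\eta_i$ vanish on the positive-dimensional general fibres of $a$, convert this into boundedness of the relevant image of the fundamental group, and then play this off against bigness---is the same as the paper's. But the decisive step is left open. You correctly note that \cref{thm:KE} only converts the vanishing of $\eta_\varrho$ into \emph{boundedness} of $\varrho(\mathrm{Im}[\pi_1(Z^{\mathrm{norm}})\to\pi_1(X)])$, whereas bigness only forbids this image from being \emph{finite}; you then defer the reconciliation to the unproved assertion that bigness forces $s_\varrho$ to be generically finite, which is essentially a restatement of what has to be shown and is not established by anything you wrote. That is a genuine gap, not a technicality.

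The paper closes it with one observation you never make: \emph{normality}. Working upstairs on $\xsp$ with $\tau:=\pi^*\varrho$, a connected component $F$ of a general fibre of $a$ is a general fibre of a fibration, so $\tau(\mathrm{Im}[\pi_1(F)\to\pi_1(\xsp)])$ is a \emph{normal} subgroup of $\tau(\pi_1(\xsp))$; it is bounded because the pluriharmonic map $u\circ\tilde{\pi}$ contracts the corresponding component $F'$ in the universal cover to a point of $\Delta(G)$. Since $\tau(\pi_1(\xsp))$ is unbounded and its Zariski closure contains $G^{\circ}(K)$ with $G$ almost simple, \cref{lem:BT2} applies verbatim and shows this bounded normal subgroup is in fact \emph{finite}, contradicting bigness of $\tau$. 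By pushing $F$ down to $Z=\pi(F)\subset X$ you discard exactly this normal-subgroup structure, which is why you are left ``analysing infinite bounded subgroups'' with no leverage. (A minor further inaccuracy: $\dim a(\xsp)$ is governed by \emph{all} logarithmic one-forms on $\overline{\xsp}$, not just the span of the $\eta_i$, so your opening ``equivalence'' is only an implication; this is harmless, since the contradiction argument only uses the correct direction, namely that the $\eta_i$ vanish on fibres of $a$.)
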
 
		Let us explain the proof of \cref{claim:generically}. Assume by contradiction that $\dim a(\xsp)<\dim \xsp$. Let $F$ be a connected component of a general fiber of $a$.  Then $\eta_i|_{F}\equiv 0$ for each $\eta_i$.  
		
		Let $\pi':\widetilde{\xsp}\to \xsp$ be the universal covering   and denote by $\tilde{\pi}:\widetilde{\xsp}\to \widetilde{X}$ be the map between universal covering lifting $\pi:\xsp\to X$.  Denote by $\tau=\pi^*\varrho: \pi_1(\xsp)\to G(K)$.  Then $u\circ\tilde{\pi}:\widetilde{\xsp}\to \Delta(G)$ is  $\tau$-equivariant harmonic map with logarithmic energy at infinity by \cref{item:pullback}. Let $F'$ be a connected component of $\pi'^{-1}(F)$.   Since 	$\{\eta_1,\ldots,\eta_\ell\}$  is generically the $(1,0)$-part of complexified differentials of $u\circ \tilde{\pi}$, it follows that $u\circ\tilde{\pi}(F')$ is a point. This implies that $\tau({\rm Im}[\pi_1(F)\to \pi_1(\xsp)])$ fixes a point in $\Delta(G)$, hence is bounded. 
		
		Note that $\tau({\rm Im}[\pi_1(F)\to \pi_1(\xsp)])$  is a normal subgroup of $\tau(\pi_1(\xsp))$.  Since $\tau(\pi_1(\xsp))$ is unbounded as $\varrho$ is unbounded, by \cref{lem:BT2}, we conclude that $\tau({\rm Im}[\pi_1(F)\to \pi_1(\xsp)])$  is finite. 
		
		Since we assume that $\varrho$ is big, $\tau$ is also big. We obtain a contradiction. Hence $\dim a(\xsp)=\dim \xsp$. 
		
		Therefore, the logarithmic Kodaira dimension $\bar{\kappa}(\xsp)\geq 0$. Assume that it is not maximal, then  the logarithmic Iitaka fibration
		$j:\xsp\to J$ has general fibers positive dimensional. Let $F$ be a general fiber  of $j$. Then $a|_F:F\to A$ is generically finite into the image and $\bar{\kappa}(F)=0$. By the criterion of abelian variety in \cite[Lemma 1.4]{CDY25}, we conclude that $\pi_1(F)$ is abelian. 
		
		Note that the Zariski closure of $\tau(\pi_1(\xsp))$ is also almost simple. 	Since \[\tau({\rm Im}[\pi_1(F)\to \pi_1(\xsp)])\]  is a abelian and normal subgroup of $\tau(\pi_1(\xsp))$ and $\varrho$ is Zariski dense, we conclude that the  Zariski closure of $\tau({\rm Im}[\pi_1(F)\to \pi_1(\xsp)])$, denoted by $N$, is  a normal subgroup of $G$. Since $G$ is almost simple, it follows that $N$ is finite. Hence $\tau({\rm Im}[\pi_1(F)\to \pi_1(\xsp)])$  is finite, contradicting with the assumption that $\varrho$ is big. Therefore, $j$ is birational, and we conclude that $\xsp$ is of log general type. 
		
		We will spread the positivity from $\xsp$ to $X$ to show that $X$ is of log general type. This step is innovative and as far as I know, the method has never appeared before.   
		Define a section
		$$
		\sigma:=\prod_{h\in H}\prod_{\eta_i\neq\eta_j}h^*(\eta_i-\eta_j)\in H^0(\overline{\xsp}, \Sym^{M}\pi^*\Omega_{\overline{X}}(\log D)), 
		$$
		which is non-zero. By \eqref{eq:ramification}, $\sigma$  vanishes at  ${\rm Ram}(\pi)$. 
		Since it is invariant under the $H$-action, it descends to a section
		$$
		\sigma^{H}\in H^0(\overline{X}, \Sym^{M}\Omega_{\overline{X}}(\log D))
		$$
		so that $\pi^*\sigma^{H}=\sigma$.   Let $R\subset X$ be the ramification locus of $\pi:\overline{\xsp}\to \overline{X}$. By the purity we know that $R$ is a divisor on $X$. Note that $\sigma^H$ vanishes at $R$. Therefore, it induces a non-trivial morphism
		\begin{align}\label{eq:symmetric}
			\cO_{\overline{X}}(R)\to  \Sym^{M}\Omega_{\overline{X}}(\log D).
		\end{align} 
		Since $\xsp$ is of log general type, and $\pi$ is unramified over $X-R$,  it follows that $K_{\overline{X}}+D+\overline{R}$ is big.   \eqref{eq:symmetric} together with  a theorem of Campana-P\u{a}un  in  \cref{thm:CP} below, implies that $K_{\overline{X}}+D$ is big. Therefore, $X$ is of log general type.
		
		\medspace
		
		Let us prove that $X$ is pseudo Picard hyperbolic. 
		Let $g:\bD^*\to X$ be non-constant holomorphic map that is not contained in ${\rm Ram}(\pi)$. Then there exists a Riemann surface $Y$, a proper surjective holomorphic map $p:Y\to\bD^*$ and a holomorphic map $f:Y\to \xsp$ such that we have the following commutative diagram
		\begin{equation*}
			\begin{tikzcd}
				Y\arrow[r, "f"] \arrow[d, "p"]& \xsp\arrow[d, "\pi"]\\
				\bD^*\arrow[r, "g"] &X
			\end{tikzcd}
		\end{equation*}
		A crucial fact is the estimation of  the ramification counting function of $p:Y\to \bD^*$ in \cite[Lemma 4.11]{CDY22} together with \cite[Lemma 11.2]{CDY25}.
		\begin{claim}\label{claim:small ram}
			There exists a proper Zariski closed subset $\Xi_1\subsetneq X$ such that if  $g(\bD^*)\not\subset \Xi_1$, then we have
			$$
			N_{{\rm ram}\,  \pi}(r)=O(\log r)+o(T_f(r)). 
			$$
		\end{claim}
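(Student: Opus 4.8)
The plan is to dominate the ramification divisor of $p\colon Y\to\bD^{*}$ by the pullbacks of the zero loci of the spectral $1$-forms $\eta_i-\eta_j$, and then to estimate the resulting counting functions by the logarithmic derivative lemma on semi-abelian varieties. First I would use that $\pi\colon\overline{\xsp}\to\overline{X}$ is \'etale over $\overline{X}\setminus{\rm Ram}(\pi)$ together with the identity $\pi\circ f=g\circ p$: every ramification point of $p$ lies over $f^{-1}({\rm Ram}(\pi))$, and the ramification divisor of $p$ is dominated by $f^{*}{\rm Ram}(\pi)$, so by the first main theorem $N_{{\rm ram}\, \pi}(r)\le N_{f}(r,{\rm Ram}(\pi))+O(1)$. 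Invoking \eqref{eq:ramification} to replace ${\rm Ram}(\pi)$ by $\bigcup_{\eta_i\neq\eta_j}\{\eta_i-\eta_j=0\}$ reduces the problem to bounding $N_{f}\bigl(r,\{\eta_i-\eta_j=0\}\bigr)$ for each pair $i\neq j$ with $\eta_i\neq\eta_j$; up to $O(1)$, this is the counting function of the zeros of the meromorphic $1$-form $f^{*}(\eta_i-\eta_j)$ on $Y$.

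Next I would exploit the quasi-Albanese morphism $a\colon\xsp\to A$ furnished by \cref{claim:generically}, for which $\dim\xsp=\dim a(\xsp)$ and $\eta_i=a^{*}\bar\eta_i$ for suitable logarithmic $1$-forms $\bar\eta_i$ on the semi-abelian variety $A$. Since $\bar\eta_i-\bar\eta_j$ is translation invariant, hence nowhere vanishing on $A$ itself, the form $f^{*}(\eta_i-\eta_j)=(a\circ f)^{*}(\bar\eta_i-\bar\eta_j)$ vanishes at a point of $Y$ only where the derivative of $a\circ f$ maps into the fixed hyperplane $\ker(\bar\eta_i-\bar\eta_j)$. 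The key input is then a Nevanlinna estimate --- this is exactly \cite[Lemma 4.11]{CDY22}, with the bookkeeping for the ramified covering $p\colon Y\to\bD^{*}$ supplied by \cite[Lemma 11.2]{CDY25} --- asserting that the zero-counting function of $(a\circ f)^{*}(\bar\eta_i-\bar\eta_j)$ on $Y(r)$ is $O(\log r)+o(T_{f}(r))\ ||$, provided $a\circ f(Y)$ is not contained in a translate of a proper semi-abelian subvariety of $A$ along which this logarithmic derivative bound degenerates. This follows from the logarithmic derivative lemma for maps into semi-abelian varieties (in the style of Noguchi--Winkelmann and Yamanoi), applied on the covering $Y\to\bD^{*}$; the logarithmic poles of $\bar\eta_i-\bar\eta_j$ along the boundary of $A$ contribute only to the $O(\log r)$ term, and it is here that the logarithmic energy growth of the harmonic map $u$ --- which is what makes $\eta_i$ a \emph{logarithmic} $1$-form on $\overline{\xsp}$ in the first place (see \cref{thm:BDDMex} and \cref{prop:spectral}) --- enters.

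Finally I would set $\Xi_1$ to be the union, over the finitely many pairs $(i,j)$, of the inverse images under $a$ of the translates of proper semi-abelian subvarieties produced above, pushed down to $X$ by the finite morphism $\pi$ (enlarged, if necessary, by $\pi({\rm Ram}(\pi))\cap X$); this is a proper Zariski closed subset of $X$ because $\xsp$ is of log general type, so in particular the relevant subvarieties are proper. For $g(\bD^{*})\not\subseteq\Xi_1$, equivalently $f(Y)\not\subseteq\pi^{-1}(\Xi_1)$, each $f^{*}(\eta_i-\eta_j)$ is not identically zero and the estimate above applies, so summing over $(i,j)$ gives $N_{{\rm ram}\, \pi}(r)=O(\log r)+o(T_{f}(r))$. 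I expect the main obstacle to be precisely this Nevanlinna estimate: adapting the logarithmic derivative lemma to the possibly non-algebraic, ramified covering $p\colon Y\to\bD^{*}$ --- where the quantity $N_{{\rm ram}\, \pi}(r)$ one is trying to bound reappears inside the estimate and must be controlled by a bootstrap --- and isolating the degeneracy locus $\Xi_1$, since the bound genuinely fails when $a\circ f$ is tangent to the directions annihilated by some $\bar\eta_i-\bar\eta_j$.
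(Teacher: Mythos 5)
Your proposal is correct and follows essentially the route the paper takes: the paper itself offers no proof of this claim, deferring entirely to \cite[Lemma 4.11]{CDY22} and \cite[Lemma 11.2]{CDY25}, and those lemmas are proved exactly by the chain you describe --- dominate the ramification of $p$ by $f^{*}\mathrm{Ram}(\pi)$, use \eqref{eq:ramification} to pass to the zero loci of $\eta_i-\eta_j$, and estimate the resulting counting functions via the quasi-Albanese map and the lemma on logarithmic derivatives on the covering $Y\to\bD^{*}$, with $\Xi_1$ absorbing the degeneracy loci. The only slight imprecision is your justification that $\Xi_1$ is proper: this comes from $\dim\xsp=\dim a(\xsp)$ (so that preimages of proper subvarieties and translated subtori are proper in $\xsp$, and no $\eta_i-\eta_j$ with $\eta_i\neq\eta_j$ vanishes identically), not from $\xsp$ being of log general type.
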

		Recall that $\xsp$ is of log general type and the quasi-Albanese map $a:\xsp\to A$ satisfies that $\dim \xsp=\dim a(\xsp)$. Therefore, we apply \cref{thm2nd} to conclude that there exists a  proper Zariski closed subset $\Xi_2\subsetneq X$  such that 
		$g$ does not have essential singularity at the origin provided that $g(\bD^*)\not\subset \Xi_1\cup\Xi_2$.  This proves that $X$ is pseudo Picard hyperbolic. 
	\end{proof}  
	
	\begin{thm}[{\cite[Corollary 8.7]{CP19}}]\label{thm:CP}
		Let $\overline{X}$ be a smooth projective variety and let $D$ be a simple normal crossing divisor on $\overline{X}$. Let $L$ be a line bundle on $\overline{X}$, which admits a morphism $L \rightarrow  \bigotimes^m \Omega^1_{\overline{X}}(\log D)$ for some $m>0$, and such that the $\mathbb{Q}$-bundle $\varepsilon\left(K_{\overline{X}}+D\right)+L$ is big for some rational number $\varepsilon \geq 0$. Then $K_{\overline{X}}+D$ is big. \qed
	\end{thm}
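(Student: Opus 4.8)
The plan is to deduce the statement from two inputs. The first is the \emph{birational stability of the logarithmic cotangent bundle} of Campana and Păun (established in the body of the same paper): when $K_{\overline X}+D$ is pseudo-effective, $\Omega^1_{\overline X}(\log D)$ has no torsion-free quotient of negative slope along any movable class. The second is the duality of Boucksom--Demailly--Păun--Peternell between the pseudo-effective cone of divisors and the movable cone of curves, which identifies the big cone with the interior of the pseudo-effective cone, i.e.\ with the classes that pair strictly positively with every nonzero movable curve class. First I would normalize the hypothesis: replacing $L$ by its saturation inside $\bigotimes^m\Omega^1_{\overline X}(\log D)$ only adds an effective divisor to $L$, so $\varepsilon(K_{\overline X}+D)+L$ stays big, and a rank-one saturated subsheaf of a locally free sheaf on the smooth variety $\overline X$ is reflexive, hence invertible. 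Thus we may assume $L$ is a line bundle with a generically injective morphism into $\bigotimes^m\Omega^1_{\overline X}(\log D)$; write $n=\dim\overline X$.

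Suppose for contradiction that $K_{\overline X}+D$ is not big, and distinguish two cases. If $K_{\overline X}+D$ is pseudo-effective, then by the Campana--Păun theorem $\Omega^1_{\overline X}(\log D)$, and hence every tensor power $\bigotimes^m\Omega^1_{\overline X}(\log D)$, has all of its torsion-free quotients of nonnegative slope along any movable class $\alpha$; comparing the degree of the sub-line-bundle $L$ with that of the ambient bundle therefore gives $L\cdot\alpha\le c\,(K_{\overline X}+D)\cdot\alpha$ for a fixed constant $c=c(m,n)>0$ and every movable $\alpha$. Consequently $(\varepsilon(K_{\overline X}+D)+L)\cdot\alpha\le(\varepsilon+c)(K_{\overline X}+D)\cdot\alpha$; since $\varepsilon(K_{\overline X}+D)+L$ is big it has strictly positive intersection with every nonzero movable class, so $(K_{\overline X}+D)\cdot\alpha>0$ for all such $\alpha$, and by the BDPP duality the pseudo-effective class $K_{\overline X}+D$ lies in the interior of the pseudo-effective cone, i.e.\ is big --- a contradiction. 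If instead $K_{\overline X}+D$ is not pseudo-effective, then the logarithmic version of the BDPP uniruledness criterion (again Campana--Păun) provides a covering family of rational curves $\{C_t\}$ on $\overline X$ along which $\Omega^1_{\overline X}(\log D)$ is anti-nef and $(K_{\overline X}+D)\cdot C_t<0$; anti-nefness passes to $\bigotimes^m\Omega^1_{\overline X}(\log D)$ and hence $L\cdot C_t\le 0$, so $(\varepsilon(K_{\overline X}+D)+L)\cdot C_t\le 0$, which contradicts the fact that a big line bundle (written as an ample plus an effective $\mathbb Q$-divisor) meets a general member of a covering family positively.

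The genuine content is the Campana--Păun positivity theorem for $\Omega^1_{\overline X}(\log D)$ --- equivalently, the log BDPP uniruledness criterion --- which I would cite as a black box; its proof runs through the theory of foliations of positive slope, orbifold Bogomolov-type inequalities, and Miyaoka's characterization of uniruledness. Within the reduction itself the only delicate points are the logarithmic bookkeeping: one must saturate correctly inside the tensor power, work with the appropriate notion of movable class, and, in the non-pseudo-effective case, handle the ``orbifold slope'' of $\Omega^1_{\overline X}(\log D)$ along rational curves that meet $D$, tracking the weights coming from $D\cdot C_t$ so that the restricted bundle is genuinely anti-nef. These are exactly the issues that the orbifold/log formalism of Campana--Păun is designed to address.
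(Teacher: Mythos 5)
First, a caveat on the comparison: the survey does not prove this statement at all --- it is quoted from Campana--P\u{a}un \cite{CP19} with an immediate \textup{\qedsymbol} --- so your proposal has to be measured against the argument in that reference. Your normalization (saturating $L$, which only adds an effective divisor) and your treatment of the pseudo-effective case are correct and do match the actual mechanism: once $K_{\overline X}+D$ is pseudo-effective, generic semipositivity gives $\mu_{\alpha,\min}(\Omega^1_{\overline X}(\log D))\ge 0$ for every movable class $\alpha$, hence $\mu_{\alpha,\max}\le (K_{\overline X}+D)\cdot\alpha$ and, using semistability of tensor products with respect to movable classes, $L\cdot\alpha\le m\,(K_{\overline X}+D)\cdot\alpha$; bigness of $\varepsilon(K_{\overline X}+D)+L$ then forces $(K_{\overline X}+D)\cdot\alpha>0$ for every nonzero movable $\alpha$, and BDPP duality upgrades pseudo-effectivity to bigness.

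The gap is in your second case. The statement you invoke --- that $K_{\overline X}+D$ not pseudo-effective yields a covering family of rational curves $C_t$ with $(K_{\overline X}+D)\cdot C_t<0$ along which the \emph{full} bundle $\Omega^1_{\overline X}(\log D)$ is anti-nef --- is not a citable theorem, from \cite{CP19} or elsewhere; it is essentially the logarithmic/orbifold analogue of the BDPP--Miyaoka--Mori uniruledness criterion, which is an open conjecture of Campana. What one does get for free (since $K_{\overline X}$ itself is then not pseudo-effective, $\overline X$ is uniruled and carries free curves) only makes $\Omega^1_{\overline X}\big|_{C_t}$ anti-nef; the residue extension by $\bigoplus_i\sO_{D_i}$ can create sub-line bundles of positive degree on $C_t$ when $D\cdot C_t>0$, so anti-nefness of the log bundle does not follow. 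The route taken in \cite{CP19} is genuinely different: from a movable $\alpha$ with $(K_{\overline X}+D)\cdot\alpha<0$ one extracts the maximal destabilizing subsheaf $\sF\subset T_{\overline X}(-\log D)$, which has positive minimal slope and is therefore an algebraically integrable foliation with rationally connected orbifold leaves by their main theorem; one then restricts $L$ to the leaf closures and runs an induction peeling off the conormal directions of $\sF$ to conclude $L\cdot\alpha\le 0$, contradicting $L\cdot\alpha>0$ (which follows from bigness of $\varepsilon(K_{\overline X}+D)+L$ and $(K_{\overline X}+D)\cdot\alpha<0$). So the non-pseudo-effective case is not ``logarithmic bookkeeping'': it is the foliation-theoretic heart of the corollary, and as written your argument does not reach it.
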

	
	\subsection{Proof of \cref{main:hyper} when ${\rm char}\, K=0$ (sketch)} 
	We can assume that $K=\bC$.  For simplicity, we assume that the Zariski closure $G$ of $\varrho(\pi_1(X))$  is \emph{almost simple}. There are several cases that occurs. 
	
	\noindent {\bf Case 1. $\varrho$ is rigid.}   It means that for any continuous deformation $\varrho_t$ of $\varrho$, we have $[\varrho_t]=[\varrho]$, where $[\varrho]$ denotes the image of $\varrho$ in the Betti moduli space $M_{\rm B}(X,N)(\bC):=M_{\rm B}(\pi_1(X),\GL_{N})(\bC)$.  By Mochizuki's extension of \cref{cor:rigid} to the quasi-projective setting \cite{Moc06}, $\varrho$ underlies a $\bC$-variation of Hodge structure (cf. also \cite[\S 6]{CDY22} for a more self-contained proof). Moreover, after replacing $\varrho$ by a suitable conjugate, we may assume there exists  a number field $k\subset \overline{\bQ}$ such that 
	\begin{itemize}
		\item $G$ is defined over $k$;
		\item we have the factorization $\varrho:\pi_1(X)\to G(k)$;
		\item $\varrho(\pi_1(X))$ is Zariski dense in $G$.
	\end{itemize}
	
	\noindent {\bf Case 1.1.}   Assume that for each non-archimedean place $v$ of $k$, the composite $\varrho_v:\pi_1(X)\to \GL_{N}(k_v)$ of $\varrho$ and $k\hookrightarrow k_v$, is bounded. Here $k_v$ denotes the non-archimedean completion of $k$ with respect to $v$. 
	
	If this case occurs, we have a factorization $\varrho:\pi_1(X)\to \GL_{N}(\cO_k)$. Let us denote by  ${\rm Ar}(k)$ the set of archimedean places of $k$.  Note that ${\rm GL}_N(\cO_k)\to \prod_{w\in {\rm Ar}(k)}{\rm GL}_N(\bC)$ is a discrete subgroup  by \cite[Proposition 6.1.3]{Zim}. We denote by $\varrho_w:\pi_1(X)\to \GL_{N}(\bC)$ the composite of $\varrho$ and $w:k\hookrightarrow \bC$.  Then $\varrho_w$ is also rigid and thus underlies a $\bC$-VHS.   It follows that  for the product representation
	$$
	\prod_{w\in {\rm Ar}(k)}\varrho_w:\pi_1(X)\to \prod_{w\in {\rm Ar}(k)}{\rm GL}_N(\bC),
	$$
	its image $\Gamma$ is discrete. 
	
	Let $\sD$ be the period domain associated with the $\bC$-VHS of
	$\sigma := \prod_{w\in {\rm Ar}(k)} \varrho_w$.
	Since $\Gamma$ acts discretely on $\sD$, the quotient $\sD/\Gamma$ is a complex space.
	Let $p\colon X \to \sD/\Gamma$ be the period map.
	As we assume that $\varrho$ is big, the representation $\sigma$ is also big.
	By \cref{lem:period}, we have $\dim X = \dim p(X)$.
	Applying \cref{thm:PicardVHS}, we conclude that $X$ is pseudo-Picard hyperbolic, and  is strongly of log general type by \cite{BC20,CD21}.
	
	\medspace

	\noindent\textbf{Case 1.2.}
	Assume that there exists a non-archimedean place $v$ of $k$ such that the composite
	$\varrho_v \colon \pi_1(X) \to G(k_v)$, obtained from $\varrho$ via the embedding $k \hookrightarrow k_v$, is unbounded.
	Note that $\varrho_v(\pi_1(X))$ is Zariski dense in $G$.
	Since $\varrho$ is big, the representation $\varrho_v$ is also big.
	Therefore, the assumptions of \cref{main:unbounded hyperbolic} are satisfied, and the theorem follows.
	
	\noindent {\bf Case 2: $\varrho$ is non-rigid.} In the previous work like \cite{CS08,Eys04}, the authors constructed unbounded representations using curves in character varieties in positive characteristic (after taking reduction mod $p$).  Note that this is quite natural in positive characteristic representation as we have seen in \cref{thm:Shapositive}.   However, once we made some reduction mod $p$ arguments,  these unbounded representations might not be Zariski dense in $G$ nor big (hence we cannot apply \cref{main:unbounded hyperbolic}).    In \cite{CDY22} we introduce a completely new method to construct unbounded representations and avoid reduction mod $p$. 
	\begin{claim}[{\cite[Proposition 6.1]{CDY22}}]\label{claim:unbounded}
		If $\varrho:\pi_1(X)\to G(\bC)$ is non-rigid, then there exists a  big, Zariski dense, and unbounded representation $\varrho':\pi_1(X)\to G(K)$, where $K$ is a finite extension of some $\bQ_p$ with $p$ prime.  
	\end{claim}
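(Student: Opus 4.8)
To prove \cref{claim:unbounded} the plan is to produce $\varrho'$ entirely within characteristic zero, so that the loss of Zariski density and bigness inherent in any reduction-modulo-$p$ argument (as used in \cite{CS08,Eys04}) is avoided from the outset. Fix a faithful embedding $G\hookrightarrow\GL_N$ defined over a number field $k$, which we freely enlarge by finite extensions when convenient. Since $\varrho$ is non-rigid, the irreducible component $M_0$ of $M_{\rm B}(X,G)$ through $[\varrho]$ has positive dimension, so one can choose an irreducible curve $C\subset M_0$ through $[\varrho]$ defined over $k$. As $\varrho$ is Zariski dense in $G$, hence $G$-irreducible, its $G$-orbit in the representation scheme $R(X,G)$ is closed and equals the fibre over $[\varrho]$; therefore any component of the preimage of $C$ that dominates $C$ meets this orbit, and one can extract an integral affine curve $\widetilde C\subset R(X,G)$, geometrically connected over $k$, dominating $C$ and meeting the orbit in a point $\widetilde x_0$ with $\varrho_{\widetilde x_0}$ conjugate to $\varrho$. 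Writing $F=k(\widetilde C)$, the generic point of $\widetilde C$ gives a representation $\varrho_F\colon\pi_1(X)\to G(F)$.

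The first key step is that $\varrho_F$ automatically inherits Zariski density and bigness. For Zariski density: the Zariski closure of $\varrho_F(\pi_1(X))$ in $G_F$ specializes at $\widetilde x_0$ to a closed subgroup containing $\overline{\varrho(\pi_1(X))}=G$, so by semicontinuity of fibre dimension it has dimension $\dim G$ and hence equals $G_F$. For bigness: given a positive-dimensional closed subvariety $Z\subset X$ through a very general point, put $H_Z:=\operatorname{Im}[\pi_1(Z^{\mathrm{norm}})\to\pi_1(X)]$, a finitely generated group; if $\varrho_F(H_Z)$ were finite, then its kernel $H_Z'$, of finite index and generated by some $h_1,\dots,h_m$, would satisfy $\varrho_F(h_i)=e$, and since each condition ``$\rho(h_i)=e$'' is closed in $R(X,G)$ and holds at the generic point of the integral curve $\widetilde C$, it holds on all of $\widetilde C$, in particular at $\widetilde x_0$; hence $\varrho(H_Z')=\{e\}$ and $\varrho(H_Z)$ would be finite, contradicting that $\varrho$ is big. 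Thus $\varrho_F(H_Z)$ is infinite for every such $Z$.

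The second key step is to realize $F$ inside a finite extension of some $\bQ_p$ so that the image becomes unbounded. The finite map $M_{\rm B}(X,G)\to M_{\rm B}(X,\GL_N)$ cannot contract the curve, so, the coordinate ring of $M_{\rm B}(X,\GL_N)$ being generated by the trace functions $\rho\mapsto\operatorname{tr}\rho(\gamma)$, there is $\gamma_0\in\pi_1(X)$ with $f:=\operatorname{tr}\varrho_F(\gamma_0)\in F$ non-constant, hence transcendental over $k$. Fix any prime $p$ and a place $\mathfrak p\mid p$ of $k$, let $L=k_{\mathfrak p}$ (a finite extension of $\bQ_p$) with the induced embedding $k\hookrightarrow L$; as $L$ is uncountable it contains $t_0$ transcendental over $k$, and after multiplying by a suitable power of a uniformizer we may assume $v_L(t_0)<0$. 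The assignment $f\mapsto t_0$ defines an embedding $k(f)\hookrightarrow L$ extending $k\hookrightarrow L$; since $F/k(f)$ is finite and $\overline L$ is algebraically closed, it extends to an embedding $\iota\colon F\hookrightarrow\overline L$ whose image $K:=\iota(F)$ is a finite extension of $\bQ_p$. Set $\varrho':=\iota\circ\varrho_F\colon\pi_1(X)\to G(K)$. As $\iota$ is a field embedding, the image of $\varrho'$ on any subgroup is isomorphic to that of $\varrho_F$, so $\varrho'$ is big by the previous step, and it remains Zariski dense since Zariski density is preserved under base change of fields. Finally $\operatorname{tr}\varrho'(\gamma_0)=t_0\notin\mathcal O_K$, so $\varrho'(\pi_1(X))$ cannot be conjugated into $\GL_N(\mathcal O_K)$, i.e.\ $\varrho'$ is unbounded --- precisely the hypotheses needed to invoke \cref{main:unbounded hyperbolic}.

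I expect the genuine obstacle to be exactly the tension this argument is built to resolve: one must produce $\varrho'$ that is simultaneously big, Zariski dense and unbounded, while the classical device for unboundedness --- reducing a non-constant curve in the character variety modulo $p$ and completing at a boundary point --- typically destroys the first two properties. The idea making the plan succeed is to keep everything in characteristic zero and instead extract unboundedness from a transcendental substitution $f\mapsto t_0$ into a characteristic-zero local field $K/\bQ_p$: this is legitimate because $K$ is locally compact, so the Gromov--Schoen-type theory of \cref{thm:BDDMex} still applies, whereas bigness and Zariski density are now inherited for free by passing to the generic point of the family and through the faithful embedding $\iota$. The remaining work is bookkeeping --- arranging $\widetilde C$ to be an integral curve meeting a lift of $[\varrho]$, and choosing $t_0$ transcendental with negative valuation --- and is elementary.
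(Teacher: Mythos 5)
Your proposal is correct and follows essentially the same route as the paper's (i.e.\ \cite[Proposition 6.1]{CDY22}): stay in characteristic zero, take a curve through $[\varrho]$ in the positive-dimensional component of the character variety, inherit Zariski density by openness and bigness by specializing closed conditions from the generic point back to the orbit of $\varrho$, and extract unboundedness by embedding the function field into a $p$-adic field so that a non-constant trace function becomes non-integral — this is the concrete form of the survey's ``bounded locus is compact, affine component is not'' heuristic. One small correction: $\iota(F)$ itself is a countable field, hence not a finite extension of $\bQ_p$; you should take $K$ to be a finite extension of $L$ containing $\iota(F)$ (which exists since $\iota(F)$ is finite over $k(t_0)\subset L$), after which the argument goes through unchanged.
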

	The idea of the proof of \cref{claim:unbounded} is roughly that, for the set of bounded representations $R$ in the representation variety $R_{\rm B}(\pi_1(X),G)(K)$, its image in the character variety $M_{\rm B}(\pi_1(X),G)(K)$ is compact. Since $\varrho$ is non-rigid and $M_{\rm B}(\pi_1(X),G)$ is affine, the geometric connected component of  $M_{\rm B}(\pi_1(X),G)$ containing $\varrho$ is non-compact. Hence there exists some unbounded representation. Moreover, since Zariski density of a representation into an almost simple algebraic group is a Zariski open condition, we may assume that such an unbounded representation is Zariski dense.
	To ensure that it is big, some additional work is required; we refer the reader to \cite[Proposition~6.1]{CDY22} for further details.
	
	Since $\varrho$ is non-rigid, by \cref{claim:unbounded} we can construct a  big, Zariski dense, and unbounded representation $\varrho':\pi_1(X)\to G(K)$, where $K$ is some non-archimedean local field. We then apply \cref{main:unbounded hyperbolic}  to conclude the theorem.

	\subsection{On the generalized Green-Griffiths-Lang conjecture}\label{subsec:20230427}
	
	\begin{proof}[Proof of \cref{thm:GGL} (sketch)]
		
		\noindent \textbf{Case 1: ${\rm char}\, K=0$}.  
		We may assume that $K=\bC$. Let $G$ be the Zariski closure of $\varrho$, which is a complex reductive group as we assume that $\varrho$ is reductive. We may assume that $G$ is connected after we replace $X$ by a finite \'etale cover. Let $\cD G$ be the derived group of $G$ and let $R(G)$ be the radical of $G$. Define $G_1:=G/R(G)$ which is semisimple and $G_2:=G/\cD G$ which is a torus.  Then $G\to G_1\times G_2$ is  an isogeny. 
		
		Consider the representation $\sigma:\pi_1(X)\to G_1(\bC)$ by composing $\varrho$ with the quotient $G\to G_1$. Then $\sigma$ is Zariski dense.  One can show that,  after replacing $X$ by a finite \'etale cover and a birational proper modification, there exists a dominant morphism $f:X\to Y$ with connected general fibers, and  a big and Zariski dense representation \(\tau : \pi_{1}(Y) \to G(K)\) such that  $f^*\tau=\sigma$ (cf. \cite[Proposition 2.5]{CDY22}).     Therefore, by \cref{main:hyper}, we conclude that $Y$ is pseudo Picard hyperbolic and strongly of log general type, if it is not a point.  
		
		Consider the morphism $(f,{\rm alb_X}):X\to Y\times A$, where ${\rm alb}_X:X\to A$ denotes the quasi-Albanese map of $X$.  Since $\varrho$ is big, we can show that $g:=(f,{\rm alb_X})$  is generically finite into its image. Hence we apply \cref{cor:GGL} to conclude \cref{thm:GGL}.

		\medspace

		\noindent \textbf{Case 2: ${\rm char}\, K=p>0$}.  
		
		Let $\overline{X}$ be a smooth projective compactification of $X$ such that
		$D := \overline{X} \setminus X$ is a simple normal crossing divisor.
		By the same arguments as in \cref{thm:Shapositive}, together with the assumption
		that $\varrho$ is big, we can show that there exist unbounded representations
		\[
		\{\tau_i \colon \pi_1(X) \to \GL_N(K_i)\}_{i=1,\ldots,k},
		\]
		where each $K_i$ is a finite extension of $ {\bF_{q_i}((t))}$ with
		$q_i = p^{n_i}$ for some $n_i \in \bN$, such that, for the Katzarkov--Eyssidieux
		reduction maps
		$s_{\tau_i} \colon X \to S_{\tau_i}$ associated with $\tau_i$, the product map
		\[
		(s_{\tau_1}, \ldots, s_{\tau_k}) \colon X \longrightarrow
		S_{\tau_1} \times \cdots \times S_{\tau_k}
		\]
		is generically finite onto its image.

		By \cref{prop:spectral}, there exists a spectral covering $\pi:\xsp\to X$ of Galois group $H$ such that 
		\begin{enumerate}[label={\rm (\alph*)}]
			\item \label{itea} there exists (spectral) forms $\{\eta_1,\ldots,\eta_m\} \subset H^0(\overline{\xsp}, \pi^*\Omega_{\overline{X}}(\log D))$ associated with $\tau_1,\ldots,\tau_k$, which are invariant under $H$;
			\item  \label{iteb}$\pi$ is \'etale outside  
			\begin{align}\label{eq:rami}
				R:=\{x\in \overline{\xsp} \mid \exists \eta_{i}\neq\eta_j  \mbox{ with }(\eta_i-\eta_j)(x)=0\} 
			\end{align}
			\item \label{itec}There exists a morphism $a:\xsp\to A$ to a semi-abelian variety $A$ with $H$ acting on $A$  such that $a$ is $H$-equivariant.
			\item \label{ited} The quasi-Stein factorization of the quotient $X\to A/H$ of $a$ by $H$, coincides with the   quasi-Stein factorization of $(s_{\tau_1},\ldots, s_{\tau_k})$.
		\end{enumerate}    Therefore, we have   $\dim \xsp=\dim a(\xsp)$.  
		
		Assume that $X$ is of log general type. 
		We will use   notions of Nevanlinna theory   in \cref{sec:second}.   
		For  any holomorphic map $f:\bC_{>\delta}\to X$ whose image is not contained in $\pi(R)$, there exists  a proper surjective holomorphic map   $p:Y\to \bC_{>\delta}$  from a connected Riemann surface $Y$ to  $\bC_{>\delta}$ and  a  holomorphic map $g:Y\to \xsp$  satisfying the following diagram:
		\begin{equation}\label{figure:curve}
			\begin{tikzcd}
				Y\arrow[r, "g"] \arrow[d, "p"] & \xsp\arrow[d, "\pi"]\\
				\bC_{>\delta}\arrow[r, "f"] & X
			\end{tikzcd}
		\end{equation}By \cref{claim:small ram},  there exists a proper Zariski closed subset $\Xi\subsetneq X$ such that for  any holomorphic map $f:\bC_{>\delta}\to X$ whose image not contained in $\Xi$,  
		one has  
		\begin{align*} 
			N_{{\rm ram}\, p}(r) =o( T_{g}(r,L)) + O(\log r) ||,
		\end{align*}  
		where $L$ is an ample line bundle on $\overline{\xsp}$  and $T_g(r,L)$ is the Nevanlinna order function.  
		Note that $\xsp$ of log general type as we assume that $X$ is of log general type and $\pi:\xsp\to X$ is a Galois cover.  We apply \cref{thm2nd} to conclude that $f$ has no essential singularity at the origin,
		which implies that $X$ is pseudo Picard hyperbolic.

		\medspace
		
		Assume that $X$ is pseudo Brody hyperbolic. Then $\xsp$ is also pseudo Brody hyperbolic, and by applying \cref{cor:GGL} with $S$ being a point, we conclude that $\xsp$ is of log general type. We then use exactly the same arguments as in the proof of \cref{main:unbounded hyperbolic} to spread the positivity of $\xsp$ to $X$, relying on \cref{thm:CP} to show that $X$ is of log general type. 
	\end{proof}
	
	\subsection{Proof of \cref{main:hyper} when ${\rm char}\, K=p>0$ (sketch)}  
	We will still maintain the same notations as introduced in the proof of \cref{thm:GGL}.	Let $\pi:\xsp\to X$ be the Galois covering  defined therein.  Consider the representation $\pi^*\varrho:\pi_1(\xsp)\to G(K)$, which is Zariski dense. By the proof of \cref{thm:GGL}, there exists a morphism $a:\xsp\to A$ where $A$ is a semiabelian variety such that $\dim \xsp=\dim a(\xsp)$. Hence we have $\bar{\kappa}(\xsp)\geq 0$.   By the same arguments as in the proof of \cref{main:unbounded hyperbolic}, one can show that 
	$\xsp$ is of log general type.   
	
	We now use  \Cref{claim:small ram} together with \cref{thm2nd}     to conclude that $X$ is pseudo Picard hyperbolic.

	\section{Topology of algebraic varieties in the presence of a big local system}\label{sec:topology2}
 In this section, we outline the proofs of the theorems stated in \cref{sec:topology}, using the techniques in \cref{sec:sha}, following a beautiful strategy initiated by Arapura--Wang~\cite{AW25}.

	\subsection{Proof of linear Chern-Hopf-Thurston conjecture}
	In this subsection, we skech the idea of the proof of \cref{thm:CHT} when $\varrho$ is semisimple and large. 
	
	For any perverse sheaf $\cP$, its characteristic cycle is 
	$$
	CC(\cP)=\sum_{i=1}^{m}n_i T_{Z_i}^*X,
	$$ 
	where $n_i\in \bN$, $Z_i$ is an irreducible subvariety of $X$, and \(T^{*}_{Z_i}X\) denotes the conormal bundle of \(Z_i\) in the cotangent bundle \(T^{*}X\).
	Each \(T^{*}_{Z_i}X\) is a conic Lagrangian cycle of \(T^{*}X\).
	The crucial idea, initiated by Arapura and Wang \cite{AW25}, is the following formula:
	\begin{align}
		\chi(X,\cP)=CC(\cP)\cdot  T_X^*X,
	\end{align}
	where \(T_X^*X\) denotes the conormal bundle of \(X\), namely the zero section of the cotangent 
	bundle \(T^*X \to X\). 
	Therefore, in order to prove \cref{thm:CHT}, it suffices to show that
	\begin{thm}\label{thm:CHT2}
Let $X$ be a smooth projective variety and let $\varrho:\pi_1(X)\to \GL_{N}(K)$ be a large representation, for any field $K$. Then for any closed subvariety $Z$ of $X$, we always have  		 	\[
		 T_Z^*X \cdot T_X^*X \ge 0
		 \]
		 for any closed subvariety \(Z \subset X\).
	\end{thm}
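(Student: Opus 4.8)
The plan is to prove \cref{thm:CHT2} by converting the intersection number into an Euler characteristic that is controlled by the holomorphic convexity and the positivity furnished by the machinery of \cref{sec:sha}. First I would recall, via MacPherson's local Euler obstruction $\mathrm{Eu}_Z$, that the conic Lagrangian cycle attached to $\mathrm{Eu}_Z$ is $(-1)^{\dim Z}[T_Z^*X]$ and that $\int_X c_*(\mathrm{Eu}_Z)$ is the degree-zero part of the Mather class of $Z$; hence, writing $d=\dim Z$, $\nu\colon\widehat Z\to Z$ for the Nash blow-up, and $\widehat T\subset\nu^*T_X$ for the tautological rank-$d$ subbundle,
\[
T_Z^*X\cdot T_X^*X=(-1)^{d}\int_{\widehat Z}c_{d}(\widehat T)=\int_{\widehat Z}c_{d}\bigl(\widehat T^{\vee}\bigr).
\]
For $Z$ smooth this is just $(-1)^{d}\chi(Z)$, so \cref{thm:CHT2} simultaneously contains the linear Chern--Hopf--Thurston inequality for each subvariety of $X$ and encodes a correction for its singularities. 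I would emphasize that one cannot replace $\widehat T^{\vee}$ by the cotangent bundle of a resolution: the resulting statement $(-1)^{\dim}\chi\ge 0$ is false for varieties with merely \emph{big} fundamental group, and it is precisely the extra positivity of $\widehat T^{\vee}$ — on every fibre contracted by $\widehat Z\to X$ it is a quotient of a trivial bundle, hence globally generated — that rescues the inequality.

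Next I would interpret the right-hand side homologically. The constructible function $\mathrm{Eu}_Z$ pulls back along the covering $p\colon\widetilde X_\varrho\to X$ associated with $\varrho$ (Galois with group $\Gamma:=\varrho(\pi_1(X))$), and, via the relation between characteristic cycles and Euler characteristics together with an $L^{2}$-index theorem in the sense of Atiyah, $\int_X c_*(\mathrm{Eu}_Z)$ equals a von Neumann Euler characteristic $\chi_{\Gamma,(2)}\bigl(\widetilde X_\varrho,\,p^*\mathrm{Eu}_Z\bigr)$, namely the MacPherson combination of the $L^{2}$-Betti numbers of the preimages in $\widetilde X_\varrho$ of the stratum closures of $Z$. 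Each such preimage is a closed analytic subset of $\widetilde X_\varrho$, hence Stein provided $\widetilde X_\varrho$ is — which holds in characteristic zero, since $\varrho$ is large and, after a finite \'etale base change and the reduction to the reductive case via \cref{thm:Shafarevich}, the covering $\widetilde X_\varrho$ is holomorphically convex by the methods behind \cref{thm:EKPR}, and then Stein because largeness excludes positive-dimensional compact subvarieties. Equipping these Stein subvarieties with the complete K\"ahler metric induced by the big and nef class $\{\sum_i T_{\tau_i}+\sqrt{-1}\,\mathrm{tr}(\theta\wedge\theta^{*})\}$ of \cref{rem:byproduct} — which restricts to a class of the same type on every stratum closure by \cref{prop:cons} and \cref{lem:period} — an $L^{2}$-vanishing theorem in the spirit of Gromov and Eyssidieux forces their reduced $L^{2}$-cohomology, and the versions twisted by $\widehat T^{\vee}$, to be concentrated in the middle degree; a stratum $S$ of complex dimension $d_S$ then contributes $(-1)^{d_S}$ times a nonnegative von Neumann dimension, and reassembling the $\mathrm{Eu}_Z$-weights as in MacPherson's formula yields $(-1)^{d}(T_Z^*X\cdot T_X^*X)=\chi_{\Gamma,(2)}(\widetilde X_\varrho,p^*\mathrm{Eu}_Z)\ge 0$, with the correct sign.

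In positive characteristic there is no $L^{2}$-Hodge theory and $\widetilde X_\varrho$ need not be Stein, so I would argue algebraically: using \cref{thm:Shapositive} and the Katzarkov--Eyssidieux reductions, pass to a spectral covering on which the multivalued forms $\eta_\tau$ of \cref{def:multivaluedform} and the canonical currents $T_\tau$ of \cref{def:canonical} produce a big and nef model of the logarithmic cotangent bundle; combined with \cref{thm:CP} this supplies enough (generic) positivity of the (log-)cotangent sheaf to express $\int_{\widehat Z}c_{d}(\widehat T^{\vee})$ as a nonnegative combination of top Chern, or Schur, numbers of nef vector bundles, which are nonnegative by Fulton--Lazarsfeld positivity. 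In both characteristics the argument proceeds by induction on $\dim Z$, the inductive hypothesis being used to absorb the contributions of the singular strata of $Z$.

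The main obstacle is the middle-degree $L^{2}$-vanishing in the second step: Steinness of $\widetilde X_\varrho$ by itself does \emph{not} suffice, and one genuinely needs the positivity of \cref{rem:byproduct}, together with its compatibility with every stratum closure and with the twisting by the Nash cotangent bundle — this is exactly where the deep analytic input of the reductive Shafarevich conjecture enters. A second, more serious difficulty is the positive-characteristic case, where the lack of $L^{2}$-methods forces one to extract all the required positivity of the logarithmic cotangent bundle from harmonic maps to Bruhat--Tits buildings; and a final, bookkeeping difficulty is to make the induction on $\dim Z$ close, i.e.\ to verify that every contribution of the singular strata is a nonnegative multiple of $T_V^*X\cdot T_X^*X$ for $\dim V<d$.
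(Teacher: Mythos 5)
Your reduction of $T_Z^*X\cdot T_X^*X$ to $\int_{\widehat Z}c_d(\widehat T^{\vee})$ via the Chern--Mather class is correct, and the overall architecture (convert to an index computation on $\widetilde X_\varrho$, feed in the positivity from the Shafarevich machinery, induct on $\dim Z$) is a genuinely different route from the paper's. But the analytic core of your argument fails. The middle-degree concentration of (reduced, possibly twisted) $L^2$-cohomology that you invoke in the second step is not a consequence of \cref{rem:byproduct}: the class $\{\sum_i T_{\tau_i}+\sqrt{-1}\,\mathrm{tr}(\theta\wedge\theta^*)\}$ is only big and nef --- a closed positive current with continuous potential, degenerate along a proper analytic subset --- and its primitive on $\widetilde X$ has merely sublinear growth \eqref{eq:dsub}. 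From this one can extract, as in \cref{main:kollar}, the Dolbeault vanishings $H^{p,0}_{(2)}(\widetilde X)=0$ for $p<n$ and $H^{n,q}_{(2)}(\widetilde X)=0$ for $q>0$, which control $\chi(X,K_X)$; they do \emph{not} give $H^{k}_{(2)}(\widetilde X)=0$ for $k\neq n$, which is what controls $(-1)^n\chi(X)$. If that de Rham concentration were available (even just for $Z=X$), Atiyah's index theorem would immediately yield the linear Chern--Hopf--Thurston inequality and the entire characteristic-cycle apparatus would be unnecessary; the reason the paper does not argue this way is precisely that such a Gromov-type vanishing is unknown for large linear representations. Your proposal compounds the difficulty by requiring the same concentration on the singular, noncompact preimages of the stratum closures of $Z$, twisted by the Nash bundle $\widehat T^{\vee}$, with respect to a ``metric'' induced by a degenerate class --- none of which is supported by the results you cite. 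A secondary problem: in positive characteristic the spectral construction produces a nonzero map $\cO_{\overline{X}}(R)\to \Sym^{M}\Omega_{\overline{X}}(\log D)$ and hence bigness of $K_{\overline{X}}+D$ via \cref{thm:CP}, not nefness of (a model of) the cotangent bundle, so Fulton--Lazarsfeld positivity has nothing to act on.

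For contrast, the paper's proof stays entirely inside intersection theory on $T^*X$: to each multivalued $1$-form $\eta_\tau$ (\cref{def:multivaluedform}) it attaches an operator $\Phi_{\eta_\tau}$ on conic Lagrangian cycles satisfying \eqref{eq:iteration}, so that whenever $\eta_\tau|_Z\not\equiv 0$ the number $T_Z^*X\cdot T_X^*X$ becomes a nonnegative combination of the same quantities for proper subvarieties $Z_i\subsetneq Z$. Largeness of $\varrho$ guarantees, via \cref{thm:KE} (and, in characteristic zero, \cref{prop:cons} and \cref{lem:period}), that this iteration terminates either at points --- where the intersection is proper, hence nonnegative --- or at subvarieties contained in fibers of $s_{\rm fac}$, on which the restricted $\bC$-VHS has finite period map and the near-nefness of the cotangent bundle together with Demailly--Peternell--Schneider closes the argument. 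If you want to salvage your approach, the missing ingredient you would have to supply is precisely a Gromov-type middle-degree $L^2$-vanishing for sublinearly-exact big-and-nef classes; as stated, that step is a gap, not a technicality.
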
 
	Although the cotangent bundle \(T^*X\) is non-compact, the above intersection number is 
	well defined since the zero section \(T_X^*X\) is compact. 
	
	We begin with a preliminary observation regarding the strategy employed in the proof of \cref{thm:CHT}. 
\begin{lem}\label{lem:baby}
	Let $X$ be a smooth projective $n$-fold. If there exists a holomorphic 1-form $\eta$ on $X$ such that its zero locus $Z(\eta) := (\eta=0)$ is zero-dimensional, then
	$$
	(-1)^n\chi(X)   \geq 0.
	$$
\end{lem}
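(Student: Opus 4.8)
The plan is to interpret $\eta$ as a global holomorphic section of the cotangent bundle $\Omega_X^1$, which has rank $n=\dim X$, and to exploit the hypothesis that its zero scheme $Z(\eta)$ has the expected codimension. First I would observe that, since $Z(\eta)$ is zero-dimensional, its codimension equals $n=\operatorname{rk}\Omega_X^1$; hence $\eta$ is a \emph{regular} section, i.e.\ in any holomorphic coordinate chart around a point $p\in Z(\eta)$, writing $\eta=\sum_{i=1}^n \eta_i\,dz_i$, the germs $(\eta_1,\dots,\eta_n)$ form a regular sequence in $\mathcal{O}_{X,p}$. (The case $Z(\eta)=\varnothing$ is allowed, and then $\Omega_X^1$ carries a nowhere-vanishing section.) Consequently the Koszul complex of $\eta$ resolves $\mathcal{O}_{Z(\eta)}$, and the standard localized-top-Chern-class formula gives the identity of zero-cycle classes $[Z(\eta)]=c_n(\Omega_X^1)\cap[X]$ in $A_0(X)$.

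Next I would take degrees and pass to topology. Since $c_n(\Omega_X^1)=(-1)^n c_n(T_X)$ and, by the Gauss--Bonnet/Poincar\'e--Hopf theorem, $\int_X c_n(T_X)=\chi(X)$, one obtains
\[
(-1)^n\chi(X)=\int_X c_n(\Omega_X^1)=\deg Z(\eta).
\]
Finally, $Z(\eta)$ is an \emph{effective} zero-cycle: it is a finite closed subscheme, and $\deg Z(\eta)=\sum_{p\in Z(\eta)}\operatorname{length}_p\mathcal{O}_{Z(\eta)}$ is a sum of positive integers (equal to $0$ when $Z(\eta)=\varnothing$). Therefore $(-1)^n\chi(X)\geq 0$, as claimed.

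I do not expect a genuine obstacle here: this is a classical consequence of the functoriality of Chern classes together with the Poincar\'e--Hopf index theorem, and it is included only as a toy model for the strategy behind \cref{thm:CHT2}. The two points that require (routine) care are: (i) verifying that $\dim Z(\eta)=0$ forces $\eta$ to be a regular section, so that the Koszul resolution applies and $[Z(\eta)]$ really represents $c_n(\Omega_X^1)\cap[X]$; and (ii) recording that the local contribution at each isolated zero --- the colength of the ideal generated by the components of $\eta$ --- is a strictly positive integer, which is precisely what makes the global count nonnegative. Alternatively, one could run the same argument differential-geometrically: a Hermitian metric on $X$ turns $\eta$ into a smooth vector field with isolated zeros whose Poincar\'e--Hopf indices agree with these local multiplicities and are positive because $\eta$ is holomorphic.
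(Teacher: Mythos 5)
Your proposal is correct and is essentially the paper's argument in different packaging: the paper phrases the computation as the proper intersection of the graph of $\eta$ with the zero section inside $T^*X$ (the graph being homologous to the zero section, whose self-intersection is $(-1)^n\chi(X)$), while you phrase it as the localized top Chern class $[Z(\eta)]=c_n(\Omega_X^1)\cap[X]$ and the positivity of the lengths of the components of the effective zero-cycle. The regularity of the section that you verify in point (i) is exactly the properness of the intersection that the paper invokes, and the positive local multiplicities in point (ii) are the same local intersection numbers.
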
 
\begin{proof}
	Consider the graph $\Gamma$ of the section $\eta$ of $T^*X \to X$, which is a closed subvariety of the total space $T^*X$. Let $T_X^*X$ denote the zero section of $T^*X$. The graph $\Gamma$ is homologous to $T_X^*X$. Hence, computing the self-intersection of the zero section, we have
	$$
	(-1)^n\chi(X) = T_X^*X \cdot T_X^*X = \Gamma \cdot T_X^*X.
	$$
	Since $Z(\eta)$ is zero-dimensional, the intersection of $\Gamma$ and $T_X^*X$ is proper.  Therefore, we have
	 \[
	\Gamma \cdot T_X^*X \geq 0.
	\]
\end{proof}
However, in general, we cannot guarantee the existence of holomorphic 1-forms on $X$  in \cref{thm:CHT2}. Moreover, even if such holomorphic 1-forms exist, their zero loci need not consist of isolated points. Consequently, the cycles in $T^*X$ used to compute the intersection—specifically the graph of the   1-form and the zero section—may not intersect properly.

To address the first issue, we use  a \emph{multivalued 1-form} (see \cref{def:multivaluedform}) instead and adapt the intersection theory from \cref{lem:baby}. To resolve the second issue, we construct a procedure that deforms the intersection progressively to achieve properness.

More precisely, using techniques analogous to deformation to the normal cone in intersection theory, one can associate to a $d$-valued $1$-form $\eta$ a map $\Phi_\eta$ from conic Lagrangian cycles to conic Lagrangian cycles such that, for any subvariety $Z \subset X$, one has
\begin{align}\label{eq:vanishing cycle}
	\Phi_\eta(T_Z^*X)
	= n_0\, T_Z^*X + \sum_{1 \le i \le m} n_i\, T_{Z_i}^*X,
\end{align}
where \(n_0\) is the multiplicity of the zero form in the restriction 
\(\eta|_{Z_{\mathrm{reg}}}\), and each \(Z_i\) is a proper closed subvariety of \(Z\).
In particular, if \(\eta|_{Z_{\mathrm{reg}}}\) is nontrivial, then \(n_0 < d\).
	Moreover, we have
	$$
	\Phi_\eta(T_Z^*X)\cdot T_X^*X=d T_Z^*X\cdot T_X^*X.
	$$
	Therefore,
	if $\eta|_Z\not\equiv 0$, we have
	\begin{align}\label{eq:iteration}
		(d-n_0) T_Z^*X\cdot T_X^*X=\sum_{1 \leq i \leq m} n_i T_{Z_i}^* X\cdot T_X^*X,
	\end{align}
 where $Z_i \subsetneq Z$ is a proper subvariety of $Z$. Roughly speaking, we have
	\[
	\dim Z = \dim(T_Z^*X \cap T_X^*X) > \dim Z_i = \dim(T_{Z_i}^*X \cap T_X^*X),
	\]
	implying that the intersection becomes more ``proper''. We omit the precise definition of the map $\Phi_{\eta}$ here and instead refer the interested reader to \cite[\S 3.1]{DW24} for further details.

	\begin{proof}[Proof of \cref{thm:CHT2} (sketch)]
		\noindent {\rm Case 1: }  ${\rm char}\, K=p>0. $
		
		In \cref{thm:Shapositive},  it is proved  that  for any linear $\sigma:\pi_1(X)\to \GL_N(K)$, then there are representations $\tau_i:\pi_1(X)\to \GL_N(K_i)$,  where $K_i$ is non-archimedean local field of characteristic $p$ (i.e. $K_i$ is a   finite extension of $\bF_{p}((t))$), such that the Shafarevich morphism of $\sigma$ is the Stein factorization of $$s_{\tau_1}\times\cdots\times s_{\tau_k}:X\to S_{\tau_1}\times\cdots\times S_{\tau_k}.$$ 
		
		Therefore, when $\varrho$ is large, its Shafarevich morphism is just the identity map, and thus for any $Z$, there exists some $\tau_i$ such that 
		$
		s_{\tau_i}(Z)
		$ is not a point. By the property of the Katzarkov-Eyssidieux reduction map in \cref{thm:KE}, there multivalued 1-form $\eta_{\tau_i}|_{Z}\not\equiv 0$. 
		
		Now we perform the iteration of the above algorithm, to achieve that $\dim Z_i=0$ for each $i$ in right-hand side of \eqref{eq:iteration}. The intersection of $T_{Z_i}^*X$ and $T_X^*X$ is proper, and we have 
		$$
		T_Z^*X\cdot T_X^*X\geq 0
		$$
		for each $Z$. 
		
		\medspace
		
		\noindent {\rm Case 2: }  $K=\bC $  and $\varrho$ is semisimple. 
		
	Consider the map $s_{\rm fac}:X\to S_{\rm Fac}(X)$ defined in \cref{def:fac}. 		By \cref{lem:simultaneous}, there exist  reductive representations $\{\tau_i:\pi_1(X)\to \GL_{N}(K_i)\}_{i=1,\ldots,m}$, where each  $K_i$ is a non-archimedean local field of characteristic zero,  such that $s_{\rm fac}:X\to S_{\rm fac}$ is the Stein factorization of 
			$$
			s_{\tau_1}\times \cdots \times s_{\tau_m}:X\to 	S_{\tau_1}\times \cdots \times S_{\tau_m}.
			$$
		Here $s_{\tau_i}:X\to S_{\tau_i}$ is the Katzarkov-Eyssidieux reduction map associated with $\tau_i$.

	Fix a closed subvariety $Z \subseteq X$. Assume that $s_{\rm fac}(Z)$ is not a point. Then $s_{\tau_i}(Z)$ is not a point for some $i$. By \cref{thm:KE}, this implies that $\eta_{\tau_i}|_{Z} \not\equiv 0$.
	
	We employ the same strategy, using the maps $\Phi_{\eta_{\tau_1}}, \ldots, \Phi_{\eta_{\tau_m}}$ defined above, to make the intersection of $T_X^*X$ with $T_Z^*X$ sufficiently proper. 
	In this situation, when the algorithm introduced above terminates, it means that each subvariety $Z_i$ appearing on the right-hand side of \eqref{eq:iteration} is contained in some fiber of $S_{\rm fac}$. 
	Unlike the case of positive characteristic, one may have $\dim Z_i > 0$.
	
	We now sketch the arguments of \cite{DW24} to treat this case; the following discussion is intended as an outline rather than a rigorous proof.

	Let $\mathcal{L}$ be the $\mathbb{C}$-VHS on $X$ constructed in \cref{prop:cons}. For simplicity, we assume that $Z_i$ is smooth. By \cref{prop:cons}, the restriction $\varrho|_{\pi_1(Z_i)}$ corresponds to a direct summand of the local system underlying $\mathcal{L}$, which has discrete monodromy. If $\varrho$ is large and semisimple, the period map of $\mathcal{L}|_{Z_i}$ is finite by the arguments in \cref{lem:period}. Consequently, certain curvature properties of the period domain imply that $T^*{Z_i}$ is ``almost'' nef. As observed in \cite{AW25}, a theorem of Demailly--Peternell--Schneider \cite{DPS94} then implies that $T_{Z_i}^*X \cdot T_X^*X \geq 0$. This concludes the proof for this case.
		
		\medspace
		
		\noindent {\rm Case 3: }  Suppose $K=\mathbb{C}$ and $\varrho$ is linear. The proof relies on the \emph{tautological} variation of mixed Hodge structures introduced in \cite{ES11}, combined with arguments used for the linear Shafarevich conjecture in \cite{EKPR12}. Due to the technical complexity of this construction, we omit the details here.
	\end{proof}	 
	\subsection{Proof of the linear Koll\'ar's conjecture}
	In this subsection, we sketch the proof of \cref{item:vanishing}  assuming that $\varrho$ is semisimple and big.
	
		As we see before, other two items in \cref{main:kollar} follow from \cref{item:vanishing}.  So it suffices to prove \cref{item:vanishing}.   
		By the byproducts in the proof of the  reductive Shafarevich conjecture  \cite{Eys04,DY23}, as we remarked in \cref{rem:byproduct}, there exists 
		\begin{enumerate}
			\item a family of Zariski dense representations \( \{\tau_i: \pi_1(X) \to G_i(K_i)\}_{i=1,\ldots,\ell} \), where each \( G_i \) is a reductive group over a non-archimedean local field \( K_i \) of characteristic zero;
			\item a \( \bC \)-VHS \( \cL \) on \( X \),
		\end{enumerate}
		such that
		\[
		\Phi:=T_{\tau_1} + \cdots + T_{\tau_\ell} + \sqrt{-1} \, \mathrm{tr}(\theta \wedge \theta^*)
		\]
		is a closed positive \( (1,1) \)-current on \( X \), which is smooth and strictly positive over a non-empty analytic open subset \( X^\circ \subset X \).
		
		Here:
		\begin{itemize}
			\item \( T_{\tau_i} \) denotes the canonical current on \( X \) associated with \( \tau_i \), as defined in \cref{def:canonical};
			\item \( \theta \) is the Higgs field of the Hodge bundle associated with \( \cL \), and \( \theta^* \) is its adjoint with respect to the Hodge metric. \qed
		\end{itemize}
		Let $\beta_{\tau_i}$ be the   1-form on $\widetilde{X}$ associated with $\tau_i$   defined in \eqref{eq:1-form}, and $\beta_\cL$ be the    1-form on $\widetilde{X}$ associated with $\cL$   defined in \eqref{lem:potential2}. 
		Define 
		$\beta:=\beta_\cL+\sum_{i=1}^{\ell}\beta_{\tau_i}$.  Then $\beta$ has $L_{\rm loc}^1$-coefficients. By \cref{lem:potential,lem:potential2}, it satisfies 
		\begin{align}\label{eq:dsub}
			|\beta(x)|\leq_{\rm a.e.} C(1+d_{\widetilde{X}}(x, x_0)) 
		\end{align} 
		for some constant $C>0$, and 
		\begin{align}\label{eq:mutual}
			d\beta\geq   \pi_X^*(\sum_{i=1}^{\ell}T_{\tau_i}+\omega_{\cL} )=:\pi_X^*\Phi.
		\end{align}   
		
	Assume, for contradiction, that there exists a holomorphic \((p,0)\)-form \(\alpha\) on 
	\(\widetilde{X}\) which is \(L^{2}\) with respect to \(\omega\), for some \(0\le p\le n-1\).
	A crucial step in \cite[Theorem~2.2]{DW24b} is that the sublinear growth condition 
	\eqref{eq:dsub} implies that a suitable ``Stokes formula'' holds, namely 
		\begin{align*} 
			\int_{\widetilde{X} }  	\sqrt{-1}^{p^2}	 d\beta \wedge \alpha\wedge \bar{\alpha}\wedge \omega^{n-p-1}=0.
		\end{align*}   
	Since \((\sqrt{-1})^{p^{2}}\,\alpha\wedge \bar{\alpha}\) is a semipositive \((p,p)\)-form, it follows 
	from \eqref{eq:mutual} that
		\begin{align}\label{eq:pointwise}
			0=	\int_{\widetilde{X} }  	\sqrt{-1}^{p^2}	 d\beta \wedge \alpha\wedge \bar{\alpha}\wedge \omega^{n-p-1}	\geq \int_{\widetilde{X}}  	\sqrt{-1}^{p^2}	  \alpha\wedge \bar{\alpha} \wedge \omega^{n-p-1}\wedge  \pi_X^*\Phi\geq 0.
		\end{align}
		Since $\Phi$ is smooth and strictly positive over a non-empty analytic open subset \( X^\circ \subset X \). One can show that 
		$
		\alpha(x)=0
		$ 
		for any $x\in \pi_X^{-1}(X^\circ)$.  Since $\alpha$ is holomorphic, it follows that  $\alpha\equiv 0$.  This proves that $H^{(p,0)}_{(2)}(\widetilde{X})=0$ for $p<n$. By the $L^2$-Lefschetz theorem \cite{Gro92}, the map
		$$
		H^{(p,0)}_{(2)}(\widetilde{X})\stackrel{ \pi_X^*\omega^{n-p}\wedge }{\to} H^{(n,n-p)}_{(2)}(\widetilde{X})
		$$
		is isomorphic. Hence $H^{(n,q)}_{(2)}(\widetilde{X})=0$ for $q>0$. 	
		The theorem is proved when $\varrho$ is semisimple and big. 
		
		When the representation $\varrho$ is linear and big in the sense of \cite{DW24b}, the proof of the theorem relies on results regarding the linear  {Shafarevich conjecture} established in \cite{EKPR12}. Additionally, we must establish a more involved  {vanishing theorem} than the one presented above (cf. \cite[Theorem 2.2]{DW24b}). 

	\subsection{Deformation of big fundamental groups}
Let $(X,\omega)$ be a compact K\"ahler manifold, and let
$\varrho:\pi_1(X)\to \GL_N(\bC)$ be a reductive representation.
By \cref{thm:Corlette}, there exists a harmonic bundle $(E,\theta,h)$
whose associated flat connection
$
\nabla_h+\theta+\theta^*
$ 
has monodromy representation $\varrho$.
Here $\nabla_h$ denotes the Chern connection of $(E,h)$, and
$\theta^*$ is the adjoint of $\theta$ with respect to $h$.
It is a standard exercise to check that the $(1,1)$-form
$
\sqrt{-1}\,\mathrm{tr}(\theta\wedge\theta^*)
$ 
is real, semipositive, and closed.
Moreover, it does not depend on the choice of the K\"ahler metric $\omega$.
	\begin{dfn}[Canonical form]\label{def:canonical form}
		Such $(1,1)$-form  $\omega_\varrho:=\sn{\rm tr}(\theta\wedge\theta^*)$ is called the \emph{canonical form} associated with $\varrho$.   
	\end{dfn}       
	In \cite{DMW24}, we first establish the deformation smoothness of such canonical forms. Our main result is as follows. 
\begin{thm}[{\cite[Remark~7.8]{DMW24}}]\label{harmonic2}
	Let $\sX$ be a K\"ahler manifold, and let
	$f:\sX\to \bD$ be a proper holomorphic fibration over the unit disk $\bD$.
	For $t\in\bD$, write $X_t:=f^{-1}(t)$.
	Let $\varrho:\pi_1(X_0)\to \GL_N(\bC)$ be a reductive representation, and define
	\begin{equation}\label{eq:t}
		\varrho_t:\pi_1(X_t)\stackrel{\simeq}{\to}\pi_1(\sX)\stackrel{\simeq}{\to}\pi_1(X_0)
		\stackrel{\varrho}{\longrightarrow}\GL_N(\bC).
	\end{equation}
	Then the fiberwise defined canonical $(1,1)$-form $\omega_{\varrho_t}$ on $X_t$,
	associated with the representation $\varrho_t$, varies smoothly with respect to
	$t\in\bD$.
\end{thm}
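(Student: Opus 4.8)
The plan is to write $\omega_{\varrho_t}=\sn\,\mathrm{tr}(\theta_t\wedge\theta_t^{*})$ for a family of harmonic metrics on a \emph{fixed} flat bundle and to prove that this family depends smoothly on $t$ by a parametrized implicit function theorem argument applied to the harmonic metric equation, after first reducing to the case of an irreducible representation.

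First I would fix $t_0\in\bD$ and use Ehresmann's theorem to trivialize $f$ smoothly over a small disk $\bD'\ni t_0$ as $M\times\bD'$, so that $X_t=(M,J_t)$ for a smooth family of complex structures $J_t$ on $M$ and the isomorphisms $\pi_1(X_t)\xrightarrow{\sim}\pi_1(\sX)$ of \eqref{eq:t} are those induced by the trivialization. Thus $\varrho_t$ is, up to conjugacy, the restriction to $\pi_1(X_t)$ of a single representation $\varrho\colon\pi_1(\sX)\to\GL_N(\bC)$; equivalently, there is one flat bundle $(\mathcal V,\mathcal D)$ on $\sX$ whose restriction $(\mathcal V,\mathcal D)|_{X_t}$ has monodromy $\varrho_t$. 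Fix a Kähler form $\omega$ on $\sX$; then $\omega_t:=\omega|_{M\times\{t\}}$ is a smooth family of Kähler metrics on $(M,J_t)$, and by the metric-independence of the canonical form (noted before \cref{def:canonical form}) it suffices to compute $\omega_{\varrho_t}$ using $\omega_t$. Decomposing $\varrho=\bigoplus_i W_i\otimes\varrho_i$ into isotypic components---a decomposition of $\pi_1(\sX)$-representations that does \emph{not} depend on $t$---one has $\omega_{\varrho_t}=\sum_i(\dim W_i)\,\sn\,\mathrm{tr}(\theta_{i,t}\wedge\theta_{i,t}^{*})$, where $(\mathcal V_i,\mathcal D_i,h_{i,t},\theta_{i,t})$ is the harmonic bundle attached to the irreducible constituent $\varrho_i$ restricted to $\pi_1(X_t)$; here one uses that a harmonic metric on $W_i\otimes\varrho_i$ has the form $g_i\otimes h_{i,t}$ and that neither the choice of $g_i$ nor the scaling ambiguity of $h_{i,t}$ affects $\mathrm{tr}(\theta_{i,t}\wedge\theta_{i,t}^{*})$. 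This reduces the problem to: for $\varrho$ irreducible, the Higgs field $\theta_t$ (well-defined by \cref{thm:Corlette}, the harmonic metric being then unique up to a positive scalar, which changes neither $\theta_t$ nor its $h_t$-adjoint) depends smoothly on $t$.

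For irreducible $\varrho$ I would argue near $t_0$ by perturbation. Fix a harmonic metric $h_0:=h_{t_0}$ of $(\mathcal V,\mathcal D)|_{X_{t_0}}$ and write a nearby metric as $h=h_0e^{s}$ with $s$ a Hölder section of $h_0$-self-adjoint endomorphisms of $\mathcal V$; the harmonic metric equation becomes $\mathcal H(s,t)=0$ for a nonlinear second-order elliptic operator depending smoothly on $(s,t)$ in Hölder spaces, with $\mathcal H(0,t_0)=0$. Its linearization $L=D_s\mathcal H(0,t_0)$ is self-adjoint and elliptic, and by the standard computation going back to Corlette and Simpson its kernel consists exactly of the $\mathcal D$-parallel $h_0$-self-adjoint endomorphisms; by Schur's lemma, for $\varrho$ irreducible this is the one-dimensional space $\mathbb R\cdot\mathrm{Id}$, reflecting only the residual scalar ambiguity. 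Restricting to metrics of fixed determinant (the slice of trace-free $s$) removes this kernel, so $L$ becomes an isomorphism on that slice; the implicit function theorem then yields a solution $s(t)$ with $s(t_0)=0$ depending smoothly on $t\in\bD'$, i.e.\ a smooth family of harmonic metrics $h_t=h_0e^{s(t)}$. (If one prefers not to rely on the residual kernel being only scalar, one instead runs a Lyapunov--Schmidt reduction over $\ker L$, solving the finite-dimensional reduced equation smoothly in $t$ using the $Z_{\GL_N}(\varrho(\pi_1))$-equivariance of the energy functional, the solution set being a single group orbit for each $t$.) Feeding $h_t$ and the smoothly varying $J_t$ into $\omega_{\varrho_t}=\sn\,\mathrm{tr}(\theta_t\wedge\theta_t^{*_{h_t}})$ gives smoothness of $\omega_{\varrho_t}$ on $\bD'$; since $t_0$ was arbitrary and $\omega_{\varrho_t}$ is intrinsically defined, the local statements patch to all of $\bD$.

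The main obstacle is the analysis of the linearized operator $L$ together with the non-uniqueness of harmonic metrics. The isotypic reduction strips this down to the harmless residual scalar ambiguity of an irreducible representation, but identifying $\ker L$ with the parallel self-adjoint endomorphisms---a clean-intersection/Morse--Bott phenomenon expressing the rigidity of harmonic maps into the NPC space $\GL_N(\bC)/\mathrm U_N$---and checking that the reduced equation is solvable with smooth dependence on the parameter, uniformly on a $t$-independent Hölder neighborhood, is the technical heart. A robust alternative, in the spirit of the continuity statements underlying \cref{conj:deformation}, is to first prove that $t\mapsto h_t$ is \emph{continuous} via the Daskalopoulos--Mese estimates for equivariant harmonic maps under deformation of the domain metric; once a well-defined continuous branch is fixed, upgrading to smoothness is a routine application of the implicit function theorem on the determinant-one (or $\ker L$-transverse) slice.
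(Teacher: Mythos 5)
The survey does not actually prove \cref{harmonic2}: it is quoted from \cite[Remark~7.8]{DMW24}, so there is no in-paper argument to measure you against. Judged on its own terms, your proposal is a correct and essentially standard route. The two reductions you make are the right ones: (i) Ehresmann plus contractibility of $\bD$ lets you work with a single flat bundle on a fixed underlying manifold $M$ with a smoothly varying complex structure $J_t$ and K\"ahler metric $\omega_t$, and metric-independence of the canonical form (noted before \cref{def:canonical form}) makes the choice of $\omega_t$ harmless; (ii) the isotypic decomposition is a decomposition of the abstract $\pi_1(\sX)$-module, hence $t$-independent, and it reduces the residual non-uniqueness of the harmonic metric (\cref{thm:Corlette}) to the scalar ambiguity on each irreducible factor, which does not affect $\mathrm{tr}(\theta\wedge\theta^{*})$. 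The perturbation step is also sound: the kernel of the linearized harmonic-metric operator at $h_0$ is exactly the space of $\cD$-parallel $h_0$-self-adjoint endomorphisms, which is $\bR\cdot\mathrm{Id}$ for irreducible $\varrho$, and the determinant factor of $\GL_N(\bC)/\rU_N$ splits off as a flat $\bR$-factor, so the trace part of the equation decouples as a linear Laplace equation and the restriction to the trace-free slice loses no solutions; the implicit function theorem in H\"older spaces then gives a smooth branch $h_t=h_0e^{s(t)}$, and uniqueness up to scalar identifies it with \emph{the} harmonic metric of $\varrho_t$. Two points deserve explicit justification rather than assertion: that every harmonic metric on an isotypic block $W_i\otimes V_i$ is of the product form $g_i\otimes h_{i,t}$ (this follows from the description of the centralizer of a semisimple representation acting transitively on the set of harmonic metrics, but should be cited or proved), and the trace-decoupling just mentioned. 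Finally, note that the argument in \cite{DMW24} is set up so as to also cover the non-archimedean analogue (\cref{lem:continuity}), where the target is a Euclidean building and no implicit function theorem is available; there one argues via Korevaar--Schoen/Daskalopoulos--Mese-type continuity estimates for equivariant harmonic maps under deformation of the domain, which is precisely the alternative you sketch in your last paragraph. Your IFT argument buys genuine smoothness cheaply in the archimedean case, while the harmonic-map-estimate route is the one that generalizes.
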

	Another main technical result, and also main difficult result, is the following consequence of  the non-archimedean analogue of \cref{harmonic2}. 
	\begin{thm}[{\cite[Lemma 5.2]{DMW24}}]\label{lem:continuity}
	Let $f:\sX\to \bD$ be a smooth projective family of relative dimension $n$
	over the unit disk $\bD$, and let
	$\varrho:\pi_1(X_0)\to G(K)$ be a Zariski-dense representation,
	where $G$ is a reductive algebraic group over a non-archimedean local field $K$.
	For $t\in\bD$, define
	\begin{equation}\label{eq:t2}
	 \varrho_t:\pi_1(X_t)\stackrel{\simeq}{\to}\pi_1(\sX)\stackrel{\simeq}{\to}\pi_1(X_0)
		\stackrel{\varrho}{\longrightarrow} G(K).
	\end{equation}
	There exists a full-measure open subset $X_0^\circ\subset X_0$ such that
	for any $x_0\in X_0^\circ$ there exists a coordinate system
	\[
	(\Omega; z_1,\ldots,z_n,t;\varphi)
	\]
	on $\sX$ centered at $x_0$, together with a real $(1,1)$-form
	\[
	T(z,t)=\sqrt{-1}\sum_{i,j} a_{ij}(z,t)\,dz_i\wedge d\bar z_j
	\]
	on $\bD^n$ (with $z=(z_1,\ldots,z_n)$) satisfying
		\begin{thmlist}
			\item the map $\varphi:\bD^n\times \bD_\ep\to \Omega$ is a biholomorphism, with $f\circ \varphi(z_1,\ldots,z_n,t)=t$.  
			\item the coefficents $a_{ij}(z,t)$ are continuous function on $\bD^n\times\bD_{\ep}$.
			\item For each fixed $t\in \bD_\ep$, $T_{t}(z):=T(z,t)$ is a smooth semi-positive closed $(1,1)$-form on $\bD^n$.
			\item For each $t\in \bD_\ep$, one has  $ T_{\varrho_t}|_{\Omega\cap X_t}\geq T_t$.
			\item  $T_{\varrho_{0}}|_{\Omega\cap X_0}=T_0$.
		\end{thmlist} 
		Here $T_{\varrho_t}$ is the canonical current 
		defined in \cref{def:canonical}, associated with  $\varrho_t$. 
 	\end{thm}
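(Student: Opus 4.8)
The plan is to reduce everything to one analytic input: after trivializing the family near $x_{0}$, the $\varrho_{t}$-equivariant pluriharmonic maps into the Bruhat--Tits building vary \emph{continuously} in $t$, and in fact $C^{\infty}_{\mathrm{loc}}$ on their regular loci. First I would set things up: using the isomorphisms $\pi_{1}(X_{t})\xrightarrow{\sim}\pi_{1}(\sX)$ of \eqref{eq:t2}, the representation $\varrho$ extends to $\varrho_{\sX}\colon\pi_{1}(\sX)\to G(K)$ and restricts to $\varrho_{t}$ on each fiber. Since every $X_{t}$ is smooth projective, \cref{thm:BDDMex} (equivalently Gromov--Schoen \cite{GS92} in the compact Kähler case) produces a $\varrho_{t}$-equivariant pluriharmonic map $u_{t}\colon\widetilde{X_{t}}\to\Delta(G)_{K}$, harmonic for the restriction of any Kähler metric, canonical up to the inessential ambiguity by the uniqueness theorem of \cite{DM24}. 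Recall from \eqref{eq:form} and \cref{def:canonical} that on the regular set $\mathcal{R}(u_{t})$, where $u_{t}$ locally takes values in a single apartment $A$ with chart $i_{A}$ and orthogonal coordinates $(x_{1},\dots,x_{N})$, one has
\[
T_{\varrho_{t}}\big|_{\mathcal{R}(u_{t})}=\sqrt{-1}\sum_{i=1}^{N}\partial u^{t}_{A,i}\wedge\bar{\partial}u^{t}_{A,i},\qquad u^{t}_{A,i}:=x_{i}\circ i_{A}\circ u_{t},
\]
with each $u^{t}_{A,i}$ pluriharmonic, so the right-hand side is smooth, closed, and semipositive there, with smooth local potential $\tfrac12\sum_{i}(u^{t}_{A,i})^{2}$.

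The main step, and the principal obstacle, is the continuity statement. After shrinking $\bD$ I would fix a $C^{\infty}$ trivialization of $f$ over $\bD$ and lift it to universal covers, so as to compare $u_{t}$ with $u_{0}$ on a fixed relatively compact piece of $\widetilde{X_{0}}$. The claim is that $u_{t}\to u_{0}$ uniformly there as $t\to 0$, the convergence being $C^{\infty}_{\mathrm{loc}}$ on $\mathcal{R}(u_{0})$. The argument would follow the usual pattern for harmonic maps into NPC spaces: (a) \emph{a priori estimates} uniform in $t$ --- Lipschitz and energy bounds à la Gromov--Schoen and Daskalopoulos--Mese, using that the energy of $u_{t}$ is controlled since all $\varrho_{t}$ are identified with $\varrho_{0}$; (b) \emph{compactness} --- Arzelà--Ascoli extracts from $t_{k}\to 0$ a subsequential uniform limit $u_{\infty}\colon\widetilde{X_{0}}\to\Delta(G)_{K}$ that is $\varrho_{0}$-equivariant and harmonic, harmonicity passing to the limit via the variational characterization and lower semicontinuity of energy; (c) \emph{identification} --- by the uniqueness theorem of \cite{DM24}, $u_{\infty}=u_{0}$, so the whole family converges. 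Finally the pluriharmonic equations $\bar{\partial}\partial u^{t}_{A,i}=0$ on $\mathcal{R}(u_{t})$ together with interior estimates for harmonic functions promote the $C^{0}$ convergence to $C^{\infty}_{\mathrm{loc}}$ on $\mathcal{R}(u_{0})$. The hard part here is exactly that buildings are singular and, when $K$ is not locally compact, one must invoke the Breiner--Dees--Mese regularity theorem \cite{BDM24}; this continuity is the non-archimedean analogue of the smooth dependence \cref{harmonic2} established in \cite{DMW24} for symmetric spaces, and I expect it to carry most of the weight of the proof.

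With continuity in hand the conclusion is formal. By the Gromov--Schoen regularity theorem together with \cite{DM24}, $X_{0}\setminus\mathcal{R}(u_{0})$ is a proper Zariski closed subset, so $X_{0}^{\circ}:=\mathcal{R}(u_{0})$ is open of full measure; fix $x_{0}\in X_{0}^{\circ}$. Choose a holomorphic family chart $\varphi\colon\bD^{n}\times\bD_{\ep}\xrightarrow{\sim}\Omega\ni x_{0}$ with $f\circ\varphi(z,t)=t$ (item (i)), small enough that $u_{0}$ maps a neighbourhood of the relevant lift of $\Omega\cap X_{0}$ into a single apartment $A$ with fixed chart $i_{A}$. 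Define, for $|t|<\ep$, a smooth semipositive closed $(1,1)$-form $T_{t}$ on $\bD^{n}$ by transporting $\sqrt{-1}\sum_{i}\partial u^{t}_{A,i}\wedge\bar{\partial}u^{t}_{A,i}$ through $\varphi$ wherever $u_{t}$ is locally $A$-valued, and extending it to a smooth semipositive closed minorant of $T_{\varrho_{t}}|_{\Omega\cap X_{t}}$ on all of $\bD^{n}$ (taking the largest such minorant, or via the regularization of the energy density of $u_{t}$ in \cite{DMW24}) --- this accounts for the inequality in item (iv), since for $t\neq0$ the map $u_{t}$ may fail to remain apartment-valued on all of $\Omega\cap X_{t}$, while at $t=0$ the local apartment description gives $T_{0}=T_{\varrho_{0}}|_{\Omega\cap X_{0}}$, which is item (v). Writing $T(z,t)=\sqrt{-1}\sum_{k,l}a_{kl}(z,t)\,dz_{k}\wedge d\bar{z}_{l}$, continuity of the coefficients in $(z,t)$ (item (ii)) follows from the $C^{\infty}_{\mathrm{loc}}$ convergence $u_{t}\to u_{0}$, and smoothness, closedness and semipositivity of $T_{t}$ for each fixed $t$ (item (iii)) from the construction. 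Finally, for the central torus factor of $G$, where $\Delta(G)_{K}$ is a Euclidean space, $u_{t}$ is just a tuple of pluriharmonic functions and the continuity of the middle step is classical, coming from the smooth variation of harmonic representatives of $H^{1}$ in the family; the reductive case is the product of this with the semisimple case treated above.
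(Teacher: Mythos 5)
The survey itself does not prove this lemma: it cites \cite[Lemma 5.2]{DMW24} and only indicates that the argument rests on harmonic-map theory into NPC spaces (Gromov--Schoen, Korevaar--Schoen, \cite{BDDM}) together with the uniqueness theorem of \cite{DM24}. Your overall strategy --- uniform Lipschitz and energy estimates for the family $u_t$, compactness, identification of the subsequential limit with $u_0$ by uniqueness, and then a local apartment description near a regular point of $u_0$, with $X_0^\circ=\mathcal{R}(u_0)$ of full measure by the Gromov--Schoen regularity theorem --- is consistent with those ingredients, and the reduction of the reductive case to its semisimple and toral factors is fine. (A minor point: since $K$ is a non-archimedean \emph{local} field here, $\Delta(G)_K$ is locally compact, so the appeal to Breiner--Dees--Mese is not needed for the compactness step; their result is a regularity theorem for non-locally-compact targets, not a compactness device.)

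The genuine gap is in the final step, where the five listed properties actually have to be produced. You define $T_t$ where $u_t$ is apartment-valued and then propose to extend it either as ``the largest smooth semipositive closed minorant of $T_{\varrho_t}|_{\Omega\cap X_t}$'' or ``via the regularization of the energy density of $u_t$ in \cite{DMW24}''. The first object is not well defined: the family of smooth semipositive closed forms dominated by a given positive current has no largest element in general (pointwise suprema of semipositive forms are neither smooth nor closed), so this produces neither a form nor, a fortiori, one whose coefficients are jointly continuous in $(z,t)$. The second is circular in a blind proof. The real difficulty is exactly the one you brush past: the singular set of $u_t$ moves with $t$ and may meet $\Omega\cap X_t$ for $t\neq 0$ even though $\Omega\cap X_0\subset\mathcal{R}(u_0)$, so $C^\infty_{\mathrm{loc}}$ convergence on $\mathcal{R}(u_0)$ does not by itself give a globally defined continuous family $T(z,t)$ on $\bD^n\times\bD_\ep$ satisfying the minorization (iv) for every $t$ and equality (v) at $t=0$. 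One needs an explicit construction --- for instance a fixed mollification of the pulled-back directional energy-density measures combined with the Gromov--Schoen monotonicity estimates and the uniform convergence $u_t\to u_0$ --- that yields items (ii)--(v) simultaneously. As written, those items are asserted rather than derived, and this is where most of the work in \cite[Lemma 5.2]{DMW24} lies.
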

 	 	The proof of \cref{lem:continuity} relies heavily on the theory of harmonic
 	maps into Euclidean buildings (and more generally, NPC spaces) as developed by Gromov--Schoen \cite{GS92}, 
 	Korevaar--Schoen \cite{KS,KS2} and later in \cite{BDDM}. The argument is quite involved; we refer
 	the interested reader to \cite{DMW24} for a more detailed exposition.
 	
 	Let us explain the idea of the proof of \cref{conj:deformation} in the case where
 	$\varrho$ is reductive and big.  Set $X:=X_0:=f^{-1}(0)$. By \cref{rem:byproduct}, there exist reductive
 	representations
 	\[
 	\{\tau_i:\pi_1(X)\to \GL_{N}(K_i)\}_{i=1,\ldots,\ell},
 	\]
 	where each $K_i$ is a non-archimedean local field of characteristic zero, together
 	with a $\bC$-VHS $\sigma$, such that the sum
 	\[
 	\sum_{i=1}^{\ell} T_{\tau_i}+\omega_\sigma
 	\]
 	satisfies the following properties:
 	\begin{itemize}
 		\item it is a closed positive $(1,1)$-current on $X$ with continuous potential;
 		\item it is smooth on a Zariski open subset $X^\circ\subset X$;
 		\item its cohomology class is big and nef.
 	\end{itemize}
 	Here $T_{\tau_i}$ denotes the canonical current associated with $\tau_i$, and
 	$\omega_\sigma$ is the canonical form defined in \cref{def:canonical form}
 	associated with the monodromy representation
 	$\sigma:\pi_1(X)\to \GL_{N'}(\bC)$ of the $\bC$-VHS $\cL$.
 	By Boucksom’s criterion \cite{Bou02}, the current
\begin{align*} 
	 \sum_{i=1}^{\ell} T_{\tau_i}+\omega_\sigma
\end{align*} 
 	is smooth and strictly positive on some analytic open subset
 	$U\subset X^\circ$.
 	
 	By \cref{harmonic2,lem:continuity}, for $t$ sufficiently small, the sum
\begin{align}\label{eq:current}
 	\sum_{i=1}^{\ell} T_{\tau_{i,t}}+\omega_{\sigma_t}
\end{align}
 	is strictly positive on some analytic open subset of $X_t$.
 	Here $\tau_{i,t}:\pi_1(X_t)\to G(K_i)$ and
 	$\sigma_t:\pi_1(X_t)\to \GL_{N'}(\bC)$ are the representations induced by
 	$\tau_i$ and $\sigma$, respectively, as defined in
 	\eqref{eq:t} and \eqref{eq:t2}.
 	
 	On the other hand, for any subvariety $Z\subset X_t$, if
 $
 	\mathrm{Im}\bigl[\pi_1(Z)\to \pi_1(X_t)\bigr]
$ 
 	is finite, then the restrictions $T_{\tau_{i,t}}|_Z$ and
 	$\omega_{\sigma_t}|_Z$ are both trivial. This follows from the functoriality
 	of harmonic maps under pullback established in \cref{thm:BDDMex}, together
 	with the construction of canonical forms and canonical currents.
 	Consequently, the reductive Shafarevich morphism of $X_t$, whose existence is
 	guaranteed by \cref{thm:Shafarevich}, must be birational for $t$ sufficiently
 	small. This shows that $X_t$ has a big fundamental group for all sufficiently
 	small $t$.
 	
 	In the general case where $\varrho$ is linear and big, we apply the techniques
 	developed in \cite{EKPR12}, together with analytic results on the variation of
 	mixed Hodge structures, to prove \cref{conj:deformation}. We omit the details
 	here and refer the interested reader to \cite{DMW24}. 
 	 
 	\subsection{Applications to hyperbolicity}
I now give an application of \cref{conj:deformation} to the hyperbolicity of
algebraic varieties under deformation, combining
\cref{main:hyper,thm:GGL}. 
First, we recall the following classical result on the \emph{openness}
of Brody hyperbolicity. 
 	\begin{thm}[{\cite[Proposition 1.10]{Dem20}}]\label{thm:Brody}
 		Let $f:\sX\to \bD$ be a  holomorphic proper submersion from a complex manifold $\sX$ to  the unit disk with connected fibers.  If $X_0$ is  Brody hyperbolic, then there exists $\ep>0$ such that $X_t$ is Brody hyperbolic   for  $|t|<\ep$. 
 	\end{thm}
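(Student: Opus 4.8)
The plan is to argue by contradiction via Brody's reparametrization lemma together with a normal families argument; this is the classical proof of the openness (semicontinuity) of Brody hyperbolicity, and no input beyond what is recalled in the excerpt is needed. Fix once and for all a hermitian metric $\omega$ on the total space $\sX$. Suppose the conclusion fails: then there is a sequence $t_\nu\to 0$ in $\bD$ such that each fiber $X_{t_\nu}:=f^{-1}(t_\nu)$ admits a non-constant entire curve $\bC\to X_{t_\nu}$. Since $f$ is a proper submersion, the fibers $X_{t_\nu}$ are compact, and for $\nu$ large they are all contained in the fixed compact set $K:=f^{-1}(\{|t|\le |t_1|\})\subset\sX$.

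Next I would apply Brody's reparametrization lemma on each fiber: since $X_{t_\nu}$ is a compact hermitian manifold that is not Brody hyperbolic, there is a non-constant holomorphic map $g_\nu\colon\bC\to X_{t_\nu}\subset K$ with the normalization
\[
\|g_\nu'(0)\|_\omega=\sup_{z\in\bC}\|g_\nu'(z)\|_\omega=1 .
\]
In particular the maps $g_\nu\colon\bC\to\sX$ are $1$-Lipschitz for $\omega$ and have image in the fixed compact set $K$. By the Arzel\`a--Ascoli theorem, after passing to a subsequence, $g_\nu$ converges locally uniformly to a holomorphic map $g\colon\bC\to\sX$ with image in $K$.

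It then remains to identify the limit and derive a contradiction. Since $f\circ g_\nu\equiv t_\nu$ and $t_\nu\to 0$, passing to the limit gives $f\circ g\equiv 0$, so $g$ factors through the fiber $X_0$. Moreover, locally uniform convergence of holomorphic maps forces locally uniform convergence of their differentials, so $\|g'(0)\|_\omega=\lim_\nu\|g_\nu'(0)\|_\omega=1\neq 0$; hence $g\colon\bC\to X_0$ is a non-constant entire curve, contradicting the Brody hyperbolicity of $X_0$. This contradiction proves the theorem for a suitable $\ep>0$.

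The only point requiring genuine care — and the place where the argument could break down if set up carelessly — is the invocation of Brody's reparametrization lemma: one must produce the entire curves $g_\nu$ with a derivative bound that is \emph{uniform} and measured with respect to a \emph{single} ambient metric on $\sX$ (rather than with fiberwise metrics that might degenerate), since this is precisely what yields equicontinuity of the family $\{g_\nu\}$ and hence the applicability of Arzel\`a--Ascoli. Beyond this classical lemma, the proof is entirely routine, so I do not anticipate a substantial obstacle.
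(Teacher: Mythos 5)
Your argument is correct: this is the classical Brody reparametrization proof of the openness of Brody hyperbolicity, and it is precisely the argument in the cited reference \cite[Proposition 1.10]{Dem20}; the paper itself states the result without proof. The one point you flag — normalizing $\sup_{z}\|g_\nu'(z)\|_\omega=\|g_\nu'(0)\|_\omega=1$ with respect to a single ambient metric — is handled correctly, since the differentials of the $g_\nu$ take values in the tangent spaces of the fibers, so the fiberwise and ambient norms agree and Arzel\`a--Ascoli applies on the fixed compact set $K$.
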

 	There has long been a folklore conjecture that such openness properties hold for pseudo Brody hyperbolicity.
 	\begin{conjecture}\label{conj:hyperbolic}
 		Let $f:\sX\to \bD$ be as in \cref{thm:Brody}. If $X_0$ is pseudo Brody   hyperbolic, then  $X_t$ is also pseudo  Brody hyperbolic for   sufficiently small $t$. 
 	\end{conjecture}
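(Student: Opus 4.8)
The plan is to prove \cref{conj:hyperbolic} under the (at present indispensable) extra hypothesis that $\pi_1(X_0)$ admits a big representation: concretely, a big \emph{semisimple} representation $\varrho\colon\pi_1(X_0)\to\GL_N(\bC)$, or a big representation $\varrho\colon\pi_1(X_0)\to\GL_N(K)$ with ${\rm char}\,K>0$. In that situation $X_0$ is already pseudo Picard hyperbolic and strongly of log general type by \cref{main:hyper,thm:GGL}, so the hypothesis that $X_0$ is pseudo Brody hyperbolic is automatic, and one can in fact prove the stronger conclusion that $X_t$ is pseudo Picard hyperbolic and strongly of log general type for all sufficiently small $t$. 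Since $f\colon\sX\to\bD$ is a proper holomorphic submersion over the contractible disk it is a locally trivial fibration, so there are canonical isomorphisms $\pi_1(X_t)\stackrel{\simeq}{\to}\pi_1(\sX)\stackrel{\simeq}{\to}\pi_1(X_0)$; the subtlety is that \emph{bigness} of a representation need not survive this identification, because a subvariety of $X_t$ passing through a very general point need not propagate to a subvariety of $X_0$ with the same property.

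The first step is to invoke \cref{conj:deformation}: from the big representation $\varrho$ on $\pi_1(X_0)$ it yields a representation $\tau\colon\pi_1(\sX)\to\GL_N(\bC)$ whose restriction $\tau_t\colon\pi_1(X_t)\to\GL_N(\bC)$ is big for all $|t|<\ep_0$. One may moreover arrange $\tau_t$ to be semisimple: either $\tau$ is $\varrho$ itself transported along $\pi_1(\sX)\simeq\pi_1(X_0)$, and semisimplicity is a property of the abstract representation; or one reads it off the proof of \cref{conj:deformation}, which for small $t$ produces a $\bC$-VHS $\sigma_t$ on $X_t$ together with Zariski-dense reductive non-archimedean representations $\tau_{i,t}$ whose canonical currents (\cref{def:canonical}) and canonical form sum to a $(1,1)$-current on $X_t$ that is big and nef, strictly positive on an analytic open subset, and restricts trivially to every subvariety with finite $\pi_1$-image; the direct sum of the $\tau_{i,t}$ with $\sigma_t$ is then a big \emph{semisimple} representation of $\pi_1(X_t)$.

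The second step applies \cref{thm:GGL} — that is, runs the argument of \cref{subsec:20230427} — to the big semisimple representation $\tau_t$ of $\pi_1(X_t)$, for each $|t|<\ep_0$: writing the Zariski closure $G_t$ of $\tau_t(\pi_1(X_t))$ as an isogeny quotient of $G_{1,t}\times G_{2,t}$ with $G_{1,t}$ semisimple and $G_{2,t}$ a torus, the semisimple-quotient representation factors, after a finite étale cover and a birational modification, through a dominant morphism $f_t\colon X_t\to Y_t$ with connected general fibers carrying a big, Zariski-dense representation $\pi_1(Y_t)\to G_{1,t}(\bC)$; by \cref{main:hyper}, $Y_t$ is pseudo Picard hyperbolic and strongly of log general type; since $\tau_t$ is big, $(f_t,\alb_{X_t})\colon X_t\to Y_t\times A_t$ is generically finite onto its image, and \cref{cor:GGL} transfers the hyperbolicity from $Y_t$ (with the quasi-Albanese factor) to $X_t$ itself. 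Hence $X_t$ is pseudo Picard hyperbolic, in particular pseudo Brody hyperbolic, and strongly of log general type for all $|t|<\ep_0=:\ep$, which is the desired conclusion. For the positive-characteristic hypothesis one replaces \cref{conj:deformation} by the analogous deformation-openness statement proved with Bruhat–Tits buildings in place of symmetric spaces, after which the positive-characteristic case of \cref{thm:GGL}, which requires no semisimplicity, closes the argument.

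The hard part is not this last deduction but the inputs it rests on. The unconditional form of \cref{conj:hyperbolic} is far out of reach — for families of, say, surfaces of general type with big cotangent bundle one would have to propagate the Demailly–Semple jet-differential and foliation machinery under deformation, for which no method is known. Within the linear framework the crux is \cref{conj:deformation} itself, whose proof hinges on the deformation-continuity of equivariant harmonic maps into non-positively curved targets — symmetric spaces via \cref{harmonic2} and Euclidean buildings via \cref{lem:continuity} — and, most delicately, on controlling the canonical current $T_{\varrho_t}$ of a non-archimedean representation as $t\to 0$, which relies on the regularity theory of harmonic maps into NPC spaces (Gromov–Schoen \cite{GS92}, Korevaar–Schoen \cite{KS,KS2}, \cite{BDDM}). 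A secondary but genuine point is that the deformed representation must stay in a regime — semisimple in characteristic zero, arbitrary linear in positive characteristic — in which bigness is known to force hyperbolicity via \cref{main:hyper,thm:GGL}; a big linear representation outside these regimes is not yet known to imply even pseudo Brody hyperbolicity, so the reduction to the semisimple (or positive-characteristic) case cannot be skipped.
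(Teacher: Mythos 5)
There is a genuine gap, and it sits in your very first assertion. You claim that a big reductive (``semisimple'') representation $\varrho:\pi_1(X_0)\to\GL_N(\bC)$ already forces $X_0$ to be pseudo Picard hyperbolic ``by \cref{main:hyper,thm:GGL}'', so that the hypothesis of \cref{conj:hyperbolic} is automatic. This is false, and the paper's own \cref{example2} is a counterexample: $X=C\times E$ carries a big faithful reductive representation of $\pi_1(C)\times\bZ^2$ but is not pseudo Brody hyperbolic. \cref{main:hyper} requires the representation to be Zariski dense in a \emph{semisimple algebraic group}, which is strictly stronger than being a semisimple (i.e.\ reductive) representation; and \cref{thm:GGL} asserts only the \emph{equivalence} of the four properties of \cref{conj:GGL} for $X$, not that any one of them holds. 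The same conflation breaks your concluding step: \cref{coritem1} transfers Picard hyperbolicity from $Y_t$ to $X_t$ only under the additional hypothesis that $X_t$ is of log general type (and $Y_t$ may even be a point when the Zariski closure of $\tau_t$ is a torus), so bigness of $\tau_t$ alone cannot yield hyperbolicity of $X_t$.

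The missing idea --- the pivot of the paper's argument for \cref{main4} --- is Siu's invariance of plurigenera. One must actually \emph{use} the hypothesis that $X_0$ is pseudo Brody hyperbolic: by \cref{thm:GGL} applied to $\varrho$, $X_0$ is then of general type; by \cite{Siu98,Pau07}, $X_t$ is of general type for all $t$; and only then does the bigness of a reductive representation $\tau_t$ on $\pi_1(X_t)$ (obtained, as you correctly indicate, from the deformation continuity of canonical currents behind \cref{conj:deformation}, or more precisely from the maximal reductive representation of \cite[Lemma 3.25]{DY23} as in \cref{claim:big}) combine with \cref{thm:GGL} to give that $X_t$ is pseudo Picard hyperbolic. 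Your first step (deforming bigness) is sound and matches the paper; the second step cannot close without the general-type input. Note also that the paper's own derivation of \cref{conj:hyperbolic} from \cref{conj:GGL} is exactly this chain --- Brody hyperbolicity of $X_0$ $\Rightarrow$ general type $\Rightarrow$ (Siu) general type of $X_t$ $\Rightarrow$ Brody hyperbolicity of $X_t$ --- and your proposal omits it entirely.
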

 This conjecture is indeed a consequence of \cref{conj:GGL}.
 Let us explain how to deduce \cref{conj:hyperbolic} from \cref{conj:GGL}. 
 Assume that $X_0$ is pseudo Brody hyperbolic.
 Then $X_0$ is of general type by \cref{conj:GGL}.
 By Siu’s invariance of plurigenera \cite{Siu98,Pau07}, it follows that
 $X_t$ is of general type for any $t \in \bD$.
 Applying \cref{conj:GGL} again, we conclude that $X_t$ is pseudo Brody
 hyperbolic.
 This proves \cref{conj:hyperbolic}.

 	\begin{thm}[{\cite[Theorem D]{DMW24}}]\label{main4}
 		Let $f:\sX\to \bD$ be a smooth projective family. Assume that there is a big and reductive representation $\varrho:\pi_1(X_0)\to \GL_N(\bC)$. If $X_0$ is pseudo Brody   hyperbolic, then  $X_t$ is  pseudo Picard hyperbolic for sufficiently small $t$.
 	\end{thm}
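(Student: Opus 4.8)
The plan is to combine the deformation theorem for big fundamental groups (\cref{conj:deformation}), the non-abelian Green--Griffiths--Lang theorem (\cref{thm:GGL}, which in turn rests on \cref{main:hyper}), and Siu's invariance of plurigenera, running the argument used above to deduce \cref{conj:hyperbolic} from \cref{conj:GGL}, but now with \cref{conj:GGL} available through \cref{thm:GGL}. Recall first that a reductive representation into $\GL_N(\bC)$ is the same thing as a semisimple (completely reducible) one, so $\varrho$ is a semisimple and big representation of $\pi_1(X_0)$.

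The first step is to turn hyperbolicity of $X_0$ into positivity of $K_{X_0}$. Since $X_0$ is smooth projective, pseudo Brody hyperbolic, and carries a semisimple big representation, \cref{thm:GGL} shows that \cref{conj:GGL} holds for $X_0$; in particular the implication (iii)$\Rightarrow$(i) gives that $X_0$ is of general type. By Siu's invariance of plurigenera \cite{Siu98,Pau07}, the plurigenera $h^0(X_t, mK_{X_t})$ are independent of $t\in\bD$, and since all fibers have the same dimension it follows that $X_t$ is of general type for every $t\in\bD$.

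The second step is to propagate the representation. As $f$ is a smooth proper family over the disk, Ehresmann's theorem makes $X_0\hookrightarrow\sX$ a homotopy equivalence, so $\pi_1(X_t)\xrightarrow{\sim}\pi_1(\sX)\xleftarrow{\sim}\pi_1(X_0)$; thus $\varrho$ extends to the representation $\tau\colon\pi_1(\sX)\to\GL_N(\bC)$ furnished by \cref{conj:deformation}, and for every $t$ the induced representation $\tau_t\colon\pi_1(X_t)\to\GL_N(\bC)$ equals $\varrho$ up to the canonical isomorphism $\pi_1(X_t)\simeq\pi_1(X_0)$, hence is again semisimple. By \cref{conj:deformation}, after shrinking $\bD$ the representation $\tau_t$ is moreover \emph{big} for all sufficiently small $t$. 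Therefore, for such $t$, the smooth projective variety $X_t$ carries a semisimple and big representation, so \cref{thm:GGL} applies to $X_t$ and \cref{conj:GGL} holds for it. Combining this with the general type property of $X_t$ from the first step (the equivalence (i)$\Leftrightarrow$(ii) of \cref{conj:GGL}), we conclude that $X_t$ is pseudo Picard hyperbolic, which is the assertion.

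The only deep input is \cref{conj:deformation} itself, whose proof rests on the continuity of equivariant harmonic maps into symmetric spaces and Euclidean buildings under deformation (\cref{harmonic2,lem:continuity}); the remaining steps are formal. The point worth emphasizing is that bigness of $\tau_t$ is an \emph{algebro-geometric} property of the pair $(X_t,\tau_t)$, not a property of the abstract group $\pi_1(X_t)$: the isomorphism $\pi_1(X_t)\simeq\pi_1(X_0)$ transports $\varrho$ and preserves its semisimplicity for free, but it is precisely \cref{conj:deformation} that is needed to guarantee that the transported representation remains big on the nearby fibers.
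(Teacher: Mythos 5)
Your overall architecture coincides with the paper's: apply \cref{thm:GGL} to $X_0$ to get that it is of general type, transport general type to every fiber by Siu's invariance of plurigenera, produce a big reductive representation on nearby fibers, and apply \cref{thm:GGL} again. The first and last steps are exactly as in the paper. The gap is in the middle step. \cref{conj:deformation} asserts only that there exists \emph{some} representation $\tau\colon\pi_1(\sX)\to\GL_N(\bC)$ whose restrictions $\tau_t$ are big for small $t$; it does not assert that $\tau_t$ is semisimple, nor that $\tau_t$ coincides with $\varrho$ under the canonical isomorphism $\pi_1(X_t)\simeq\pi_1(X_0)$. Your sentence ``the induced representation $\tau_t$ equals $\varrho$ up to the canonical isomorphism, hence is again semisimple'' is therefore not licensed by the cited statement. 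This matters twice over: \cref{thm:GGL} in characteristic zero requires the big representation to be \emph{semisimple}, and in characteristic zero one cannot pass to the semisimplification without possibly destroying bigness (unlike the positive-characteristic situation of \cref{lem:finite group}). Moreover, bigness of $\varrho$ itself transported to $X_t$ is genuinely in doubt: the proof of \cref{conj:deformation} shows that the canonical currents attached to certain \emph{auxiliary} non-archimedean representations and a $\bC$-VHS stay positive on nearby fibers, which controls those auxiliary representations (and hence the abstract bigness of $\pi_1(X_t)$), not the transported $\varrho_t$.

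The paper closes this gap by replacing $\varrho$ with the reductive representation $\tau\colon\pi_1(X_0)\to\GL_N(\bC)$ of \cite[Lemma 3.25]{DY23}, characterized by $\ker\tau\subset\ker\sigma$ for every reductive $\sigma\colon\pi_1(X_0)\to\GL_N(\bC)$, so that bigness of \emph{any} reductive representation forces bigness of $\tau$, while $\tau_t$ stays reductive for free since it is the same abstract representation. It then proves a dedicated claim that this particular $\tau_t$ is big for small $t$, by showing that every fiber $Z$ of the Shafarevich morphism $\mathrm{sh}_{\tau_t}$ kills all the auxiliary representations $\tau_{i,t}$ and $\sigma_t$ (via the kernel inclusion), hence kills the deformed canonical current of \cref{conj:deformation}, forcing $\mathrm{sh}_{\tau_t}$ to be birational. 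Your proof becomes correct once you substitute this construction for the appeal to the bare statement of \cref{conj:deformation}.
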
 
 \begin{proof}
   We apply \cref{thm:GGL} to conclude that $X_0$ is of general type. By Siu's invariance of plurigenera \cite{Siu98,Pau07}, $X_t$ is of general type for any $t\in \bD$.

 By \cite[Lemma 3.25]{DY23}, there exists another reductive  representation $\tau:\pi_1(X_0)\to \GL_N(\bC)$ such that  for any  reductive  representation $\sigma:\pi_1(X_0)\to \GL_N(\bC)$, we have 
 	\begin{align}\label{eq:inclusion}
 		\ker\tau\subset\ker\sigma.
 	\end{align} Let $\tau_t:\pi_1(X_t)\to \GL_N(\bC)$  be the composite map of $\tau$ and the natural isomorphism $\pi_1(X_t)\to \pi_1(X_0)$ induced by $f$. 
 	\begin{claim}\label{claim:big}
 		There exists $\ep>0$ such that $\tau_t$ is a big  and reductive representation for any $t\in \bD_\ep$. 
 	\end{claim} 
 	\begin{proof} 
We use the same notation as in the above subsection.
By \cref{thm:Shafarevich}, the Shafarevich morphism
\[
\mathrm{sh}_{\tau_t} \colon X_t \to \mathrm{Sh}_{\tau_t}(X_t)
\]
exists.
It suffices to prove that this morphism is birational for $t$ sufficiently
close to $0$.

Let
\[
\{\tau_i \colon \pi_1(X) \to \GL_N(K_i)\}_{i=1,\ldots,\ell}
\quad \text{and} \quad
\sigma \colon \pi_1(X) \to \GL_{N'}(\bC)
\]
be the reductive representations appearing in the above proof of
\cref{conj:deformation}.
Let $Z$ be a fiber of $\mathrm{sh}_{\tau_t}$.
By \eqref{eq:inclusion}, we have
\[
\tau_{i,t}\bigl(\operatorname{Im}[\pi_1(Z^{\mathrm{norm}})
\to \pi_1(X_t)]\bigr)
\]
finite for each $i$.

Recall that in \cref{prop:cons}, the $\bC$-VHS $\sigma$ is a direct sum of
reductive representations of $\pi_1(X)$ into $\GL_N(\bC)$.
By \eqref{eq:inclusion}, it follows that
\[
\sigma_t\bigl(\operatorname{Im}[\pi_1(Z^{\mathrm{norm}})
\to \pi_1(X_t)]\bigr)
\]
is also finite.
Consequently, for the closed positive $(1,1)$-current
\[
\sum_{i=1}^{\ell} T_{\tau_i,t} + \omega_{\sigma,t}
\]
defined in \eqref{eq:current}, its restriction to $Z$ is trivial.

By the same arguments as in the proof of \cref{conj:deformation}, we conclude
that the morphism
$\mathrm{sh}_{\tau_t} \colon X_t \to \mathrm{Sh}_{\tau_t}(X_t)$
is birational for $t$ sufficiently small.
\end{proof}

Since $X_t$ is of general type, it follows from
\cref{claim:big,thm:GGL} that $X_t$ is pseudo Picard hyperbolic
for any $t \in \bD_\varepsilon$.  
 \end{proof}   
 	\begin{cor}[{\cite[Corollary E]{DMW24}}]\label{corx}
 	Let $X$ be a smooth projective variety.  Assume that either 
 	\begin{enumerate}[label=(\alph*)]
 		\item \label{item:VHS} there is  $\bC$-VHS for short $\varrho:\pi_1(X)\to \GL_N(\bC)$ with discrete monodromy $\Gamma$ such that the period map $X\to \sD/\Gamma$ is   generically finite onto the image, or
 		\item\label{item:semisimple} there is a big representation $\tau:\pi_1(X)\to \GL_N(\bC)$ such that the Zariski closure of $\tau(\pi_1(X))$ is a semisimple algebraic group.
 	\end{enumerate} Then every small projective deformation of $X$ is  pseudo Picard hyperbolic.
 \end{cor}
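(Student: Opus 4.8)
The plan is to deduce \cref{corx} directly from \cref{main4}, so the main task is to reduce the two hypotheses \ref{item:VHS} and \ref{item:semisimple} to the situation of \cref{main4}: namely, that $X$ carries a big and reductive representation into $\GL_N(\bC)$ and is pseudo Brody hyperbolic. Once this is done, \cref{main4} produces a smooth projective family $f\colon\sX\to\bD$ with $X_0=X$, and any small projective deformation of $X$ appears as some $X_t$, hence is pseudo Picard hyperbolic.

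First I would treat case \ref{item:VHS}. A $\bC$-VHS is in particular semisimple (its monodromy is reductive, being a direct sum of irreducible pieces), so $\varrho$ is reductive. Since the period map $p\colon X\to\sD/\Gamma$ is generically finite onto its image, \cref{example1} (the bullet in \cref{exa:big} concerning period maps) shows that $\varrho$ is big: for any closed irreducible positive-dimensional $Z\subset X$ through a very general point, $p|_{Z^{\mathrm{norm}}}$ is still generically finite onto its image, so $\varrho(\operatorname{Im}[\pi_1(Z^{\mathrm{norm}})\to\pi_1(X)])$ is infinite (a finite image would force the period map of $Z^{\mathrm{norm}}$ to be constant by the argument in \cref{lem:period}). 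Moreover \cref{thm:PicardVHS} applies since the period map is generically finite, hence injective at a general point, so $X$ is pseudo Picard hyperbolic and in particular pseudo Brody hyperbolic. Thus the hypotheses of \cref{main4} are met with this $\varrho$.

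Next, case \ref{item:semisimple}. Here $\tau$ is big and its Zariski closure $G$ is semisimple; a semisimple algebraic group is reductive, so by \cref{thm:reductive} (after replacing $X$ by a desingularization if needed — but $X$ is already smooth) $\tau$ is reductive. Hence $X$ carries a big and reductive representation. It remains to check $X$ is pseudo Brody hyperbolic: this is exactly the content of \cref{main:hyper}\ref{item:hyper1} (with $\sigma=\mathrm{id}$), since $\tau$ is big and Zariski dense into the semisimple group $G(\bC)$, so $X$ is pseudo Picard hyperbolic and strongly of log general type, in particular pseudo Brody hyperbolic. Again the hypotheses of \cref{main4} hold.

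In both cases, applying \cref{main4} to the given $X=X_0$ and the representation $\varrho$ (resp.\ $\tau$) yields: for every smooth projective family $f\colon\sX\to\bD$ with $f^{-1}(0)\cong X$, the fiber $X_t$ is pseudo Picard hyperbolic for $|t|$ small. Since any small projective deformation of $X$ is realized as such an $X_t$, the corollary follows. The only subtlety — and the step I would be most careful about — is verifying that bigness of $\varrho$ is genuinely inherited from generic finiteness of the period map in case \ref{item:VHS}, i.e.\ that one may test bigness on subvarieties through a very general point; this is handled by the equivalence in \cref{lem:period} between finiteness of $\varrho|_{\pi_1(Z^{\mathrm{norm}})}$ and $p(Z)$ being a point, which immediately gives what is needed.
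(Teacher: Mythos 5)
Your proposal is correct and follows essentially the same route as the paper: in case (a) establish that $\varrho$ is big and reductive and that $X$ is pseudo Brody hyperbolic (the paper gets the latter via the Griffiths--Schmid curvature of the period domain and the Ahlfors--Schwarz lemma rather than \cref{thm:PicardVHS}, and gets reductivity from \cref{thm:Corlette}, but these are interchangeable), in case (b) invoke \cref{main:hyper}, and then apply \cref{main4} in both cases. The only slip is your citation of \cref{thm:reductive} in case (b): that theorem concerns pullbacks of reductive representations, whereas the fact you actually need is the standard one that a representation is semisimple precisely when the Zariski closure of its image is reductive, which holds here since semisimple groups are reductive.
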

 \begin{proof}
 	In Case \ref{item:VHS}, by the work of Griffiths-Schmid \cite{GS69}, the period domain $\sD$ is equipped with a natural metric that has negative holomorphic sectional curvature along the horizontal direction. Since the period map of $\varrho$ is assumed to be generically finite onto its image, it follows from the Ahlfors-Schwarz lemma that $X$ is pseudo Brody hyperbolic.
 	
 	On the other hand, by \cref{lem:period}, $\varrho$ is big. By \cref{thm:Corlette}, $\varrho$ is also reductive. Hence, the conditions in \cref{main4} are fulfilled, allowing us to conclude that a small projective deformation of $X$ is  pseudo Picard hyperbolic.  
 	
 	In Case \ref{item:semisimple},  by \cref{main:hyper}, $X$ is pseudo Brody hyperbolic.   We apply \cref{main4} to conclude  that a small deformation of $X$ is  pseudo Picard hyperbolic.    
 \end{proof}
 	
	\section{Some further applications}\label{sec:application} 
In this section, we present several applications of our hyperbolicity results
  in \cref{main:hyper}. 
	\subsection{Special and $h$-special varieties}
	We first recall the definition of special varieties by Campana \cite{Cam04,Cam11}.
	\begin{dfn}[Campana's specialness]\label{def:special}
		Let $X$ be a    quasi-projective normal variety.
		\begin{thmlist}
			\item	$X$ is \emph{weakly special} if for any finite \'etale cover $\widehat{X}\to X$ and any proper birational modification $\widehat{X}'\to \widehat{X}$, there exists no  dominant morphism  $\widehat{X}'\to Y$  with connected general fibers such that $Y$ is a positive-dimensional  quasi-projective  variety of  log  general type.  
			\item  $X$ is \emph{special} if  for  any proper birational modification $X'\to X$ there is no dominant morphism $X'\to  Y$  to with connected general fibers over  a positive-dimensional  quasi-projective  variety  $Y$ such that the \emph{Campana orbifold base}  (or simply orbifold base) is  of log  general type.  
			\item $X$ is \emph{Brody special} if  it contains a Zariski dense entire curve. 
		\end{thmlist}
	\end{dfn}
	
	Campana defined $X$ to be \emph{$H$-special} if $X$ has vanishing Kobayashi pseudo-distance.
	Motivated by \cite[11.3 (5)]{Cam11}, in \cite[Definition 2.2]{CDY25b} we introduce the following definition. 
	\begin{dfn}[$h$-special]\label{defn:20230407}
		Let $X$ be a smooth quasi-projective variety.
		We define the equivalence relation $x\sim y$ of two points $x,y\in X$ iff there exists a sequence of holomorphic maps $f_1,\ldots,f_l:\mathbb C\to X$ such that letting $Z_i\subset X$ to be the Zariski closure of $f_i(\mathbb C)$, we have 
		$$x\in Z_1, Z_1\cap Z_2\not=\emptyset, \ldots, Z_{l-1}\cap Z_l\not=\emptyset, y\in Z_l.$$
		We set $R=\{ (x,y)\in X\times X; x\sim y\}$.
		We define $X$ to be \emph{hyperbolically special} ($h$-special for short) iff $R\subset X\times X$ is Zariski dense.
	\end{dfn} 
	By definition, rationally connected projective varieties are $h$-special without referring to a theorem of Campana and Winkelmann \cite{CW16}, who proved that all rationally connected projective varieties contain Zariski dense entire curves. 
	
	In \cite{Cam04,Cam11}, Campana proposed the following tantalizing abelianity conjecture. 
	\begin{conjecture}[Campana]\label{conj:Campana}
		A special   smooth quasi-projective variety has virtually abelian fundamental group. 
	\end{conjecture}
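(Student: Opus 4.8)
The strategy is to separate $\pi_1(X)$ into its \emph{linear} shadow and the rest: to prove that every finite-dimensional linear quotient of $\pi_1(X)$ is virtually abelian — which is where the non-abelian Hodge machinery of \cref{sec:sha,sec:hyperbolicity} applies — and then to deduce virtual abelianity of $\pi_1(X)$ itself, which is the genuinely open point. Since Campana's specialness (\cref{def:special}) is invariant under finite \'etale covers and proper birational modifications, I would fix a smooth model of $X$ with a good compactification and pass to finite \'etale covers whenever convenient; I may also assume the coefficient field of any given representation is algebraically closed.

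\textbf{Killing the semisimple part.} Let $\varrho\colon\pi_1(X)\to\GL_N(K)$ be a representation and let $G_0$ be an almost simple factor of the derived group of the Zariski closure of the image of the semisimplification $\varrho^{\mathrm{ss}}$; composing, we obtain a Zariski dense representation $\varrho_0\colon\pi_1(X)\to G_0(K)$. I claim $G_0$ must be trivial. If $\varrho_0$ is unbounded over some non-archimedean local field, then by \cref{thm:KE} its Katzarkov--Eyssidieux reduction $s_{\varrho_0}\colon X\to S_{\varrho_0}$ is non-constant; as the restriction of $\varrho_0$ to a general fibre is bounded, the reduction descends after a finite orbifold cover to a big Zariski dense representation on the base, and the orbifold refinement of \cref{main:hyper} (in the spirit of \cite{CCE15}) shows that the Campana orbifold base of $s_{\varrho_0}$ is of log general type, contradicting specialness. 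If, on the other hand, $\varrho_0$ is bounded over every non-archimedean place, then — arguing exactly as in the proof of \cref{prop:cons}, and using \cref{lem:finite group} to dispose of positive characteristic — its image is discrete in a product of real Lie groups, so $\varrho_0$ underlies a $\bC$-variation of Hodge structure with discrete monodromy; by \cref{lem:period} the period map $p\colon X\to\sD/\Gamma$ is then either non-constant, in which case the closure of $p(X)$ is of log general type by \cite{BC20} and the fibration $X\to\overline{p(X)}$ again contradicts specialness, or constant, which would force $G_0$ finite, a contradiction. Hence the Zariski closure of $\varrho^{\mathrm{ss}}(\pi_1(X))$ has trivial derived group; being reductive, it is a group of multiplicative type, so $\varrho^{\mathrm{ss}}(\pi_1(X))$ is virtually abelian and $\varrho(\pi_1(X))$ is virtually solvable.

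\textbf{From solvable to abelian, and the obstacle.} Next I would invoke the known solvable and nilpotent cases of Campana's conjecture: for special $X$, a virtually solvable linear quotient of $\pi_1(X)$ is virtually nilpotent (this uses Delzant-type results together with an analysis of the Campana base of the fibration attached to a non-nilpotent solvable quotient), and a virtually nilpotent quotient of a (quasi-)K\"ahler group is virtually abelian (Arapura--Nori and its quasi-projective extensions). Combining with the previous paragraph, every linear quotient of $\pi_1(X)$ is virtually abelian; equivalently, $\pi_1(X)^{\mathrm{alg}}$ is pro-(virtually abelian), and in particular the conjecture is proved whenever $\pi_1(X)$ is linear — this is precisely the ``linear Campana conjecture'', which the results surveyed here (\cref{thm:EKPR,main:hyper}) make unconditional. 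The remaining, and in my view decisive, difficulty is to pass from ``all linear quotients are virtually abelian'' to ``$\pi_1(X)$ is virtually abelian'': a (quasi-)K\"ahler group need not be linear, nor residually linear, so the kernel of $\pi_1(X)\to\pi_1(X)^{\mathrm{alg}}$ can be large, and the harmonic-map and Hodge-theoretic tools of \cref{sec:sha} only see \emph{linear} data. I would attack this either through the full Shafarevich conjecture (\cref{conj:Sha}) — holomorphic convexity of $\widetilde X$ with a Shafarevich morphism detecting \emph{all} infinite subquotients of $\pi_1(X)$, whose triviality for special $X$ would force $\pi_1(X)$ finite over its abelianization — or through Campana's core map $c\colon X\to C(X)$, which is a point when $X$ is special, together with the conjectural identification of the core with the $\Gamma$-reduction and a structure theorem for $\ker[\pi_1(X)\to\pi_1^{\mathrm{orb}}(C(X))]$; but both routes presuppose exactly the control over non-linear fundamental groups that is currently unavailable, so this last step is the true obstacle, and it is what separates the theorem in the linear case from the conjecture in general. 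A more ambitious line would be to extend non-abelian Hodge theory to representations into suitable non-algebraic targets (e.g.\ mapping class groups, in the spirit of \cite{EF25}) and to use the resulting rigidity to exclude exotic infinite quotients of $\pi_1(X)$ for special $X$.
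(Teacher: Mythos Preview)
The statement you are trying to prove is a \emph{conjecture}, and the paper does not prove it. More than that: immediately after stating \cref{conj:Campana}, the paper records that it is \emph{false} for non-proper quasi-projective varieties, and \cref{example} exhibits an explicit counterexample---the complement of the zero and infinity sections in $\bP(L\oplus\cO_B)$ over an elliptic curve $B$ with $c_1(L)\neq 0$. This variety is special and Brody special, yet its fundamental group is torsion-free nilpotent of class~$2$ and not virtually abelian. The paper then replaces \cref{conj:Campana} by the weaker \cref{conj:Campana2} (virtually nilpotent) and the sharper \cref{conj:Campana3} (virtually nilpotent of class~$2$).

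The concrete place where your argument breaks is the sentence ``a virtually nilpotent quotient of a (quasi-)K\"ahler group is virtually abelian (Arapura--Nori and its quasi-projective extensions)''. The Arapura--Nori theorem is a statement about fundamental groups of \emph{compact} K\"ahler manifolds; it does \emph{not} extend to the quasi-projective setting, and \cref{example} is precisely a counterexample to that extension. Indeed, the fundamental group there is linear (a central extension of $\bZ^2$ by $\bZ$, hence embeddable in $\GL_3(\bZ)$), so your ``linear Campana conjecture'' conclusion---that every linear quotient is virtually abelian---is already false, not merely the non-linear residual step you flag as the obstacle. What the paper actually proves in this direction is \cref{thm:CDY}: for special or $h$-special $X$, any linear representation over a field of characteristic zero has virtually \emph{nilpotent} image (and virtually abelian image in positive characteristic), which is sharp by the example.
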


	In \cite{CDY25b} we discovered that  \cref{conj:Campana} fails for non-proper  quasi-projective variety. 
	
	\subsection{Campana's conjecture revisited}
	In \cite[Example 4.26]{CDY25b}, we constructed a smooth quasi-projective variety  such that it is both special and Brody special, yet it has nilpotent fundamental group that is not virtually abelian.  Later, Aguilar-Campana \cite{AC25}  also gave another simpler construction of such examples.   
	We first recall the following definition. 
	\begin{dfn}[nilpotent group]
		A group $G$ is \emph{nilpotent} if it  has a  central series of finite length. That is, a series of normal subgroups
		$$\{1\} = G_0 \triangleleft G_1 \triangleleft \dots \triangleleft G_n = G$$
		such that $G_{i+1}/G_i\leq Z(G/G_i)$.  For a nilpotent group $G$, the smallest $n$ such that $G$ has a central series of length $n$ is called the nilpotency class of $ G$; and $G$ is said to be \emph{nilpotent of class $n$}. 
	\end{dfn}  
	The following example is given in \cite{AC25}. 
	\begin{example}\label{example}
		Let $L$ be a  holomorphic line bundle $L$ over an elliptic curve $B$   such that $c_1(L)\neq 0$.  Let $X$ be $L^*$, that is the complement  of the zero
		and infinity sections of  $\bP(L\oplus \cO_B)\to X$. Then it is a $\bC^*$-fibration over the elliptic curve.  
		By the Gysin sequence, we have 
		\begin{align}\label{eq:Gysin}
			0\to H^1(B,\bZ)\stackrel{\pi^*}{\to} H^1(X,\bZ)\to H^0(B,\bZ)\stackrel{\cdot c_1(L)}{\to} H^2(B,\bZ)\to H^2(X,\bZ)\to H^1(B,\bZ)\to\cdots
		\end{align}   
		where $\pi:X\to B$ is the projection map.   Therefore, if $c_1(L)\neq 0$,  
		$$
		\pi^*: H^1(B,\bZ) {\to} H^1(X,\bZ)
		$$
		is an isomorphism.   It follows that $\pi_1(X)$ is then
		a central extension of $\pi_1(B)$ by $\bZ$,  hence is  torsionfree, and  nilpotent of class $2$. 
	\end{example} 
	Consequently, in the quasi-projective setting, we revised \cref{conj:Campana} as follows.
	\begin{conjecture}\label{conj:Campana2}
		A special or $h$-special smooth quasi-projective variety has virtually nilpotent fundamental group.  
	\end{conjecture}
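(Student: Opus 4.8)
The plan is to attack \cref{conj:Campana2} through its linear shadow first: to prove that if $X$ is special (resp. $h$-special) and $\pi_1(X)$ admits a faithful finite-dimensional representation — or, more modestly, that every infinite linear quotient of $\pi_1(X)$ is virtually nilpotent. The strategy is to derive a contradiction with (weak) specialness whenever $\pi_1(X)$ is ``too non-abelian''. First I would record the standard facts that specialness is preserved under finite étale covers and that the orbifold base of a fibration onto a positive-dimensional variety of log general type is itself of log general type; hence a special $X$ is weakly special, and in particular, after any finite étale cover and any birational modification, $X$ admits no dominant morphism with connected general fibers onto a positive-dimensional variety of log general type. For the $h$-special case I would instead keep in mind the elementary observation that if $f\colon X\to Y$ is dominant with $\dim Y>0$ and $Y$ is pseudo Brody hyperbolic with exceptional locus $\Xi\subsetneq Y$, then every entire curve in $X$ maps into a fiber of $f$ or into $f^{-1}(\Xi)$, so the equivalence relation $R$ of \cref{defn:20230407} is contained in $(X\times_Y X)\cup (f^{-1}(\Xi)\times f^{-1}(\Xi))$, a proper Zariski closed subset of $X\times X$ — contradicting $h$-specialness.

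With this in place, let $\varrho\colon\pi_1(X)\to \GL_N(\bC)$ have infinite image; after passing to a finite étale cover and conjugating we may assume the Zariski closure $G$ of $\varrho(\pi_1(X))$ is connected and that $\varrho$ is defined over a number field. If the semisimple quotient $G_1:=G/R(G)$ is nontrivial (hence positive-dimensional), then $\sigma\colon\pi_1(X)\to G_1(\bC)$ obtained by composition has Zariski dense, infinite image; by the factorization used in the proof of \cref{thm:GGL} (cf.\ the discussion surrounding \cref{main:hyper}), after a finite étale cover and a birational modification there is a dominant morphism $f\colon X\to Y$ with connected general fibers, with $\dim Y>0$, and a big, Zariski dense representation $\tau\colon\pi_1(Y)\to G_1(K)$ with $f^*\tau$ conjugate to $\sigma$. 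Then \cref{main:hyper} forces $Y$ to be strongly of log general type (and pseudo Brody hyperbolic), contradicting weak specialness (resp.\ $h$-specialness, via the observation above). Hence $G_1$ is trivial, so — after a finite étale cover — $\varrho$ takes values in a connected solvable group.

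It remains to upgrade ``virtually solvable'' to ``virtually nilpotent'', which is the genuinely new point, since the two are not equivalent for linear groups. Writing $G$ for the connected solvable Zariski closure, the obstruction to nilpotency is precisely a semisimple element of $G$ acting on the unipotent radical (equivalently on an abelianization) with an eigenvalue $\lambda$ that is not a root of unity; such a $\lambda$ has $|\lambda|_v\neq 1$ at some place $v$, archimedean or not. After passing to the appropriate quotient of $G$ one then produces a representation $\tau_v\colon\pi_1(X)\to\GL_{N'}(k_v)$ with \emph{unbounded} image. By \cref{thm:KE} its Katzarkov–Eyssidieux reduction $s_{\tau_v}\colon X\to S_{\tau_v}$ is non-constant, so $T_{\tau_v}$ and the multivalued logarithmic $1$-form $\eta_{\tau_v}$ (via \cref{prop:spectral}) are nontrivial; the goal is then to run the positivity-spreading argument of the proof of \cref{main:unbounded hyperbolic} — passing to the spectral covering and invoking \cref{thm:CP} — to exhibit a positive-dimensional factor of $X$ of log general type, contradicting weak specialness, and closing the $h$-special case through pseudo Brody hyperbolicity as before. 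The hard part here is that bigness and Zariski density are \emph{not} inherited by the quotient of $G$ that isolates the offending eigenvalue, so the base $S_{\tau_v}$ is not automatically of general type; controlling it requires a delicate analysis — in the spirit of \cite{CDY25} — of how the abelian and nilpotent quotients of $\pi_1(X)$ sit inside the quasi-Albanese variety, together with an Ueno–Kawamata-type structure theorem for the fibration $s_{\tau_v}$. The second, and more fundamental, obstacle is that removing the linearity hypothesis altogether is out of reach with present methods: harmonic maps to symmetric spaces and Bruhat–Tits buildings, Shafarevich morphisms (\cref{thm:Shafarevich}), and Katzarkov–Eyssidieux reductions only see linear quotients of $\pi_1(X)$, so a complete proof of \cref{conj:Campana2} would need an independent input guaranteeing that $\pi_1(X)$ of a special variety possesses a sufficiently rich supply of linear representations — a statement which is itself unknown.
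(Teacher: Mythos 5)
First, note that the statement you are proving is stated in the paper as a \emph{conjecture}: the paper itself does not prove \cref{conj:Campana2} in general, but only its linear incarnation, \cref{thm:CDY} (via \cref{thm:202210123}). You correctly recognize this, and your first step --- killing the semisimple quotient $G/R(G)$ of the Zariski closure by producing, after an \'etale cover and a modification, a fibration $f:X\to Y$ with a big Zariski dense representation on $Y$, then invoking \cref{main:hyper} to contradict weak specialness (resp.\ $h$-specialness, via the fiberwise confinement of entire curves) --- is exactly Step~1 of the paper's proof of \cref{thm:CDY}. Up to that point your proposal and the paper coincide.

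The genuine gap is in your upgrade from ``virtually solvable'' to ``virtually nilpotent'', and it is not merely the technical difficulty you flag. Your plan is: a non-nilpotent connected solvable $G=T\ltimes R_u(G)$ has a character $\chi$ of $T$ acting nontrivially on the unipotent radical, some value $\lambda=\chi(\varrho(\gamma))$ is not a root of unity, hence $|\lambda|_v\neq 1$ at some place $v$, giving an unbounded representation over $k_v$; then run the Katzarkov--Eyssidieux reduction and the positivity-spreading of \cref{main:unbounded hyperbolic}. But unboundedness at a non-archimedean place is \emph{not} incompatible with specialness: the representation obtained this way is (essentially) a rank-one character of $\pi_1(X)$, and special varieties carry plenty of unbounded characters --- $\mathbb{C}^*$, semiabelian varieties, and the nilpotent example of \cref{example} all do, since any character of infinite order into $\overline{\bQ}^{\,*}$ is unbounded somewhere by Kronecker's theorem. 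For such representations the reduction $s_{\tau_v}$ factors through the quasi-Albanese map, whose image for a special variety is a semiabelian variety, never of log general type; so no contradiction with \cref{thm:CP}-type positivity can emerge from unboundedness alone. What actually distinguishes the non-nilpotent situation is that the offending character $\chi\circ\varrho$ is a \emph{cohomology-jumping} character (it occurs in the monodromy of the variation of mixed Hodge structure attached to the tower of nilpotent quotients), and this is where the paper's argument goes instead: \cref{thm:202210123} is proved using the $\pi_1$-exactness of the quasi-Albanese morphism together with Deligne's theorem that the radical of the algebraic monodromy group of an admissible VMHS is \emph{unipotent}, in the spirit of Campana's work on resolvable quotients of K\"ahler groups. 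That Hodge-theoretic input is the missing idea in your sketch; without it the eigenvalue/valuation argument cannot close. (Your final caveat --- that the non-linear case of the conjecture is inaccessible to these methods --- is accurate and consistent with the paper, which leaves \cref{conj:Campana2} open beyond the linear case.)
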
 
	We propose the following stronger conjecture. Similar questions were also independently asked in \cite{AC25, Rogov25}. 
	\begin{conjecture}\label{conj:Campana3}
		Let $X$ be a smooth quasi-projective variety that is either  {special} or  {$h$-special}. Then its fundamental group $\pi_1(X)$ is  {virtually nilpotent of class $2$}.
	\end{conjecture}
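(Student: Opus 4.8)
\emph{Strategy toward \cref{conj:Campana3}.} The statement for arbitrary (a priori non-linear) fundamental groups seems out of reach with present techniques, so the plan is to treat the case in which $\pi_1(X)$ is \emph{linear}, i.e.\ admits a faithful representation into $\GL_M(K)$ for some field $K$; this is already the regime of the counterexample in \cref{example}. Throughout one may replace $X$ by a finite étale cover and by a smooth birational model, since all of \cref{def:special}, \cref{defn:20230407}, linearity, and virtual nilpotency of class $2$ are insensitive to such operations. The engine is the contrapositive of \cref{main:hyper}: if $X$ is weakly special (in particular if it is special or $h$-special) then $X$ is not strongly of log general type, and by \cref{thm:GGL} a pseudo-Brody hyperbolic variety of log general type cannot be $h$-special either; hence \emph{$\pi_1(X)$ admits no big, Zariski dense representation into a semisimple algebraic group over any field}.

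The first step is to upgrade this to a statement about the full linear fundamental group. Fix a faithful $\varrho\colon\pi_1(X)\to\GL_M(\bC)$ with Zariski closure $G$, let $\sigma$ be the composite with $G\twoheadrightarrow G/R(G)$, and consider the Shafarevich morphism ${\rm sh}_\sigma\colon X\to{\rm Sh}_\sigma(X)$ of \cref{thm:Shafarevich}, the factorization $s_{\rm fac}$ of \cref{def:fac}, and the period map of the $\bC$-VHS furnished by \cref{prop:cons}. The key geometric input to be established is that whenever one of these reduction maps is non-constant, its \emph{Campana orbifold base} is of log general type: for the period-map part this is the Brunebarbe--Cadorel theorem already used in \cref{thm:sym}, and for the non-archimedean reductions it should follow from the spectral-covering construction of \cref{prop:spectral} together with \cref{thm:CP}, exactly as in the proof of \cref{main:unbounded hyperbolic}, after recording the ramification as an orbifold divisor. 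Specialness of $X$ then forces all of these maps to be constant, so $G^{\circ}=R(G)^{\circ}$ is solvable and $\pi_1(X)$ is virtually torsion-free solvable.

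It remains to bound the nilpotency class, which is where the non-compact structure of $X$ enters and where the value ``$2$'' of \cref{example} should appear. Starting from a virtually solvable linear $\pi_1(X)$, one analyses the derived series: the abelianization produces the quasi-Albanese map $\alb_X\colon X\to A$, and since $X$ is special the image $\alb_X(X)$ is special of maximal quasi-Albanese dimension, hence, up to cover and modification, a semi-abelian variety; the fibres of $X\to\alb_X(X)$ are again special and carry no further non-trivial quasi-Albanese map, so by an inductive application of Campana's core / $\gamma_n$-reduction theory (the nilpotency framework recalled in the abstract) one expects the process to terminate after a single additional step, the residual piece being the ``$\bC^*$-directions'' detected by the multivalued logarithmic $1$-forms of \cref{def:multivaluedform}. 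Concretely, the goal is to produce, after a finite étale cover, a central extension $1\to A_1\to\pi_1(X)\to A_0\to 1$ with $A_0,A_1$ finitely generated abelian — which is precisely nilpotency of class $\le 2$ — with $A_0$ coming from $\pi_1(\alb_X(X))$ and $A_1$ from the fibre directions of a $\bC^*$-tower, mirroring \cref{example}.

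The main obstacle is the interface between the two notions of ``largeness'' at play: \cref{main:hyper} requires a quotient that is \emph{big} in the sense of \cref{dfn:big}, i.e.\ non-trivial on the normalization of \emph{every} positive-dimensional subvariety through a very general point, whereas Campana's specialness is formulated via fibrations and orbifold bases. Bridging these — showing that a linear $\pi_1(X)$ which is not virtually nilpotent of class $\le 2$ necessarily yields a \emph{big} (not merely Zariski dense) semisimple-group quotient on $X$ or on a suitable modification — is the crux, and will presumably require refining the factorization arguments of \cite{DY23} so as to detect positivity along general subvarieties, together with the (still open) statement that the orbifold bases of the various reduction maps are of log general type. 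The sharp constant $2$, rather than some larger class, is a secondary difficulty that genuinely uses that only the first non-compact ``layer'' survives in a special variety.
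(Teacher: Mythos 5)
The statement you are addressing is stated in the paper as a \emph{conjecture}, and the paper does not prove it: what is actually established there is the weaker \cref{thm:CDY} (virtual nilpotency of $\varrho(\pi_1(X))$ in characteristic $0$, virtual abelianity in positive characteristic), with the class-$2$ refinement in the linear case attributed to the forthcoming work \cite{CDP25}, which requires a new Hodge theory for local systems on quasi-projective varieties. Your text is accordingly a strategy sketch rather than a proof, and it does not close the conjecture even in the linear case. Your first step (killing the semisimple quotient $G/R(G)$ of the Zariski closure) is essentially the paper's Step~1 in the proof of \cref{thm:CDY}, but you misidentify where the difficulty lies: the passage from a Zariski dense representation into a semisimple group to a \emph{big} one is not the crux --- it is already handled by the factorization results of \cite{CDY22,DY23b} (one replaces $X$ by a cover and a modification so that $\sigma=f^*\tau$ with $\tau$ big and Zariski dense on the base $Y$ of a fibration, and then \cref{main:hyper} makes $Y$ of log general type, contradicting weak specialness or $h$-specialness directly). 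No orbifold-base analysis of the reduction maps is needed for this step.

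The genuine gaps are elsewhere. First, from solvability of $G$ you jump to ``virtually torsion-free solvable'' and then to a central extension $1\to A_1\to\pi_1(X)\to A_0\to 1$; but the paper's route from solvable to \emph{nilpotent} already requires \cref{thm:202210123}, whose proof rests on the $\pi_1$-exactness of the quasi-Albanese map for special/$h$-special varieties together with Deligne's theorem that the radical of the algebraic monodromy group of an admissible variation of mixed Hodge structures is unipotent --- an input absent from your sketch. Second, the step from ``nilpotent'' to ``nilpotent of class $2$'' is the actual new content of the conjecture in the linear case, and your proposed mechanism (an induction on quasi-Albanese maps terminating ``after a single additional step'') is not an argument: nothing in the sketch rules out a longer lower central series, and the paper indicates that proving the class-$2$ bound requires the deformation/Hodge-theoretic machinery of \cite{CDP25}, not a reduction-map induction. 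Finally, the conjecture as stated concerns arbitrary fundamental groups, so even a complete execution of your plan would only yield the linear case.
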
 
	Shimoji has proved some interesting results on  this conjecture, see \cite{Taito25}.  Rogov \cite{Rogov25} also proposed some strategy in proving this conjecture using higher Albanese maps by Hain and o-minimal geometry. 
	\subsection{Nilpotency conjecture in the linear case}
	In \cite{CDY25b,DY23b}, we confirm \cref{conj:Campana2} for quasi-projective varieties with linear fundamental groups. 
	\begin{thm}\label{thm:CDY}
		Let $X$ be a special or $h$-special smooth quasiprojective variety. Let  $\varrho:\pi_1(X)\to \GL_{N}(K)$ be a linear representation where $K$ is any field.  
		\begin{thmlist}
			\item  \label{VN} {\rm  \cite[Theorem A]{CDY22}\ } If ${\rm char}\, K=0$, then  the image $\varrho(\pi_1(X))$ is virtually nilpotent.  
			\item \label{VR:positive}   {\rm \cite[Theorem G]{DY23b}\ }   If ${\rm char}\, K>0$, then $\varrho(\pi_1(X))$ is virtually abelian.   
		\end{thmlist}  
	\end{thm}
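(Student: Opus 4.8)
Since the conclusions concern only the abstract group $\varrho(\pi_1(X))$, the plan is first to replace $K$ by its algebraic closure and pass to a finite \'etale cover of $X$ --- which preserves both specialness and $h$-specialness --- so that the Zariski closure $G$ of $\varrho(\pi_1(X))$ is connected. Write $U=R_u(G)$, so that $G/U$ is reductive; let $S$ be its semisimple quotient and $\sigma\colon\pi_1(X)\to S(K)$ the induced Zariski dense representation. The first step is to kill $S$: by the factorization theorem (\cite[Proposition~2.5]{CDY22} in characteristic zero, \cite[Proposition~5.8]{DY23b} in general), after a further \'etale cover and a birational modification there is a dominant fibration $f\colon X\to Y$ with connected general fibres and a \emph{big} Zariski dense representation $\tau\colon\pi_1(Y)\to S(K)$ with $f^*\tau=\sigma$. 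If $\dim Y>0$, then \cref{main:hyper} forces $Y$ to be strongly of log general type and pseudo Picard hyperbolic; this contradicts the specialness of $X$ (which forbids dominant maps, even after \'etale covers and modifications, onto positive-dimensional varieties of log general type), and also its $h$-specialness (every entire curve in $Y$ would lie in a proper $\Xi_Y\subsetneq Y$, forcing the relation $R$ of $X$ into $f^{-1}(\Xi_Y)\times f^{-1}(\Xi_Y)$). Hence $Y$ is a point, $S$ is trivial, and $G=T\ltimes U$ is connected solvable with $T$ a maximal torus.

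The heart of the matter is then a purely solvable statement. Writing $\varrho_T\colon\pi_1(X)\to T(K)$ for the composition of $\varrho$ with $G\to T$, I would show that every weight $\chi$ of $T$ on $U^{\mathrm{ab}}=U/[U,U]$ has $\chi\circ\varrho_T\colon\pi_1(X)\to K^{\ast}$ of \emph{finite} image. Granting this, a finite-index subgroup of $\varrho(\pi_1(X))$ lies in $T'\cdot U$ for a subtorus $T'$ centralising $U$ (it suffices to control $U^{\mathrm{ab}}$, since the weights on the deeper central quotients of $U$ are sums of the $\chi$'s), and $T'\cdot U$ is a nilpotent algebraic group, giving assertion \cref{VN}. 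To prove the claim, suppose $\chi\circ\varrho_T$ has infinite image. Zariski density of $\varrho$ in $T\ltimes U$ forces the associated extension cocycle to be cohomologically nontrivial in the $\chi$-eigendirection --- otherwise a conjugation inside $U$ would remove it --- so the rank-one local system $L_\chi$ with monodromy $\chi\circ\varrho_T$, which has infinite monodromy, satisfies $H^1(X,L_\chi)\neq 0$, i.e.\ $[L_\chi]$ lies in the first cohomology jump locus $\Sigma^1(X)$. By the structure theorem for jump loci in its orbifold form (Beauville, Arapura), every positive-dimensional component of $\Sigma^1(X)$ is pulled back from a fibration of $X$ onto a hyperbolic orbifold curve. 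A special variety admits no such fibration by definition; an $h$-special one cannot either, since an entire curve $g\colon\mathbb C\to X$ composed with such a fibration $p\colon X\to C$ ramifies over each orbifold point to a multiple of the prescribed order (because the multiple fibres of $p$ are divisible by those orders), hence $p\circ g$ lifts to the orbifold cover, a curve of genus $\ge 2$, and is therefore constant --- so all entire curves lie in fibres of $p$ and $R$ is not Zariski dense. Thus $[L_\chi]$ is an isolated point of $\Sigma^1(X)$; since the jump loci are defined over $\mathbb Z$ and isolated points of a union of torsion-translated subtori are torsion, $\chi\circ\varrho_T$ has finite image --- a contradiction. (Over a function field this last step uses that a character factoring through such an isolated point takes values in a finite field, hence is torsion.)

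For the positive characteristic refinement \cref{VR:positive}, I would upgrade ``virtually nilpotent'' to ``virtually abelian'' as follows. After passing to a finite-index subgroup, $\Gamma:=\varrho(\pi_1(X))$ is nilpotent with connected Zariski closure $T\times U$ (a connected nilpotent group is the product of its central maximal torus and its unipotent radical). Being finitely generated nilpotent, $\Gamma$ is polycyclic, so $\Gamma\cap U$ is finitely generated; but a finitely generated subgroup of $U(K)$ with $\mathrm{char}\,K=p$ is finite (filter $U$ by copies of $\mathbb G_a$ and use that finitely generated subgroups of $(K,+)$ are finite in characteristic $p$, as in the proof of \cref{lem:finite group}). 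Hence $\Gamma$ is finite-by-(finitely generated abelian), thus virtually abelian.

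The main obstacle is this solvable step. Once \cref{main:hyper} has removed the semisimple part, the representation is invisible to the hyperbolicity machinery, and the remaining input must come from the structure theory of cohomology jump loci together with the absence of hyperbolic-orbifold-curve fibrations on special and $h$-special varieties. The delicate points there are the orbifold refinement of the jump-locus structure theorem, the verification that $h$-specialness (and not merely specialness) excludes such fibrations, and the bookkeeping needed to pass from the non-virtual-nilpotency of $\varrho(\pi_1(X))$ to a single infinite-monodromy rank-one local system with non-vanishing $H^1$.
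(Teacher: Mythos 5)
Your reduction to the connected solvable case is exactly the paper's Step~1: pass to the algebraic closure and a finite \'etale cover, split off the semisimple quotient of the Zariski closure, invoke the factorization theorem to produce a fibration $f\colon X\to Y$ carrying a big Zariski dense representation, and use \cref{main:hyper} to contradict (weak) specialness or $h$-specialness unless $Y$ is a point. For the solvable-to-nilpotent step in characteristic zero you take a genuinely different route: the paper's \cref{thm:202210123} is proved (in \cite[\S 4]{CDY25b}) via the $\pi_1$-exactness of the quasi-Albanese map together with Deligne's theorem that the radical of the monodromy group of an admissible variation of mixed Hodge structure is unipotent, whereas you argue through the jump locus $\Sigma^1(X)$, the Arapura--Budur--Wang structure theorem (torsion-translated subtori; positive-dimensional components arising from orbifold pencils over hyperbolic orbifold curves), and the absence of such pencils on special and $h$-special varieties. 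This is close in spirit to Campana's \cite{Cam01}, which the paper cites as its inspiration; the cocycle argument showing $H^1(\pi_1(X),K_\chi)\neq 0$ for every weight with infinite image is correct, and the route is viable in characteristic zero, at the cost of the orbifold refinements you flag yourself.

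The genuine gap is in \cref{VR:positive}. Your deduction of ``virtually abelian'' needs $\Gamma\cap U$ to be finitely generated, which you extract from polycyclicity of finitely generated nilpotent groups --- so you must first establish virtual nilpotency in positive characteristic. But your proof of nilpotency is the jump-locus argument, and the structure theorems you invoke (isolated points of $\Sigma^1$ are torsion; positive-dimensional components come from orbifold fibrations) are theorems about $\mathbb{C}$-local systems: the mod-$p$ fibre of the jump locus can acquire extra positive-dimensional components with no geometric origin, so you cannot conclude that $\chi\circ\varrho_T$ is an isolated, hence torsion, point of the character variety over $K$. The paper avoids nilpotency altogether here: by \cite[Proposition 4.13]{CDY25b} the quasi-Albanese map of a special or $h$-special variety is $\pi_1$-exact, so $[\pi_1(X),\pi_1(X)]$ is the image of $\pi_1(F)$ for a general fibre $F$ and is therefore finitely generated; since $G$ is solvable, $[\varrho(\pi_1(X)),\varrho(\pi_1(X))]$ is a finitely generated subgroup of $R_u(G)(K)$, and the filtration argument of \cref{lem:finite group} makes it finite, whence $\varrho(\pi_1(X))$ is virtually abelian. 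This input --- finite generation of the derived subgroup coming directly from the geometry of the quasi-Albanese map, with no appeal to nilpotency or to jump loci in characteristic $p$ --- is the missing idea in your treatment of the positive-characteristic case.
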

	By  \Cref{example}, \cref{thm:CDY} is shown to be sharp. Surprisingly, in the context of representations in positive characteristic,   we   obtain a stronger result.     
	
	\begin{proof}[Proof of  \cref{thm:CDY}]
		\noindent {\it Step 1. We prove that $\varrho(\pi_1(X))$ is  solvable.} 
	 	We may assume that $K$ is algebraically closed.  Let $G$ be the Zariski closure of $\varrho(\pi_1(X))$.  By \cite{Cam11}, any finite \'etale cover of a   special (resp. $h$-special) variety is still  special (resp. $h$-special).   After replacing $X$ by a  finite étale cover,  we may assume that $G$ is connected. Let $R(G)$ be the radical of $G$. Let $H:=G/R(G)$, which is semisimple.  If $\dim H>0$,  then $\varrho$ induces a Zariski dense representation $\sigma: \pi_1(X)\to H(K)$.  
	 	  We can prove that, after replacing $X$ by a composition of birational modifications and  finite étale Galois covers,  there exists a   a dominant morphism $f:X\to Y$ over  a smooth quasi-projective variety $Y$  with connected general fibers, and  a big and Zariski dense representation $\tau:\pi_1(Y)\to H(K)$ such that $\sigma=f^*\tau$.  By \cref{main:hyper}, $Y$ is of log general type and pseudo Picard hyperbolic.  
		This leads to a contradiction since $X$ is special (thus weakly special by \cite{Cam11}) or $h$-special. Hence $G=R(G)$. 
		
		\medspace
		
		\noindent {\it Step 2. We prove that $\varrho(\pi_1(X))$ is virtually abelian if ${\rm char}\, K>0$.}   Note that any finite \'etale cover of a special (resp. $h$-special) variety is still special (resp. $h$-special) by \cite{Cam04} and \cite[Lemma 3.2]{CDY25b}.  Replacing  $X$ by a finite \'etale cover, we may assume that  $\pi_1(X)^{ab}\to \pi_1(A)$ is an isomorphism, where   $\pi_1(X)^{ab}:=\pi_1(X)/[\pi_1(X),\pi_1(X)]$.  Since  $X$ is special or $h$-special, by \cite[Proposition 4.13]{CDY25b}, the quasi-albanese map $a:X\to A$ of $X$ is $\pi_1$-exact, i.e.,   we have the following exact sequence:
		$$\pi_1(F)\to \pi_1(X)\to \pi_1(A)\to 1,$$ where $F$ is a general fiber of $a$. 
		Hence $[\pi_1(X),\pi_1(X)]$ is the image of $\pi_1(F)\to \pi_1(X)$, which is thus finitely generated.  
		It implies that  $[\varrho(\pi_1(X)),\varrho(\pi_1(X))]=\varrho([\pi_1(X),\pi_1(X)])$ is also finitely generated.
		By Step 1,  $G$ is solvable. Hence we have
		$
		\cD G\subset R_u(G),
		$ 
		where $R_u(G)$ is the unipotent radical of $G$ and $\cD G$ is the  the derived group of $G$. Consequently, 
		we have	
		$$ [\varrho(\pi_1(X)),\varrho(\pi_1(X))]
		\subset [G(K),G(K)]\subset R_u(G)(K).$$  
		Note that every subgroup of finite index in $[\pi_1(X),\pi_1(X)]$ is also finitely generated (cf. \cite[Proposition 4.17]{ST00}. 
		By the same arguments in \cref{lem:finite group}, we conclude that $[\varrho(\pi_1(X)),\varrho(\pi_1(X))]$ is finite. 
		Hence $\varrho(\pi_1(X))$ is virtually abelian.  
		
		\medspace
		
		\noindent {\it Step 3. We prove that $\varrho(\pi_1(X))$ is virtually nilpotent if ${\rm char}\, K=0$ .}  
		The proof is non-trivial and based on \ref{thm:202210123} below.
	\end{proof}  
	\begin{thm}[{\cite[Theorem 11.3]{CDY22}}] \label{thm:202210123}
		Let $X$ be a special or $h$-special quasi-projective manifold.
		Let $G$ be a connected, solvable algebraic group defined over $\mathbb C$.
		Assume that there exists a Zariski dense representation $\varphi:\pi_1(X)\to G$.
		Then $G$ is nilpotent.
		In particular, $\varphi(\pi_1(X))$ is nilpotent.\qed
	\end{thm}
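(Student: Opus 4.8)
The plan is to argue by contradiction: if $G$ is not nilpotent, then $X$ will be seen to admit, after a finite \'etale cover and a birational modification, a fibration onto an orbifold curve of log general type, which is excluded both by specialness and by $h$-specialness. \emph{Step 1 (reduction to the affine group of the line).} Write $G=T\ltimes U$ with $U=R_u(G)$ the unipotent radical and $T$ a maximal torus; $G$ is nilpotent if and only if $T$ is central, i.e.\ if and only if $T$ acts trivially on $U$. Assuming $G$ is not nilpotent, a standard commutator computation (using that $T$ is reductive and $U$ nilpotent) shows that $T$ already acts non-trivially on the vector group $U/[U,U]$, so there is a non-zero weight $\chi_0\in X^*(T)$. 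Since $[U,U]$ is characteristic in $U$ and hence normal in $G$, we may pass to $G/[U,U]=T\ltimes(U/[U,U])$, collapse the sum of the weight spaces other than the $\chi_0$-weight space (a $T$-stable, hence normal, subgroup), and then divide the $\chi_0$-weight space by the kernel of a linear functional (again $T$-stable, since $T$ acts there by the scalar $\chi_0$). This produces a surjective homomorphism $G\twoheadrightarrow H:=\mathbb{G}_m\ltimes\mathbb{G}_a$ onto the affine group of the line, with $T\to\mathbb{G}_m$ induced by $\chi_0$. Composing with $\varphi$ yields a Zariski-dense representation $\rho\colon\pi_1(X)\to H(\mathbb{C})$.

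\emph{Step 2 (from $\rho$ to a cohomology jump locus, and to geometry).} Writing $\rho(\gamma)=\bigl(\begin{smallmatrix}\chi(\gamma)&c(\gamma)\\0&1\end{smallmatrix}\bigr)$, the homomorphism $\chi\colon\pi_1(X)\to\mathbb{C}^*$ has Zariski-dense, hence infinite, image, and the $\chi$-twisted cocycle $c$ represents a non-zero class $[c]\in H^1(\pi_1(X),\mathbb{C}_\chi)=H^1(X,L_\chi)$ --- for otherwise $\rho$ would be conjugate into $\mathbb{G}_m\times\{0\}$, contradicting Zariski density. Thus $L_\chi$ is a non-torsion rank-one local system lying in the first cohomology support locus $\Sigma^1_1(X)$. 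By the structure theory of cohomology jump loci of smooth quasi-projective varieties (Arapura; Budur--Wang \cite{BW20}; closely related, for K\"ahler groups, to Delzant's computation of the Bieri--Neumann--Strebel invariant), every component of $\Sigma^1_1(X)$ is a torsion translate of a subtorus and every isolated point is torsion; since $L_\chi$ is non-torsion it lies on a positive-dimensional component. After passing to the finite \'etale cover $X'\to X$ that trivializes the torsion translation (specialness and $h$-specialness are preserved by finite \'etale covers and birational models) and to a suitable birational model, this component becomes $f^*\Hom(\pi_1(C),\mathbb{C}^*)$ for a surjective morphism $f\colon X'\to C$ whose orbifold base $(\bar C,\Delta_f)$ is of log general type, with $L_\chi|_{X'}=f^*L_0$. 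If $X$ is special this contradicts Campana's definition of specialness at once. If $X$ is $h$-special, then by Campana's orbifold Brody hyperbolicity of $(\bar C,\Delta_f)$ every entire curve in $X'$ is contracted by $f$; hence the equivalence relation $R\subset X'\times X'$ is contained in $X'\times_C X'$, which is not Zariski dense, contradicting $h$-specialness. Either way we reach a contradiction, so $G$ is nilpotent; and then $\varphi(\pi_1(X))$, being a subgroup of the nilpotent group $G(\mathbb{C})$, is nilpotent.

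\emph{Main obstacle.} Step 1 is routine algebraic-group bookkeeping, and Step 2's passage from Zariski density to $[c]\neq 0$ is a one-line computation in non-abelian $H^1$. The real work is in the geometric structure result invoked in Step 2: one needs the quasi-projective, orbifold refinement of the Arapura/Green--Lazarsfeld structure theorem, together with the assertion that the curve $C$ it produces is genuinely of log general type. The $h$-special case is the more delicate of the two conclusions, since it is not excluded on purely definitional grounds and instead requires the orbifold Brody hyperbolicity of the base curve in order to force every entire curve to be vertical.
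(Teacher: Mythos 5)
Your proof is correct in outline, but it takes a genuinely different route from the one the paper indicates. The remark following the theorem explains that the actual proof (in \cite{CDY25b}, \S 4) is based on the $\pi_1$-exactness of the quasi-Albanese map of a special or $h$-special variety together with Deligne's theorem that the radical of the algebraic monodromy group of an admissible variation of mixed Hodge structure is unipotent: roughly, one realizes the conjugation action of the abelian quotient $\pi_1(X)\to\pi_1(A)$ on (a Hodge-theoretic avatar of) $\mathcal{D}\pi_1(X)^{\rm ab}$ inside an admissible VMHS, deduces from Deligne that this abelian monodromy group is unipotent, and concludes that the maximal torus of $G$ acts trivially on $R_u(G)^{\rm ab}$, hence that $G$ is nilpotent. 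Your route --- reduce to $\mathbb{G}_m\ltimes\mathbb{G}_a$, extract a non-torsion character $\chi$ with $H^1(X,L_\chi)\neq 0$, and invoke the structure theory of $\Sigma^1_1(X)$ to produce an orbifold pencil of general type --- is essentially Campana's original argument from \cite{Cam01} transplanted to the quasi-projective setting, which is precisely the paper's stated source of inspiration. Your approach yields a direct geometric contradiction that treats the special and $h$-special hypotheses uniformly; the paper's approach avoids the translated-component structure theorem for $\Sigma^1_1$ of quasi-projective varieties, which is the single load-bearing citation in your Step~2.

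On that citation, be precise about what you need: (i) every irreducible component of $\Sigma^1_1(X)$ is a torsion translate of a subtorus (Budur--Wang), so a non-torsion point necessarily sits on a positive-dimensional component; and (ii) every positive-dimensional component, \emph{including translated ones}, is pulled back from an orbifold pencil $f:X\to (C,\Delta)$ with $\chi^{\rm orb}(C,\Delta)<0$ (Arapura; Artal Bartolo--Cogolludo--Matei), which requires ruling out contributions from $R^1f_*$ and from orbicurves with $\chi^{\rm orb}=0$. Two further small checks: the multiplicities furnished by the jump-locus theorem are divisibility multiplicities, which are bounded above by Campana's $\inf$-multiplicities, so general type of the jump-locus orbifold does imply general type of the Campana orbifold base; and in the $h$-special case you also use that $h$-specialness passes to finite \'etale covers and birational models, and that orbifold curves of log general type admit no nonconstant orbifold entire curves (\cite{Cam04}) --- both are available, so the chain argument forcing $R\subset X'\times_C X'$ goes through.
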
 
The proof of \cref{thm:202210123} is involved.
It is inspired by \cite{Cam01} and is based on the $\pi_1$-exactness of the
quasi-Albanese morphism mentioned above, together with Deligne’s theorem
asserting that the radical of the algebraic monodromy group of an admissible
variation of mixed Hodge structures is \emph{unipotent}.
We refer the reader to \cite[\S~4]{CDY25b} for details of the proof.

	In a forthcoming paper, Cao, Hacon, Păun, and the  author \cite{CDP25} develop Hodge theory for local systems over quasi-projective varieties, extending our previous techniques on Hodge theory for rank-one local systems over quasi-compact Kähler manifolds \cite{CDHP25}. As a consequence, we  refine \cref{VN} by proving that $\varrho(\pi_1(X))$ is nilpotent of class~$2$. This result establishes \cref{conj:Campana3} in the case where the fundamental group is linear.
		\subsection{Algebraic varieties with compactifiable universal coverings}
		In the work \cite{CHK13,CH13}, Claudon, H\"oring and Koll\'ar proposed the following intriguing conjecture:
		\begin{conjecture}\label{conj}
			Let $X$ be a complex projective manifold with infinite fundamental group $\pi_1(X)$. Suppose that the universal cover $\widetilde{X}$ is quasi-projective. Then   after replacing $X$ by a  finite étale cover,   there exists a locally trivial fibration $X \rightarrow A$ with simply connected fiber  $F$ onto a complex torus $A$. In particular we have $\widetilde{X}\simeq F \times \mathbb{C}^{\operatorname{dim} A}$.
		\end{conjecture}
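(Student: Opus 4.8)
The conjecture is open in general; I describe a strategy in the case where $\pi_1(X)$ is \emph{linear}, i.e.\ admits a faithful representation into $\GL_N(\bC)$, which is the setting accessible to the techniques of this survey. The plan is to produce, after a finite étale cover, a locally trivial fibration $X\to A$ onto a complex torus with simply connected fibre $F$; since $\widetilde A\cong\bC^{\dim A}$ is contractible, this yields $\widetilde X\cong F\times\bC^{\dim A}$. The starting point is the linear Shafarevich conjecture: by \cref{thm:EKPR}, $\widetilde X$ is holomorphically convex, so by \cref{thm:Cartan} there is a proper holomorphic fibration $r\colon\widetilde X\to S$ onto a Stein space $S$; it is equivariant for the deck action of $\pi_1(X)$ and descends to the Shafarevich morphism ${\rm sh}_X\colon X\to {\rm Sh}(X)$, with ${\rm Sh}(X)$ a normal projective variety having large fundamental group and $S=\widetilde{{\rm Sh}(X)}$.

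\emph{Step 1: $\pi_1(X)$ is virtually abelian.} Suppose not. If $\pi_1(X)$ is not virtually solvable then, by the Tits alternative together with the factorization machinery underlying \cref{thm:Shafarevich} and \cref{main:hyper}, after a finite étale cover and a birational modification there is a dominant fibration $X\to V$ with connected fibres, $\dim V\ge 1$, and a \emph{big}, Zariski dense representation $\pi_1(V)\to G(\bC)$ into a semisimple group $G$; by \cref{main:hyper}, $V$ is a projective variety of general type which is pseudo Picard hyperbolic and has infinite fundamental group. But $\widetilde V$ is dominated, via a proper holomorphic surjection, by a covering space of $X$ intermediate between $X$ and $\widetilde X$; propagating algebraicity from $\widetilde X$ to the covers and quotients involved---in the spirit of the algebraization results of \cite{BBT23,BBT24}---one finds $\widetilde V$ quasi-projective, whence by \cref{extension theorem} the universal covering $\widetilde V\to V$ is a dominant \emph{rational} map of quasi-projective varieties of equal dimension, hence of finite degree; this is absurd, since $\pi_1(V)$ is infinite. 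Therefore $\pi_1(X)$ is virtually solvable, and the structure theory of Kähler groups (solvable Kähler groups are virtually nilpotent, by Delzant; nilpotent Kähler groups are virtually abelian, see \cite{ABCKT}) forces it to be virtually abelian.

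\emph{Step 2: the Shafarevich morphism is a locally trivial fibration over a torus.} Replacing $X$ by a finite étale cover we may assume $\pi_1(X)\cong\bZ^{2g}$; then ${\rm sh}_X$ induces an isomorphism $\pi_1(X)\xrightarrow{\ \sim\ }\pi_1({\rm Sh}(X))\cong\bZ^{2g}$, and since ${\rm Sh}(X)$ has large fundamental group its Albanese map has zero-dimensional fibres (a positive-dimensional fibre would map to a point of $\Alb({\rm Sh}(X))$, hence have trivial image in $\pi_1({\rm Sh}(X))$, contradicting largeness). Thus ${\rm Sh}(X)$ is birational to a $g$-dimensional subvariety of the $g$-dimensional abelian variety $\Alb({\rm Sh}(X))$, hence to $\Alb({\rm Sh}(X))$ itself, and the defining property of ${\rm sh}_X$ rules out any exceptional contraction (a contracted positive-dimensional subvariety would provide a subvariety of $X$ with infinite but trivial $\pi_1$-image); so ${\rm Sh}(X)=\Alb({\rm Sh}(X))=:A$ and $S=\widetilde A\cong\bC^g$. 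It remains to show that $r\colon\widetilde X\to\bC^g$, equivalently ${\rm sh}_X\colon X\to A$, is a holomorphic fibre bundle. Its fibres are compact connected, hence projective varieties, and along each $\bZ^{2g}$-orbit above a point of $A$ they are mutually biholomorphic; since $\widetilde X$ is quasi-projective, the classifying morphism $\bC^g\to\mathcal{M}$ to the relevant moduli space is algebraic, and its invariance under the dense group of lattice translations forces it to be constant, so all fibres of $r$ are mutually isomorphic. A parallel argument, using finiteness of the cohomology of $\widetilde X$, excludes singular and multiple fibres, so $r$ is a proper holomorphic submersion with isomorphic fibres, hence by the Fischer--Grauert theorem a holomorphic fibre bundle with fibre a projective variety $F_0$ and structure group inside the complex Lie group ${\rm Aut}(F_0)$. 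As $\bC^g$ is a contractible Stein manifold, Grauert's Oka principle trivializes this bundle: $\widetilde X\cong F_0\times\bC^g$. Since $\widetilde X$ is simply connected, $\pi_1(F_0)=1$; descending the bundle structure through the $\bZ^{2g}$-action, ${\rm sh}_X={\rm alb}_X\colon X\to A$ is a locally trivial fibration with simply connected fibre $F:=F_0$, which is the assertion.

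The principal obstacle is Step 1, and within it the transfer of quasi-projectivity---more precisely, of enough algebraicity for \cref{extension theorem} to apply---from $\widetilde X$ to the auxiliary varieties $V$ and $\widetilde V$ produced by the factorization; this is where linear Shafarevich and the hyperbolicity theorem \cref{main:hyper} must be combined with care, and where the algebraization techniques of \cite{BBT23,BBT24,DY23} are essential. A secondary difficulty is turning the heuristic ``algebraic rigidity forces isotriviality'' of Step 2 into an actual proof of local triviality of $r$. Outside the linear case the conjecture is entirely open, since even the construction of ${\rm sh}_X$---equivalently \cref{conj:Sha}---is unavailable.
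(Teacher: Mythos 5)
The statement here is a conjecture, and the paper does not prove it in general; what it does prove (\cref{thm:univ}) is exactly the linear case you target. You and the paper agree on the first move: both reduce, via Claudon--H\"oring \cite{CH13}, to showing that $\pi_1(X)$ is virtually abelian. Your Step~2, which re-derives the locally trivial fibration over the torus from scratch, is precisely the content of \cite[Theorem 1.5]{CH13} that the paper simply cites; as written it does not go through (the deck group acts on $\bC^g$ by a \emph{discrete} cocompact lattice, not a dense group; there is no moduli space $\mathcal{M}$ with an algebraic classifying map for arbitrary projective fibres; and the exclusion of singular and multiple fibres is a nontrivial point in \cite{CH13}), but this part can legitimately be outsourced.

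The genuine gap is in your Step~1, and it is the one you flag yourself. After producing the fibration $X\to V$ with a big, Zariski dense representation of $\pi_1(V)$ into a semisimple group, you need $\widetilde V$ (equivalently, the intermediate cover $\widetilde X/\ker[\pi_1(X)\to\pi_1(V)]$) to be quasi-projective in order to invoke \cref{extension theorem}. Quasi-projectivity of $\widetilde X$ gives no control over such quotients: quotients of quasi-projective varieties by infinite discrete groups are in general not quasi-projective, and the o-minimal algebraization results of \cite{BBT23,BBT24} do not supply this. Without it the contradiction collapses. The paper's proof of \cref{thm:univ} avoids the issue entirely by using the hypothesis only on $\widetilde X$ itself: take Campana's core map $c_X\colon X\dashrightarrow C(X)$, whose orbifold base is of general type, apply the orbifold Kobayashi--Ochiai theorem to the composite $\widetilde X\to X\dashrightarrow C(X)$ from a projective compactification of the quasi-projective $\widetilde X$, and conclude that the general fibre $F$ of $c_X$ has $\pi_1$-image of \emph{finite index} in $\pi_1(X)$. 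Since $F$ is special and $\pi_1(X)$ is linear, that image is virtually abelian (\cite[Theorem 7.8]{Cam04} in characteristic zero, \cref{VR:positive} in positive characteristic), hence so is $\pi_1(X)$. Note finally that your fallback chain ``virtually solvable $\Rightarrow$ virtually nilpotent $\Rightarrow$ virtually abelian'' rests on the assertion that nilpotent K\"ahler groups are virtually abelian, which is not a theorem in \cite{ABCKT}; the paper's route through specialness and Campana's theorem for linear quotients does not need it.
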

		It's worth noting that assuming abundance conjecture, Claudon, H\"oring and Koll\'ar proved this conjecture in \cite{CHK13}. In \cite{CH13}, Claudon-H\"oring proved \cref{conj} in the case where $\pi_1(X)$ is virtually abelian. 
		
		In this subsection we establish a linear version of \Cref{conj} without relying on the abundance conjecture.   
		\begin{thm}[{\cite[Corollary G]{DY23b}}]\label{thm:univ}
		\cref{conj} holds if  there exists a faithful  representation $\varrho:\pi_1(X)\to \GL_{N}(K),$ where $K$ is any field.
		\end{thm}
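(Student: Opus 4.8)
The plan is to reduce \cref{conj} for $X$ to the case where $\pi_1(X)$ is virtually abelian, which was established by Claudon--H\"oring \cite{CH13}; the entire difficulty lies in this reduction, i.e.\ in showing that a faithful linear fundamental group together with the quasi-projectivity of $\widetilde X$ forces $\pi_1(X)$ to be virtually abelian.

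First I would bring in the Shafarevich machinery. Since $\varrho$ is faithful, the Shafarevich morphism $\mathrm{sh}_X\colon X\to \mathrm{Sh}(X)$ exists: in characteristic zero this is \cref{thm:EKPR} (which also yields the holomorphic convexity of $\widetilde X$), and in positive characteristic it is \cref{thm:Shapositive}; cf.\ \cref{thm:Shafarevich}. It lifts to a proper holomorphic fibration $\mathrm{sh}_{\widetilde X}\colon \widetilde X\to \mathrm{Sh}(\widetilde X)$ with compact fibers: a fiber is a connected component of the preimage of a fiber of $\mathrm{sh}_X$ — which is compact, as $X$ is projective — and such a component is a finite cover of it, because the image of its fundamental group in $\pi_1(X)$ is finite. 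As $\widetilde X$ is quasi-projective, these compact fibers are projective subvarieties of a fixed projective compactification $\overline{\widetilde X}$, hence are parametrized by a locally closed subvariety of a Hilbert scheme of $\overline{\widetilde X}$; therefore $\mathrm{Sh}(\widetilde X)$ is quasi-projective, and in characteristic zero it is moreover Stein, hence affine. After replacing $X$ by a finite \'etale cover, $\pi_1(X)\xrightarrow{\ \sim\ }\pi_1(\mathrm{Sh}(X))$, so we may assume $S:=\mathrm{Sh}(X)$ is a smooth projective variety with $\mathrm{sh}_S=\mathrm{id}$, carrying a \emph{large} linear representation $\pi_1(S)\to\GL_N(K)$, and with $\widetilde S=\mathrm{Sh}(\widetilde X)$ quasi-projective (affine if $\mathrm{char}\,K=0$); it suffices to prove $\pi_1(S)$ is virtually abelian.

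The core step is to kill any non-abelian part. Let $G$ be the algebraic monodromy group of the semisimplification of the given representation (a reductive group) and let $G_1=G/R(G)$ be its semisimple quotient. If $G_1$ were non-trivial, then by the factorization results of \cite[Proposition~2.5]{CDY22} (resp.\ \cite[Proposition~5.8]{DY23b}), after an \'etale cover and a birational modification there is a fibration $f\colon S'\to Y$ with connected fibers onto a positive-dimensional smooth projective variety $Y$ and a \emph{big}, Zariski-dense representation $\tau\colon\pi_1(Y)\to G_1(K)$ pulling back to the induced representation on $S'$. By \cref{main:hyper} (in either characteristic), $Y$ is then of general type and pseudo Picard hyperbolic. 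This contradicts the quasi-projectivity of $\widetilde X$: a smooth projective variety of general type cannot have a quasi-projective universal cover — the cleanest way to see this being to observe that $\widetilde S$, the universal cover of the projective variety $S=\mathrm{Sh}(X)$, is an \emph{affine} variety, which by spreading the positivity coming from $\mathrm{Sh}(X)$ and $Y$ (as in the proofs of \cref{main:hyper,main:kollar}, working with the canonical bundle and the currents $T_{\tau_i}$, $\sqrt{-1}\,\mathrm{tr}(\theta\wedge\theta^*)$ on $\widetilde S$) is forced to satisfy $\widetilde S\cong\mathbb C^{\dim S}$, so that $S$ admits no positive-dimensional hyperbolic quotient at all. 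I expect this incompatibility — quasi-projective $\widetilde X$ versus a positive-dimensional general-type (pseudo Picard hyperbolic) quotient — to be the main obstacle of the proof. Granting it, $G_1$ is trivial, so $G=R(G)$ is solvable and $\pi_1(S)$ has virtually solvable linear image.

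It remains to go from virtually solvable to virtually abelian. In positive characteristic this is immediate from \cref{lem:finite group} together with the finite generation of the derived subgroup of $\pi_1(S)$ — exactly as in Step~2 of the proof of \cref{thm:CDY} — so $\pi_1(X)$ is virtually abelian. In characteristic zero, the quasi-projectivity of $\widetilde X$ again shows that $S$ is \emph{special} (a non-weakly-special structure would produce a positive-dimensional general-type quotient, excluded above), whence \cref{thm:202210123} gives that $\pi_1(S)$ is virtually nilpotent; the residual non-abelian nilpotent part is then eliminated because a torsion-free nilpotent group that is the fundamental group of a \emph{projective} variety with quasi-projective (affine) universal cover must be abelian — the phenomenon behind \cref{example} genuinely requires a non-proper total space. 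Once $\pi_1(X)$ is virtually abelian, replace $X$ by a further finite \'etale cover so that $\pi_1(X)\cong\mathbb Z^{2k}$, note that the Albanese morphism $\mathrm{alb}_X\colon X\to A:=\mathrm{Alb}(X)$ then induces an isomorphism on fundamental groups, and invoke \cite{CH13} to upgrade this to a locally trivial fibration $X\to A$ with simply connected fiber $F$; then $\widetilde X\simeq F\times\mathbb C^{\dim A}$.
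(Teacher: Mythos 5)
Your global strategy---reduce to showing that $\pi_1(X)$ is virtually abelian and then invoke \cite[Theorem 1.5]{CH13}---is exactly the paper's, and your route through the Shafarevich morphism, the factorization through a big Zariski-dense representation into the semisimple quotient, and \cref{main:hyper} is close in spirit to the original argument of \cite{DY23b} (the paper remarks that its streamlined proof was suggested by a referee). However, the step on which everything hinges is not actually proved in your write-up. You must rule out a positive-dimensional quotient $f\colon S'\to Y$ with $Y$ of general type and pseudo Picard hyperbolic, and your justification is that $\widetilde S$ is affine and ``by spreading the positivity \dots is forced to satisfy $\widetilde S\cong\mathbb C^{\dim S}$''. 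This is not a consequence of anything in the paper and is not how such a contradiction is obtained; moreover, even granting a statement about general-type varieties with quasi-projective universal covers, it would not apply to $Y$, whose universal cover you do not control. The correct bridge from quasi-projectivity of $\widetilde X$ to a contradiction is an extension theorem: for instance \cref{extension theorem} shows that the dominant meromorphic map from the quasi-projective variety $\widetilde S$ to the pseudo Picard hyperbolic variety $Y$ is rational, hence extends to a projective compactification $\overline{\widetilde S}$, so the preimage of a general fiber of $f$ has only finitely many components; this forces $\tau(\pi_1(Y))$ to be finite, contradicting bigness and Zariski density. There are also secondary gaps you should be aware of: the quasi-projectivity of $\mathrm{Sh}(\widetilde X)$ and its persistence under the \'etale covers and birational modifications required by the factorization (blowing up $\widetilde X$ along the preimage of a center involves infinitely many components), and the unproved assertion that a nilpotent fundamental group of a projective variety with affine universal cover must be abelian.

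The paper's actual proof is much shorter and sidesteps all of this. It takes Campana's core map $c_X\colon X\dashrightarrow Y:=C(X)$, whose orbifold base is of general type; the orbifold Kobayashi--Ochiai theorem \cite[Theorem 8.2]{Cam04} extends the composition $c_X\circ\pi$ meromorphically to a projective compactification $\overline{\widetilde X}$ (this is the only place where quasi-projectivity of $\widetilde X$ enters), so $\pi^{-1}(F)$ has finitely many components for a general fiber $F$ of $c_X$, and hence $\pi_1(F)\to\pi_1(X)$ has finite-index image. Since $F$ is special, \cref{VR:positive} in positive characteristic, resp.\ \cite[Theorem 7.8]{Cam04} in characteristic zero, shows that $\varrho(\mathrm{Im}[\pi_1(F)\to\pi_1(X)])$ is virtually abelian, and faithfulness of $\varrho$ completes the reduction. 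If you wish to keep your architecture, replace the ``$\widetilde S\cong\mathbb C^{\dim S}$'' claim by the application of \cref{extension theorem} sketched above, and note that once $S$ is known to be special you may conclude virtual abelianity directly from \cite[Theorem 7.8]{Cam04} without the detour through nilpotency.
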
 
		\begin{proof} 
By \cite{CH13}, it suffices to prove that $\pi_1(X)$ is virtually abelian. 			Consider the core map $$c_X: X \dashrightarrow Y := C(X)$$  defined by Campana, such that  the orbifold base of $c_X$ is of orbifold general type (see \cite{Cam04}). By the orbifold version of the Kobayashi-Ochiai theorem \cite[Theorem 8.2]{Cam04}, the composed meromorphic map
			\[
			\widetilde{X} \xrightarrow{\pi} X \overset{c_X}{\dashrightarrow} Y
			\]
			extends to a meromorphic map from some projective compactification $\overline{\widetilde{X}}$ of $\widetilde{X}$. This implies that the general fiber $F$ of $c_X$ satisfies that $\pi^{-1}(F)$ has only finitely many connected components. In particular, the induced homomorphism $\pi_1(F) \rightarrow \pi_1(X)$ has image of finite index.
			
			According to \cite{Cam04}, $F$ is a special manifold. Since we assume the existence of a faithful representation $\varrho: \pi_1(X) \to \GL_N(K)$,   it follows from \cref{VR:positive} when $\mathrm{char}\, K>0$ and  \cite[Theorem 7.8]{Cam04} when $\mathrm{char}\, K=0$ that the image $\varrho([\pi_1(F)\to \pi_1(X)])$ is virtually abelian. As the image of $\pi_1(F) \to \pi_1(X)$ has finite index, $\pi_1(X)$ itself is virtually abelian. The conclusion then follows from \cite[Theorem 1.5]{CH13}, completing the proof of \cref{thm:univ}.
		\end{proof} 
\begin{rem}
	 Our original proof of \cref{thm:univ} was involved and relied essentially on the
	 pseudo Picard hyperbolicity established in \cref{main:hyper}.
	 The arguments in the above proof were pointed out to us by a referee
	 of \cite{DY23b}.
	 Similar arguments were already used in \cite{CH13}.
\end{rem}

		
		\subsection{A structure theorem: on a conjecture by Koll\'ar}
		In \cite[Conjecture 4.18]{Kol95}, Koll\'ar raised the following conjecture on the structure of varieties with big fundamental group. 
		\begin{conjecture}\label{conj:Kollar}
			Let $X$ be a smooth projective variety with big fundamental group such that $0<\kappa(X)<\dim X$. Then $X$ has a finite \'etale cover $p:X'\to X$ such that $X'$ is birational to a smooth family of abelian varieties over a projective variety of general type $Z$ which has big fundamental group. 
		\end{conjecture}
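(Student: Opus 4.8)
The plan is to establish \cref{conj:Kollar} under the additional hypothesis that $\pi_1(X)$ is \emph{linear}, i.e.\ that there is a faithful representation $\varrho\colon\pi_1(X)\to\GL_N(K)$ over some field $K$ --- this is the generality in which the nilpotency theorem \cref{thm:CDY} and the hyperbolicity theorem \cref{main:hyper} are available, and it is what lets one dispense with abundance. After a birational modification we may assume the Iitaka fibration $f\colon X\to Y$ is a morphism with $\dim Y=\kappa(X)$, so $0<\dim Y<\dim X$, and a very general fibre $F$ of $f$ is a positive-dimensional smooth projective variety with $\kappa(F)=0$; in particular $F$ is special. The crux of the argument is a fibrewise statement: after replacing $X$ by a finite \'etale cover, $F$ is an abelian variety. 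Granting this, one globalises to put $X$ birationally in the form of a smooth abelian scheme over a projective base $Z$ that is of general type and has big fundamental group --- exactly the conclusion of \cref{conj:Kollar} --- the reduction from such a relative description to the statement of the conjecture being handled as in \cite{CH13}.

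For the fibrewise statement, set $\Gamma_F:=\operatorname{Im}[\pi_1(F)\to\pi_1(X)]$. Since $X$ has big fundamental group and $F$ passes through a very general point, every positive-dimensional subvariety of $F$ has infinite image in $\Gamma_F$, and since $F$ is special, \cref{thm:CDY} gives that $\varrho(\Gamma_F)$ is virtually nilpotent when $\operatorname{char}K=0$ and virtually abelian when $\operatorname{char}K>0$. After a finite \'etale cover of $F$ one reduces --- in positive characteristic immediately, in characteristic zero using the class-$2$ refinement of \cref{thm:CDY} together with the $\pi_1$-exactness of the quasi-Albanese map and Hain's higher Albanese map --- to the case where $\Gamma_F$ is a finitely generated abelian group. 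The abelian quotient $\pi_1(F)\twoheadrightarrow\Gamma_F$ factors through $H_1(F,\mathbb{Z})$, hence yields a morphism $g\colon F\to B$ to an abelian variety with $\pi_1(F)\to\pi_1(B)=\Gamma_F$ the given quotient; bigness of $\Gamma_F$ forces $g$ to be generically finite onto its image, since a positive-dimensional fibre of $g$ would have trivial image in $\pi_1(B)$. As $\kappa(F)=0$, the image $g(F)$ has Kodaira dimension $0$, so by Ueno's classification of subvarieties of abelian varieties it is a translate of an abelian subvariety; thus $g$ is generically finite and surjective onto an abelian variety, so $F$ has maximal Albanese dimension, and Kawamata's results on varieties of Kodaira dimension zero \cite{Kaw81} (surjectivity of the Albanese map, and the characterization $\kappa=0$, $b_1=2\dim$) force $\operatorname{Alb}(F)\to F$ to be birational. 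Hence $F$ is birational to the abelian variety $\operatorname{Alb}(F)$.

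To globalise, one must produce --- after a finite \'etale cover $X'\to X$ and a further birational modification --- a \emph{smooth} family of abelian varieties $X'\to Z$ over a projective $Z$ of general type with big fundamental group. I would proceed in three steps. (i) Using the homotopy exact sequence of $f$ and the fact that $\Gamma_F$ is a finitely generated abelian normal subgroup of $\pi_1(X)$, arrange --- via finite \'etale covers of $X$ chosen to dominate suitable \'etale covers of $Y$ --- that the fibrewise monodromy of the abelian fibration is unipotent with trivialised torsion; combined with the theory of (semistable) N\'eron models of abelian fibrations, equivalently the period map to $\mathcal{A}_g$, this converts $f$ into a smooth abelian scheme over a birational model $Z$ of $Y$ once the discriminant divisor is removed by the \'etale pullback. (ii) Show $Z$ has big fundamental group: replacing $Y$ by its Shafarevich (or $\Gamma$-)reduction if needed, a further contraction would produce an abelian-fibred subvariety of strictly larger Kodaira dimension, contradicting that $f$ is the Iitaka fibration. (iii) Show $Z$ is of general type: since $\kappa(X)=\dim Y$ and $\kappa(F)=0$, additivity of the Kodaira dimension for fibrations with abelian fibres (Kawamata, Viehweg --- a case of $C_{n,m}$ not requiring abundance) gives $\kappa(Z)=\dim Z$; alternatively, when $Z$ carries a big Zariski-dense semisimple representation one may invoke \cref{main:hyper} directly. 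Finally, the abelian scheme $X'\to Z$ has big fundamental group precisely because $X'$ does and $Z$ is of general type with big $\pi_1$.

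The main obstacle is this globalisation, and within it the constraint --- imposed by the statement of \cref{conj:Kollar} --- that the smooth abelian scheme be reached through the \emph{finite \'etale} tower of $X$ rather than by an arbitrary base change: one must kill the discriminant of the abelian fibration without leaving that tower, which is delicate because \'etale covers of $X$ need not untwist $f$, and one has to feed in precisely the big-$\pi_1$ hypothesis so that the base $Z$ inherits both a big fundamental group and --- crucially, without abundance --- general type. In characteristic zero there is the genuine further difficulty of descending from Campana's nilpotency to abelianity of the fibre monodromy $\Gamma_F$; this is where the Hodge-theoretic refinement of \cref{thm:CDY} to nilpotency of class $2$, the $\pi_1$-exactness of the quasi-Albanese map, and Hain's higher Albanese theory enter, and it is the technically heaviest input. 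A secondary point is arranging the Iitaka fibration to be well behaved --- connected general fibre genuinely of Kodaira dimension zero, semipositivity of the direct images $f_*\omega_{X/Y}^{\otimes m}$ --- which one imports from Kawamata's and Viehweg's work on the Iitaka conjecture for abelian-fibred varieties.
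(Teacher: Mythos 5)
The statement you set out to prove is recorded in the paper as an \emph{open conjecture} of Koll\'ar: the paper does not prove it, even in the linear case. What the paper actually establishes is the strictly weaker structure theorem \cref{thm:20230510}: after a finite \'etale cover and a birational modification, the (log) Iitaka fibration factors through a locally trivial fibration with (semi)abelian fibres over the Iitaka base --- with \emph{no} claim that the base is of general type or has big fundamental group. Your fibrewise analysis (a very general Iitaka fibre $F$ is special with $\kappa(F)=0$ and big restricted monodromy, hence of maximal Albanese dimension, hence birational to an abelian variety by Kawamata) is sound in outline and parallel in spirit to the paper's argument, which reaches the same conclusion by splitting the monodromy group into its semisimple and torus parts rather than by invoking specialness and \cref{thm:CDY}. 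Note, though, that even here your reduction in characteristic zero from ``virtually nilpotent'' to ``virtually abelian'' for $\Gamma_F$ is only gestured at; for projective special $F$ one should simply invoke Campana's theorem, as the paper does in the proof of \cref{thm:univ}, rather than class-$2$ nilpotency and higher Albanese maps.

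The genuine gaps are in the globalisation, which is precisely why this remains a conjecture. First, you cannot kill the discriminant of the abelian fibration by finite \'etale covers of $X$: semistable reduction yields unipotent monodromy only after a \emph{ramified} base change, and even then the family retains degenerate fibres; nothing in your argument shows the fibration has no degenerations. By contrast, the local triviality in \cref{thm:20230510} comes from a concrete mechanism --- the Albanese images of the very general Iitaka fibres are semiabelian subvarieties of a fixed semiabelian variety, of which there are only countably many, so they all coincide --- and your proposal has no substitute for it. Second, your argument that the base $Z$ is of general type is backwards: $C_{n,m}$ with $\kappa(F)=0$ gives $\kappa(X)\ge \kappa(Z)$, i.e.\ $\kappa(Z)\le \dim Z$, not the reverse; properly elliptic surfaces over $\mathbb{P}^1$ whose orbifold base is hyperbolic show that the Iitaka base need not be of general type before the \'etale cover, and arranging this \emph{after} an \'etale cover of $X$ is exactly the hard multiple-fibre/orbifold problem that the conjecture encodes. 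The claim that $Z$ inherits a big fundamental group is likewise asserted via a vague contradiction rather than proved. Until these points are addressed, what you have is a plausible proof of the fibrewise statement plus a restatement of the difficulties, not a proof of \cref{conj:Kollar}.
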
 
		In this section we address \cref{conj:Kollar}.  Our theorem is the following:
		\begin{thm}[{\cite[Theorem 5.1]{CDY25b},\cite[Corollary H]{DY23b}}]
			\label{thm:20230510}
			Let $X$ be a   smooth quasi-projective variety and let  $\varrho:\pi_1(X)\to \GL_{N}(K)$ be a big representation where $K$ is any field.  When ${\rm char}\, K=0$, we assume additionally that $\varrho$ is reductive. 
			\begin{enumerate}[label={\rm (\alph*)}]
				\item \label{item:LKD} The logarithmic Kodaira dimension $\bar{\kappa}(X)\geq 0$. 
				\item \label{item:Iitaka}More generally, after replacing $X$ by a suitable finite \'etale cover and a birational modification, there are a semiabelian variety $A$, a quasi-projective manifold $V$, and a birational morphism $a:X\to V$ such that we have the following commutative diagram
				\begin{equation*}
					\begin{tikzcd}
						X \arrow[rr,    "a"] \arrow[dr,  "j"] & & V \arrow[ld, "h"]\\
						& J(X)&
					\end{tikzcd}
				\end{equation*}
				where $j$ is the logarithmic Iitaka fibration and $h:V\to J(X)$ is a  locally trivial fibration with fibers isomorphic to $A$.   Moreover, for a   general fiber $F$ of $j$, $a|_{F}:F\to  A$ is proper in codimension one. 
			\end{enumerate} 
		\end{thm}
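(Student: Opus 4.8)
The statement \cref{thm:20230510} concerns a big representation $\varrho:\pi_1(X)\to\GL_N(K)$ (reductive if $\operatorname{char}K=0$) and asserts first that $\bar\kappa(X)\ge 0$, and second a structure result realizing the logarithmic Iitaka fibration as a locally trivial family of semiabelian varieties over a base with big fundamental group. The strategy I would follow is to pass to the spectral covering $\pi:\xsp\to X$ associated with $\varrho$ (via \cref{prop:spectral}, after the Katzarkov--Eyssidieux reductions of \cref{thm:KE}), where the multivalued $1$-form $\eta_\varrho$ becomes a genuine set of logarithmic $1$-forms, and to exploit the fact already used in the proof of \cref{main:unbounded hyperbolic} and \cref{thm:GGL}: the quasi-Albanese map $a:\xsp\to A$ satisfies $\dim\xsp=\dim a(\xsp)$. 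This is precisely what \cref{claim:generically} establishes, using \cref{lem:BT2} (in the non-archimedean/semisimple case) or the analogous boundedness/finiteness input together with bigness of $\varrho$. Once $\dim\xsp=\dim a(\xsp)$, one gets $\bar\kappa(\xsp)\ge 0$ immediately, and since $\pi:\xsp\to X$ is a finite (Galois) cover, $\bar\kappa(X)\ge 0$ as well; this proves \ref{item:LKD}. Here one must also handle the positive-characteristic case by running the argument of \cref{thm:Shapositive} to produce unbounded $\GL_N(\bF_q((t)))$-representations and the associated spectral covering, as in the proof of \cref{thm:GGL}, Case~2.

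\textbf{Key steps for \ref{item:Iitaka}.} After replacing $X$ by a finite \'etale cover and a birational model, I would proceed as follows. First, use the generic finiteness of the quasi-Albanese map on the spectral cover together with $H$-equivariance to descend: the Galois group $H$ acts on $A$, and the quotient $\xsp\to A\to A/H$ together with the Katzarkov--Eyssidieux reduction gives a morphism of $X$ that controls the fibers of $j$. Second, analyze the logarithmic Iitaka fibration $j:X\to J(X)$: for a general fiber $F$, one has $\bar\kappa(F)=0$ and $a|_F:F\to A$ generically finite onto its image, so by the logarithmic criterion for semiabelian varieties (as in \cite[Lemma~1.4]{CDY25} / Kawamata's theorem) $F$ is, up to finite \'etale cover and birational modification, a semiabelian variety. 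Third — and this is where Kawamata's structure theory for the Iitaka fibration of varieties dominating a semiabelian variety, combined with \cite[Theorem~1.5]{CH13}-type arguments, enters — one upgrades ``fiberwise semiabelian'' to a \emph{locally trivial} fibration $h:V\to J(X)$ with fiber $A$; this requires showing the fibers are mutually isomorphic and that the family has no monodromy obstruction, which is where one invokes that $\pi_1$ of the base (which is $J(X)$, the log Iitaka base) is big — a property inherited from bigness of $\varrho$ because $j$ contracts only subvarieties along which $\varrho$ has virtually abelian/semiabelian-type restriction, forcing $j$ to factor through a Shafarevich-type map. Finally one checks $J(X)$ is of log general type (it is the Iitaka base, so $\bar\kappa(J(X))=\dim J(X)$) and has big fundamental group, and that $a|_F:F\to A$ is proper in codimension one.

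\textbf{Main obstacle.} The hardest part will be Step~3: promoting the fiberwise statement to a genuinely \emph{locally trivial} family of semiabelian varieties, i.e. controlling how the abelian/semiabelian fibers vary. Fiber-by-fiber one knows $\bar\kappa=0$ and generic finiteness to a (semi)abelian variety, but gluing these into a locally trivial fibration requires ruling out both jumping of the isomorphism type of $A$ and nontrivial monodromy of the family over $J(X)$. The tool I expect to need is a combination of: (i) the $\pi_1$-exactness of the quasi-Albanese morphism for the fibers (which are special in Campana's sense, being of log Kodaira dimension zero with generically finite quasi-Albanese map — cf. the argument in Step~2 of the proof of \cref{thm:CDY} and \cite[Proposition~4.13]{CDY25b}); (ii) bigness of $\varrho$, which forces the restriction of $\varrho$ to the fibers to have infinite image, pinning down the relevant part of $\pi_1(X)$ as a (virtually) abelian normal-up-to-the-base summand; and (iii) a rigidity/deformation argument for families of abelian varieties with large base monodromy, along the lines of Claudon--H\"oring \cite{CH13} for the virtually abelian case. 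In positive characteristic this step should actually be \emph{easier}, since by \cref{thm:CDY}\ref{VR:positive} the relevant fundamental groups are virtually abelian outright, so one reduces directly to \cite[Theorem~1.5]{CH13}; the genuine difficulty is concentrated in the characteristic-zero reductive case, where one has only virtual nilpotency of class $2$ a priori and must use the $\bC$-VHS $\cL$ of \cref{prop:cons} and the Katzarkov--Eyssidieux machinery to extract the abelian-by-semiabelian structure.
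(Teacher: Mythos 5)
Your proposal for \ref{item:LKD} contains a genuine gap at the descent step. The spectral covering $\pi:\xsp\to X$ of \cref{prop:spectral} is a finite Galois cover that is in general \emph{ramified} (its branch locus lies in the divisor where two spectral forms coincide), and $\bar{\kappa}(\xsp)\geq 0$ does not imply $\bar{\kappa}(X)\geq 0$ along a ramified cover: a genus-two curve is a ramified double cover of $\bP^1$. This is precisely why, in the proof of \cref{main:unbounded hyperbolic}, the descent from $\xsp$ to $X$ is not automatic but requires the invariant symmetric differential $\sigma^H$ vanishing on the ramification divisor together with the Campana--P\u{a}un theorem (\cref{thm:CP}) --- and that argument descends \emph{bigness} of $K_{\overline{X}}+D+\overline{R}$, not mere effectivity of a pluricanonical class. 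A second, independent problem is that \cref{claim:generically} is proved under the hypotheses that $G$ is almost simple and $\varrho$ is an \emph{unbounded} non-archimedean representation. In the general reductive characteristic-zero case there may be no unbounded non-archimedean representation at all (e.g.\ $\varrho$ rigid and integral, such as the lattice representation of a fake projective plane, where $b_1(X)=0$); then the spectral data is trivial and the quasi-Albanese of $\xsp$ can be a point, so your starting point $\dim\xsp=\dim a(\xsp)$ fails.

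The paper's route is structurally different and avoids both issues. One replaces $X$ by an \'etale cover and a birational modification (which preserve $\bar{\kappa}$), decomposes the reductive monodromy group via the central isogeny $G\to S\times T$ with $S$ semisimple and $T$ a torus, and uses a factorization theorem to produce a fibration $f_1:X_1\to Y_1$ carrying a big Zariski-dense representation into $S$ on the base $Y_1$, which is therefore of log general type by \cref{main:hyper}. On a general fiber $F$ of $f_1$ the semisimple part dies, so bigness of $\varrho$ forces the \emph{abelian} representation into $T(K)$ to be big on $F$; being abelian it factors through the quasi-Albanese of $F$, whence the quasi-Albanese of $F$ is generically finite onto its image and $\bar{\kappa}(F)\geq 0$, and Fujino's subadditivity $\bar{\kappa}(X_1)\geq\bar{\kappa}(Y_1)+\bar{\kappa}(F)$ gives \ref{item:LKD}. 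For \ref{item:Iitaka} the paper does not need the monodromy and local-triviality machinery you invoke: it shows the very general Iitaka fibers $X_t$ are contracted by $f_1$ (again by subadditivity against the strongly-log-general-type base), so their quasi-Albanese images are semiabelian subvarieties of a fixed $A$, and the countability of semiabelian subvarieties pins down the fiber up to isomorphism. Your Step~3 via Claudon--H\"oring-type rigidity and $\pi_1$-exactness is not how the paper proceeds, and your assertion that $J(X)$ has big fundamental group is not part of the statement being proved.
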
  
		\begin{proof}
			We may assume that $K$ is algebraically closed.  To prove the theorem we are free to replace $X$ by a birational modification  and by a finite \'etale cover since the logarithmic Kodaira dimension will remain unchanged. If ${\rm char}\, K > 0$, we replace $\varrho$ by its semisimplification, which
			remains big by \cref{lem:finite group}.
			 Hence we might assume that $\varrho$ is big and semisimple. Consequently, after replacing $X$ by a finite \'etale cover,  the Zariski closure $G$ of $\varrho$ is reductive and connected.  Let $\cD G$ be the derived group of $G$, which is semisimple. Let $Z\subset G$ be the maximal central torus of $G$. Then  $T:=G/\cD G$  is a torus and  $S:=G/Z$ is semisimple.  The natural morphism $G\to  S\times T$  is a central isogeny. 
			The induced representation $\varrho': \pi_1(X)\to  S(K)\times T(K)$ by $\varrho$  is also big.   Consider the  representation $\sigma:\pi_1(X)\to S(K)$, obtained by composing $\varrho$ with the morphism $G \to S$. Then $\sigma(\pi_1(X))$ is Zariski dense.  
			
			We then prove the following factorization theorem
			(cf.~\cite[Proposition~5.9]{DY23b})%
			\footnote{The main difficulty arises in the case ${\rm char}\, K>0$, since
				Selberg's lemma—asserting that a finitely generated linear group is virtually
				torsion-free—is no longer available.}:
			there exist
			\begin{enumerate}[label={\rm (\roman*)}]
				\item a generically finite, proper, surjective morphism
				$\mu \colon X_1 \to X$ from a smooth quasi-projective variety,
				obtained as a composition of birational modifications and finite
				\'etale Galois covers;
				\item a dominant morphism $f_1 \colon X_1 \to Y_1$, where $Y_1$ is a smooth
				quasi-projective variety with connected general fibers;
				\item a big and Zariski dense representation
				$\tau \colon \pi_1(Y_1) \to S(K)$;
			\end{enumerate}
			such that $\mu^*\sigma = f_1^*\tau$.
			Note that $Y_1$ may be a point.
			If $\dim Y_1 > 0$, then by \cref{main:hyper}, $Y_1$ is strongly of log general type.

		 	\begin{claim}\label{claim:same}
		 		For a general smooth fiber $F$ of $f_1$,  	we have	  $\dim F=\dim \alpha(F)$
	 	\end{claim}
		 \begin{proof}
			Note that $\mu^*\sigma({\rm Im}[\pi_1(F)\to \pi_1(X)])$ is trivial. Since $\mu^*\varrho':\pi_1(X_1)\to S(K)\times T(K)$ is big, by the construction of $\sigma$, we conclude that the representation $\eta:\pi_1(F)\to T(K)$ obtained by 
				$$
				\pi_1(F)\to \pi_1(X_1)\stackrel{\mu^*\varrho'}{\to}  S(K)\times T(K)\to T(K)
				$$ 
				is big.  Since $T(K)$ is commutative,  $\eta$ factors through $\pi_1(F)\to \pi_1(A)\to T(K)$. This implies that $\dim F=\dim \alpha(F)$. 
		 	\end{proof}

			Let us  prove \cref{item:LKD}.  
		By \cref{claim:same}, for a    general smooth fiber $F$ of $f_1$, $\bar{\kappa}(F)\geq 0$. Since $Y_1$ is of log general type, by the  subadditivity of the logarithmic Kodaira dimension  proven in \cite[Theorem 1.9]{Fuj17}, we obtain $$\bar{\kappa}(X_1)\geq \bar{\kappa}(Y_1)+\bar{\kappa}(F)\geq \bar{\kappa}(Y_1)=\dim Y_1\geq 0.$$  
			Hence $\bar{\kappa}(X)=\bar{\kappa}(X_1)\geq 0$. 
			The first claim is proved.

			\medspace

			We proceed to prove the second assertion. For simplicity, we assume that the logarithmic Iitaka fibration $j:X_1\to J(X_1)$ is regular. Let $X_t:=j^{-1}(t)$ for any $t\in Y_1$. 
			\begin{claim}\label{claim:20230518}
				$f_1(X_t)$ is a point for very generic $t\in J(X_1)$.
			\end{claim} 
			\begin{proof}
				Since $f_1$ is dominant and $Y_1$ is strongly of log general type, $\overline{f_1(X_t)}$ is of log general type for generic $t\in J(X)$.
				Now, let us take a very generic $t\in J(X)$.
				To show that $f_1(X_t)$ is a point, suppose for the sake of contradiction that $\dim f_1(X_t)>0$.
				Then $\bar{\kappa}(\overline{f_1(X_t)})=\dim \overline{f_1(X_t)}$.
				Since $\bar{\kappa}(X_t)=0$, the general fibers of the restriction $f_1|_{X_t}:X_t\to \overline{f_1(X_t)}$ have non-negative logarithmic Kodaira dimension. By \cite[Theorem 1.9]{Fuj17} again, it follows that $\bar{\kappa}(X_t)\geq \bar{\kappa}(\overline{f_1(X_t)})>0$. This yields a contradiction.
				Thus, $f_1(X_t)$ is a point.
			\end{proof}
			
			By \cref{claim:same,claim:20230518}, for very generic $t\in J(X_1)$, we have $\dim X_t=\dim \alpha(X_t)$. By the birational criterion for semi-abelian varieties in \cite[Proof of Lemma 1.4]{CDY25}, the closure of $\alpha(X_t)$ is a semi-abelian subvariety of $A$. As there are at most countably many semi-abelian subvarieties of $A$, we conclude that the images of the very general fibers of $f_1$ are isomorphic.  
			  This roughly shows the second assertion.    
		\end{proof}
		
		\begin{rem}
			If we assume the logarithmic abundance conjecture: a smooth quasi-projective variety is $\bA^1$-uniruled if and only if $\bar{\kappa}(X)=-\infty$, then it predicts that   $\bar{\kappa}(X)\geq 0$ if there is a big representation $\varrho:\pi_1(X)\to {\rm GL}_N(\bC)$, which is slightly stronger than the first claim in \cref{thm:20230510}. Indeed, since $\varrho$ is big, $X$ is not $\bA^1$-uniruled and thus $\bar{\kappa}(X)\geq 0$ by this conjecture.  
		\end{rem}
		
		\medspace

		\section*{Convention and notation} In this paper, we use the following conventions and notations:
		\begin{itemize}[noitemsep]
			\item Quasi-projective varieties and their closed subvarieties are assumed to be positive-dimensional and irreducible unless specifically mentioned otherwise. Zariski closed subsets, however, may be reducible.  
			\item Fundamental groups are always referred to as topological fundamental groups.
			\item If \(X\) is a complex space, we denote by \(X^{\mathrm{norm}}\) its normalization, and by 
			\(X_{\mathrm{reg}}\) its smooth locus.
			\item $\bD$ denotes the unit disk in $\bC$, and $\bD^*$ denotes the punctured unit disk.  
			\item A finitely generated group $\Gamma$ is called \emph{linear} if it admits an almost faithful representation $\varrho:\Gamma \to \GL_N(\bC)$, i.e.\ such that $\ker \varrho$ is finite. 
			It is called \emph{reductive} if, moreover, $\varrho$ is semisimple.
			\item For a complex algebraic variety $X$, unless otherwise specified, we denote by $\pi_X:\widetilde{X}\to X$ its universal covering map. More generally, for any normal subgroup $\Gamma\subset \pi_1(X)$, we denote by $\widetilde{X}_\Gamma\to X$ the Galois covering of $X$ with Galois group $\pi_1(X)/\Gamma$.   For any representation $\varrho:\Gamma \to \GL_N(K)$,  we denote by $\widetilde{X}_\varrho\to X$ the  Galois covering of $X$ with Galois group $\pi_1(X)/\ker\varrho$.  
			\item A proper holomorphic fibration between complex spaces, is a proper holomorphic map such that each fiber is connected. 
			\item A $\mathbb{C}$-VHS (resp. $\mathbb{Z}$-VHS) denotes a complex (resp. integral) polarized 
			variation of Hodge structure.
			\item The reductive (resp.\ linear) Shafarevich conjecture refers to the Shafarevich conjecture 
			for projective varieties with reductive (resp.\ linear) fundamental groups. 
			\item A group \(G\) is said to be \emph{virtually} \(P\) if it contains a subgroup of finite index 
			that has property \(P\).
			\item For a smooth complex quasi-projective variety $X$, unless otherwise specified, we assume that $\overline{X}$ is a smooth compactification of $X$ such that the boundary $D := \overline{X} \setminus X$ is a simple normal crossing divisor.
			 
		\end{itemize} 
		
	\medspace
	
		\noindent\textbf{Acknowledgements.} \quad 
		
	I would like to thank all of my collaborators on the results presented in this
	paper, for many fruitful discussions and exchanges of ideas:  Damian Brotbek,
		Benoît Cadorel, Junyan Cao, Georgios Daskalopoulos, Christopher Hacon, Ludmil Katzarkov,
		Chikako Mese, Mihai Păun, Botong Wang, and Katsutoshi Yamanoi. 
		
		I also thank Daniel Barlet,  Patrick Brosnan, Yohan Brunebarbe, Frédéric Campana,
		Benoît Claudon, Hélène Esnault, Philippe Eyssidieux, Henri Guenancia, Auguste Hébert,   Bruno Klingler, János Kollár, 
		Takuro Mochizuki, Pierre Py, Guy Rousseau, Christian Schnell, Carlos Simpson,    Matei Toma and Claire Voisin for many stimulating discussions on
		non-abelian Hodge theories  and related topics over the years.

	 
	\newcommand{\etalchar}[1]{$^{#1}$}
	\providecommand{\bysame}{\leavevmode\hbox to3em{\hrulefill}\thinspace}
	\providecommand{\MR}{\relax\ifhmode\unskip\space\fi MR }
	\providecommand{\MRhref}[2]{%
		\href{http://www.ams.org/mathscinet-getitem?mr=#1}{#2}
	}
	\providecommand{\href}[2]{#2}

\end{document}